\newcommand\reallywidehat[1]{%
	\savestack{\tmpbox}{\stretchto{%
			\scaleto{%
				\scalerel*[\widthof{\ensuremath{#1}}]{\kern-.6pt\bigwedge\kern-.6pt}%
				{\rule[-\textheight/2]{1ex}{\textheight}}
			}{\textheight}%
		}{0.5ex}}%
	\stackon[1pt]{#1}{\tmpbox}%
}
\newlength{\parenheight}
\newlength{\parendepth}
\newlength{\parendrop}
\newcommand{\paren}[4]{%
	\settoheight{\parenheight}{\(#4 #2\)}%
	\settodepth{\parendepth}{\(#4 #2\)}
	\addtolength{\parendepth}{.5ex}
	\addtolength{\parenheight}{-.5ex}
	\addtolength{\parenheight}{\parendepth}
	\addtolength{\parendepth}{-.5\parenheight}
	\setlength{\parendrop}{-.5\parenheight}
	\addtolength{\parendrop}{.5ex}
	\raisebox{-\parendepth}{\(#4
		\left#1%
		\rule[\parendrop]{0pt}{\parenheight}%
		\right.\)}
	#2
	\raisebox{-\parendepth}{\(#4
		\left.%
		\rule[\parendrop]{0pt}{\parenheight}%
		\right#3\)}
}
\def\myleft#1#2\myright#3{%
	\mathchoice{%
		\paren{#1}{#2}{#3}{\displaystyle}%
	}{%
		\paren{#1}{#2}{#3}{\textstyle}%
	}{%
		\paren{#1}{#2}{#3}{\scriptstyle}%
	}{%
		\paren{#1}{#2}{#3}{\scriptscriptstyle}%
	}%
}
\numberwithin{equation}{section}
\newtheorem{theorem}{Theorem}
\numberwithin{theorem}{section}
\newtheorem{lemma}[theorem]{Lemma}
\newtheorem{proposition}[theorem]{Proposition}
\newtheorem{corollary}[theorem]{Corollary}
\newtheorem{assumption}[theorem]{Assumption}
\newtheorem{example}[theorem]{Example}
\newtheorem{remark}[theorem]{Remark}
\theoremstyle{plain}
\theoremstyle{definition}
\newtheorem{definition}[theorem]{Definition}
\theoremstyle{remark}
\setlist[enumerate]{leftmargin=*, listparindent=\parindent, parsep=0pt, font=\upshape, label=(\arabic*)} 
\setlist[itemize]{leftmargin=*} 
\newcommand{\mat}[1]{\left( \begin{matrix} #1 \end{matrix} \right)}
\newcommand{\pabcd}{\left(\begin{smallmatrix}a&b\\c&d\end{smallmatrix}\right)}
\newcommand{\Z}{\mathbb{Z}}
\newcommand{\N}{\mathbb{N}}
\newcommand{\Q}{\mathbb{Q}}
\newcommand{\U}{\mathcal{U}}
\newcommand{\R}{\mathbb{R}}
\newcommand{\C}{\mathbb{C}}
\renewcommand{\L}{\mathbb{L}}
\newcommand{\sgn}{\operatorname{sgn}}
\newcommand{\Mat}{\operatorname{Mat}}
\newcommand{\reg}{\operatorname{reg}}
\newcommand{\supp}{\operatorname{supp}}
\newcommand{\Id}{\operatorname{Id}}
\renewcommand{\H}{\mathbb{H}}
\newcommand{\SL}{\text{\rm SL}}
\newcommand{\bb}[1]{\boldsymbol{#1}}
\newcommand{\sm}{\setminus}
\newcommand{\vast}{\bBigg@{3.5}}
\newcommand{\Vast}{\bBigg@{4.5}}
\newcommand{\VVast}{\bBigg@{6}}
\renewcommand{\b}[1]{\boldsymbol{#1}}
\newcommand{\sangle}[1]{\langle #1\rangle}
\newcommand{\wh}{\widehat}
\def\del{  \partial}
\renewcommand{\pmod}[1]{\  \,  \left( \rm{mod} \,  #1 \right)}
\newcommand{\xol}[1]{\xoverline{#1}}
\newcommand{\ol}[1]{\overline{#1}}
\def\a{\alpha}
\def\b{\beta}
\def\d{\delta}
\def\e{\varepsilon}
\def\De{\Delta}
\def\k{\kappa}
\def\l{\lambda}
\def\th{\theta}
\def\TH{\Theta}
\def\vf{\varphi}
\def\n{\nu}
\def\s{\sigma}
\def\t{\tau}
\newcommand{\Mf}{\mathfrak{M}}
\newcommand{\calP}{\mathcal{P}}
\newcommand{\calM}{\mathcal{M}}
\newcommand{\calC}{\mathcal{C}}
\newcommand{\calE}{\mathcal{E}}
\newcommand{\calS}{\mathcal{S}}
\newcommand{\FF}{\mathcal{F}}
\newcommand{\GL}{{\text {\rm GL}}}
\newcommand{\g}{\gamma}
\def\H{\mathbb{H}}
\renewcommand{\pmod}[1]{\  \,  \left( \mathrm{mod} \,  #1 \right)}
\newcommand{\xoverline}[1]{\mkern 1.5mu\overline{\mkern-1.5mu#1\mkern-1.5mu}\mkern 1.5mu}
\begin{document}

\title[Modularity of counting functions of convex polygons]{Modularity of counting functions of convex planar polygons with rationality conditions}
\author{Kathrin Bringmann}
\author{Jonas Kaszian}
\author{Jie Zhou}

\address{University of Cologne, Department of Mathematics and Computer Science, Weyertal 86-90, 50931 Cologne, Germany}
\email{kbringma@math.uni-koeln.de}
\email{jkaszian@math.uni-koeln.de}

\address{Jie Zhou, Yau Mathematical Sciences Center, Tsinghua University, Beijing 100084, P.\,R. China}
\email{jzhou2018@mail.tsinghua.edu.cn}

\keywords{Elliptic curves, generating functions, homological mirror symmetry, Jacobi forms, mock theta functions, modular forms}

\makeatletter
\@namedef{subjclassname@2020}{%
 \textup{2020} Mathematics Subject Classification}
\makeatother

\subjclass[2020]{11F03, 11F11, 11F27, 11F37, 11F50, 14N35, 53D37}
\maketitle

\begin{abstract}
	We study counting functions of planar polygons arising from homological mirror symmetry of elliptic curves. We first analyze the signature and rationality of the quadratic forms corresponding to the signed areas of planar polygons. Then we prove the convergence, meromorphicity, and mock modularity of the counting functions of convex planar polygons satisfying certain rationality conditions on the quadratic forms.
\end{abstract}

\section{Introduction and statement of results}

Since the work of \cite{Pol:2005, Pol:20011} there has been growing interest in indefinite theta functions arising from generating functions of genus zero open Gromov--Witten invariants and homological mirror symmetry of elliptic curves and of related manifolds. These generating functions are concrete objects due to the simplicity of elliptic curves: they are simply counting functions of planar polygons. Such a counting function of planar polygons with $N$ sides and  fixed angles has the form
\begin{equation*}
	\Theta_{Q,\chi}(\bb{z};\tau)= \sum_{\bb{n}\in\mathbb{Z}^{N-2}} \chi\left(\bb{n}+\frac{\bb{y}}{v}\right) q^{Q (\bb{n})}e^{2\pi i B\left (\bb{n},\,\bb{z}\right)}\,,~ \tau=u+iv\in \mathbb{H}\,,~\bm{z}=\bm{x}+i\bm{y}\in\C^{N-2}\,,~q:=e^{2\pi i \tau}.
\end{equation*}
Geometrically, $\bm n+\frac{\bm y}{v}$ corresponds to the side lengths of the polygons being enumerated, $Q(\bm n)$ arises from its signed area, and $B(\bm n,\bm z)$ from its weighted perimeter. The term $\chi(\bm n+\frac{\bm y}{v})$ is determined by the Euler characteristic of the moduli space of genus zero holomorphic disks in the elliptic curve. Its support only contains points in $\Z^{N-2}$ that give rise to polygons with non-negative areas instead of the full lattice. It is expected from homological mirror symmetry \cite{Kon:1994} that these generating functions exhibit modular properties. In a series of works \cite{BKR, BKZ, BRZ, Lau:2015, Pol:20012, Pol:20013, Pol:2005, Pol:2000, Pol:20011} it was proved that this is indeed the case for the enumeration of certain types of planar $N$-gons for $N\le6$. In this paper, we systematically extend the previous studies of individual cases to a large class of counting functions, namely those of convex planar polygons. Notably, the previous studies used rationality satisfied by quadratic forms and defining normal vectors. Since such rationality is difficult to access for the more general geometric objects, so complications arise in our setting.

For positive definite rational quadratic forms $Q$, such generating functions (with $\chi=1$) are (positive definite) theta functions, thus modular forms of weight $\frac N2-1$ (or Jacobi forms if one includes the elliptic variable $\bm z$). Since $q^{Q(\bm n)}$ produces exploding terms for indefinite quadratic forms, one can restrict the summation to a suitable cone to obtain a convergent series, which usually breaks modularity. Through Zwegers' thesis \cite{Zw} and later extensions to arbitrary signature  \cite{ABMP,FK,Na}, 
modularity properties can be recovered by adding a real-analytic function yielding a ``completion'' that is modular of weight $\frac N2-1$ and whose non-holomorphic differential properties depend on the signature of the quadratic form. More explicitly, we write the indicator function of the cone as a linear combination of expressions of the form $\prod_{j=1}^N\sgn(B(\bm{c_j},\bm a))$, where $C=(\bm{c_1},\dots,\bm{c_N})$ are the normal vectors of the hyperplanes bounding the cone. To obtain the completion, one exchanges each sign product by a smoothed replacement $E_{Q,C}$, called the generalized error function (see \eqref{DefGenError}). Note that $E_{Q,C}(\bm a)$ approximates the sign product with suitable exponential accuracy as $||\bm a||\to\infty$, where $||\cdot||$ is the Euclidean norm, to ensure convergence of the resulting theta function despite the growth of $q^{Q(\bm a)}$ in regions with $Q(\bm a)<0$. Furthermore, the generalized error function satisfies Vign\'eras' differential equation in Theorem \ref{Vigneras} of \cite{Vi} yielding modularity. Since the generalized error functions used in the completion contain a factor $\sqrt v$, the completion is usually not holomorphic. However, applying non-holomorphic derivatives simplifies the differential properties leading to the concept of {\it depth}. Following Zagier--Zwegers, recursively define the space $\Mf_k^d$ of mock modular forms of weight $k$ and depth $d$, and the space of their modular completions $\wh\Mf_k^d$, where mock modular forms of depth zero are ordinary modular forms ($\wh\Mf_k^0=\Mf_k^0=\Mf_k$), as follows: 
\begin{definition}
	A {\it mock modular form} $h:\H\to\C$ of {\it weight} $k$ and {\it depth} $d\ge1$ is the ``holomorphic part'' (see Theorem \ref{thm:Modularitysimplicialwithnoisotropicray}) of a real-analytic function $\wh h$, called {\it completion}, that transforms like a weight $k$ modular form and satisfies
	\begin{equation*}
		\frac{\del   }{\del \bar{\tau}}\wh{h} \in \bigoplus_j  v^{r_j} \wh{\mathfrak{M}}^{d-1}_{k + r_j}  \otimes \overline{\mathfrak{M}_{2+r_j}}.
	\end{equation*}
\end{definition}

The presence of isotropic vectors in the summed cone of the counting functions, which geometrically correspond to degenerations of polygons into line segments,
leads to more interesting phenomena.
A detailed analysis of the growth behaviour of the quadratic form
close to such points is thus included to determine convergence and meromorphicity of  the counting functions. Furthermore, new techniques on
generalized error functions for negative semi-definite quadratic spaces
are introduced for the study of modularity.
Applying these results and the limiting techniques, we obtain the following modularity result.

\begin{theorem}\label{ModularityOfCoutingFunction}
	Assume the positivity condition \eqref{eqnpositivityonangles} and the rationality condition \eqref{eqnrelaxedconditionintermsangles}. The regularized counting function $\TH_{Q,\chi_C^{\reg}}$, with $\chi_C^{\reg}$ given in  \eqref{RegularizedChi}, is a linear combination of mock modular forms of weight at most $\frac N2-1$ and depth at most $N-3$.
\end{theorem}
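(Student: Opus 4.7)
The plan is to adapt the standard Zwegers-style completion strategy for indefinite theta series, combined with the regularization machinery developed earlier in the paper to handle the geometric features peculiar to convex polygon counting.

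First, I would decompose the summation cone. Under the positivity condition \eqref{eqnpositivityonangles}, side-length vectors of convex $N$-gons with fixed angles carve out a polyhedral cone $C$ in $\R^{N-2}$ with $N$ facets, one for each side being non-negative. Since $N>N-2$, this cone is not simplicial, so I would first triangulate it into a signed sum of simplicial subcones. Each simplicial subcone is cut out by $N-2$ inequalities $B(\bm c_j,\cdot)\ge 0$, so its (unregularized) indicator becomes the standard product $\prod_{j=1}^{N-2}\tfrac12(1+\sgn(B(\bm c_j,\cdot)))$ appearing in Zwegers' setup. The rationality condition \eqref{eqnrelaxedconditionintermsangles} ensures that the normal vectors $\bm c_j$ obtained from this triangulation lie in an appropriate rational sublattice, which is what is needed to later apply Vign\'eras' theorem with a congruence subgroup.

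Second, I would construct the modular completion by replacing each sign product by the generalized error function $E_{Q,C}$ from \eqref{DefGenError}. By Vign\'eras' theorem (Theorem \ref{Vigneras}), for each simplicial piece whose boundary vectors span a subspace on which $Q$ is nondegenerate, this substitution produces a real-analytic function transforming with weight $\frac{N-2}{2}=\frac{N}{2}-1$, whose holomorphic part is mock modular by the recursive definition given in the introduction. Summing over the triangulation then yields a completion of $\Theta_{Q,\chi_C^{\reg}}$ at the claimed weight.

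Third -- and this is where the hard work lies -- I would deal with simplicial pieces whose boundary rays are isotropic (or whose span is degenerate) for $Q$. Geometrically these correspond to polygons degenerating to line segments, and they are exactly the situations where the naive series diverges and the earlier completion procedure fails. Here I would invoke the regularized characteristic function $\chi_C^{\reg}$ from \eqref{RegularizedChi}, together with the convergence/meromorphicity analysis carried out earlier in the paper and the new generalized error functions adapted to negative semi-definite subspaces. The idea is to split each degenerate piece along the radical of $Q$: on the nondegenerate quotient one applies Vign\'eras' theorem in the usual way, while on the isotropic direction one extracts a regularized principal value. Compatibility of the regularization with modularity, which must be verified by a limiting argument, is the main technical obstacle.

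Finally, to obtain the depth bound $N-3$, I would argue recursively: each negative direction of $Q$ on a simplicial piece contributes at most one layer of non-holomorphic completion, since differentiating $E_{Q,C}$ in $\bar\tau$ reduces the rank of the underlying error function by one. Under the rationality/positivity conditions the signature of $Q$ on the ambient $(N-2)$-dimensional space is $(1,N-3)$, so the resulting completion satisfies $\partial_{\bar\tau}\wh h \in \bigoplus_j v^{r_j}\wh{\Mf}_{k+r_j}^{N-4}\otimes\overline{\Mf_{2+r_j}}$, placing $\Theta_{Q,\chi_C^{\reg}}$ in $\Mf_{N/2-1}^{N-3}$ as claimed.
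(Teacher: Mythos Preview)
Your overall architecture matches the paper's: triangulate the polyhedral cone, replace sign products by generalized error functions, invoke Vign\'eras, and read off weight $\tfrac{N}{2}-1$ and depth $N-3$ from the signature $(1,N-3)$. Where you diverge is in the treatment of the isotropic rays, and there your description is both different from the paper and not quite complete.

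You propose to ``split each degenerate piece along the radical of $Q$'' and extract a ``regularized principal value'' on the isotropic direction. The paper does something else. When a simplicial piece has an isotropic ray in its closure, the obstruction is that for the relevant subset $S$ the span $\langle C_S\rangle$ is only negative \emph{semi}-definite, so $E_{Q,C_S}$ is undefined. The paper's remedy (Theorem~\ref{thm:Modularitysimplicialwithisotropicray}) is to deform the quadratic form to $Q_t(\bm\lambda)=Q(\bm\lambda)-\tfrac{t}{2}\|\bm\lambda_{U_S}\|^2$, which makes $\langle C_S\rangle$ genuinely negative definite. One then builds $\widehat{p}_{C,t}$ from the error functions $E_{Q_t,C_S}$ but inserts it into the theta series for the \emph{original} $Q$; the delicate point (Lemmas~\ref{lempositivityininterior} and~\ref{lempositivity}) is that this hybrid object still satisfies Vign\'eras' equation for $Q$ and still converges. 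No principal value or quotient-by-radical construction appears; the limit $t\to 0^+$ is only used to recover the holomorphic part. Your splitting idea may be workable, but it is not what is done here, and as stated it does not explain why modularity survives the regularization.

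A second point you underplay: the regularization $\chi_C^{\mathrm{reg}}$ does two distinct things---it discards the isotropic normal vectors (replacing $C$ by $C^{\mathrm{reg}}$) and it drops the face contributions $\delta_{\langle C_J\rangle^\perp}$ that land on isotropic rays. The paper then needs Lemma~\ref{lemnoisotrpicrayinsimplicialdecomposition} to guarantee that, after this pruning, the triangulation can be chosen so that \emph{no} lower-dimensional face arising in the inclusion--exclusion is itself an isotropic ray. Without this, some boundary terms in your triangulation would still be divergent series that neither the error-function completion nor the regularization removes.
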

The following example demonstrates what the generating function and the corresponding modular completion look like.

\begin{example}\label{extrapzoidintro}
	The enumeration of isosceles trapezoids is given by  \cite{BKZ,Pol:2000}
	\[
		f(\bm z) := \sum_{\bm n\in\Z^2} \chi\left(\bm n+\frac{\bm y}{v}\right) q^{\frac12\left(n_1^2-n_2^2\right)} e^{2\pi i(n_1z_1-n_2z_2)},
	\]
	where
$
		\chi(\bm w) = \frac12(\sgn(w_1-w_2)+\sgn(w_2))
$,
	with $\sgn(x):=\frac{x}{|x|}$ for $x\ne0$, $\sgn(0):=0$.
In this case, we have 
\[
Q(\bb{n})= \frac12\left(n_1^2-n_2^2\right),\quad 
B(\bb{n}, \bb{z})=n_1z_1-n_2z_2,\quad \text{for $\bb{n}=(n_{1},n_{2})\in \mathbb{Z}^{2},$ $\bb{z}=(z_{1},z_{2})\in\mathbb{C}^{2}$}.
\]
	Assume that $z_1,z_2$ satisfy $0<y_1-y_2<v$, $0<y_2<v$. Denote $\bm{c_1}=(1,1)^T$, $\bm{c_2}=(0,1)^T$. Then the above generating function can be rewritten as
\begin{equation*}
	f(\bm{z}) = {1\over2} \sum_{\bb{n}\in\mathbb{Z}^2} \left(\sgn(B(\bm{c_1},\bb{n}))-\sgn(B(\bm{c_2},\bb{n}))\right) q^{{1\over2}\left(n_1^2-n_2^2\right)} e^{2\pi i(n_1z_1-n_2z_2)}
\end{equation*}
and is related to so-called Appell--Lerch functions. 
One then has $\chi_C^{\reg}=\chi$ according to \eqref{RegularizedChi}.
Using the results in \cite{Zw}, the modular completion of $	f(\bm{z})$ is 
\begin{equation*}
	\widehat{f}(\bm{z}) = {1\over2} \sum_{\bb{n}\in\mathbb{Z}^2} \left(\sgn(B(\bm{c_1},\bb{n}))-E\left({B(\bm{c_2},\bb{n})\over\sqrt{-Q(\bm{c_2})}} v^{1\over2}\right)\right) q^{{1\over2}\left(n_1^2-n_2^2\right)} e^{2\pi i(n_1z_1-n_2z_2)}\,,
\end{equation*}
where $E(x):=2\int_{0}^{x}e^{-\pi t^2}dt$ is the error function.
\end{example}

The paper is organized as follows. In Section \ref{secsignedarea}, we study properties of the signed area of planar polygons. We analyze the signature, positivity, and rationality of the corresponding quadratic form. In Section \ref{secdefinitionofcountingfunctions}, we prove convergence of the counting functions of convex planar polygons satisfying the positivity and rationality conditions given in Section \ref{secsignedarea}. This requires an examination of the structure of the isotropic vectors contained in the range of summation in the counting functions.  We also consider the correspondence between counting functions of degenerate polygons and restrictions of theta functions on faces of cones. In Section \ref{sectools}, we develop some tools for the study of modularity of indefinite theta functions for negative semi-definite quadratic spaces, such as generalized error functions. In Section \ref{secmodularity}, we prove modularity of the counting functions of convex planar polygons studied in Section \ref{secdefinitionofcountingfunctions}. In Appendix \ref{secappenedix}, we discuss deformations of polygons with fixed angles.

\section*{Acknowledgments}

We thank the referee for helpful comments. This research was supported by the Deutsche Forschungsgemeinschaft (German Research Foundation) under the Collaborative Research Centre / Transregio (CRC/TRR 191) on Symplectic Structures in Geometry, Algebra, and Dynamics. This result is part of a project that has received funding from the European Research Council (ERC) under the European Union's Horizon 2020 research and innovation programme (Grant agreement No. 101001179). Part of J. Zhou's work was done while he was a postdoc at the Mathematical Institute of University of Cologne and was supported by German Research Foundation Grant CRC/TRR 191. J. Zhou was also supported by the national key research and development program of China (No. 2022YFA1007100), a start-up grant at Tsinghua University, the Young overseas high-level talents introduction plan of China.

\section{Signed area of planar polygons}
\label{secsignedarea}

\subsection{Elementary properties of the signed area of planar polygons}

For  $N\geq 3$ consider $N$-gons with non-zero angles $\theta_{j}$ and non-zero side lengths $\lambda_j$ ($1\leq j\leq N$).
The vertices and the correspondingly sides are labelled clockwisely and cyclically $\pmod{N}$.
The $k$-th side vector has length $\lambda_{k}$ and the angle at the vertex labelled by $k$ from the $(k-1)$-th side vector to the $k$-th side vector is $\theta_{k}\in (-\pi,0)\cup (0,\pi)$;
see Figure \ref{figure:orientedpolygon} below. The angles satisfy $\sum_{k=1}^{N}\theta_k=2\pi$ and, since the $N$-gon is closed, we have with $\varphi_k:=\sum_{a=1}^{k}\th_a$,
\begin{align}\label{eqnrankcondition}
\sum_{k=1}^{N} \lambda_k e^{-i\varphi_k} =0.
\end{align}
\begin{figure}[h]
	\centering
	\includegraphics[scale=0.6]{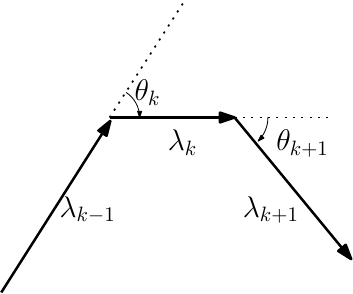}
	\caption{The outer angle at the vertex labelled by $k$ of the polygon is $\theta_{k}$, for $1\leq k \leq N$.}
	\label{figure:orientedpolygon}
\end{figure}

\begin{definition}
	An $N$-gon is {\it simple} if no non-consecutive edges intersect. Otherwise it is {\it complex}.
\end{definition}

If a simple $N$-gon further satisfies $\th_k\in(0,\pi)$ for $1\le k\le N$, then it is convex. If $\th_k\in(-\pi,0)$ for some $k$, then the $N$-gon is concave at the vertex labelled by $k$. Note that an $N$-gon satisfying $\l_k=0$ for some $k$ is in fact an $(N-1)$-gon. We study the signed area of simple polygons defined by using the cross product; see Appendix \ref{secappenedix} for discussions on complex polygons.
\begin{figure}[h]
	\centering
	\includegraphics[scale=0.6]{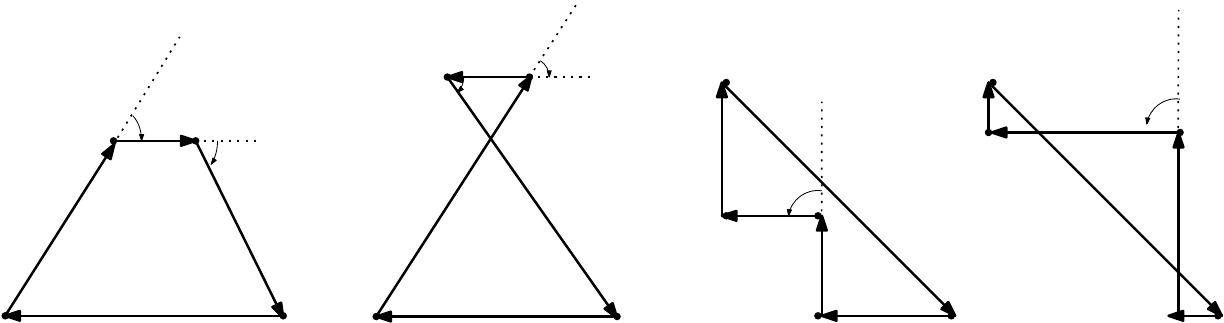}
	\caption{From left to right: the polygons are simple and convex, complex, simple, complex.}
	\label{figure:simpleandcomplexpolygons}
\end{figure}

\begin{lemma}\label{la:quadForm}
	The signed area of the simple $N$-gon is given by ($\bm\l=(\l_1,\dots,\l_{N-2})^T$)
	\begin{align*}
		Q(\bm{\lambda})=-\frac12\sum_{1\leq j\leq N-2} \hspace{-0.2cm} { \sin (\varphi_{N}-\varphi_{j}  ) \sin (\varphi_{N-1}-\varphi_{j})
		\over \sin (\theta_{N})}
		\lambda_{j}^2
		-\hspace{-0.3cm}\sum_{1 \leq  j<k\leq N-2}\hspace{-0.3cm}
		{ \sin (\varphi_{N}-\varphi_{j}  ) \sin (\varphi_{N-1}-\varphi_{k})
		\over \sin (\theta_{N})}
		\lambda_{j} \lambda_{k}.
	\end{align*}
\end{lemma}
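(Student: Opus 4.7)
The plan is to compute the signed area directly from the shoelace formula and then use the closure condition \eqref{eqnrankcondition} to eliminate the two redundant side lengths $\lambda_{N-1}$ and $\lambda_N$. Identifying $\R^2$ with $\C$, I place the starting vertex at $P_0=0$, so that $P_k=\sum_{j=1}^k\lambda_j e^{-i\varphi_j}$ and the closure $P_N=0$ is exactly \eqref{eqnrankcondition}. Applying the shoelace formula $2A=\sum_k P_k\times P_{k+1}$ and telescoping via $P_{k+1}=P_k+\lambda_{k+1}e^{-i\varphi_{k+1}}$, one obtains
\begin{equation*}
2Q(\bm{\lambda})=-\sum_{1\le j<m\le N}\lambda_j\lambda_m\sin(\varphi_j-\varphi_m),
\end{equation*}
where the overall minus sign reflects the clockwise orientation and ensures that convex polygons have positive area.

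Next, I would solve \eqref{eqnrankcondition} for $\lambda_{N-1}$ and $\lambda_N$. Splitting the closure into real and imaginary parts gives a $2\times 2$ linear system whose determinant equals $\sin\theta_N$, and Cramer's rule yields
\begin{equation*}
\lambda_{N-1}=-\frac{1}{\sin\theta_N}\sum_{j=1}^{N-2}\lambda_j\sin(\varphi_N-\varphi_j),\qquad \lambda_N=\frac{1}{\sin\theta_N}\sum_{j=1}^{N-2}\lambda_j\sin(\varphi_{N-1}-\varphi_j).
\end{equation*}

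I then decompose the double sum above into four pieces according to whether the indices lie in $\{1,\dots,N-2\}$ or in $\{N-1,N\}$. A direct computation shows that the two ``mixed'' pieces $\sum_{j\le N-2}\lambda_j\lambda_{N-1}\sin(\varphi_j-\varphi_{N-1})$ and $\sum_{j\le N-2}\lambda_j\lambda_N\sin(\varphi_j-\varphi_N)$ each simplify, via the Cramer formulas above, to $\mp\lambda_{N-1}\lambda_N\sin\theta_N$ and thus cancel. Only the purely interior sum over $j<m\le N-2$ and the purely boundary term $\lambda_{N-1}\lambda_N\sin(\varphi_{N-1}-\varphi_N)=-\lambda_{N-1}\lambda_N\sin\theta_N$ remain.

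The final step rests on the Pl\"ucker-type identity
\begin{equation*}
\sin(\varphi_j-\varphi_m)\sin\theta_N=\sin(\varphi_N-\varphi_j)\sin(\varphi_{N-1}-\varphi_m)-\sin(\varphi_{N-1}-\varphi_j)\sin(\varphi_N-\varphi_m),
\end{equation*}
which follows from two applications of the product-to-sum formula. Rewriting the interior sum using this identity and expanding the boundary term $-\lambda_{N-1}\lambda_N\sin\theta_N$ via the Cramer formulas above, the antisymmetric part of the Pl\"ucker identity combines with the symmetric cross-terms produced by the boundary expansion to give exactly the asymmetric coefficient $\sin(\varphi_N-\varphi_j)\sin(\varphi_{N-1}-\varphi_k)$ for $j<k$ (and the diagonal analogue at $j=k$) claimed in the lemma. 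The main obstacle is this last trigonometric bookkeeping: correctly tracking signs through Cramer's rule and the Pl\"ucker identity to verify that, after collecting like monomials $\lambda_j\lambda_k$, the upper-triangular form stated in the lemma emerges rather than a symmetrized average.
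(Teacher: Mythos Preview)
Your argument is correct and follows the same overall strategy as the paper: compute the shoelace area, then eliminate the dependent side lengths via the closure relation \eqref{eqnrankcondition}. The difference is in the bookkeeping. Because the base vertex is $P_0=0$, the shoelace sum $\sum_k P_k\times P_{k+1}$ already collapses to $\tfrac12\sum_{1\le j<k\le N-1}\sin(\varphi_k-\varphi_j)\lambda_j\lambda_k$ with the upper index $N-1$, not $N$; the paper works with this form, so only $\lambda_{N-1}$ must be substituted and no mixed--term cancellation or Pl\"ucker identity is needed---one just splits off the $k=N-1$ terms, plugs in the Cramer expression for $\lambda_{N-1}$, and simplifies. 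Your version, summing to $N$, forces you to eliminate both $\lambda_{N-1}$ and $\lambda_N$, which generates the extra mixed pieces and the boundary term that you then (correctly) cancel and recombine via the Pl\"ucker identity. Both routes work; the paper's is shorter, while yours has the virtue of making the symmetry between $\lambda_{N-1}$ and $\lambda_N$ manifest and of isolating the trigonometric identity that underlies the final upper-triangular form.
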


\begin{proof}
	One connects the vertex labelled by $1$ with the rest of the vertices to cut the $N$-gon into $N-2$ triangles. Computing the signed area of each triangle using the cross product and summing yields
	\begin{equation}\label{eqnarea1}
		-\frac12\sum_{k=1}^{N-2} \left(\sum_{j=1}^k \l_je^{-i\vf_j}\right) \left(\l_{k+1}e^{-i\vf_{k+1}}\right) = \frac12\hspace{-.12cm}\sum_{1\le j<k\le N-1}\hspace{-.12cm} (\sin(\vf_k)\cos(\vf_j)-\sin(\vf_j)\cos(\vf_k))\l_j\l_k.
	\end{equation}
	Simplifying this gives an alternate form for the signed area 
	\begin{equation}\label{eqnareaform}
		\frac12\sum_{1\le j<k\le N-1} \sin(\vf_k-\vf_j)\l_j\l_k.
	\end{equation}
	Furthermore, one can easily solve for $\lambda_{N-1},\lambda_{N}$ by considering the real and imaginary parts of \eqref{eqnrankcondition}
	\begin{align}\label{eqnlambdaN-1}
		\lambda_{N-1}=  -\sum_{j=1}^{N-2}{ \sin (\varphi_{N}-\varphi_{j}  )\over \sin (\theta_{N})}\lambda_{j}\,,\quad \lambda_{N}=\sum_{j=1}^{N-2}  {\sin (\varphi_{N-1}-\varphi_{j} ) \over \sin (\theta_{N})}\lambda_{j}.
	\end{align}
	We split off the terms with $k=N-1$ in \eqref{eqnarea1} and plug in $\lambda_{N-1}$ and $\varphi_{N-1}=2\pi -\theta_{N}$ to obtain
	\begin{multline*}
		Q(\bm\l) =	-\sum_{1\le j<k\le N-2} \sin(\vf_j)\cos(\vf_k)\l_j\l_k - \sum_{1\le j<k\le N-2} \sin(\vf_k)\sin(\vf_j)\frac{\cos(\th_N)}{\sin(\th_N)}\l_j\l_k\\
		- \frac12\sum_{1\le j\le N-2} \frac{\sin(\vf_j)}{\sin(\th_N)}\sin(\vf_j+\th_N)\l_j^2.
	\end{multline*}
	Combining the first two sums, using a trigonometric identity, gives the claim.
\end{proof}

We next extend the signed area formula in Lemma \ref{la:quadForm} to $\l_j\in\mathbb{R}$ ($1\le j\le N-2$) and denote the resulting quadratic form by $Q$. For simplicity, we employ the following definitions:
\begin{align}\nonumber
	\bm{v}&:= \left(-\frac{\sin(\vf_N-\vf_j)}{\sin(\th_N)}\right)_{1\leq j\leq N-2},\quad
	\bm{w} \hspace{-.1cm}:= \left(\frac{\sin(\vf_{N-1}-\vf_j)}{\sin(\th_N)}\right)_{1\leq j\leq N-2}\,,
	\label{eqndfnofNlambda}\\
	\quad \l_{N-1}& := \bm{v}^T\cdot\bm\l\,,\quad  \hspace{33mm}\l_N := \bm{w}^T\cdot\bm\l.
\end{align}
We also use the standard basis vectors $\bm{e}_{\bm j}:=(\delta_{j,1},\delta_{j,2},\dots, \delta_{j,N-2})^T$ for $1\leq j\leq N-2$. The symmetric matrix $A=(A_{jk})_{1\leq j,k\leq N-2}$ corresponding to $Q$ is given by
\begin{equation}\label{eqndefinitionofA}
	A_{jk}:=\sin (\theta_{N}) v_{\min(j,k)} w_{\max(j,k)}.
\end{equation}

The following results, which follow from geometric intuition (that the signed area  is independent of the choice of the reference vertex), can be proved directly using Lemma \ref{la:quadForm} and equation \eqref{eqnrankcondition}.

\begin{lemma}\label{lemcyclicsymmetry}
	\ \begin{enumerate}[leftmargin=*,label=\rm{(\arabic*)}]
		\item Any $N-2$ consecutive parameters (enumerated cyclically) from $\l_k$ $(1\le k\le N)$ provide a set of independent parameters for the signed area.

		\item The signed area formula is invariant under cyclic permutations on the labellings $\{1,2,\dots,N\}$, i.e., for a polygon with side lengths $(\lambda_1,\lambda_2,\dots, \lambda_N)$ and a cyclic permutation $\sigma \in S_{N}$ we have
		\begin{align*}
			Q(\bm{\lambda})=&-\frac12\sum_{1\leq j\leq N-2} { \sin \left(\varphi_{\sigma(N)}-\varphi_{\sigma(j)}  \right) \sin \left(\varphi_{\sigma(N-1)}-\varphi_{\sigma(j)}\right)
			\over \sin (\varphi_{\sigma(N)}-\varphi_{\sigma(N-1)})}
			\lambda_{\sigma(j)}^2
			\\&-\sum_{1 \leq  j<k\leq N-2}
			{ \sin \left(\varphi_{\sigma(N)}-\varphi_{\sigma(j)}  \right) \sin \left(\varphi_{\sigma(N-1)}-\varphi_{\sigma(k)}\right)
			\over \sin \left(\varphi_{\sigma(N)}-\varphi_{\sigma(N-1)}\right)}
			\lambda_{\sigma(j)} \lambda_{\sigma(k)}.
		\end{align*}
	\end{enumerate}
\end{lemma}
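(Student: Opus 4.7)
For part (1), the approach is to view the closure identity \eqref{eqnrankcondition} as a pair of real linear equations
\begin{equation*}
	\sum_{k=1}^N \cos(\varphi_k)\lambda_k = 0, \qquad \sum_{k=1}^N \sin(\varphi_k)\lambda_k = 0,
\end{equation*}
and then observe that if we fix any $N-2$ cyclically consecutive $\lambda_k$'s as independent parameters, the two omitted indices are again cyclically consecutive, say $m$ and $m+1\pmod N$. Viewing the two scalar equations above as a system in $(\lambda_m,\lambda_{m+1})$, the coefficient matrix has determinant $\sin(\varphi_{m+1}-\varphi_m)=\sin\theta_{m+1}$, which is nonzero by the hypothesis $\theta_{m+1}\in(-\pi,0)\cup(0,\pi)$. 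The system therefore has a unique solution, proving (1).

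For part (2), the guiding principle is that the signed area is a geometric property of the oriented polygon and is invariant under a cyclic relabeling of its vertices. To convert this geometric statement into the algebraic identity of (2), I would relabel the polygon so that vertex $\sigma(k)$ becomes the new vertex $k$: the new sides are $\lambda_{\sigma(k)}$, the new outer angles are $\theta_{\sigma(k)}$, and the new cumulative angles are $\tilde\varphi_k:=\sum_{a=1}^k\theta_{\sigma(a)}$. Part (1) guarantees that $\lambda_{\sigma(1)},\dots,\lambda_{\sigma(N-2)}$ may be taken as the independent parameters, so Lemma \ref{la:quadForm} applies directly to the relabeled polygon and produces a formula of exactly the shape claimed in (2), but written in the $\tilde\varphi$'s rather than the $\varphi_{\sigma(\cdot)}$'s.

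The last step is to identify these two sets of quantities inside the sines. A short computation using $\sum_{a=1}^N\theta_a=2\pi$ shows that for any cyclic $\sigma$ we have $\tilde\varphi_k-\tilde\varphi_j\equiv \varphi_{\sigma(k)}-\varphi_{\sigma(j)}\pmod{2\pi}$, since the two telescoping sums of $\theta_a$'s differ only by a possible complete wrap-around. By $2\pi$-periodicity of sine the coefficients match, and Lemma \ref{la:quadForm} applied to the relabeled polygon yields precisely the formula in (2). Note that the denominator $\sin(\varphi_{\sigma(N)}-\varphi_{\sigma(N-1)})$ likewise equals $\sin\theta_{\sigma(N)}$, which is the relevant denominator in the relabeled polygon.

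\textbf{Main obstacle.} Neither part is conceptually hard, and both are essentially bookkeeping once the right viewpoint is chosen. The single place that requires care is the cyclic-shift identification of cumulative angles in part (2): mishandling the wrap-around modulo $2\pi$ would break the identification between $\tilde\varphi_{k}-\tilde\varphi_{j}$ and $\varphi_{\sigma(k)}-\varphi_{\sigma(j)}$. Once this identification is verified, the statement follows immediately from part (1) combined with Lemma \ref{la:quadForm}.
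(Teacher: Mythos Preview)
Your proposal is correct and follows precisely the route the paper indicates: the paper does not give a detailed proof but simply remarks that the lemma ``can be proved directly using Lemma \ref{la:quadForm} and equation \eqref{eqnrankcondition},'' invoking the geometric fact that the signed area is independent of the reference vertex. Your argument unpacks exactly this --- using \eqref{eqnrankcondition} and the nonvanishing of $\sin\theta_{m+1}$ for part (1), and reapplying Lemma \ref{la:quadForm} after a cyclic relabel together with the $2\pi$-periodicity identification $\tilde\varphi_k-\tilde\varphi_j\equiv\varphi_{\sigma(k)}-\varphi_{\sigma(j)}\pmod{2\pi}$ for part (2).
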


\begin{remark}\label{remdependenceofparameters}
	While any $N-2$ consecutive side lengths can be chosen to provide independent parameters for the signed area, this is not necessarily true for non-consecutive ones as can be seen by considering the coefficients of $\lambda_{N-1},\lambda_N$ in terms of $\lambda_j\,(1\leq j\leq N-2)$. A direct calculation shows that the dependence occurs exactly if there exist $1\leq j,k\leq N$ such that $\sin(\varphi_k-\varphi_j)=0$. Geometrically this means that the $j$-th side of the polygon is parallel to the $k$-th side.
\end{remark}

\subsection{Non-negativity of signed area functions}

We next determine conditions that ensure a positive area or equivalently, positive values of the quadratic form $Q$. While simple polygons have a positive area, checking simplicity in terms of the angles and side lengths is not convenient. In this paper, we only consider polygons satisfying the following positivity condition
\begin{equation}\label{eqnpositivityonangles}
	\th_k\in(0,\pi)\quad \text{for}\quad  1\leq k \leq N.
\end{equation}
We obtain the following result, using the approach of Lemma \ref{la:quadForm} and induction.

\begin{corollary}\label{corpositivityofarea}
	
Assume \eqref{eqnpositivityonangles} and either $\lambda_{k}>0$ or
$\lambda_{k}<0$ for all  $ 1\leq k \leq N$. Then $Q(\bm{\lambda})>0$.
\end{corollary}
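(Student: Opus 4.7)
The plan is to induct on $N$, at each step peeling off a triangle in the spirit of the triangulation from vertex $1$ used in the proof of Lemma \ref{la:quadForm}.

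For the base case $N=3$, applying Lemma \ref{la:quadForm} with $\varphi_3=2\pi$ and $\sin(2\pi-\theta_1)=-\sin\theta_1$ gives
\[
Q(\lambda_1)=\frac{\sin\theta_1\sin\theta_2}{2\sin\theta_3}\lambda_1^2,
\]
which is positive under \eqref{eqnpositivityonangles} whenever $\lambda_1\neq 0$.

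For the inductive step, assume the claim for $(N-1)$-gons and let $P$ be an $N$-gon satisfying the hypotheses. Since $Q$ is invariant under $\bm\lambda\mapsto-\bm\lambda$, we may assume $\lambda_k>0$ for all $k$. The positivity condition \eqref{eqnpositivityonangles} combined with positive side lengths and the closure relation \eqref{eqnrankcondition} forces $P$ to be convex (the tangent direction rotates monotonically by exactly $2\pi$). Consequently, the diagonal from vertex $1$ to vertex $N-1$ has positive length $d$ and lies in the interior of $P$, subdividing $P$ into a triangle $T$ with vertices $1,N-1,N$ (sides $d,\lambda_{N-1},\lambda_N$) together with an $(N-1)$-gon $P'$ with vertices $1,2,\dots,N-1$ (sides $\lambda_1,\dots,\lambda_{N-2},d$). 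Signed-area additivity yields $Q(\bm\lambda)=Q_T+Q_{P'}$. The triangle $T$ satisfies \eqref{eqnpositivityonangles} automatically, since the exterior angles of any non-degenerate triangle lie in $(0,\pi)$. For $P'$, the exterior angles at the unchanged vertices $2,3,\dots,N-2$ remain $\theta_2,\dots,\theta_{N-2}\in(0,\pi)$, while the new exterior angles at vertices $1$ and $N-1$ equal $\theta_1+\beta_1$ and $\theta_{N-1}+\beta_{N-1}$, where $\beta_i$ denotes the interior angle of $T$ at vertex $i$; convexity of $P$ forces $\beta_i<\pi-\theta_i$, so these new angles also lie in $(0,\pi)$. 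Applying the base case to $T$ and the inductive hypothesis to $P'$ gives $Q_T,Q_{P'}>0$, whence $Q(\bm\lambda)>0$.

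The main obstacle is verifying that $P'$ genuinely inherits the positivity condition \eqref{eqnpositivityonangles} and that the diagonal has positive length inside the polygon. Both points reduce to establishing convexity of $P$ from \eqref{eqnpositivityonangles} and \eqref{eqnrankcondition}; this is an elementary geometric fact, but formulating it cleanly in the algebraic language of the closure condition requires some care. Once convexity is secured, the inductive triangle-peeling runs mechanically.
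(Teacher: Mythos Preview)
Your approach is correct and is precisely the one the paper indicates: the fan triangulation from vertex $1$ in the proof of Lemma~\ref{la:quadForm}, processed inductively by peeling off the last triangle $(1,N-1,N)$, is exactly what ``the approach of Lemma~\ref{la:quadForm} and induction'' refers to. The convexity you flag as the main obstacle is indeed the only thing to check, and it follows directly from the monotone rotation of the tangent: since $\varphi_k=\sum_{a\le k}\theta_a$ is strictly increasing from $\theta_1$ to $2\pi$ with increments in $(0,\pi)$, the side directions $e^{-i\varphi_k}$ wind exactly once around the circle, so the closed polygonal path with positive side lengths is simple and convex; once this is in hand, your verification that $P'$ inherits \eqref{eqnpositivityonangles} and that $T$ is non-degenerate goes through as written.
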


\begin{remark}\label{rempositivietyofarea}
	\ \begin{enumerate}[label=\textnormal{(\arabic*)}]
		\item We use \eqref{eqnpositivityonangles} instead of the more common convexity, since we consider deformed polygons with fixed angles but arbitrary values of side lengths $\l_k$. A deformed polygon is not necessarily convex. For example, the second polygon in Figure \ref{figure:simpleandcomplexpolygons} is a deformation of the first one, it satisfies \eqref{eqnpositivityonangles} with some negative side length and is not convex; see Appendix \ref{secappenedix} for further discussions.
		
		\item Without \eqref{eqnpositivityonangles}, the positivity of the side lengths $\l_k$ alone does not imply the positivity of the signed area; see the fourth polygon in Figure \ref{figure:simpleandcomplexpolygons} as an example.
	\end{enumerate}
\end{remark}
	
By Corollary \ref{corpositivityofarea} and continuity, $Q(\bm{\lambda})\ge0$ if $\l_k\ge0$ or $\l_k\le0$ for all $1\le k\le N$. 

\subsection{Integrality of signed area functions}
\label{secintegralityofsignedarea}

Next we consider some integrality conditions that we impose on the polygons to ensure convergence and ellipticity of the resulting counting functions. For this we extend the definition of $v_{k}, w_{k}$ to $k\in \{N-1,N\}$ according to \eqref{eqndfnofNlambda} to define
\[
	\calM:= \begin{pmatrix}
	v_{1} &\cdots& v_{N-2}& -1 & 0\\
	w_{1} &\cdots& w_{N-2} & 0 & 1
	\end{pmatrix}
	.
\]
Then \eqref{eqnrankcondition} is equivalent to
\begin{align}\label{eqnsymmetriclinearrelation}
	\calM
	\begin{pmatrix}
	\lambda_{1} &
	\cdots &
	\lambda_{N}
	\end{pmatrix}^T=0.
\end{align}
By Lemma \ref{lemcyclicsymmetry} any two consecutive columns of $\calM$ are linearly independent.
In fact, using 
\begin{align}\label{eqnelementaryidentity}
	\sin(\alpha+\beta+\gamma)\sin(\gamma) +\sin(\alpha)\sin(\beta)=\sin(\alpha+\gamma)\sin(\beta+\gamma)\,,
\end{align}
we have
\begin{equation*}\label{eqnconsecutivedeterminantofM}
	\det  \begin{pmatrix}
	v_{k} & v_{k+1}\\
	w_{k} & w_{k+1}
	\end{pmatrix}
	={\sin (\theta_{k+1})\over \sin (\theta_{N})}.
\end{equation*}
Let
\begin{equation*}
	\calP = \mat{a_1&0\\0&a_2} \in \GL_2(\R),\qquad \calS=
	\begin{pmatrix}
	\varepsilon_{1} & 0 & \cdots &0\\
	0 & \varepsilon_{2} & \cdots &0\\
	&\cdots&&0\\
	0 & 0 &0& \varepsilon_{N}
	\end{pmatrix}\in \mathrm{GL}_{N}(\mathbb{R}).
\end{equation*}
Assume that $(\theta_1, \theta_2,\dots, \theta_N)$ are such that, with $\bm e_{\bm k}\, (1\leq k \leq N)$ the standard basis vectors in $\R^N$,
\begin{equation}\label{eqnconditionintermsofstandardform}
	\calP^{-1}\calM\calS
	\begin{pmatrix}
	\bm{e}_{\bm{k}}& \bm{e}_{\bm{k+1}}\end{pmatrix}
	\in \mathrm{SL}_{2}(\mathbb{Z})\,~
	\text{for all} \,1\leq k \leq N.
\end{equation}
Here again we use the cyclic ordering in the sub-indices with, for example, $\bm{e_{N+1}}:=\bm{e_1}$.

\begin{lemma}\label{lemconditiononangles}
	The following are equivalent:
	\begin{enumerate}[label=\rm{(\arabic*)}]
		\item There exist $\e_k\ne0\,(1\le k\le N)$ such that for any solution to \eqref{eqnsymmetriclinearrelation} with $\l_k=n_k\e_k$, $n_k\in\Z$ ($k\in\{\s(1),\s(2),\dots,\s(N-2)\}$) implies that $n_k\in\Z$ for $k\in\{\s(N-1),\s(N)\}$.
		
		\item There exist $\calP$ and $\calS$ such that \eqref{eqnconditionintermsofstandardform} holds.
		
		\item For $1\le k\le\frac N2$, set $b_{2k}:=\frac{\sin(\th_1)}{\sin(\th_2)}\frac{\sin(\th_3)}{\sin(\th_4)}\cdots \frac{\sin(\th_{2k-1})}{\sin(\th_{2k})}$. There exists $t\in\R\setminus\{0\}$ such that
		\begin{align}\label{eqnconditionconsistency}
			&t = b_{N-1}\quad  \text{if } N \equiv 1\pmod 2,\qquad b_{N} = 1\quad  \text{if } N \equiv 0\pmod 2\,,\\
			\nonumber
			\text{and}\hspace{2cm} &\frac{1}{b_{2k}}\frac{\sin(\varphi_N-\varphi_{2k+1})}{\sin(\th_{2k+1})}\in \mathbb{Z}\,,\quad \frac{b_{2k}}{t}\frac{\sin(\varphi_N-\varphi_{2k})}{ \sin(\th_N)}\in \mathbb{Z}\,,\\
			\label{eqnconditionintegralentries}
			&-\frac{1}{b_{2k}}\frac{t\sin(\varphi_{N-1}-\varphi_{2k+1})}{\sin(\th_{2k+1})}\in \mathbb{Z}\,,\quad -b_{2k}\frac{\sin(\varphi_{N-1}-\varphi_{2k})}{\sin(\th_N)}\in \mathbb{Z}.
		\end{align}
	\end{enumerate}
\end{lemma}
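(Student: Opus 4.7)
The plan is to establish the two equivalences $(1)\Leftrightarrow(2)$ by a lattice-theoretic reformulation and $(2)\Leftrightarrow(3)$ by an explicit computation. Set $\tilde{\bm c}_k := \calP^{-1}\calM\calS\,\bm{e}_{\bm k}$ for $1\le k\le N$, using the extended definitions that give $\tilde{\bm c}_{N-1}=(-\e_{N-1}/a_1,0)^T$ and $\tilde{\bm c}_N=(0,\e_N/a_2)^T$. Then \eqref{eqnsymmetriclinearrelation} becomes $\sum_{k=1}^N m_k\tilde{\bm c}_k=0$ with $m_k:=\l_k/\e_k$, and condition (1) asserts that for every cyclic permutation $\s$ the projection from integer solutions $(m_k)\in\Z^N$ to the coordinates in $\{\s(1),\ldots,\s(N-2)\}$ is surjective onto $\Z^{N-2}$.

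For $(2)\Rightarrow(1)$, I pick $\s$ with $\s(N-1)=k$ and $\s(N)=k+1$ consecutive. Condition (2) then forces each $\tilde{\bm c}_j\in\Z^2$ (since each column belongs to some consecutive $\SL_2(\Z)$ pair), and inverting $(\tilde{\bm c}_k\,|\,\tilde{\bm c}_{k+1})\in\SL_2(\Z)$ yields $(m_k,m_{k+1})^T\in\Z^2$ whenever the other $m_j$'s are integers. For $(1)\Rightarrow(2)$, applying (1) to $\s$ with $\s(N-1)=k$, $\s(N)=k+1$ and to the unit vectors $n_l=\d_{lj}$ for $j\neq k,k+1$ forces $\tilde{\bm c}_j\in\Z\tilde{\bm c}_k+\Z\tilde{\bm c}_{k+1}$. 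Varying $k$ shows that all consecutive pairs span the same rank-two lattice $\Lambda\subset\R^2$. Since $\tilde{\bm c}_{N-1}$ and $\tilde{\bm c}_N$ lie along the coordinate axes, this basis of $\Lambda$ is automatically rectangular, so choosing $a_1=\pm\e_{N-1}$ and $a_2=\pm\e_N$ with signs coordinated so that every pairwise determinant is $+1$ normalizes $\Lambda=\Z^2$ and delivers (2).

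For $(2)\Leftrightarrow(3)$, I use the explicit normalizations $a_1=t$, $a_2=1$, $\e_{2k}=b_{2k}$, and $\e_{2k+1}=t\sin(\th_N)/(b_{2k}\sin(\th_{2k+1}))$. These are dictated by the requirement
\begin{equation*}
\det(\tilde{\bm c}_j\,|\,\tilde{\bm c}_{j+1})=\frac{\e_j\e_{j+1}}{a_1a_2}\cdot\frac{\sin(\th_{j+1})}{\sin(\th_N)}=1\qquad\text{for each }1\le j\le N-2,
\end{equation*}
which uses the formula for consecutive $2\times2$ minors of $\calM$ derived from \eqref{eqnelementaryidentity}. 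Under these normalizations the four families of rational expressions in \eqref{eqnconditionintegralentries} become precisely the two components of $\tilde{\bm c}_j$ separated according to the parity of $j$ (the parity split being inherited from the alternating product structure of $b_{2k}$), so their integrality is equivalent to $\tilde{\bm c}_j\in\Z^2$ for $1\le j\le N-2$. The cyclic consistency condition \eqref{eqnconditionconsistency} then captures the boundary constraints: requiring $\tilde{\bm c}_{N-1},\tilde{\bm c}_N\in\Z^2$ and the wrap-around pairs $(\tilde{\bm c}_{N-1},\tilde{\bm c}_N)$, $(\tilde{\bm c}_N,\tilde{\bm c}_1)$ to have determinant $\pm1$ forces $\e_{N-1}/t=\pm1$ and $\e_N=\pm1$, which by the parity-dependent formulas for $\e_{N-1}$ and $\e_N$ yield $t=b_{N-1}$ when $N$ is odd and $b_N=1$ when $N$ is even.

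The main obstacle will be the careful management of signs and the cyclic wrap-around: to land in $\SL_2(\Z)$ rather than $\GL_2(\Z)$, the signs of the $\e_j$'s have to be coordinated globally around the cycle, and the parity bifurcation in \eqref{eqnconditionconsistency} has to be matched with the corresponding parity case of the formulas for $\e_{N-1}$ and $\e_N$. The consecutive-minor identity derived from \eqref{eqnelementaryidentity} is what ultimately guarantees the multiplicative form of the determinants and makes them collapse to $1$ under the proposed normalizations.
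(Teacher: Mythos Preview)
Your approach is essentially the same as the paper's. For $(1)\Leftrightarrow(2)$ the paper simply says the equivalence is ``clear'' from comparing \eqref{eqnsymmetriclinearrelation} with \eqref{eqnconditionintermsofstandardform}, so your lattice reformulation is a welcome expansion; for $(2)\Leftrightarrow(3)$ the paper likewise solves the unimodularity recursion $x_kx_{k+1}t\sin(\th_{k+1})/\sin(\th_N)=1$ for $x_k:=\e_k/\e_{N-1}$ with $t:=\e_{N-1}/\e_N$, and your normalization $a_1=t$, $a_2=1$, $\e_{2k}=b_{2k}$, $\e_{2k+1}=t\sin(\th_N)/(b_{2k}\sin(\th_{2k+1}))$ is exactly the paper's solution after setting $\e_N=1$.
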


\begin{proof}
	Comparing \eqref{eqnsymmetriclinearrelation}  with \eqref{eqnconditionintermsofstandardform}, it is clear that $(1)$ and $(2)$ are equivalent. Note that if the image of, say, $\lambda_{\sigma(N-1)}$ is not an affine transformation of $\mathbb{Z}$, then it is dense in $\mathbb{R}$.
	
	To prove that $(2)$ implies $(3)$, note that  $a_1=\pm\e_{N-1}$ and $a_2=\mp \e_N$. For notational simplicity, we choose $a_1=-\e_{N-1}$ and $a_2=\e_N$. We let
	\begin{align}\label{eqnvariablesforintegralitycondition}
		x_{k}:=-{\varepsilon_{k}\over \varepsilon_{N-1} }\quad(1\leq k\leq N-2)\,,\quad\quad t:=-{\varepsilon_{N-1}\over \varepsilon_{N} }.
	\end{align}
	The integrality in \eqref{eqnconditionintermsofstandardform} is equivalent to $x_{k}v_{k}\,, t x_{k}w_{k}\in \mathbb{Z}$ for $ 1\leq k \leq N.$ The unimodularity in \eqref{eqnconditionintermsofstandardform} gives $x_kx_{k+1} t \frac{\sin(\theta_{k+1})}{\sin(\theta_N)}=1$, which we recursively solve to obtain
	\[
		x_{2k+1}={x_1\over b_{2k}}{\sin (\theta_{1})\over \sin (\theta_{2k+1})}\,,\quad x_{2k}={b_{2k}\over t x_{1}}{\sin (\theta_{N})\over \sin (\theta_{1})},\,\quad x_1v_1 = tx_{N-2}w_{N-2}=-1.
	\]
	Inserting $x_1=-\frac{1}{v_1}$, we obtain
	\begin{equation*}
		x_{2k+1} u_{2k+1}= -{1\over b_{2k}}{\sin (\theta_{1})\over \sin (\theta_{2k+1})v_{1}}u_{2k+1}\,, \quad x_{2k} u_{2k}=- {v_{1} b_{2k}\over t }{\sin (\theta_{N})\over \sin (\theta_{1})}u_{2k}\,,\quad \bm{u}\in \{\bm{v}, t\bm{w}\}.
	\end{equation*}
	Applying $tw_{N-2}x_{N-2}=-1$ gives  \eqref{eqnconditionconsistency}.
	Plugging in the above expressions for $x_{2k+1},x_{2k}$, the entries of the matrix $\U=\calP^{-1}\calM\calS$ simplify as
	\begin{align*}\label{eqnconditionintermssolutions}
		\U_{1,2k+1}&=\frac{1}{b_{2k}}\frac{\sin(\varphi_N-\varphi_{2k+1})}{\sin(\th_{2k+1})}\,,\quad \U_{1,2k}=\frac{b_{2k}}{t}\frac{\sin(\varphi_N-\varphi_{2k})}{ \sin(\th_N)}\,, \\
		\U_{2,2k+1}&=-\frac{1}{b_{2k}}\frac{t\sin(\varphi_{N-1}-\varphi_{2k+1})}{\sin(\th_{2k+1})}\,,\quad \U_{2,2k}=-b_{2k}\frac{\sin(\varphi_{N-1}-\varphi_{2k})}{\sin(\th_N)}.\notag
	\end{align*}
	The integrality condition in \eqref{eqnconditionintermsofstandardform}  then gives \eqref{eqnconditionintegralentries}.
	The above argument that $(2)$ implies that $(3)$ can be reversed, yielding that $(3)$ implies $(2)$.
\end{proof}

\begin{example}\label{exconditiononangles}
	For $N=3$, \eqref{eqnconditionconsistency} and \eqref{eqnconditionintegralentries} are trivially true. For $N=4$, one directly checks that these conditions are satisfied as long as the consistency constraint $b_4=\frac{\sin(\th_1)\sin(\th_3)}{\sin(\th_2)\sin(\th_4)}=1$ holds. By \eqref{eqnelementaryidentity}, this becomes $\sin(\th_2+\th_3)\sin(\th_3+\th_4)=0$.
	It is satisfied if and only if the $4$-gon has parallel sides.
	For $N=5$, the conditions become
	\[
		\frac{\sin(\th_k+\th_{k+1})\sin(\th_{k+1}+\th_{k+2})}{\sin(\th_k)\sin(\th_{k+2})} \in \Z\qquad \text{for } \,1\leq k \leq 5.
	\]
	For example, this is satisfied if $\th_1+\th_2=\pi=\th_2+\th_3$.
\end{example}

It is sometimes necessary to work with the fixed set of independent parameters $\l_k$ ($1\le k\le N-2$). This naturally leads to the following more relaxed condition  than \eqref{eqnconditionintermsofstandardform}, namely
\begin{equation*}
	\U = \calP^{-1}\calM\calS \in \Mat_{2\times N}(\Z).
\end{equation*}
That the above equation is solvable means geometrically that any polygon with $\bm\l=(\l_1,\dots,\l_{N-2})$, where $\l_k\in\Z\e_k,\, 1\leq k\leq N-2$, satisfies $\l_k\in\Z\e_k,k\in\{N-1,N\}$. In fact, recall from \eqref{eqnsymmetriclinearrelation} that
\begin{equation}\label{eqnMSintegralentries}
	\mat{\frac{\l_{N-1}}{\e_{N-1}}\\\frac{\l_{N-2}}{\e_N}} = \mat{\frac{v_1}{\e_{N-1}}&\cdots&\frac{v_{N-2}}{\e_{N-1}}\\ \frac{w_1}{\e_N}&\cdots&\frac{w_{N-2}}{\e_N}} \mat{\l_1\\\vdots\\\l_{N-2}} = \mat{\frac{v_1\e_1}{\e_{N-1}}&\cdots&\frac{v_{N-2}\e_{N-2}}{\e_{N-1}}\\ \frac{w_1\e_1}{\e_N}&\cdots&\frac{w_{N-2}\e_{N-2}}{\e_N}} \mat{n_1\\\vdots\\n_{N-2}}.
\end{equation}
The relaxed condition means that the $2\times(N-2)$ matrix in the last expression has integral entries.

Before proceeding we need the following.

\begin{definition}
	A matrix with integral entries is {\it primitive} if its rows and columns are primitive vectors.
\end{definition}

\begin{lemma}\label{lemrelaxedcondition}
	Let
	\[
		M= \begin{pmatrix}
		v_{1} &\cdots& v_{N-2}\\
		w_{1} &\cdots& w_{N-2}
		\end{pmatrix}\,,\quad
		S=
		\begin{pmatrix}
		\varepsilon_{1} & 0 & \cdots &0\\
		0 & \varepsilon_{2} & \cdots &0\\
		&\cdots&&0\\
		0 & 0 &0& \varepsilon_{N-2}
		\end{pmatrix}.
	\]
	Then the following are equivalent:
	\begin{enumerate}[label=\rm{(\arabic*)}]
		\item We have
		\begin{align}\label{eqnrelaxedconditionintermsangles}
			\frac{\sin(\vf_N-\vf_k)}{\sin(\vf_N-\vf_j)}\frac{\sin(\vf_{N-1}-\vf_j)}{\sin(\vf_{N-1}-\vf_k)} \in \Q \cup \{\infty\} \quad\text{for all}\quad 1 \le j,k \le N-2.
		\end{align}
		
		\item There exist $\calP,\calS$ such that
		\begin{equation}\label{eqnrelaxedconditionintermsofstandardform}
			\U = \calP^{-1}\calM\calS \in \Mat_{2\times N}(\Z).
		\end{equation}

		\item There exist $\calP,S$ such that
		\begin{align}\label{eqnrelaxedprimitiveconditionintermsofstandardform}
			U=\calP^{-1}M{S}\in  \mathrm{Mat}_{2\times (N-2)}(\mathbb{Z})\,~\text{is primitive}\,,\quad S^{T}A S\in \mathrm{Mat}_{(N-2)\times (N-2)}(\mathbb{Z}).
		\end{align}
		Here the matrix $A$ corresponds to the quadratic form $Q$ given in \eqref{eqndefinitionofA}.
	\end{enumerate}
\end{lemma}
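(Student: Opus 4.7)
The plan is to prove the cycle $(2)\Rightarrow(1)\Rightarrow(3)\Rightarrow(2)$. The implications $(2)\Rightarrow(1)$ and $(3)\Rightarrow(2)$ are essentially bookkeeping, while $(1)\Rightarrow(3)$ is the substantive content and rests on an explicit (typically irrational) choice of scalings.

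For $(2)\Rightarrow(1)$, I would use that the first $N-2$ columns of $\U=\calP^{-1}\calM\calS$ are $U_{1k}=\e_k v_k/a_1$ and $U_{2k}=\e_k w_k/a_2$, so whenever both are nonzero their ratio $U_{1k}/U_{2k}=(a_2/a_1)(v_k/w_k)\in\Q$. Comparing this across indices $j,k$ yields $(v_k/w_k)/(v_j/w_j)\in\Q$, which is precisely \eqref{eqnrelaxedconditionintermsangles}, with the vanishing entries absorbed by the $\infty$ value permitted in (1). For $(3)\Rightarrow(2)$, I would extend $S$ to $\calS$ by appending diagonal entries $\e_{N-1}:=a_1$ and $\e_N:=a_2$; then the last two columns of $\calP^{-1}\calM\calS$ become $(-1,0)^T$ and $(0,1)^T$, so $\U\in\Mat_{2\times N}(\Z)$.

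For $(1)\Rightarrow(3)$, the core idea is as follows. Set $r_k:=(v_k w_1)/(w_k v_1)\in\Q$ (with $r_1=1$) and write $r_k=p_k/q_k$ in lowest terms, so $p_1=q_1=1$. Choose $\eta\in\{\pm 1\}$ such that $\eta v_1/(w_1\sin(\th_N))>0$, pick $\alpha>0$ with $\alpha^2=\eta v_1/(w_1\sin(\th_N))$, and set $a_1:=\alpha$, $a_2:=\alpha w_1/v_1$, and $\e_k:=\alpha p_k/v_k$. A direct check then yields $U_{1k}=p_k$ and $U_{2k}=q_k$, so $U\in\Mat_{2\times(N-2)}(\Z)$; each column $(p_k,q_k)^T$ is primitive by coprimality, and both rows are primitive because the $k=1$ column is $(1,1)^T$. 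Since the construction forces $a_1 a_2\sin(\th_N)=\eta$, one obtains $(S^TAS)_{jk}=\sin(\th_N)\e_j\e_k v_{\min(j,k)}w_{\max(j,k)}=\eta\, p_{\min(j,k)}\, q_{\max(j,k)}\in\Z$, so all requirements of (3) are simultaneously met.

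The main obstacle is that $v_1$, $w_1$, and $\sin(\th_N)$ are typically individually irrational, so one cannot satisfy integrality of $U$ and of $S^TAS$ using only rational scalings. The key is that the quadratic scaling $\alpha^2$ absorbs this irrationality: choosing $\alpha$ so that $a_1 a_2\sin(\th_N)=\pm 1$ is precisely what couples the integrality of $U$ to that of $S^TAS$, turning the latter into an integer bilinear expression in the $p_j,q_k$, while the normalization $p_1=q_1=1$ automatically secures row primitivity. Degenerate cases in which some $v_k$ or $w_k$ vanishes are handled via the $\infty$ convention in (1) and cause no essential difficulty.
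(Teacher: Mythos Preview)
Your proposal is correct and follows essentially the same approach as the paper: both hinge on the observation that, once the ratios $v_k/w_k$ are all rational multiples of one another, integrality of $U$ can be arranged by column scalings, and integrality of $S^TAS$ then follows by choosing the overall (generally irrational) scale so that $a_1a_2\sin(\theta_N)=\pm1$ --- your $\alpha^2=\eta v_1/(w_1\sin(\theta_N))$ is exactly the paper's choice $\varepsilon_{N-1}=(t/\sin(\theta_N))^{1/2}$ in different coordinates. The only cosmetic differences are that you organize the equivalences as a cycle and build the solution to (3) directly from (1), whereas the paper passes through (2); your treatment of the degenerate cases $v_k=0$ or $w_k=0$ (in particular $v_1w_1=0$, which forces a different base index) is waved at rather than spelled out, but this is indeed routine.
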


\begin{proof}
	Note that \eqref{eqnrelaxedconditionintermsangles} is equivalent to $\frac{w_k}{v_k}(\frac{w_j}{v_j})^{-1}\in\Q\cup\{\infty\}$, for all $1\le j,k\le N-2$. Clearly $(1)$ and $(2)$ are equivalent. From the definitions of $M,S,\calM,\calS$, and \eqref{eqnMSintegralentries}, $(3)$ implies $(2)$. To prove that $(2)$ implies $(3)$, note that \eqref{eqnrelaxedconditionintermsofstandardform} is an equation for the variables in \eqref{eqnvariablesforintegralitycondition} and $\e_{N-1}$. For a solution to \eqref{eqnrelaxedconditionintermsofstandardform}, one can scale the set of variables in  \eqref{eqnvariablesforintegralitycondition} by integers suitably to meet \eqref{eqnrelaxedprimitiveconditionintermsofstandardform}, determining them up to sign. In particular, for such a solution, inspecting the entries of $\U$
	\[
		{ v_{j} \varepsilon_{j} \over \varepsilon_{N-1}}   { w_{k} \varepsilon_{k}\over \varepsilon_{N}} = v_{j} x_{j}\cdot   tw_{k} x_{k}\in \mathbb{Z}\,,\quad 1\leq j,k\leq N-2.
	\]
	Recall the definition of $A$ given in \eqref{eqndefinitionofA}. Then we have
	\begin{equation*}
		\left(S^TAS\right)_{jk} = \e_jA_{jk}\e_k =
		\begin{cases}
			\sin(\th_N)v_j\e_jw_k\e_k=\sin(\th_N)\frac{\e_{N-1}^2}{t}v_jx_j\cdot tw_kx_k &\text{if } 1\leq j\le k\le N-2,\\
			\sin(\th_N)w_j\e_jv_k\e_k=\sin(\th_N)\frac{\e_{N-1}^2}{t}v_kx_k\cdot tw_jx_j &\text{if } 1\leq k\le j\le N-2.
		\end{cases}
	\end{equation*}
	We take a solution to \eqref{eqnrelaxedconditionintermsofstandardform} that satisfies $t>0$ and set $\e_{N-1}=(\frac{t}{\sin(\th_N)})^\frac12$, proving the claim.
\end{proof}

\begin{remark}\label{remasymmetry}
	If one exchanges the parameters with another set of $N-2$ consecutive parameters as independent ones with the same set of numbers $\{\e_k\}_{1\le k\le N}$, then, by \eqref{eqnsymmetriclinearrelation}, the integrality of the $N-2$ numbers $\l_k$ only implies rationality of the remaining two. Equivalently, the same set of polygons determined by $\bm\l\in S\Z^{N-2}$ is described by a scaled sub-lattice of $\Z^{N-2}$ (instead of a full one) in terms of other $N-2$ consecutive parameters. Thus the cyclic symmetry (such as the one exhibited in Lemma \ref{lemcyclicsymmetry}) among the indices $1\leq k\leq N$ still exists, as long as only rationality structure is concerned. That the cyclic symmetry respects rationality is argued above from the  point of view of linear transformation via \eqref{eqnrelaxedconditionintermsofstandardform}, but it can also be checked using \eqref{eqnrelaxedconditionintermsangles} and \eqref{eqnelementaryidentity}.
\end{remark}

Hereafter, we assume that \eqref{eqnrelaxedconditionintermsangles} holds. For definiteness, we fix a solution $\{\e_k\}_{1\le k\le N}$ satisfying \eqref{eqnrelaxedprimitiveconditionintermsofstandardform}. We also omit the notations for $\e_k$ if it is clear from the context.

\section{Convergence of counting functions of convex planar polygons}\label{secdefinitionofcountingfunctions}

\subsection{Counting functions of convex polygons} 

Following \cite{BKZ,Pol:2005,Pol:20011}, we define
\begin{equation}\label{eqncountingfunctionwithparameterlambda}
	\TH_{Q,\chi}(\bm z;\t) := q^{-Q(\bm\a)}e^{-2\pi iB(\bm\a,\bm\b)}\sum_{\bm n\in\Z^{N-2}+\bm\a} \psi(\bm n)(\sgn(n_{N-2}))^{N-3}q^{Q(\bm n)}e^{2\pi iB(\bm\b,\bm n)}.
\end{equation}
Here $\bb{z}=:\bb{\alpha}\tau+\bb{\beta}$ with $\bb{\alpha},\bb{\beta}\in \R^{N-2}$, $B(\bm{a},\bm{b}):=Q(\bm{a}+\bm{b})-Q(\bm{a})-Q(\bm{b})$ is the bilinear form associated to $Q$, $\sgn(x):=\frac{x}{|x|}$ for $x\ne0$, $\sgn(0):=0$, 
and $\psi$ is the characteristic function
for 
\[
	\left\{\bm\l\in(0,\infty)^{N-2} : \l_{N-1},\l_N>0\right\} \cup \left\{\bm\l\in(-\infty,0)^{N-2} : \l_{N-1},\l_N<0\right\} \subseteq \R^{N-2}
\]
with $\l_{N-1}:=\bm v^T\cdot\bm\l$, $\l_N:=\bm w^T\cdot\bm\l$. For use below, we also define $z_{N-1}:=\bm v^T\cdot\bm z$, $z_N:=\bm w^T\cdot\bm z$. Geometrically, \eqref{eqncountingfunctionwithparameterlambda} represents the fact that the summation is over those polygons with fixed angles $\th_k$ ($1\leq k\leq N$) whose side lengths are related to the reference side lengths $\bm\a$ by shifts by integral multiples of some fixed vectors; see \cite{BKZ}, as well as Appendix \ref{secappenedix}, for detailed discussions. Define
\[
	\chi(\bm n) := \psi(\bm n)\sgn(n_{N-2})^{N-1}.
\]
In terms of $\l_k=n_k$, $1\le k\le N$, one has
\begin{align}\label{eqnchifunction}
	\chi\left(\bm\lambda\right) =\prod_{k=1}^{N} H(\lambda_{k})- (-1)^{N}\prod_{k=1}^{N} H(-\lambda_{k})\,,
\end{align}
with $H$ the {\it Heaviside function}, i.e.,
\begin{equation*}
	H(t) :=
	\begin{cases}
		1 & \text{if $t>0$},\\
		0 & \text{otherwise}.
	\end{cases}
\end{equation*}
We rewrite \eqref{eqncountingfunctionwithparameterlambda} 
\begin{equation*}\label{eqncountingfunctionwithparametern}
	\Theta_{Q,\chi}(\bb{z};\tau)=\sum_{\bb{n}\in\mathbb{Z}^{N-2}} \chi\left(\bb{n}+\frac{\bb{y}}{v}\right) q^{Q (\bb{n})}e^{2\pi i B\left (\bb{n},\,\bb{z}\right)}.
\end{equation*}
The series in \eqref{eqncountingfunctionwithparameterlambda} is divergent if the summation range contains $\bm n$ with $Q(\bm n)<0$. This does not occur if the positivity condition on angles $\th_k\in(0,\pi)$ $(1\le k\le N)$ is imposed. In fact, by \eqref{eqnchifunction} in the summation range either $\l_k\ge0$ or $\l_k\le0$ for all $1\le k\le N$. Then one applies Lemma \ref{la:quadForm} and Corollary \ref{corpositivityofarea}. In either case, the corresponding polygons being counted are actually convex.

\begin{remark}\label{remothersignatures}
	Without \eqref{eqnpositivityonangles}, $Q$ can be negative as explained in Remark \ref{rempositivietyofarea} {\rm(2)}. To avoid the resulting divergence of $\TH_{Q,\chi}$, one has to restrict the summation range in \eqref{eqncountingfunctionwithparameterlambda} to its intersection with the range  $\{\bm x\in\R^{N-2}:Q(\bm x)\ge0\}$. We will return to this topic in future investigations.
\end{remark}

\subsection{Structure of isotropic vectors}

The counting function of convex polygons is an indefinite theta function associated to the quadratic form $Q$. Define $\De:=\{\bm\l\in\R^{N-2}:\chi(\bm\l)\ne0\}$. Then $\xoverline\De$ is a polyhedron, which is a cone over a compact polytope. By Corollary \ref{corpositivityofarea}, one has $Q(\bm x)>0$ for $\bm x\in\De$. However, this does not imply convergence of $\TH_{Q,\chi}$, since $\De$ is non-compact and its closure might contain vectors in $\{\bm x\in\R^{N-2}:Q(\bm x)=0\}$. The discussion on the convergence of $\TH_{Q,\chi}$ hence requires the study of isotropic vectors contained in the closure $\xoverline\De$.

\begin{lemma}\label{la:quadFormsignature}
	If $\th_k\in(0,\pi)$ for $1\leq k\leq N$, then the quadratic form $Q$ has signature $(1,N-3)$.
\end{lemma}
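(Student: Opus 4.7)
The plan is to combine a continuity argument with an explicit signature computation at one convenient parameter value. The parameter set $\Omega := \{(\theta_1,\ldots,\theta_N) \in (0,\pi)^N : \sum_{k=1}^N \theta_k = 2\pi\}$ is a nonempty convex subset of an affine hyperplane, hence path-connected. Since signature is locally constant on the open set of non-degenerate symmetric matrices, it suffices to verify that $\det A \neq 0$ throughout $\Omega$ and to compute the signature of $A$ at one chosen point.

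For non-degeneracy, I would exploit the semi-separable form $A_{jk} = \sin(\theta_N)\,v_{\min(j,k)}\,w_{\max(j,k)}$ from \eqref{eqndefinitionofA}. The elementary identity \eqref{eqnelementaryidentity} applied to products of $v$'s and $w$'s yields the Wronskian-type relation
$$v_{j+1}w_j - v_j w_{j+1} = -\frac{\sin(\theta_{j+1})}{\sin(\theta_N)}, \qquad 1 \le j \le N-3.$$
The closed-form determinant of a semi-separable matrix $M_{jk} = a_{\min(j,k)} b_{\max(j,k)}$, namely $\det M = a_1 b_n \prod_{j=1}^{n-1}(a_{j+1}b_j - a_j b_{j+1})$ (provable by induction via a Schur complement), then produces
$$\det A = \frac{(-1)^{N-1}}{\sin(\theta_N)}\prod_{k=1}^{N-1}\sin(\theta_k),$$
which is nonzero on $\Omega$.

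For the signature at a test point, I would carry out the $LDL^T$-decomposition of $A$. The semi-separable class is stable under Schur complements: at each step the reduction sends $v \mapsto \tilde v$ with $\tilde v_j = (v_j w_{\ell-1} - v_{\ell-1} w_j)/w_{\ell-1}$ while keeping $w$ unchanged, so the recursion produces the explicit pivots
$$P_1 = \sin(\theta_1)\,w_1, \qquad P_\ell = -\frac{\sin(\theta_\ell)\,w_\ell}{w_{\ell-1}} \quad (2 \le \ell \le N-2),$$
where $w_\ell = \sin(\theta_{\ell+1}+\cdots+\theta_{N-1})/\sin(\theta_N)$. Selecting a point of $\Omega$ with $\theta_1+\theta_N>\pi$ (for instance $\theta_1=\theta_N=\pi-\varepsilon$ and all other angles equal to $2\varepsilon/(N-2)$ for sufficiently small $\varepsilon>0$), every partial sum $\theta_{\ell+1}+\cdots+\theta_{N-1}$ lies in $(0,\pi)$, so all $w_\ell>0$. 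Hence $P_1>0$ while $P_\ell<0$ for $\ell\ge 2$, yielding exactly one positive and $N-3$ negative pivots. By Sylvester's law of inertia the signature at this test point, and hence throughout $\Omega$, is $(1,N-3)$.

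The main technical obstacle is the recursive Schur-complement argument that simultaneously delivers the stability of the semi-separable class and the explicit pivot formula, underpinned by the Wronskian-type identity above. Once that is in hand, the pivot sign analysis at the test point is immediate, and the transfer of the signature across $\Omega$ via connectedness is automatic.
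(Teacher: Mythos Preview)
Your proof is correct and takes a genuinely different route from the paper's argument. The paper proceeds by induction on $N$: one locates an index $k$ with $\theta_k+\theta_{k+1}<\pi$ (which must exist since the angles sum to $2\pi$), then performs the explicit change of variables \eqref{eqnchangeofsides} to split off a single negative square, arriving at the identity \eqref{eqninduction}
\[
\bm\lambda^T A(\theta_1,\dots,\theta_N)\bm\lambda \;=\; -\tfrac{\sin(\theta_2)\sin(\theta_3)}{2\sin(\theta_2+\theta_3)}\,\lambda_2^2 \;+\; \bm\mu^T A(\theta_1,\theta_2+\theta_3,\theta_4,\dots,\theta_N)\bm\mu,
\]
which reduces the problem to the $(N-1)$-gon. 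Geometrically this is the operation of cutting a triangle off a convex polygon.

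Your approach instead exploits the semi-separable structure $A_{jk}=\sin(\theta_N)\,v_{\min(j,k)}w_{\max(j,k)}$ directly: the closed determinant formula $\det A=(-1)^{N-1}\sin(\theta_N)^{-1}\prod_{k=1}^{N-1}\sin(\theta_k)$ gives non-degeneracy on all of $\Omega$, and the Schur-complement recursion (which preserves the semi-separable class and yields the explicit pivots $P_\ell=-\sin(\theta_\ell)\,w_\ell/w_{\ell-1}$) determines the signature at one convenient point. Connectedness of $\Omega$ then transports the signature globally. What you gain is a clean global determinant formula and a uniform pivot recursion independent of any relabelling; what the paper's approach gains is the explicit orthogonal splitting \eqref{eqninduction}, which is invoked again later in the paper (Subsection~\ref{secfacesanddegenerations}) to interpret restrictions to faces of $\overline{\Delta}$ as counting functions of degenerated polygons. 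Both arguments are of comparable length, but the paper's version is pointwise (valid at every $(\theta_k)\in\Omega$ after a cyclic relabelling) whereas yours needs the continuity step because the leading principal minors, unlike $\det A$ itself, can vanish inside $\Omega$.
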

	
\begin{proof}
	We prove the claim by induction. The cases $N\in\{3,4\}$ are clear by direct computations on the corresponding quadratic forms. Assume that the statement holds for $1\leq n\le N-1$ with $N-1\ge4$. We show that it then also holds for $n=N$. First it is easy to prove by contradiction  that there exists some $k$ such that $\th_k+\th_{k+1}<\pi$. By a relabelling of the vertices if needed, we can assume that $k=2$ for definiteness. Then we make the following  change of variables
	\begin{equation}\label{eqnchangeofsides}
		\mu_1 = \lambda_1+\frac{\sin(\th_3)}{\sin(\th_2+\th_3)} \lambda_2,\qquad
		\mu_3 = \lambda_3+\frac{\sin(\th_2)}{\sin(\th_2+\th_3)} \lambda_2,\qquad
		\mu_k = \lambda_k,\quad 4\le k\le N-2.
	\end{equation}
	Denote $\bm\mu=(\mu_1,\mu_3,\dots,\mu_{N-2})^T$. One can check by a straightforward computation that
	\begin{equation}\label{eqninduction}
		\bm\l^T A(\th_1,\th_2,\dots,\th_N)\bm\l = -\frac{\sin(\th_2)\sin(\th_3)}{2\sin(\th_2+\th_3)}\l_2^2 + \bm\mu^TA(\th_1,\th_2+\th_3,\th_4,\dots,\th_N)\bm\mu.
	\end{equation}
	In fact, recall from the proof of Lemma \ref{la:quadForm} that one has from \eqref{eqnareaform}
	\[
		\bm\l^TA(\th_1,\th_2,\dots,\th_N)\bm\l = \frac12\sum_{1\le j<k\le N-1} \sin(\vf_k-\vf_j)\l_j\l_k\,,
	\]
	where $\lambda_{N-1}$ is given by \eqref{eqnlambdaN-1}. The same computations yield
	\[
		\bm\mu^TA(\th_1,\th_2+\th_3,\th_4,\dots,\th_N)\bm\mu = \frac12\sum_{\substack{1\le j<k\le N-1\\j,k\ne 2}} \sin(\vf_k-\vf_j)\mu_j\mu_k\,,
	\]
	with $\mu_{N-1}$ similarly given by \eqref{eqnlambdaN-1} $\mu_{N-1}=\sum_{j\in\{1,\cdots, N-2\}\sm\{2\}}\frac{\sin(\vf_j)}{\sin(\th_N)}\mu_j$. Plugging \eqref{eqnchangeofsides} into the above expression for $\mu_{N-1}$, we have
	\begin{align*}
		\mu_{N-1} &= { \sin (\varphi_{1})  \over \sin (\theta_{N})}\lambda_{1} + {\sin (\varphi_1)\sin(\theta_3)+\sin (\varphi_3)\sin(\theta_2)\over  \sin(\theta_{N})\sin(\theta_{2}+\theta_{3})}\lambda_{2} + { \sin (\varphi_{3})  \over \sin (\theta_{N})}\lambda_{3}+\sum_{4\leq j\leq N-2}{ \sin (\varphi_{j})  \over \sin (\theta_{N})}\lambda_{j}.
	\end{align*}
	Applying \eqref{eqnelementaryidentity}, one has
	\begin{align*}
		\mu_{N-1} &= { \sin (\varphi_{1})  \over \sin (\theta_{N})}\lambda_{1} + {\sin (\varphi_2)\over  \sin(\theta_{N})}\lambda_{2} + { \sin (\varphi_{3})  \over \sin (\theta_{N})}\lambda_{3} + \sum_{4\leq j\leq N-2}{ \sin (\varphi_{j})  \over \sin (\theta_{N})}\lambda_{j}=\lambda_{N-1}.
	\end{align*}
	Expanding in terms of $\lambda_{j},1\leq j\leq N-1$, we obtain
	\begin{align*}
		&\bm\mu^TA(\th_1,\th_2+\th_3,\th_4,\dots,\th_N)\bm\mu\\
		&\hspace{1cm}= \frac12\sum_{\substack{1\le j<k\le N-1\\j,k\notin\{1,2,3\}}} \sin(\vf_k-\vf_j)\l_j\l_k + \frac12\sin(\vf_3-\vf_1)\l_1\l_3 + \frac12\sin(\th_2)\l_1\l_2 + \frac12\sin(\th_3)\l_2\l_3\\
		&\hspace{1cm}\quad+ \frac{\sin(\th_2)\sin(\th_3)}{2\sin(\th_2+\th_3)}\l_2^2 + \frac12\sum_{4\le k\le N-1} \sin(\vf_k-\vf_1)\l_1\l_k + \frac12\sum_{4\le k\le N-1} \sin(\vf_k-\vf_3)\l_3\l_k\\
		&\hspace{1cm}\quad+ \frac12\sum_{4\le k\le N-1} \sin(\vf_k-\vf_2)\l_2\l_k.
	\end{align*}
	It follows that 
	\begin{align*}
		\bm{\mu}^{T}A(\theta_{1},\theta_{2}+\theta_{3},\theta_{4},\dots, \theta_{N})\bm{\mu}&={\sin(\theta_{2}) \sin (\theta_{3}) \over 2\sin (\theta_{2}+\theta_{3})}\lambda_{2}^{2}+\frac12\sum_{1\leq j< k\leq N-1}\sin (\varphi_{k}-\varphi_{j})\lambda_{j}\lambda_{k} \\
		&={\sin(\theta_{2}) \sin (\theta_{3}) \over 2\sin (\theta_{2}+\theta_{3})}\lambda_{2}^{2}+\bm{ \lambda}^{T}A(\theta_{1},\theta_{2},\dots, \theta_{N})\bm{ \lambda}.
	\end{align*}
	This gives \eqref{eqninduction}. By the induction hypothesis, the second term on the right-hand side of \eqref{eqninduction} corresponds to a quadratic form of signature $(1,N-4)$. It follows that the corresponding quadratic form on the left-hand side of \eqref{eqninduction} has signature $(1,N-3)$. This completes the proof.
\end{proof}

\begin{remark}
	Geometrically, \eqref{eqnchangeofsides} corresponds to cutting a triangle from a convex $(N-1)$-gon to form a convex $N$-gon. For non-convex polygons, it is not always possible to find $k$ such that $\th_k+\th_{k+1}\in(-\pi,0)\cup(0,\pi)$; see e.g., the third polygon in Figure \ref{figure:simpleandcomplexpolygons}. Furthermore, the signature of the first term on the right-hand side of \eqref{eqninduction} can be $\pm1$. Starting from $3$-gons, instead of using the cutting procedure one can use additionally the gluing procedure to form non-convex $N$-gons; see e.g., the third and forth polygons in Figure \ref{figure:simpleandcomplexpolygons}. This then implies that for general simple $N$-gons with $N\ge4$, the signature of the corresponding quadratic form $Q$ can be $(N-2-s,s)$ with $0\le s\le N-3$.
\end{remark}

We next discuss whether under the positivity condition $\th_k\in(0,\pi)$ for $1\le k\le N$, $Q$ is strictly positive in 
$\xoverline{\Delta}$. Let $\bm u^\perp:=\{\bm x\in\R^{N-2}:B(\bm x,\bm u)=0\}$ for $\bm x\in\R^{N-2}$ and
\begin{align}\label{eqnnormalvectors}
	\bm{c}_{\bm j}:=A^{-1} \bm{e}_{\bm j}\,,\quad 1\leq j\leq N-2\,,\quad
	\bm{c}_{\bm {N-1}}:=A^{-1} \bm{v}\,,\quad
	\bm{c}_{\bm{N}}:=A^{-1} \bm{w}.
\end{align}
Then we have $\lambda_{k}=B(\bm{c}_{\bm k}, \bm{\lambda})$ for $1\leq k\leq N$. The following is a preparationary lemma.

\begin{lemma}\label{lemnoncoincidenceonhyperplanes}	
	Assume that $\th_k\in(0,\pi)$ for $1\leq k\leq N$. Then the $N$ hyperplanes $\bm{c_k}^\perp$ for $1\leq k\leq N$ are distinct, except if $N=3$ or $N=4$ with $\th_k+\th_{k+1}=\pi$ for some $1\leq k\leq 4$.
\end{lemma}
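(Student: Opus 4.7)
The plan is to exploit the non-degeneracy of $A$: by Lemma \ref{la:quadFormsignature} the form $Q$ has signature $(1,N-3)$, so $A$ is invertible for every $N\geq 3$. Consequently, two hyperplanes $\bm{c}_j^{\perp}$ and $\bm{c}_k^{\perp}$ coincide if and only if $\bm{c}_j$ and $\bm{c}_k$ are linearly dependent, and via the bijection $A^{-1}$ this is equivalent to linear dependence of the corresponding vectors drawn from $\{\bm{e}_1,\dots,\bm{e}_{N-2},\bm{v},\bm{w}\}$ (cf.\ \eqref{eqnnormalvectors}). I would split the unordered pairs $(j,k)\subseteq\{1,\dots,N\}$ into four types and treat each type separately.

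The two easy types dispatch almost at once. For $1\leq j<k\leq N-2$ the standard basis vectors $\bm{e}_j,\bm{e}_k$ are trivially independent, and this pair even exists only when $N\geq 4$. For the pair $(j,k)=(N-1,N)$ one asks whether $\bm{v}$ and $\bm{w}$ are proportional; but the identity
\[
\det\!\begin{pmatrix} v_{\ell} & v_{\ell+1}\\ w_{\ell} & w_{\ell+1}\end{pmatrix}=\frac{\sin(\theta_{\ell+1})}{\sin(\theta_N)}\neq 0
\]
recorded in Section \ref{secintegralityofsignedarea} shows, under \eqref{eqnpositivityonangles}, that $M$ already has rank $2$ as soon as $N-2\geq 2$, so $\bm{v}$ and $\bm{w}$ are independent for every $N\geq 4$.

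The core of the argument lies in the mixed pairs $j\in\{1,\dots,N-2\}$, $k\in\{N-1,N\}$. The condition $\bm{e}_j\parallel\bm{v}$ says $v_i=0$ for every $i\in\{1,\dots,N-2\}\setminus\{j\}$, i.e.\ $\sin(\varphi_N-\varphi_i)=0$; since $\varphi_N=2\pi$ this forces $\varphi_i\in\pi\mathbb{Z}$. However, $0<\varphi_1<\dots<\varphi_{N-1}<2\pi$ by strict positivity of each $\theta_k$, so the unique multiple of $\pi$ available in the open interval $(0,2\pi)$ is $\pi$ itself, and this value can be attained by at most one index. Thus the vanishing condition can be enforced on at most one $i$, which forces the count $N-3$ of such $i$ to be at most $1$, i.e.\ $N\leq 4$. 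The analysis for $\bm{e}_j\parallel\bm{w}$ is parallel: $\sin(\varphi_{N-1}-\varphi_i)=0$ with $\varphi_{N-1}\in(\pi,2\pi)$ and $\varphi_i<\varphi_{N-1}$ forces $\varphi_{N-1}-\varphi_i=\pi$, and two such indices would yield a positive telescoping sum equal to $0$, again impossible.

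Finally, I would handle the small cases directly. For $N=3$ all three $\bm{c}_k^{\perp}$ are hyperplanes in $\mathbb{R}^{N-2}=\mathbb{R}$ and equal $\{0\}$, giving the first stated exception. For $N=4$, the admissible mixed-pair coincidences reduce, after discarding the instances $\varphi_1=\theta_1=\pi$ and $\sin(\theta_3)=0$ which are excluded by \eqref{eqnpositivityonangles}, to $\varphi_2=\pi$ (equivalently $\theta_1+\theta_2=\pi$, i.e.\ $\theta_3+\theta_4=\pi$) and $\varphi_3-\varphi_1=\pi$ (equivalently $\theta_2+\theta_3=\pi$, i.e.\ $\theta_4+\theta_1=\pi$); these are exactly the four values of $k\in\{1,2,3,4\}$ giving $\theta_k+\theta_{k+1}=\pi$. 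The main obstacle I anticipate is the monotonicity bookkeeping: one must use \eqref{eqnpositivityonangles} twice, first to get strict increase of the $\varphi_i$ and hence uniqueness of $\varphi_i=\pi$, and second to rule out the boundary possibilities $\theta_1=\pi$ and $\theta_3=\pi$ that appear as spurious solutions in the $N=4$ case analysis.
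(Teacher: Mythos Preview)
The paper states this lemma without proof, so there is no approach to compare against. Your argument is correct: reducing via the bijection $A^{-1}$ to linear dependence among $\{\bm{e}_1,\dots,\bm{e}_{N-2},\bm{v},\bm{w}\}$, then using the strict monotonicity $0<\varphi_1<\cdots<\varphi_{N-1}<2\pi$ to bound the number of indices $i$ with $\sin(\varphi_N-\varphi_i)=0$ or $\sin(\varphi_{N-1}-\varphi_i)=0$, cleanly forces $N\leq 4$ in the mixed-pair cases and recovers the stated $N=4$ exception. The only cosmetic point is that your phrase ``positive telescoping sum equal to $0$'' for the $\bm{w}$-case is a slight misnomer; what you actually use is that $\varphi_{N-1}-\varphi_{i_1}=\pi=\varphi_{N-1}-\varphi_{i_2}$ would force $\varphi_{i_1}=\varphi_{i_2}$, contradicting strict monotonicity.
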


\begin{remark}\label{remexceptionalcases}
	The exceptional case $N=3$ in Lemma \ref{lemnoncoincidenceonhyperplanes} corresponds to triangles whose enumeration gives Jacobi theta functions \cite{Pol:20011}. The exceptional case $N=4$ corresponds to parallelograms and trapezoids. The enumeration of parallelograms gives the Zagier series \cite{Gottsche:1998,Pol:2000,Zagier}, while that of trapezoids and other convex $4$-gons yield the Appell--Lerch sums studied by Kronecker that describe sections of rank two vector bundles on the elliptic curve as shown in \cite{Pol:20012,Pol:20013,Pol:2005,Pol:2000}. 
\end{remark}

In what follows we only focus on the cases $N\ge5$. We next study the intersection of $\{\bm\l\in\R^{N-2}:Q(\bm\l)=0\}$ with $\xoverline\De$. For $I\subseteq\{1,2,\dots,N\}$, let
\begin{align}\label{eqndfnface}
	D_0 :=& \left\{\bm{\lambda}\in \R^{N-2}: Q(\bm{\lambda})=0\right\},\quad D_+:= \left\{\bm{\lambda}\in \R^{N-2}: Q(\bm{\lambda})>0\right\},
	\\ V_I :=& \bigcap_{j\in I} \left\{\bm{\lambda}\in \R^{N-2}: \lambda_j=0\right\}=\left\{\bm{\lambda}\in \R^{N-2}: \lambda_j=0 \text{ for } j\in I \right\}\,,\quad F_{I}=V_{I}\cap \xoverline{\Delta}.\notag
\end{align}

We have the following result regarding the structure of isotropic vectors in the closure $\xoverline{\Delta}$.

\begin{proposition}\label{propisotricconestructure}
	Assume \eqref{eqnpositivityonangles} and $N\ge5$. Then
	\begin{equation}\label{eqnisotropivectors}
		D_0\cap\xoverline{\Delta} = \bigcup_{\substack{I\subseteq\{1,2,\dots,N\}\\|I|=N-2}} D_0\cap F_I.
	\end{equation}
	Furthermore, on the right-hand side we have $D_0\cap F_I\ne \{\bm{0}\}$ if and only if $I=\{1,2,\dots,N\}\setminus\{a,b\}$ with $b\ne a\pm1$ such that $\sin(\varphi_b-\varphi_a)=0$. In particular, assume that $\sin(\varphi_k-\varphi_j)\ne0$ for any $1\leq j,k\leq N$. Then $Q$ is positive on $\xoverline{\Delta}\setminus\{\bm{0}\}$.
\end{proposition}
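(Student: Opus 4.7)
The plan is to combine a geometric convexity argument with a direct linear-algebra calculation on each codimension-$(N-2)$ face $F_I$. By the symmetry $\bm{\lambda}\mapsto-\bm{\lambda}$, it suffices to work on the cone $C_+:=\{\bm{\lambda}\in\R^{N-2}:\lambda_k\ge0,\,1\le k\le N\}\subseteq\overline\Delta$. Corollary~\ref{corpositivityofarea} gives $Q>0$ in the relative interior of $C_+$, so every non-zero $\bm\lambda\in D_0\cap C_+$ has $\lambda_k=0$ for at least one $k$. The core step is to strengthen this to the bound $|J|\le 2$ for $J:=\{k\in\{1,\dots,N\}:\lambda_k>0\}$, which puts $\bm\lambda$ into $F_I$ with $I=\{1,\dots,N\}\setminus J$ of size $N-2$; this establishes the non-trivial inclusion in \eqref{eqnisotropivectors}, the reverse being immediate from $F_I\subseteq\overline\Delta$.

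To prove $|J|\le 2$, I would interpret $\bm\lambda$ as a closed polygonal path with vertices $P_k=\sum_{j\le k}\lambda_j e^{-i\varphi_j}$. Because the turning angles $\theta_k$ all lie in $(0,\pi)$ and the step sizes $\lambda_k$ are non-negative, the distinct vertices of this path form a convex polygon (adjacent vertices coincide whenever $\lambda_k=0$). Its signed area is $Q(\bm\lambda)=0$, and a convex polygon has zero signed area precisely when its vertices are collinear. Hence all non-zero edge vectors $\lambda_k e^{-i\varphi_k}$ lie on a common line through the origin, forcing $\varphi_k-\varphi_j\in\pi\Z$ for every $j,k\in J$. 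Since $0<\varphi_1<\dots<\varphi_N=2\pi$ with consecutive gaps $\theta_\ell\in(0,\pi)$, at most two $\varphi_k$'s can share a residue modulo~$\pi$, giving $|J|\le 2$.

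For the second assertion, write $I=\{1,\dots,N\}\setminus\{a,b\}$. The conditions $\lambda_k=0$ for $k\in I\cap\{1,\dots,N-2\}$ restrict $\bm\lambda$ to the plane $\R\bm e_{\bm a}+\R\bm e_{\bm b}$ (with the convention $\bm e_{\bm{N-1}}=\bm e_{\bm N}=\bm 0$), and the additional conditions $\lambda_{N-1}=\bm v^T\bm\lambda=0$ and/or $\lambda_N=\bm w^T\bm\lambda=0$ coming from $I\cap\{N-1,N\}$ impose up to two more linear equations on $(\lambda_a,\lambda_b)$. A short product-to-sum computation yields
\[
v_aw_b-v_bw_a=\frac{\sin(\varphi_b-\varphi_a)}{\sin(\theta_N)},
\]
so a non-zero solution exists precisely when $\sin(\varphi_b-\varphi_a)=0$. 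The cases where $\{a,b\}$ meets $\{N-1,N\}$ reduce to a single equation of the same form (that $v_a$ or $w_a$ vanishes). On any such non-trivial ray, one has $\lambda_a=\pm\lambda_b$ and the polygon degenerates to a line segment, so $Q\equiv0$. The exclusion $b\ne a\pm1$ is automatic because $\varphi_{a+1}-\varphi_a=\theta_{a+1}\in(0,\pi)$. The ``in particular'' statement then follows: absence of parallel sides makes every $D_0\cap F_I$ trivial, so $D_0\cap\overline\Delta=\{\bm 0\}$ by the first part, and combined with $Q\ge0$ on $\overline\Delta$ (Corollary~\ref{corpositivityofarea} extended by continuity) this yields $Q>0$ on $\overline\Delta\setminus\{\bm 0\}$.

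The main obstacle is the convex-polygon claim in the middle paragraph: one must handle degenerate cases where several $\lambda_k$ vanish and adjacent vertices merge, and verify that the resulting closed curve still has positive signed area unless its vertices are collinear. This reduces to passing to the polygon on the distinct vertices and invoking the standard fact that a simple convex polygon has positive area unless degenerate to a line segment.
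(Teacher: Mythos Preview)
Your proof is correct and takes a genuinely different route from the paper's. The paper's argument is algebraic: it invokes Lemma~\ref{la:quadFormsignature} (the signature $(1,N-3)$ of $Q$) to deduce that the cross-section of $D_0$ is a strictly convex ellipsoid, so that $D_0\cap\overline\Delta$ can meet the cross-section polytope only at vertices and hence is a finite union of rays; it then restricts to one-dimensional faces $F_I$ with $|I|=N-3$, computes $Q|_{F_I}=\tfrac12\sin(\varphi_b-\varphi_a)\lambda_a\lambda_b$ from the area formula \eqref{eqnareaform}, and pushes each such ray into some $F_J$ with $|J|=N-2$. You bypass the signature lemma entirely: you read $Q(\bm\lambda)=0$ as vanishing signed area of the closed polygonal path, argue that the path on the non-zero edges is convex, conclude collinearity of its vertices, and finish with the pigeonhole bound that at most two of the $\varphi_k$ share a residue modulo $\pi$. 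What your approach buys is geometric transparency and independence from the induction in Lemma~\ref{la:quadFormsignature}; what the paper's approach buys is that it never has to worry about merged turning angles.

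The obstacle you flag is real but small, and your sketch of its resolution is incomplete as stated. The missing step is that convexity of the merged polygon is not automatic — a merged exterior angle $\varphi_{j_{l+1}}-\varphi_{j_l}$ could a priori exceed $\pi$ — and must be ruled out using the closure constraint: if some gap between cyclically consecutive non-zero edge directions exceeded $\pi$, then all of the $e^{-i\varphi_k}$ with $k\in J$ would lie in an open half-plane, making $\sum_{k\in J}\lambda_k e^{-i\varphi_k}=0$ with $\lambda_k>0$ impossible. With this observation in hand, your degenerate-convex-polygon argument goes through. For the classification of $D_0\cap F_I$ with $|I|=N-2$, you and the paper compute essentially the same determinant; the paper organizes it through \eqref{eqnareaform} while you go directly through $\bm v,\bm w$.
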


\begin{proof}
	From Lemma \ref{la:quadFormsignature}, by diagonalizing the matrix $A$ (which does not change $Q=0$ and maps hyperplanes to hyperplanes), the intersection of $D_0$ with a cone over a polytope is a set of lines through the origin. Then the intersection of $\xoverline{\Delta}$ with the set of isotropic vectors $D_0$ is
	\[
		D_0\cap\xoverline{\Delta} = \bigcup_{\substack{I\subseteq\{1,2,\dots,N\}\\\dim(F_I)=1}} D_0\cap F_I.
	\]
	Hence we only need to consider $D_0\cap F_I$ with $|I|=N-3$ to prove \eqref{eqnisotropivectors}. By using Lemmas \ref{lemcyclicsymmetry} and \ref{lemnoncoincidenceonhyperplanes}, we can assume that  $|I\cap\{1,2,\dots,{N-2}\}|=N-4$ and $|I\cap\{N-1,N\}|=1$.
	
	Let $1\leq a <b\leq N-2$ such that $\{a,b\}\cup I=\{1,2,\dots,{N-2}\}$.
	Then, for $\bm{\l}\in F_I$,
 	\[
		Q|_{F_I}(\bm{\lambda}) = - \frac{\sin(\varphi_a)\sin(\th_N+\varphi_a)}{2\sin(\th_N)} \l_a^2 -  \frac{\sin(\varphi_b)\sin(\th_N+\varphi_b)}{2\sin(\th_N)} \l_b^2 - \frac{\sin(\varphi_a)\sin(\th_N+\varphi_b)}{\sin(\th_N)} \l_a\l_b.
	\]
	Using \eqref{eqndfnofNlambda} and $|I\cap\{N-1,N\}|= 1$, one has
	\[
		Q|_{F_I}(\bm{\lambda}) = \frac12\sin(\th_N)\l_{N-1}\l_{N} + \frac12\sin(\varphi_b-\varphi_a) \l_a\l_b=\frac12\sin(\varphi_b-\varphi_a) \l_a\l_b.
	\]
	Thus, for $\bm{\l}\in D_0\cap F_I$, we have $\bm{\l}\in D_0\cap (F_{I\cup \{a\}}\cup F_{I\cup \{b\}})$ or  $\sin(\varphi_b-\varphi_a)=0$. For the second case, recall that by \eqref{eqnpositivityonangles} one has $0<\varphi_a<\varphi_b<2\pi$, so $\sin(\varphi_b-\varphi_a)=0$ implies $\varphi_b=\varphi_a+\pi$. Using the expressions for $\bm{v},\bm{w}$ in \eqref{eqndfnofNlambda}, this further gives that the vanishing of one of $\l_{N-1},\l_{N}$ implies the vanishing of the other (and $\l_a=\l_b$). Hence we have  $\bm{\l}\in D_0\cap F_{I\cup \{N-1,N\}}$ and in general
	\begin{align*}
		D_0\cap F_I\subseteq\bigcup_{\substack{I\subseteq\{1,2,\dots,N\}\\|I|=N-2}} F_I\cap D_0.
	\end{align*}

	Take any $I$ with $I={N-2}$, say $I=\{1,2,\dots,N\}\setminus\{a,b\}$ with $a<b$. If $b=a\pm1$, enumerated cyclically, then, by Lemma \ref{lemcyclicsymmetry}, one can take $\l_j,j\in I$ as independent parameters for the signed area. From \eqref{eqnlambdaN-1}, $\l_k=0$ for all $k$ and thus $F_I=\{\bm0\}$. Otherwise, one can take $\l_a,\l_{a+1},\dots,\l_{a+N-3}$, again enumerated cyclically, as independent parameters. Using \eqref{eqnareaform} one has
	\[
		Q|_{F_I}(\bm\l) = \frac12\sin(\vf_b-\vf_a)\l_a\l_b.
	\]
	For $\l_a=0$ or $\l_b=0$ we have $\bm{\l}\in F_{I\cup\{a\}}\cup F_{I\cup\{b\}}=\{\bm{0}\}$. Hence we only need to consider $\l_a\l_b\ne0$, which implies that $D_0\cap F_I\ne \{\bm{0}\}$ if and only if $\sin(\varphi_b-\varphi_a)=0$. This finishes the proof.
\end{proof}

\begin{remark}\label{remisotropicvectorasintersection}
	Figure \ref{figure:crosssectionpolyhedron} illustrates the structure of the isotropic vectors contained in $\xoverline\De$. In the proof of Proposition \ref{propisotricconestructure}, we show that $D_0\cap F_I\ne\{\bm0\},|I|=N-3$ requires $D_0\cap F_I=D_0\cap F_J$ for some $|J|={N-2}$. 
	Geometrically, the condition in Proposition \ref{propisotricconestructure} for the existence of non-zero isotropic vectors means that the convex polygons have parallel sides, and the vanishing of $Q$ occurs exactly if the parallel sides coincide.
	\begin{figure}[h]
		\centering
		\includegraphics[scale=0.6]{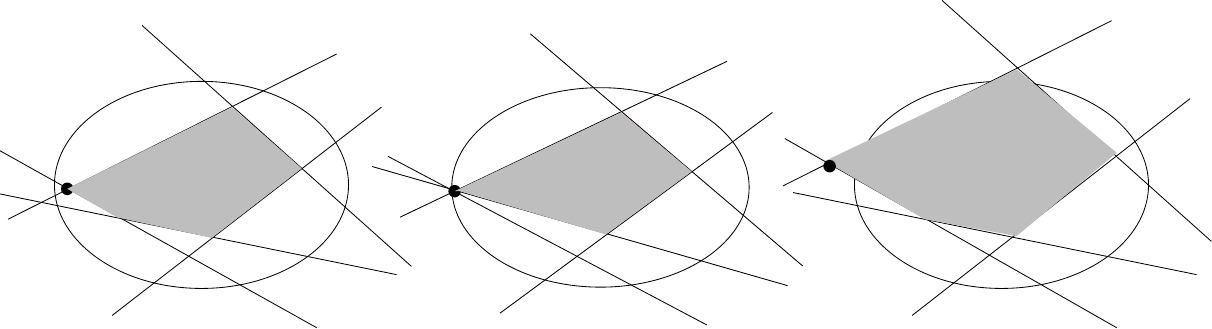}
		\caption{The shaded region is the cross section for $\xoverline{\Delta}$, the quadric is the cross section for $D_0$. The first case arises from consideration of counting functions of convex polytopes which involve no isotropic vectors, the second one from that of  convex polytopes which involve finitely many isotropic vectors, the final one from that of concave polytopes which can involve infinitely many isotropic vectors.}
		\label{figure:crosssectionpolyhedron}
	\end{figure}
\end{remark}

\begin{definition}
	We call a face $D_0\cap F_I$ an {\it isotropic ray} if it is one-dimensional. By Proposition \ref{propisotricconestructure}, this holds if $I=\{1,2,\dots,N\}\setminus\{a,b\}$ with $b\ne a\pm1$ and $\sin(\vf_b-\vf_a)=0$. There are at most $\binom{N}{N-2}-N-4$ isotropic rays.
\end{definition}

By a change of labelings if needed, we can assume such an isotropic ray  is given by
$
D_0\cap  F_{I}\,, I=\{1,2,\cdots, N\}\setminus\{\ell, N-1\}
$
for some $1\leq \ell \leq N-3$.
We also denote
\[
	\mathcal{E}_\ell := \{\bm c_{\bm j} :  j\in\{1,2,\dots,{N-2}\}\setminus\{\ell\}\},\qquad E_\ell := \operatorname{span}_\R\{\bm c_{\bm j} : j\in\{1,2,\dots,{N-2}\}\setminus\{\ell\}\} = \operatorname{span}(\mathcal{E}_\ell).
\]
According to \eqref{eqndfnofNlambda}, \eqref{eqndefinitionofA}, and \eqref{eqnnormalvectors}, there exists an isotropic ray if for some $1\le\ell\le N-3$,
\begin{equation*}
	B(\bm w,\bm{c_\ell}) = B(\bm{c_N},\bm{e_\ell}) = w_\ell = 0\,;
\end{equation*}
note that $w_{N-2}\ne0$ by the positivity condition \eqref{eqnpositivityonangles}. It is possible that $\dim(F_J):=\dim(\sangle{F_J})<\dim(V_J)$ for $J\subseteq\{1,\dots,N\}$, for example, if $J=\{j\}$, where $V_{\{j\}}$ fails to be a supporting hyperplane of $\xoverline\De$; see the middle graph of Figure \ref{figure:crosssectionpolyhedron}. We next discuss this subtlety, which leads to another characterization of isotropic rays that is useful in studying modularity of counting functions.

\begin{lemma}	\label{lemlineardepenence}
	For any $J\subseteq \{1,\dots,N\}$ with $ |J|\le N-2$,
	let $J_{*}$ be a maximal subset (under inclusion) among those $K\subseteq \{1,\dots,N\}$ satisfying $J\subseteq K$ and $F_{J}=F_{K}$.
	Then we have
	\begin{align*}
		F_J \text{ is an isotropic ray} \quad&\Leftrightarrow\quad \text{there~exists}~J_0\subsetneq J_{*},~\text{such~that}~F_{J_0} = F_{J_{*}}\\
		& \Leftrightarrow\quad \{\l_j\}_{j\in J_{*}} \text{ are linearly dependent}
		\quad \Leftrightarrow \quad\dim(V_{J_{*}}) = N-1-|J_{*}|.
	\end{align*}
\end{lemma}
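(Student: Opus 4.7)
The plan is to exploit the global structure of linear relations among $\lambda_1,\ldots,\lambda_N$ viewed as linear functionals on $\mathbb{R}^{N-2}$: the closure relation $\sum_{k=1}^{N}\lambda_k e^{-i\vf_k}=0$ yields a two-dimensional space of relations, spanned by $(\cos\vf_k)_k$ and $(\sin\vf_k)_k$. I would work under the standing assumption $F_J\ne\{\bm{0}\}$, equivalently $|J_*|\le N-2$, which is the setting relevant for isotropic rays. For the equivalence (3) $\Leftrightarrow$ (4), a linear relation supported on $J_*$ has coefficients $c_k=\alpha\cos\vf_k+\beta\sin\vf_k$ and must satisfy $c_k=0$ for all $k\notin J_*$. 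Since $\vf_1<\cdots<\vf_N$ lie in $(0,2\pi)$, every residue class modulo $\pi$ contains at most two of the $\vf_k$, so a nonzero $(\alpha,\beta)$ annihilating all the $k\notin J_*$ exists only when $|\{1,\ldots,N\}\setminus J_*|=2$ and the two missing $\vf_k$ differ by $\pi$. Under $|J_*|\le N-2$, this forces the space of relations in $J_*$ to have dimension $0$ or $1$, so $\mathrm{rank}\{\lambda_j\}_{j\in J_*}\in\{|J_*|,|J_*|-1\}$, and (3) (rank $<|J_*|$) coincides with (4) ($\dim V_{J_*}=N-1-|J_*|$, i.e., rank $=|J_*|-1$).

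For (4) $\Rightarrow$ (2), pick any $k$ with non-zero coefficient in the unique relation; then $\lambda_k$ is a linear combination of the remaining $\lambda_j$, $j\in J_*\setminus\{k\}$, so $V_{J_*\setminus\{k\}}=V_{J_*}$, whence $F_{J_*\setminus\{k\}}=F_{J_*}$; set $J_0:=J_*\setminus\{k\}$. For the harder converse (2) $\Rightarrow$ (3), fix $J_0\subsetneq J_*$ with $F_{J_0}=F_{J_*}$ and pick $k\in J_*\setminus J_0$. Restricting to the positive branch, $\lambda_k$ vanishes on $V_{J_0}\cap\overline{\Delta}_+=V_{J_*}\cap\overline{\Delta}_+$ while being non-negative on $\overline{\Delta}_+$, so the linear program $\max\lambda_k$ subject to $\lambda\in V_{J_0}\cap\overline{\Delta}_+$ has optimum zero. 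By maximality of $J_*$ and a convex combination argument, the relative interior of $V_{J_*}\cap\overline{\Delta}_+$ in its affine hull $V_{J_*}$ contains points with $\lambda_j>0$ for all $j\notin J_*$; at such a point the active inequality constraints are precisely $\{\lambda_j\ge0:j\in J_*\}$. Applying Farkas' lemma with complementary slackness then forces the dual multipliers for $\lambda_j\ge0$, $j\notin J_*$, to vanish, yielding $\lambda_k$ as a linear combination of $\{\lambda_j\}_{j\in J_*\setminus\{k\}}$ alone, which is the dependence asserted in (3).

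Finally, (1) $\Leftrightarrow$ (4) is derived from Proposition \ref{propisotricconestructure}: condition (4) together with $|J_*|\le N-2$ forces $|J_*|=N-2$ and $J_*=\{1,\ldots,N\}\setminus\{a,b\}$ with $\sin(\vf_b-\vf_a)=0$, i.e., $\vf_b=\vf_a+\pi$; the one-dimensional $V_{J_*}$ satisfies $\lambda_a=\lambda_b$ and $\lambda_k=0$ for $k\ne a,b$, so meets $\overline{\Delta}$ in a ray along which $Q=\tfrac12\sin(\vf_b-\vf_a)\lambda_a\lambda_b=0$, making it an isotropic ray. The converse is immediate from the same proposition. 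The main obstacle is the implication (2) $\Rightarrow$ (3): the naive attempt to conclude $V_{J_0}=V_{J_*}$ from $F_{J_0}=F_{J_*}$ fails in general (examples show that $F_{J_0}$ can lie in a proper subspace of $V_{J_0}$), so the dependence must instead be extracted by the LP/Farkas argument above, which hinges on correctly identifying the active constraints at a relative interior point of $V_{J_*}\cap\overline{\Delta}_+$.
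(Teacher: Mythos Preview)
The paper states Lemma~\ref{lemlineardepenence} without proof, so there is no argument to compare against directly. Your proposal is essentially correct and fills this gap, with two remarks.

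First, the standing assumption $F_J\neq\{\bm 0\}$ (equivalently $|J_*|\le N-2$) that you impose is genuinely needed: if $F_J=\{\bm 0\}$ then $J_*=\{1,\dots,N\}$, condition~(1) fails while conditions~(2)--(4) hold, so the lemma as written is only valid in the nontrivial regime you isolate. This is worth flagging explicitly.

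Second, your Farkas/LP argument for (2)$\Rightarrow$(3) is correct but heavier than necessary. Once you have produced, via maximality of $J_*$ and convex combination, the point $p\in F_{J_*}$ with $\lambda_j(p)>0$ for all $j\notin J_*$, the contrapositive $\neg(3)\Rightarrow\neg(2)$ follows directly: if $\{\lambda_j\}_{j\in J_*}$ are independent and $k\in J_*\setminus J_0$, choose $\bm v\in\mathbb{R}^{N-2}$ with $\lambda_k(\bm v)=1$ and $\lambda_j(\bm v)=0$ for $j\in J_*\setminus\{k\}$ (possible since $|J_*|\le N-2$ independent linear conditions). Then for small $\varepsilon>0$ the point $p+\varepsilon\bm v$ lies in $V_{J_0}\cap\overline\Delta_+$ (the constraints $\lambda_j\ge0$ for $j\notin J_*$ stay strict, those for $j\in J_*\setminus\{k\}$ stay zero, and $\lambda_k=\varepsilon>0$), yet $p+\varepsilon\bm v\notin F_{J_*}$ since $\lambda_k\ne0$. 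Hence $F_{J_0}\supsetneq F_{J_*}$. This bypasses duality and complementary slackness entirely while using exactly the same key input (the relative-interior point $p$). Your remaining arguments for (3)$\Leftrightarrow$(4) via the two-dimensional relation space, for (4)$\Rightarrow$(2), and for (1)$\Leftrightarrow$(4) via Proposition~\ref{propisotricconestructure} and the observation that at most two of the strictly increasing $\varphi_k\in(0,2\pi]$ can lie in any residue class modulo $\pi$, are clean and correct.
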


\begin{remark}
	By the geometry of the polyhedron $\xoverline\De$, the existence of $J_0\subsetneq J$ with $F_J=F_{J_0}$ implies $\l_j\in\sangle{\l_k:k\in J_0}$ for $j\in J\setminus J_0$. By Lemma \ref{lemlineardepenence}, $V_J$, and thus $F_J$, must be an isotropic ray. 
	However, the converse is not true. For example, it is possible that $|J|=1$ with $F_J$ an isotropic ray and thus a desired $J_0$ does not exist; see the middle graph of Figure  \ref{figure:crosssectionpolyhedron} for illustration.
\end{remark}

\subsection{Convergence of counting functions}

The structure of isotropic rays exhibited in Proposition \ref{propisotricconestructure} allows to prove convergence of $\TH_{Q,\chi}$ using the ideas in \cite{Zw}. The key is to find a slicing summation range $\supp(\chi)\cap(\Z^{N-2}+\bm\a)\subseteq\De\cap(\Z^{N-2}+\bm\a)$ ensuring convergence.
It suffices to consider one of the two connected components of $D_0\cap\xoverline\De\setminus\{\bm0\}$, say, the one with $\l_k\ge0$. The following lemma on the normal vectors introduced in \eqref{eqnnormalvectors} is useful in finding a slicing of $\R^{N-2}$.

\begin{lemma}\label{lemnormalvectors}
	Assume the positivity condition \eqref{eqnpositivityonangles} and $N\ge5$. Suppose that $D_0\cap F_I$ is one-dimensional with $I=\{1,2,\dots,N\}\setminus\{\ell,N-1\}$ for some $1\le\ell\le N-3$. Fix a generator $\bm f=f_\ell\bm{e_\ell}$ of this isotropic ray with\footnote{For definiteness, we can take $f_\ell>0$ to be $1$.} $f_\ell>0$.
	\begin{enumerate}[label=\textnormal{(\arabic*)}]
		\item For each $\bm c_{\bm k},1\leq k\leq N$, one has $Q(\bm c_{\bm k})\le0$.
		
		\item  One has $\bm f^\perp=E_\ell$ and $\bm f\in E_\ell$.
		
		\item Choose any subset $\calE_\ell^-$ of $\calE_\ell$ with $N-4$ elements such that vectors in $\calE_\ell^-\cup\{\bm f\}$ are linearly independent. Define $E_\ell^-:=\langle\calE_\ell^-\rangle$. Then one has the orthogonal decomposition
		$
			E_\ell = \R\bm f \oplus E_\ell^-
		$
		with $Q|_{E_\ell^-}<0$. In particular, one has $Q(\bm c_{\bm j})<0$ for any $\bm c_{\bm j}\in\calE_\ell^-$.
	\end{enumerate}
\end{lemma}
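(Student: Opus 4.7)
The natural order is to establish (2), (3), (1) in this sequence, since (2) is a direct computation, (3) is a Witt-type signature decomposition built upon (2), and the bulk of (1) then follows case-by-case from the semi-definiteness obtained in (3).

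For (2), I would simply compute
\[
B(\bm{f},\bm{c_j}) \;=\; f_\ell\,\bm{e_\ell}^{T}A\,\bm{c_j} \;=\; f_\ell\,\bm{e_\ell}^{T}\bm{e_j} \;=\; f_\ell\,\delta_{\ell,j},\qquad 1\le j\le N-2,
\]
so every vector in $\mathcal{E}_\ell$ is $B$-orthogonal to $\bm{f}$, giving $E_\ell\subseteq\bm{f}^\perp$. Both subspaces have dimension $N-3$: $E_\ell$ because $A^{-1}$ sends the $N-3$ independent vectors $\{\bm{e_j}\}_{j\ne\ell}$ to an independent set, and $\bm{f}^\perp$ because $Q$ is non-degenerate by Lemma \ref{la:quadFormsignature} and $\bm{f}\ne\bm{0}$. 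Hence equality, and $Q(\bm{f})=0$ yields $\bm{f}\in\bm{f}^\perp=E_\ell$.

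For (3), I would invoke a standard Witt decomposition of the Lorentzian form $Q$ of signature $(1,N-3)$. Choose $\bm{g}$ with $B(\bm{f},\bm{g})=1$; after subtracting a multiple of $\bm{f}$ one may assume $Q(\bm{g})=0$, making $\R\bm{f}\oplus\R\bm{g}$ a hyperbolic plane of signature $(1,1)$ whose $B$-orthogonal complement has signature $(0,N-4)$, i.e.\ is negative definite. Therefore $\bm{f}^\perp=\R\bm{f}\oplus(\R\bm{f}\oplus\R\bm{g})^\perp$ and the quotient $\bm{f}^\perp/\R\bm{f}$ is negative definite. The linear-independence hypothesis on $\mathcal{E}_\ell^-\cup\{\bm{f}\}$ ensures that $E_\ell^-$ is a complement of $\R\bm{f}$ in $E_\ell=\bm{f}^\perp$; the identity $Q(\bm{u}+t\bm{f})=Q(\bm{u})$ for $\bm{u}\in\bm{f}^\perp$ shows that $Q|_{E_\ell^-}$ is isomorphic to the negative-definite quotient form, and the orthogonality of $E_\ell=\R\bm{f}\oplus E_\ell^-$ is automatic since $\bm{f}$ lies in the radical of $Q|_{E_\ell}$. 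The strict inequality $Q(\bm{c_j})<0$ for $\bm{c_j}\in\mathcal{E}_\ell^-$ follows because each such $\bm{c_j}$ is a nonzero element of $E_\ell^-$.

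For (1), I would argue case-by-case depending on whether $\bm{c_k}\in E_\ell$. Whenever this holds, the negative semi-definiteness of $Q|_{E_\ell}$ from (3) gives $Q(\bm{c_k})\le 0$ at once. This applies to $\bm{c_j}$ for $j\in\{1,\dots,N-2\}\setminus\{\ell\}$ by the definition of $\mathcal{E}_\ell$, and to $\bm{c_N}$ because $B(\bm{c_N},\bm{f})=f_\ell\,w_\ell=0$, using that the isotropic-ray condition is equivalent to $w_\ell=0$. The remaining pair $k\in\{\ell,N-1\}$, for which $\bm{c_k}\notin\bm{f}^\perp$ and the signature argument does not apply directly, is the main obstacle. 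For these I expect the right approach to combine the Lorentzian-pairing observation that $\bm{c_k}$ must lie in the same closed future light-cone component as $\xoverline{\Delta}$ (because $B(\bm{c_k},\bm{\lambda})=\lambda_k\ge 0$ on $\xoverline{\Delta}$, so the pairing of $\bm{c_k}$ with any time-like vector in $\xoverline{\Delta}$ is non-negative) with the cyclic symmetry of Lemma \ref{lemcyclicsymmetry} to relabel the special pair $\{\ell,N-1\}$ away from the index in question, plus an explicit exploitation of the min-max form of $A$ in \eqref{eqndefinitionofA} to control the sign of the relevant diagonal entries of $A^{-1}$; this is the step I expect to require the most delicate bookkeeping.
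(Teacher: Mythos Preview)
Your arguments for (2) and (3) are correct. For (3), you take a genuinely different route from the paper: you build a hyperbolic plane $\R\bm f\oplus\R\bm g$ and read off the negative-definite quotient $\bm f^\perp/\R\bm f$ directly from the Witt decomposition, which is self-contained and does not rely on (1). The paper instead proves (1) first, argues that $Q|_{E_\ell^-}$ is definite because a two-dimensional isotropic subspace is forbidden in signature $(1,N-3)$, and then invokes (1) to pin down the sign. Your approach has the advantage that it makes (3) independent of (1); the paper's approach has the advantage that (1) is available as a tool throughout.

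The gap is in (1) for $k\in\{\ell,N-1\}$. Your proposed fix does not work. First, the Lorentzian-pairing observation is pointing the wrong way: if $\bm{c_k}$ were in the closed light cone you would conclude $Q(\bm{c_k})\ge 0$, the opposite of what you need; and in any case $B(\bm{c_k},\bm\lambda)\ge 0$ only on the subcone $\xoverline{\Delta}$, not on a full component of $D_+$, so no light-cone membership follows. Second, cyclic relabelling cannot move the pair $\{\ell,N-1\}$ away from itself: this pair is \emph{intrinsic} to the chosen isotropic ray (it is the pair $(a,b)$ with $\sin(\varphi_b-\varphi_a)=0$ from Proposition~\ref{propisotricconestructure}), so any relabelling that keeps this ray as the distinguished one keeps the same two ``bad'' indices. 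Finally, the min--max structure of $A$ gives no obvious sign for $(A^{-1})_{\ell\ell}$ without further work.

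The paper's argument for (1) is uniform in $k$ and avoids the case split entirely. For each $1\le k\le N$, the face $F_{\{k\}}=\{\bm\lambda\in\xoverline{\Delta}:\lambda_k=0\}$ contains a nonzero vector; by Proposition~\ref{propisotricconestructure} any such vector lies in $D_+\cup D_0$, so $\bm{c_k}^\perp$ meets $D_+\cup D_0$ nontrivially. But if $Q(\bm{c_k})>0$ then $\bm{c_k}^\perp$ is negative definite (signature $(1,N-3)$), a contradiction. This handles all $k$ at once, including $\ell$ and $N-1$, without any appeal to $E_\ell$.
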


\begin{proof}
	(1) Intersecting \eqref{eqnisotropivectors} with $F_{\{k\}}\setminus\bigcup_{j\ne k}F_{\{j\}}$, we get $\xoverline\De\cap F_{\{k\}}\setminus\bigcup_{j\ne k}F_{\{j\}}\subseteq D_+$. Thus, for any $1\le k\le N$, there exists $\bm\l\in D_+$ such that $B(\bm{c_k},\bm\l)=0$. Suppose that $\bm{c_k}$ satisfies $Q(\bm{c_k})>0$ and choose a vector $\bm\l$ as above. By Lemma \ref{la:quadFormsignature}, we have $B(\bm{c_k},\bm\l)>0$ if $\bm{c_k}$ and $\bm\l$ lie in the same component of $\{\bm a\in\R^{N-2}:Q(\bm a)\ge0\}$ and $B(\bm{c_k},\bm\l)<0$ otherwise. Either case contradicts the condition that $B(\bm{c_k},\bm\l)=0$.\\
	(2) By Proposition \ref{propisotricconestructure}, $B(\bm{c_j},\bm f)=0$ for $\bm{c_j}\in\calE_\ell$. For dimension reasons $\bm f^\perp=E_\ell$. Since $Q(\bm f)=0$, one has $\bm f\in \bm f^\perp=E_\ell$. \\
	(3)
	Since $A\bm{c}_{\bm j}=\bm{e}_{\bm j}$ for $1\leq j\leq N-2$ from \eqref{eqnnormalvectors}, $\dim (E_{\ell})=N-3$. By (2), such a choice for $\mathcal{E}_\ell^{-}$ is possible. Any isotropic vector $\bm{u}\in E_\ell^{-}\subseteq \bm{f}^\perp$ gives an isotropic vector space $\langle\bm{f},\bm{u}\rangle$. Since $Q$ has signature $(1,N-3)$, the space is at most one-dimensional and thus $\bm{u}=\bm{0}$. Therefore $Q|_{E_\ell^-}$ is negative definite by (1). In particular, this gives that $Q(\bm{c}_{\bm j})<0$ for $\bm{c}_{\bm j}\in \mathcal{E}_\ell^{-}$.
\end{proof}

We have the following basic estimates basing on the particular expression of $Q$.

\begin{lemma}\label{lemeconeproperties}
	With the assumptions and notation as in Lemma \ref{lemnormalvectors}, the following hold:
	\begin{enumerate}[label=\rm{(\arabic*)}]
		\item We have $v_j>0$ ($1\le j\le\ell$), $w_j<0$ ($1\le j<\ell$), $w_\ell=0$, and $w_j>0$ ($\ell<j\le N-2$).
		
		\item For any $\g\in\R$, there exists $\d>0$, such that for any $\bm\l\in\xoverline\De\cap(\Z^{N-2}+\bm\a)\setminus(\Z+\a_\ell)\bm{e_\ell}$ and any $\bm\rho\in(\Z+\g)\bm{e_\ell}\setminus\{\bm0\}$ that satisfy $\l_k,\rho_k\ge0$ for $1\le k\le N$, we have $B(\bm\l,\bm\rho)\ge\d$.
	\end{enumerate}
\end{lemma}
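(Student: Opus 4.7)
The plan for (1) is to combine Proposition \ref{propisotricconestructure} with the positivity condition \eqref{eqnpositivityonangles}. Since $D_0\cap F_I$ is one-dimensional with $I=\{1,\ldots,N\}\setminus\{\ell,N-1\}$, the proposition forces $\sin(\vf_{N-1}-\vf_\ell)=0$. Combined with $0<\vf_\ell<\vf_{N-1}<2\pi$ and $\vf_{N-1}=2\pi-\th_N\in(\pi,2\pi)$, this yields $\vf_\ell=\vf_{N-1}-\pi\in(0,\pi)$. The signs of $w_j=\sin(\vf_{N-1}-\vf_j)/\sin(\th_N)$ then follow by a case analysis on the size of $\vf_{N-1}-\vf_j$: it lies in $(\pi,2\pi)$ for $j<\ell$, equals $\pi$ for $j=\ell$, and lies in $(0,\pi)$ for $\ell<j\le N-2$. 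Similarly, for $1\le j\le\ell$ one has $0<\vf_j\le\vf_\ell<\pi$, which together with $\sin(\vf_N-\vf_j)=-\sin(\vf_j)$ gives $v_j>0$.

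For (2), the sign hypothesis $\l_k,\rho_k\ge 0$ first restricts both vectors to the positive branch of $\xoverline\Delta$. Linearity gives $B(\bm\l,\bm\rho)=\rho_\ell B(\bm\l,\bm{e}_\ell)$, and the definition $A_{k\ell}=\sin(\th_N)v_{\min(k,\ell)}w_{\max(k,\ell)}$ together with $w_\ell=0$ from (1) makes all contributions with $k\le\ell$ vanish, leaving
\[
B(\bm\l,\bm\rho)=\rho_\ell\sin(\th_N)\,v_\ell\sum_{k=\ell+1}^{N-2}w_k\l_k.
\]
Since $\sin(\th_N)>0$, $v_\ell>0$, and each $w_k>0$ for $\ell<k\le N-2$, it suffices to bound $\rho_\ell$ and $\sum_{k>\ell}w_k\l_k$ from below by positive constants independent of $\bm\l,\bm\rho$. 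The bound on $\rho_\ell$ is immediate: $\rho_\ell\in\g+\Z$, $\rho_\ell\ne 0$, and $\rho_\ell\ge 0$ (via $\rho_{N-1}=v_\ell\rho_\ell\ge 0$ with $v_\ell>0$) force $\rho_\ell\ge\min\{r\in\g+\Z:r>0\}$.

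The main obstacle, as I expect, is ruling out the possibility that $\l_k=0$ for every $\ell<k\le N-2$, which would make the sum vanish. I would argue by contradiction: under this assumption the constraint $\l_N=\sum_{j=1}^{N-2}w_j\l_j\ge 0$ reduces to $\sum_{j<\ell}w_j\l_j\ge 0$, and since by (1) $w_j<0$ for $j<\ell$ while $\l_j\ge 0$, this forces $\l_j=0$ for all $j<\ell$. Hence $\bm\l=\l_\ell\bm{e}_\ell\in(\Z+\a_\ell)\bm{e}_\ell$, contradicting the hypothesis. Consequently some $\l_{k_0}>0$ with $\ell<k_0\le N-2$, and the discreteness of $\l_{k_0}\in\a_{k_0}+\Z$ yields $\l_{k_0}\ge\min\{r\in\a_{k_0}+\Z:r>0\}>0$. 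Combining this with the lower bound for $\rho_\ell$ and the fixed positive constants $\sin(\th_N)\,v_\ell$ and $\min_{\ell<k\le N-2}w_k$ produces the desired $\d$, which depends only on $\g$, $\bm\a$, $\ell$, and the angles.
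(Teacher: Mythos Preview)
Your proof is correct and follows essentially the same approach as the paper: for (1) you derive $\vf_{N-1}-\vf_\ell=\pi$ from Proposition~\ref{propisotricconestructure} and the positivity condition, then read off the signs of $v_j,w_j$ from \eqref{eqndfnofNlambda}; for (2) you compute $B(\bm\l,\bm\rho)=\rho_\ell\sin(\th_N)\,v_\ell\sum_{k>\ell}w_k\l_k$, rule out $\l_k=0$ for all $k>\ell$ via the constraint $\l_N\ge 0$, and finish with discreteness. One cosmetic remark: your lower bound $\min\{r\in\a_{k_0}+\Z:r>0\}$ depends on the index $k_0$, which in turn depends on $\bm\l$; to obtain a uniform $\d$ you should take the minimum of these quantities over $\ell<k_0\le N-2$ (finitely many), which is what the paper does implicitly with its constant $\kappa$.
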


\begin{proof}
	(1) By Proposition \ref{propisotricconestructure}, we have $\sin(\vf_{N-1}-\vf_\ell)=0$ and thus by \eqref{eqnpositivityonangles} $\vf_{N-1}-\vf_\ell=\pi$, $0<\vf_\ell<\pi<\vf_{N-1}<\vf_N=2\pi$. Explicit computations, using \eqref{eqndfnofNlambda}, give the desired claim.\\
	(2)	In the component of the region $\xoverline{\Delta}\setminus\{\bm{0}\}\subseteq\{\bm{\lambda}\in\R^{N-2}:Q(\bm{\lambda})\ge0\}$ that contains $\bm{f}=f_\ell\bm{e}_{\bm\ell}$, namely the one with $\l_k\ge0$ for $1\leq k\leq N$, we have
	\begin{equation*}\label{eqnfinerestimateonB}
		B(\bm{\l},\bm{\rho}) =  \sin(\th_N) \left(\rho_\ell w_\ell\sum_{j\leq\ell} v_j \l_j + \rho_\ell v_\ell\sum_{\ell<j\leq N-2} w_j \l_j\right)=\sin(\th_N)\cdot \rho_\ell\cdot v_\ell\cdot \sum_{\ell<j\leq N-2} w_j \l_j. \notag
	\end{equation*}
	Since $\rho_\ell>0$, we obtain $B(\bm{\l},\bm{\rho})\ge0$ with equality only if $\l_j=0$ for $\ell<j\le {N-2}$. In the case $\l_j=0$ for $\ell<j\le {N-2}$, recalling $\l_k\ge0$ for $1\leq k\leq N$, we have in particular
	\begin{equation*}\label{eqnlineardependenceonN}
		\l_{N} = \sum_{1\le j<\ell} w_j \l_j + w_\ell \l_\ell + \sum_{\ell+1\le j\le N-2} w_j \l_j = \sum_{1\le j<\ell} w_j \l_j \ge 0.
	\end{equation*}
	With (1) this gives $\l_j=0$ $(1\le j\le\ell-1)$ and thus $\bm\l\in\R\bm f$. We can thus assume that at least one of $\l_j$ $(\ell<j\le N-2)$ is non-zero. Letting $\k:=\min\bigcup_j(\Z+\a_j)\setminus\{0\}$, we have $\min\{\l_j\ne0:\ell<j\le N-2\}\ge\k$. Since $0<\rho_\ell\in\Z+\g$ implies that $\min(\rho_\ell)>0$, we obtain
	\begin{equation*}
		B(\bm\l,\bm\rho) \ge \sin(\th_N)\cdot  \min (\rho_\ell) \cdot v_\ell \cdot \min_{j>\ell} w_j \cdot \kappa  =: \d>0. \qedhere
	\end{equation*}
\end{proof}

Using Lemmas \ref{lemnormalvectors} and \ref{lemeconeproperties} we now give a slicing of $\xoverline\De\cap(\Z^{N-2}+\bm\a)$. We let $B_\e(\bm f):=\{\bm a\in \R^{N-2}:||\bm{a-f}||<\e\}$ and set $\R X:=\{t\bm a:t\in\R,\bm a\in X\}$.

\begin{lemma}\label{lemnbdofisotropicray}
Assume the conditions and notation as in Lemma \ref{lemnormalvectors}. Assume furthermore \eqref{eqnrelaxedconditionintermsangles}. Then there exist $\varepsilon,\delta>0$ sufficiently small and a vector $\bm{\eta}$ with $Q(\bm{\eta})\neq 0$, such that any
$\bm{\lambda}\in \R(B_{\varepsilon}(\bm{f})\cap \xoverline{\Delta}) \cap ( \mathbb{Z}^{N-2}+\bm{\alpha})$ lying in the same component as $\bm{f}$
admits a unique decomposition
\begin{align}\label{eqnuniquedecomposition}
\bm{\lambda}=
\bm{\rho}+\bm{\mu}\in \mathbb{L}_1\oplus \mathbb{L}_2\subseteq \mathbb{R}\bm{f}
\oplus \bm{\eta}^{\perp}.
\end{align}
Here
$\mathbb{L}_{1}, \mathbb{L}_{2}$ are  sets of  certain affine transformations of $\mathbb{Z},\mathbb{Z}^{N-3}$, respectively. Moreover
\begin{align}\label{eqnpropertiesofdecomposition}
B(\bm{\rho}, \bm{\mu} )\geq \delta\,,\quad  Q(\bm{\mu})>0\,\quad \text{for }\, \bm{\lambda}\notin (\mathbb{Z}+\alpha_{\ell})\bm{e}_{\bm \ell}\cup\{\bm{0}\}.
\end{align}
\end{lemma}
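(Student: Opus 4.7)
The plan is to construct the vector $\bm\eta$ and the decomposition explicitly, relying on the structural results from Lemmas \ref{lemnormalvectors} and \ref{lemeconeproperties} together with the rationality condition \eqref{eqnrelaxedconditionintermsangles}. The first step is to choose $\bm\eta$ so that $B(\bm\eta,\bm f)\neq 0$ (ensuring the direct sum $\R^{N-2}=\R\bm f\oplus\bm\eta^\perp$), $Q(\bm\eta)\neq 0$, and $\bm\eta$ is rational relative to the lattice $\Z^{N-2}+\bm\alpha$. A natural candidate is $\bm\eta=\bm{c_\ell}$, for which $B(\bm\eta,\bm f)=f_\ell$ and $\bm\eta^\perp=V_{\{\ell\}}$; however, if $Q(\bm{c_\ell})=0$ or if the resulting projection does not yield strict positivity of $Q(\bm\mu)$ below, one replaces $\bm\eta$ by a rational linear combination $\bm\eta=\bm{c_\ell}+\sum_{j\neq\ell}\beta_j\bm{c_j}$ with $\beta_j\in\Q$ chosen via \eqref{eqnrelaxedconditionintermsangles} so that all three properties hold simultaneously. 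The decomposition is then $\bm\rho:=B(\bm\eta,\bm\lambda)\, B(\bm\eta,\bm f)^{-1}\bm f$ and $\bm\mu:=\bm\lambda-\bm\rho$; since the projection maps $\bm\lambda\mapsto\bm\rho$ and $\bm\lambda\mapsto\bm\mu$ are $\Q$-linear on the lattice $\Z^{N-2}+\bm\alpha$, clearing denominators yields $\L_1\subseteq\R\bm f$ and $\L_2\subseteq\bm\eta^\perp$ as affine transformations of $\Z$ and $\Z^{N-3}$ respectively.

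For the bound $B(\bm\rho,\bm\mu)\ge\delta$: since $\bm\rho\in\R\bm f$ is isotropic, $Q(\bm\rho)=0$ and hence $B(\bm\rho,\bm\mu)=B(\bm\rho,\bm\lambda)-2Q(\bm\rho)=B(\bm\rho,\bm\lambda)$. Writing $\bm\rho=\rho_\ell\bm{e}_{\bm\ell}$ where $\rho_\ell\in(\Z+\gamma)\setminus\{0\}$ for an appropriate shift $\gamma$ (determined by $\bm\eta$ and $\alpha_\ell$), the positivity conditions $\lambda_k\ge 0$ and $\rho_k\ge 0$ for $1\le k\le N$ are automatic for $\bm\lambda$ in the same component as $\bm f$ (using $v_\ell>0$, $w_\ell=0$ from Lemma \ref{lemeconeproperties}(1)). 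Lemma \ref{lemeconeproperties}(2) then yields $B(\bm\rho,\bm\lambda)\ge\delta>0$ whenever $\bm\lambda\notin(\Z+\alpha_\ell)\bm{e}_{\bm\ell}$.

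The delicate point, which I expect to be the main obstacle, is the strict positivity $Q(\bm\mu)>0$. One uses the identity $Q(\bm\mu)=Q(\bm\lambda)-tB(\bm f,\bm\lambda)$ with $t=B(\bm\eta,\bm\lambda)/B(\bm\eta,\bm f)$, combined with the fact that $Q(\bm\lambda)>0$ on $\xoverline\Delta\setminus$ (isotropic rays) from Proposition \ref{propisotricconestructure}. The difficulty is that the indefinite form $Q|_{\bm\eta^\perp}$ is generally not positive on the full image of $\xoverline\Delta$ under the skew projection: direct computation shows that the naive choice $\bm\eta=\bm{c_\ell}$ can produce $Q(\bm\mu)<0$ on certain boundary rays of the cone. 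The resolution is to orient $\bm\eta$ relative to the negative-definite subspace $E_\ell^-$ from Lemma \ref{lemnormalvectors}(3), so that the projected cone $\{\bm\mu:\bm\lambda\in\R(B_\varepsilon(\bm f)\cap\xoverline\Delta)\}$ sits inside the positive cone of $Q|_{\bm\eta^\perp}$; rationality \eqref{eqnrelaxedconditionintermsangles} guarantees enough freedom in choosing such $\bm\eta$ while preserving the lattice structure. Shrinking $\varepsilon$ if necessary ensures that the higher order contributions dominate and that $Q(\bm\mu)>0$ holds for all $\bm\lambda$ outside the isotropic ray and the origin, completing the proof.
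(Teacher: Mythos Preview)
Your overall architecture matches the paper's: pick $\bm\eta$ with $B(\bm\eta,\bm f)\neq 0$ and $Q(\bm\eta)\neq 0$, project along $\R\bm f$ onto $\bm\eta^\perp$, use rationality for the lattice structure of $\L_1,\L_2$, and invoke Lemma~\ref{lemeconeproperties}(2) for $B(\bm\rho,\bm\mu)\ge\delta$. But the step you correctly flag as delicate, $Q(\bm\mu)>0$, is not actually resolved in your sketch, and the vague phrase ``orient $\bm\eta$ relative to $E_\ell^-$'' together with ``shrinking $\varepsilon$'' does not supply the mechanism.

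The paper's key idea is a very specific choice: $\bm\eta=a\bm{c_\ell}-\bm f$ with $a>0$ small. This is the trick that you are missing. With this $\bm\eta$, the constraint $B(\bm\mu,\bm\eta)=0$ reads $aB(\bm\mu,\bm{c_\ell})=B(\bm\mu,\bm f)=B(\bm\lambda,\bm f)$, so $\mu_\ell=B(\bm{c_\ell},\bm\mu)\ge 0$ automatically. For $j\neq\ell$ one has $B(\bm{c_j},\bm f)=0$, whence $\mu_j=B(\bm{c_j},\bm\mu)=B(\bm{c_j},\bm\lambda)=\lambda_j\ge 0$. The only nontrivial coordinate is $\mu_{N-1}$, and here the smallness of $a$ (via the explicit inequality $av_j+f_\ell\sin(\th_N)v_\ell^2 w_j>0$ for $j>\ell$) forces $\mu_{N-1}\ge 0$ as well. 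Thus $\bm\mu\in\xoverline\Delta$ itself, and $Q(\bm\mu)\ge 0$ follows from Corollary~\ref{corpositivityofarea}; strictness when $\bm\lambda\notin\R\bm f$ comes from Proposition~\ref{propisotricconestructure}. Your identity $Q(\bm\mu)=Q(\bm\lambda)-tB(\bm f,\bm\lambda)$ by itself cannot yield this, since for generic $\bm\eta$ the second term can dominate.

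A secondary gap: you assert $\rho_\ell\in(\Z+\gamma)\setminus\{0\}$ and $\rho_k\ge 0$ as ``automatic,'' but $\rho_\ell>0$ is exactly what Lemma~\ref{lemeconeproperties}(2) needs as a hypothesis, not a conclusion. The paper obtains $\rho_\ell>0$ by noting $B(\bm\rho,\bm\eta)=B(\bm\lambda,\bm\eta)$, observing $B(\bm f,\bm\eta)=af_\ell>0$, and then shrinking $\varepsilon$ so that $B(\bm\lambda,\bm\eta)>0$ on $\R B_\varepsilon(\bm f)$ by continuity. Your sketch does not supply this step.
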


\begin{proof}
	Let $\bm\eta:=a\bm{c_\ell}-\bm f$ with $a>0$ sufficiently small such that
	\begin{equation*}\label{eqnQeta>0}
		Q(\bm{\eta})=a^2 Q(\bm{c}_{\bm \ell})-a B(\bm{f},\bm{c}_{\bm \ell})\neq 0
	\end{equation*}
	and (recalling that $v_{\ell}>0, w_j>0$ for $\ell<j\leq N-2$ as shown in Lemma \ref{lemeconeproperties})
	\begin{align}\label{eqnetachoice}
		av_{j}+f_{\ell}\sin (\th_{N})  v_{\ell}^2 w_{j}>0\qquad \text{for }\ell < j \leq N-2.
	\end{align}
	From \eqref{eqnnormalvectors} we also have
	\begin{align}\label{eqnBfeta}
		B(\bm{f},\bm{\eta})=aB(\bm{f},\bm{c_\ell})=af_{\ell}>0.
	\end{align}
	
	Take any $E_\ell^-$ provided in Lemma \ref{lemnormalvectors} (3). Since $\bm{c_1},\dots,\bm{c_{N-2}}$ are linearly independent we have $\bm{c_\ell}\notin E_\ell=\R\bm f\oplus E_\ell^-$, and thus
	\begin{equation*}\label{eqnVdecomposition1}
		V=\mathbb{R}\bm{f}\oplus E_\ell^-\oplus \mathbb{R}\bm{c_\ell}=\mathbb{R}\bm{f}\oplus E_\ell^-\oplus \mathbb{R}\bm{\eta}.
	\end{equation*}
	
	\begin{figure}[h]
		\centering
		\includegraphics[scale=0.6]{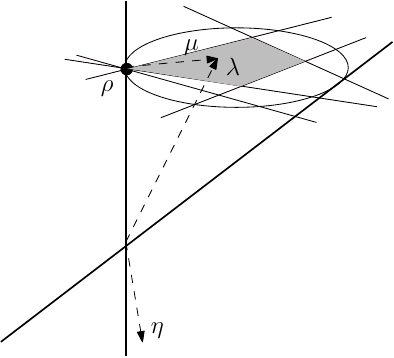}
		\caption{Slicing $\mathbb{R}^{N-2}$ using $\bm{\rho}$ and $\bm{\eta}^{\perp}$.}
		\label{figure:slicing}
	\end{figure}

	By the condition $Q(\bm\eta)\ne0$, the map $\bm x\mapsto B(\bm x,\bm\eta)$ is non-degenerate and thus $\bm\eta^\perp$ has dimension $N-3$. Since $B(\bm f,\bm\eta)\ne0$, we have  $\R\bm f\cap\bm\eta^\perp=\{\bm{0}\}$ and therefore a vector space decomposition
	\begin{align}\label{eqnslicing}
		V=\mathbb{R}\bm{f}\oplus \bm{\eta}^{\perp}.
	\end{align}
	In particular, any $\bm\l\in V$ admits a unique decomposition $\bm\l=\bm\rho+\bm\mu$ with $\bm\rho\in\R\bm f$, $\bm\mu\in\bm\eta^\perp$ as desired in \eqref{eqnuniquedecomposition}. Concretely,
	the component $\bm\mu$ is given by (see Figure \ref{figure:slicing} for an illustration)
	\begin{align}\label{eqnmucomponent}
		\bm{\mu}:=\bm{\lambda}-{B(\bm{\lambda},\bm{\eta})\over B(\bm{f},\bm{\eta}) }\bm{f}.
	\end{align}
	
	We next show that there exists $\e>0$ sufficiently small, such that for any $\bm\l\in\R(B_\e(\bm f)\cap\xoverline\De)\cap(\Z^{N-2}+\bm\a)$ that lies in the same component as $\bm f$, the resulting decomposition \eqref{eqnuniquedecomposition} satisfies \eqref{eqnpropertiesofdecomposition}. First, from \eqref{eqnmucomponent}, the set of the components $\bm\rho$ and $\bm\mu$ arising from $\bm\l\in\xoverline\De\cap(\Z^{N-2}+\bm\a)$ is given by affine transformations $\L_1,\L_2$ of the lattices $\Z,\Z^{N-3}$, respectively. To be more precise, we have
	\begin{align}\label{eqnlatticeL1}
		\L_1 &= \left\{\left(\l_\ell-a^{-1}\sin(\th_N)f_\ell v_\ell\sum_{\ell+1\le j\le N-2} w_j\l_j\right)\bm{e_\ell} : \bm\l\in\xoverline{\Delta}\cap\left(\Z^{N-2}+\bm\a\right)\right\},\\
		\nonumber
		\L_2 &= \left\{\sum_{\substack{j=1\\j\ne\ell}}^{N-2}\l_j\bm{e_j}+\left(a^{-1}\sin(\th_N)f_\ell v_\ell\sum_{\ell+1\le j\le N-2} w_j\l_j\right)\bm{e_\ell} : \bm\l\in\xoverline{\Delta}\cap\left(\Z^{N-2}+\bm\a\right)\right\}.
	\end{align}
	Under assumption \eqref{eqnrelaxedconditionintermsangles} which implies \eqref{eqnrelaxedconditionintermsofstandardform} by our choice of $\{\e_k\}_{1\le k\le N}$, $a$ can be chosen suitably such that indeed $\L_1,\L_2$ are certain affine transformations of the lattices $\Z,\Z^{N-3}$.
	Furthermore, from $B(\bm f,\bm\eta)>0$ in \eqref{eqnBfeta} and the continuity of $B$, there exists $\e>0$ sufficiently small, such that $B(\bm\l,\bm\eta)>0$ for any $\bm\l\in\R B_\e(\bm f)$. From $B(\bm\rho,\bm\eta)=B(\bm\l,\bm\eta)>0$, it follows that $\bm\rho$ lies in the same component as $\bm f$.
	Applying the reasoning of Lemma \ref{lemeconeproperties} (2) to $\bm\rho\in\L_1 $, there exists $\d>0$ such that $B(\bm\l,\bm\rho)\ge\d$ for those non-zero $\bm\l$ subject to the extra condition $\bm\l\notin(\Z+\a_\ell)\bm{e_\ell}$. Thus we have
	\begin{align*}
		B(\bm{\rho},\bm{\mu})=B(\bm{\rho}, \bm{\lambda}-\bm{\rho})=B(\bm{\lambda},\bm{\rho})\geq \delta\,,
	\end{align*}
	for $\bm{\lambda}\in \R(B_{\varepsilon}(\bm{f})\cap \xoverline{\Delta}) \cap (\mathbb{Z}^{N-2}+ \bm{\alpha})$ satisfying $\bm{\lambda}\notin (\mathbb{Z}+\alpha_{\ell})\bm{e}_{\bm \ell}\cup\{\bm{0}\}$.

	Finally
	we show that $Q(\bm\mu)>0$ for the resulting component $\bm\mu$. Since $\bm\l\in\xoverline\De$ we have $B(\bm{c_k},\bm\l)\ge0$ for $1\le k\le N$. Recalling that $B(\bm{c_j},\bm f)=0$ for $\bm{c_j}\in\calE_\ell$, we thus have
	\begin{align}\label{eqnboundmuinequality1}
		B(\bm{c}_{\bm j}, \bm{\mu})=B(\bm{c}_{\bm j}, \bm{\rho}+\bm{\mu})=B(\bm{c}_{\bm j},\bm{\lambda})\geq 0\,\quad \text{for } \bm{c}_{\bm j}\in \mathcal{E}_\ell.
	\end{align}
	By inserting the definitions of $\bm\mu$ and $\bm\eta$, we obtain that our choice of $\bm\eta$ ensures that 
	(note that $B(\bm\mu,\bm\eta)=0$ by construction)
	\begin{align}\label{eqnboundmuinequality2}
		B(\bm{\mu},\bm{c}_{\bm \ell})=a^{-1}B(\bm{\mu},\bm{f})=a^{-1}B(\bm{\lambda}-\bm{\rho},\bm{f})=a^{-1} B(\bm{\l},\bm{f})\geq 0
	\end{align}
	for $\bm\l\in\R(B_\e(\bm f)\cap\xoverline{\Delta})\cap(\Z^{N-2}+\bm\a)$ and
	the inequality is strict if $\bm{\lambda}\notin \mathbb{R}\bm{f}=\R\bm{e_\ell}$ by Lemma \ref{lemeconeproperties} (2).
	Furthermore, using \eqref{eqnnormalvectors}, \eqref{eqndefinitionofA}, \eqref{eqnetachoice}, and Lemma \ref{lemeconeproperties},  we obtain
	\begin{align}
		aB(\bm{f},\bm{c_\ell})B(\bm{\mu},\bm{c}_{\bm{N-1}})
		=a \sum_{1\leq j\leq  \ell-1}v_{j}\lambda_{j}
		+\sum_{\ell+1\leq j\leq  N-2}\left(av_{j}
		+ f_{\ell}\sin (\th_{N})  v_{\ell}^2 w_{j} \right)   \lambda_{j}\geq 0\notag\,,
	\end{align}
	where again the inequality is strict if $\bm{\lambda}\notin \mathbb{R}\bm{f}=\R\bm{e_\ell}$.
	Note that $B(\bm{f},\bm{c_\ell})>0$ by \eqref{eqnBfeta}. With \eqref{eqnboundmuinequality1} and \eqref{eqnboundmuinequality2} this shows that $\bm{\mu}\in \xoverline{\Delta}\subseteq\{\bm{x}\in \R^{N-2}: Q(\bm{x})\ge0\}$ and thus $Q(\bm{\mu})\geq 0$.
	If $\bm{\lambda}\in \mathbb{R}\bm{f}$, then $\bm{\mu}=0$ according to the direct sum structure in \eqref{eqnslicing}.
	If $\bm{\lambda}\notin \mathbb{R}\bm{f}$, then $Q(\bm{\mu})>0$ since otherwise
	 one has  $Q(\bm{\mu})= 0$ and by Proposition \ref{propisotricconestructure} one has $\bm{\mu}\in \mathbb{R}\bm{f}$ and thus $\bm{\lambda}\in \mathbb{R}\bm{f}$ by \eqref{eqnslicing},
	contradicting $\bm{\lambda}\notin \mathbb{R}\bm{f}$.
	This gives that  $Q(\bm{\mu})>0$ if $\bm{\lambda}\notin (\mathbb{Z}+\alpha_{\ell})\bm{e}_{\bm \ell}\cup\{\bm 0\}$ as desired, completing the proof.
\end{proof}

By the discussions about rationality in Remark \ref{remasymmetry},
it follows that under a different choice of independent parameters the results in Lemma \ref{lemnormalvectors} and \ref{lemeconeproperties},
and the lattice structure for $\mathbb{L}_1,\mathbb{L}_{2}$ in \eqref{eqnlatticeL1} still hold, with the domain for
$\bm{\lambda}$ in Lemma \ref{lemeconeproperties} (2) changed to an affine transformation of the sub-lattice mentioned there.
Choosing one $a$ for each isotropic ray, we obtain that
a decomposition in Lemma \ref{lemnbdofisotropicray} exists for each isotropic ray. We
use this fact in what follows without explicitly mentioning. We next prove the convergence for $\Theta_{Q,\chi}$.
\begin{theorem}\label{thmconvergenceofcountingfunction}
	Assume \eqref{eqnpositivityonangles}, $N\ge5$,
	and \eqref{eqnrelaxedconditionintermsangles}.
		Then $\Theta_{Q,\chi}$ converges compactly, i.e., uniformly on compact sets.

\end{theorem}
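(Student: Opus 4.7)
\emph{Proof plan.} The plan is to dominate the absolute value of each summand of $\TH_{Q,\chi}$ by a Gaussian majorant summable uniformly on compacta in $\H\times\C^{N-2}$. Setting $\bm{\lambda}:=\bm n+\bm y/v$, the modulus of a generic term equals $|\chi(\bm\lambda)|\,e^{-2\pi vQ(\bm n)}\,e^{-2\pi B(\bm n,\bm y)}$; on any compactum the factor $e^{-2\pi B(\bm n,\bm y)}$ grows at most exponentially in $\|\bm n\|$ and is absorbed by half of the Gaussian wherever $Q(\bm n)$ has a quadratic lower bound. It thus suffices to bound $\sum_{\bm\lambda\in\xoverline\Delta\cap(\Z^{N-2}+\bm y/v)}e^{-\pi vQ(\bm\lambda)}$. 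By Proposition \ref{propisotricconestructure}, $\xoverline\Delta$ contains only finitely many isotropic rays $\R^+\bm f_\ell$. I partition $\xoverline\Delta$ into finitely many closed sub-cones: around each isotropic ray, the tubular sub-cone $\R(B_{\varepsilon_\ell}(\bm f_\ell)\cap\xoverline\Delta)$ provided by Lemma \ref{lemnbdofisotropicray}, together with a residual closed sub-cone $\xoverline\Delta_0$ avoiding open neighborhoods of every isotropic ray. The orientation-reversed component of $\Delta$ is handled symmetrically via $\bm\lambda\mapsto-\bm\lambda$.

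On $\xoverline\Delta_0\setminus\{\bm 0\}$, Proposition \ref{propisotricconestructure} yields $Q>0$, and compactness of the cross-section with the unit sphere gives a uniform $Q(\bm\lambda)\geq c_0\|\bm\lambda\|^2$; the corresponding contribution is dominated by a convergent positive-definite Gaussian theta series in $N-2$ variables. On a tubular sub-cone around an isotropic ray $\R^+\bm f_\ell$, Lemma \ref{lemnbdofisotropicray} (applicable by \eqref{eqnpositivityonangles} and \eqref{eqnrelaxedconditionintermsangles}) uniquely decomposes $\bm\lambda=\bm\rho+\bm\mu\in\L_1\oplus\L_2\subseteq\R\bm f_\ell\oplus\bm\eta^\perp$, with $\L_1,\L_2$ genuine affine lattices of rank $1$ and $N-3$. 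Since $Q(\bm f_\ell)=0$, one has $Q(\bm\lambda)=Q(\bm\mu)+B(\bm\rho,\bm\mu)$. When $\bm\mu=\bm 0$, $\bm\lambda\in\R\bm e_\ell$ forces $\lambda_N=tw_\ell=0$ by $w_\ell=0$ from Lemma \ref{lemeconeproperties}(1), so $\chi(\bm\lambda)=0$ and such terms drop out. For $\bm\mu\neq\bm 0$, Lemma \ref{lemnbdofisotropicray} supplies $Q(\bm\mu)>0$ and $B(\bm\rho,\bm\mu)\geq\delta>0$; compactness of the normalized image of $\xoverline\Delta$ under projection along $\bm f_\ell$ onto the closed positive cone of $Q|_{\bm\eta^\perp}$ then upgrades $Q(\bm\mu)>0$ to the quadratic bound $Q(\bm\mu)\geq c_1\|\bm\mu\|^2$.

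The tubular contribution is thus dominated by
\[
\sum_{\bm\mu\in\L_2\setminus\{\bm 0\}}e^{-\pi vQ(\bm\mu)}\sum_{\bm\rho\in\L_1^+(\bm\mu)}e^{-\pi vB(\bm\rho,\bm\mu)}.
\]
Parametrizing the admissible $\bm\rho$ as $(\rho_{\min}(\bm\mu)+n\rho_{\mathrm{step}})\bm e_\ell$ with $n\in\Z_{\geq 0}$ and using $Q(\bm e_\ell)=0$ gives $B(\bm\rho,\bm\mu)=(\rho_{\min}(\bm\mu)+n\rho_{\mathrm{step}})c(\bm\mu)$ with $c(\bm\mu):=B(\bm e_\ell,\bm\mu)$. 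The rationality condition \eqref{eqnrelaxedconditionintermsangles} forces $B(\bm e_\ell,\cdot)$ to take values in a shifted arithmetic progression on $\Z^{N-2}+\bm y/v$, bounding its positive values below by a uniform $c_{\min}>0$ independent of $\bm\mu$; combined with the single-lattice-point estimate $\rho_{\min}(\bm\mu)c(\bm\mu)\geq\delta$ of Lemma \ref{lemeconeproperties}(2), the inner geometric series is dominated uniformly in $\bm\mu$ by $e^{-\pi v\delta}/(1-e^{-\pi v\rho_{\mathrm{step}}c_{\min}})$. The outer sum converges as an $(N-3)$-dimensional positive-definite theta series via $Q(\bm\mu)\geq c_1\|\bm\mu\|^2$, establishing the compact convergence claim. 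The main obstacle is securing this uniform geometric rate $c(\bm\mu)\geq c_{\min}$: the linear functional $B(\bm e_\ell,\cdot)$ vanishes on the $(N-3)$-dimensional hyperplane $E_\ell=\bm f_\ell^\perp$, so \emph{a priori} $c(\bm\mu)$ could degenerate to zero along a sequence in $\L_2$, in which case the inner geometric series would fail to be summable. Rationality is exactly what averts this degeneracy by discretizing the image of $B(\bm e_\ell,\cdot)$ on the shifted lattice, thereby propagating the single-point bound from Lemma \ref{lemeconeproperties}(2) into the required uniform ratio estimate.
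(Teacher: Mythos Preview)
Your proposal is correct and follows essentially the same approach as the paper's proof: partition $\xoverline\Delta$ into a residual cone bounded away from all isotropic rays (handled by compactness of the spherical cross-section) together with finitely many tubular cones around the isotropic rays (handled via the decomposition $\bm\lambda=\bm\rho+\bm\mu$ of Lemma~\ref{lemnbdofisotropicray}, with the inner $\bm\rho$-sum a geometric series and the outer $\bm\mu$-sum controlled by $Q(\bm\mu)>0$). Two small remarks. First, the uniform lower bound $c(\bm\mu)=B(\bm e_\ell,\bm\mu)\ge c_{\min}>0$ that you need for the geometric ratio is already contained in the proof of Lemma~\ref{lemeconeproperties}(2): since $B(\bm\rho,\bm\mu)=B(\bm\rho,\bm\lambda)=\rho_\ell\cdot\sin(\theta_N)v_\ell\sum_{j>\ell}w_j\lambda_j$ and the $\lambda_j$ for $j\ne\ell$ lie in shifted integers, the bracket is bounded below by $\sin(\theta_N)v_\ell\min_{j>\ell}w_j\cdot\kappa$ whenever $\bm\lambda\notin\R\bm f$; the rationality hypothesis \eqref{eqnrelaxedconditionintermsangles} is used separately, to guarantee that $\mathbb L_1$ is a genuine rank-one lattice so that the $\bm\rho$-sum is an honest geometric series rather than a sum over a potentially dense set. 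Second, your upgrade $Q(\bm\mu)\ge c_1\|\bm\mu\|^2$ is exactly what the paper uses implicitly when it says ``the outer sum is absolutely convergent since $Q(\bm\mu)>0$''; the justification is that the proof of Lemma~\ref{lemnbdofisotropicray} shows $\bm\mu\in\xoverline\Delta\cap\bm\eta^\perp$, and since $\bm f\notin\bm\eta^\perp$ the only isotropic vector of $\xoverline\Delta$ lying in $\bm\eta^\perp$ near the image cone is $\bm 0$, so compactness of the normalized cross-section gives the quadratic bound.
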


\begin{proof}
	We only need to consider one of the two connected components for the summation range, say, the one with $\l_k\ge0$. By Corollary \ref{corpositivityofarea}, $Q(\bm\l)\ge0$ for $\bm\l\in\xol\De$ which contains the summation range in \eqref{eqncountingfunctionwithparameterlambda}. Consider first the case $D_0\cap\xol\De=\{\bm0\}$. Let $S:=\{\bm\l\in\R^{N-2}:||\bm\l||=1\}$ be the unit sphere. Then $Q(\bm\l)\ge0$ for $\bm\l\in\xol\De\cap S$ and thus attains its minimum $m>0$. Compact convergence follows since for any $\bm\l\in\xol\De\setminus\{\bm0\}$, one has
	\[
		Q(\bm{\l}) = ||\bm\l||^2Q\left(\frac{\bm\l}{||\bm\l||}\right) \ge ||\bm\l||^2m > 0\,,\qquad \left|q^{Q(\bm\l)} e^{2\pi iB(\bm\l,\bm\b)}\right| \le e^{-2\pi ||\bm\l||^2my}.
	\]
	
	Next assume that $D_0\cap ( \xoverline{\Delta}\setminus\{\bm{0}\})\neq \emptyset$.
	By Proposition \ref{propisotricconestructure}, only finitely many rays are contained  in
	$D_0\cap ( \xoverline{\Delta}\setminus\{\bm{0}\})$ which are given by $D_0\cap (  F_{I}\setminus\{\bm{0}\}), |I|={N-2}$.
	By a relabeling of vertices if needed, any such ray $\mathbb{R}\bm{f}$  can be described as
	$
		\l_k = 0,\, k\in\{1,2,\dots,N\}\setminus\{\ell,N-1\},
	$
	with $1\leq \ell\leq N-3$. We take  $\l_1,\dots,\l_{N-2}$ as independent parameters for $\xoverline{\Delta}$ as before. Applying Lemma \ref{lemnbdofisotropicray} we have
	\begin{equation*}
	 	\sum_{\bm\l\in\mathbb{R}(B_\varepsilon(\bm f)\cap\Delta)\cap\left(\Z^{N-2}+\bm\a\right)} \left|q^{Q(\bm\l)}e^{2\pi iB(\bm\b,\bm\l)}\right|\le\sum_{\bm{\mu}\in \mathbb{L}_{2}} \left|q^{Q(\bm\mu)}\right|   \sum_{\bm\rho\in\mathbb{L}_{1}} \left|e^{2\pi i B(\bm\mu,\bm\rho)\t}\right|.
	\end{equation*}
	We only need to consider $\bm\l\ne\bm0$. By Lemma \ref{lemnbdofisotropicray}, $\rho_\ell>0$ as $\l_k\ge0$ and $B(\bm\mu,\bm\rho)\ge\d>0$ since $\bm\l\notin\R\bm f$ (note $\R\bm f\cap \Delta=\{\bm{0}\}$).
	Writing $\mathbb{L}_{1}$ in \eqref{eqnlatticeL1} as $(a_{2}\mathbb{N}+a_{1})\bm{e_\ell}$  for some $a_{1}\in \mathbb{R}, a_{2}\in \mathbb{R}^{+}$, the inner sum is bounded
	by the geometric series 
	$\sum_{k=0}^{\infty} e^{-2\pi  a_2\delta k v}$
	and thus converges compactly.
	The outer sum is absolutely convergent since $Q(\bm{\mu})>0$.
	We use the splitting
	\[
		\De = \bigcup_{\bm f} (\R(B_\e(\bm f)\cap\De)) \cup \left(\De\Big\backslash\bigcup_{\bm f} (\R(B_\e(\bm f)\cap\De))\right)
	\]
	and intersect it with $\Z^{N-2}+\bm\a$. For the
	summation over the former we apply the reasoning mentioned above to each isotropic ray $\mathbb{R}\bm{f}$,  for the summation over the latter
	we apply the compactness argument in the case $D_0\cap \xoverline{\Delta}=\{\bm{0}\}$. This proves the theorem.
\end{proof}

We also consider the following closely related series
\begin{equation}\label{eqnThetaseriesdivergentpart}
	\sum_{\bm\l\in\R\left(B_\e(\bm f)\cap\xoverline\De\right)\cap\left(\Z^{N-2}+\bm\a\right)}\hspace{-.25cm} q^{Q(\bm\l)}e^{2\pi iB(\bm\l,\bm\b)} = q^{Q(\bm\a)}q^{2\pi iB(\bm\a,\bm\b)}\hspace{-.25cm}\sum_{\bm\l\in\R\left(B_\e(\bm f)\cap\xoverline\De\right)\cap\left(\Z^{N-2}+\bm\a\right)}\hspace{-.25cm} q^{Q(\bm n)}e^{2\pi iB(\bm n,\bm z)}.
\end{equation}

Before proceeding we study $\R\bm f\cap(\Z^{N-2}+\bm\a)$ more closely. To investigate integrality we can not directly apply the cyclic symmetry anymore, since under a cyclic permutation the range $\Z^{N-2}+\bm\a$ changes in terms of a new set of consecutive $N-2$ parameters; see Remark \ref{remasymmetry}. We always work with a fixed set of independent parameters $\{\l_k\}_{1\le k\le N}$. We require the following.

\begin{lemma} \label{lemlatticeisotropicvector}
	Assume notation and assumptions as in Theorem \ref{thmconvergenceofcountingfunction}. Then $\R\bm f\cap(\Z^{N-2}+\bm\a)\ne\emptyset$ if and only if $\bm\a\in\R\bm f$. In this case $\R\bm f\cap(\Z^{N-2}+\bm\a)$ is a translated  rank-one sublattice of $\Z^{N-2}$.
\end{lemma}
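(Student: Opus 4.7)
The plan is to exploit the explicit form of the generator of the isotropic ray. By Lemma~\ref{lemnormalvectors}, $\bm f = f_\ell \bm{e_\ell}$ with $f_\ell > 0$, so $\R\bm f = \R\bm{e_\ell}$, and the question reduces to a componentwise comparison.

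First I would unpack the non-emptiness condition. A point of $\R\bm f \cap (\Z^{N-2} + \bm\a)$ has the form $t f_\ell \bm{e_\ell} = \bm n + \bm\a$ for some $t \in \R$ and $\bm n \in \Z^{N-2}$. Projecting to coordinates $j \ne \ell$ gives $\a_j = -n_j$ (in particular $\a_j \in \Z$), while the $\ell$-th component gives $t f_\ell = \a_\ell + n_\ell$, which is solvable in $t$ for every $n_\ell \in \Z$ since $f_\ell \ne 0$.

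This yields both directions of the equivalence. The reverse direction is immediate: if $\bm\a \in \R\bm f$, then $\bm n = \bm 0$ and $t = \a_\ell/f_\ell$ place $\bm\a$ itself in the intersection. For the forward direction, the componentwise analysis shows $\a_j \in \Z$ for every $j \ne \ell$; replacing $\bm\a$ by the equivalent representative $\bm\a - \sum_{j \ne \ell} \a_j \bm{e_j}$, which leaves the set $\Z^{N-2} + \bm\a$ unchanged, places $\bm\a$ in $\R\bm{e_\ell} = \R\bm f$.

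For the lattice structure, once $\a_j \in \Z$ for $j \ne \ell$, the parametrization above exhibits
\[
\R\bm f \cap (\Z^{N-2} + \bm\a) = \bigl\{(\a_\ell + n)\bm{e_\ell} : n \in \Z\bigr\},
\]
and the corresponding integer vectors $\bm n \in \Z^{N-2}$ run through the translated rank-one sublattice $-\sum_{j \ne \ell} \a_j \bm{e_j} + \Z \bm{e_\ell}$ of $\Z^{N-2}$. No substantive obstacle arises: given the explicit form $\bm f = f_\ell \bm{e_\ell}$, the argument is a one-line component comparison. The only point of care is the interpretational one of identifying $\bm\a$ with its coset modulo $\Z^{N-2}$ when reading ``$\bm\a \in \R\bm f$'', which is natural given the role of $\bm\a$ as the real part of the elliptic parameter $\bm z = \bm\a \tau + \bm\b$.
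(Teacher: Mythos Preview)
Your argument is correct only for isotropic rays that happen to lie along a coordinate axis $\R\bm{e_\ell}$ in the \emph{fixed} parametrization $\l_1,\dots,\l_{N-2}$. The form $\bm f=f_\ell\bm{e_\ell}$ in Lemma~\ref{lemnormalvectors} is not a conclusion but an assumption, obtained after a cyclic relabelling of the vertices. As the paper stresses in the paragraph immediately preceding this lemma, such a relabelling is \emph{not} available here: the lattice $\Z^{N-2}+\bm\a$ is tied to the fixed choice of independent parameters, and a cyclic permutation replaces it by a different (rescaled sub-)lattice. By Proposition~\ref{propisotricconestructure} a general isotropic ray is cut out by $\l_k=0$ for $k\in\{1,\dots,N\}\setminus\{a,b\}$ with $\sin(\vf_b-\vf_a)=0$, and when both $a,b\le N-2$ the ray lies in $\langle\bm{e_a},\bm{e_b}\rangle$ with a nontrivial slope determined by the constraint $\l_{N-1}=\l_N=0$; it is \emph{not} a coordinate axis.

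In that case your componentwise argument breaks down, and the missing ingredient is precisely the rationality hypothesis~\eqref{eqnrelaxedconditionintermsangles} (equivalently~\eqref{eqnrelaxedprimitiveconditionintermsofstandardform}). One must check that the line $\R\bm f\subset\langle\bm{e_a},\bm{e_b}\rangle$ has rational slope in the scaled lattice coordinates, so that $\R\bm f\cap\Z^{N-2}$ is genuinely a rank-one sublattice rather than $\{\bm 0\}$. The paper does exactly this case analysis: for $b\in\{N-1,N\}$ it reduces, as you found, to $\R\bm{e_a}$; for $1\le a<b\le N-2$ it uses the primitivity condition on $\calP^{-1}MS$ to pin down the slope (one of $v_a=0$, $v_b=0$, or $v_a\e_a=\pm v_b\e_b$) and then reads off the lattice intersection and the corresponding integrality constraints on $\bm\a$. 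Your proof covers the first case cleanly but omits the second entirely.
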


\begin{proof}
	By Proposition \ref{propisotricconestructure} an isotropic ray $\R\bm f$ is given by $\l_k=0$, $k\in\{1,\dots,N\}\setminus\{a,b\}$, for some $a<b$ with $b\ne a+1$. In order for\footnote{Recall that here $\Z^{N-2}+\bm\a$ means $S(\Z^{N-2}+\bm\a)$ with $S$ given in Lemma \ref{lemrelaxedcondition}.} $\R\bm f\cap(\Z^{N-2}+\bm\a)\ne\emptyset$, a first condition on $\a_k$ ($1\le k\le N$) is
	\begin{equation}\label{eqnpoleconditionreal}
		\a_k\in\Z,\qquad k\in\{1,\dots,N\}\setminus\{a,b\}.
	\end{equation}
	Here we recall that a solution $\{\e_k\}_{1\le k\le N}$ to \eqref{eqnrelaxedprimitiveconditionintermsofstandardform} is fixed from the beginning in the definition of the counting function, as discussed in Subsection \ref{secintegralityofsignedarea}. If $b\in\{N-1,N\}$, then we obtain that
	\[
		\R\bm f \cap \left(\Z^{N-2}+\bm\a\right) = \R\bm{e_a} \cap \left(\Z^{N-2}+\bm\a\right) = (\Z+\a_a)\e_a\bm{e_a}\,,
	\]
	and \eqref{eqnpoleconditionreal} becomes
	\begin{equation*}\label{eqnpoleconditionrealcase1}
		\a_k\in\Z,\qquad k\in\{1,\dots,N-2\}\setminus\{a\},\qquad
		\begin{cases}
			\sum_{j=1}^{N-2} \frac{v_j\e_j}{\e_{N-1}}\a_j\in\Z &\text{if } b=N,\\
			\sum_{j=1}^{N-2} \frac{w_j\e_j}{\e_N}\a_j\in\Z &\text{if } b=N-1.
		\end{cases}
	\end{equation*}
	If $1\leq a<b\leq N-2$, then the set $\mathbb{R}\bm{f}\cap  (\mathbb{Z}^{N-2}+\bm{\alpha})$ consists of vectors in $\mathbb{Z}^{N-2}+\bm{\alpha}$ satisfying
	\begin{align*}
		\bm{v}=\lambda_a \bm{e_a}+\lambda_b \bm{e_b}\,,\quad
		\begin{pmatrix}
		v_{a} &v_{b} \\
		w_{a} & v_{b}
		\end{pmatrix}
		\begin{pmatrix}
		\lambda_a\\
		\lambda_b
		\end{pmatrix}=0.
	\end{align*}
	The requirement \eqref{eqnrelaxedprimitiveconditionintermsofstandardform} implies that $v_a=0$, $v_b=0$, $v_a\e_a=v_b\e_b$, or $v_a\e_a=-v_b\e_b$. By basic Linear Algebra, $\R\bm f\cap(\Z^{N-2}+\bm\a)$ is non-empty if and only if $\a_b\in\Z$, $\a_a\in\Z$, $\a_a+\a_b\in\Z$, $\a_a-\a_b\in\Z$. The condition \eqref{eqnpoleconditionreal} then becomes
	\begin{equation*}\label{eqnpoleconditionrealcase2part1}
		{\alpha_{k}}\in \mathbb{Z}\,,~k\in \{1,\dots,N-2\}\setminus\{a,b\}\,,\quad
		\sum_{j=1}^{N-2}{ v_{j}  \varepsilon_{j} \over \varepsilon_{N-1}}{\alpha_{j}}\in\mathbb{Z}\,,~
		\sum_{j=1}^{N-2} {w_{j} \varepsilon_{j}  \over \varepsilon_{N}}{\alpha_{j} }\in\mathbb{Z}.
	\end{equation*}
	In any of the above cases, the set $\mathbb{R}\bm{f}\cap  (\mathbb{Z}^{N-2}+\bm{\alpha})$  consists of an affine transformation of a rank-one sub-lattice
	of  $\mathbb{Z}^{N-2}$ that contains $\bm{\alpha}$ itself.
\end{proof}

We have the following meromorphicity result.

\begin{corollary}\label{corThetaseriespolarpart}
	Under the notation and assumptions as in Theorem \ref{thmconvergenceofcountingfunction}, the series
	\[
		\sum_{\bm\l\in\R\left(B_\e(\bm f)\cap\xoverline\De\right)\cap\left(\Z^{N-2}+\bm\a\right)} q^{Q(\bm n)}e^{2\pi iB(\bm n,\bm z)}
	\]
	is holomorphic everywhere, except along 
	$P_{\bm f}=\{\bm z:\bm\a\in\R\bm f,B(\bm f,\bm\b)=0\}$, where it is meromorphic with a simple pole.
\end{corollary}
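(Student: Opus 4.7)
The plan is to apply the slicing from Lemma \ref{lemnbdofisotropicray}, which provides a unique decomposition $\bm\l = \bm\rho + \bm\mu$ with $\bm\rho \in \L_1 \subseteq \R\bm f$ and $\bm\mu \in \L_2 \subseteq \bm\eta^\perp$, where $\L_1,\L_2$ are affine transformations of $\Z,\Z^{N-3}$. Since $\bm f$ (and hence $\bm\rho$) is isotropic, $Q(\bm\l) = Q(\bm\mu) + B(\bm\rho, \bm\mu)$, and the series factorizes as
\[
\sum_{\bm\mu\in\L_2} q^{Q(\bm\mu)} e^{2\pi i B(\bm\mu, \bm z)} \sum_{\bm\rho\in\L_1(\bm\mu)} q^{B(\bm\rho, \bm\mu)} e^{2\pi i B(\bm\rho, \bm z)},
\]
where $\L_1(\bm\mu)$ records the indexing of $\bm\rho$ compatible with $\bm\mu$.

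First I would handle the regular part, consisting of those terms with $\bm\mu \neq \bm 0$. By Lemma \ref{lemnbdofisotropicray}, $B(\bm\rho, \bm\mu) \geq \delta > 0$ in this range, so the inner sum over the rank-one lattice of $\bm\rho$ is a convergent geometric series bounded uniformly by a constant multiple of $(1 - e^{-2\pi\delta v})^{-1}$. Combined with the outer sum over $\bm\mu$, which converges absolutely by the argument in Theorem \ref{thmconvergenceofcountingfunction} using $Q(\bm\mu) > 0$, this contribution is holomorphic on all of $\H \times \C^{N-2}$.

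The only possible singularity therefore arises from the diagonal term $\bm\mu = \bm 0$, which by Lemma \ref{lemnbdofisotropicray} occurs exactly for $\bm\l \in \R\bm f \cap (\Z^{N-2}+\bm\a)$. By Lemma \ref{lemlatticeisotropicvector}, this intersection is empty unless $\bm\a \in \R\bm f$, in which case it is a translated rank-one lattice $\bm\a + \Z\bm{f_0}$ intersected with the appropriate half-ray in $\xoverline\De$. On this locus $Q(\bm\l) = 0$, and since $\bm{f_0}, \bm\a \in \R\bm f$ with $\bm f$ isotropic, we have $B(\bm{f_0}, \bm\a) = 0$, so $B(\bm{f_0}, \bm z) = B(\bm{f_0}, \bm\b)$ is independent of $\t$. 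The singular part then collapses to a geometric series
\[
e^{2\pi i B(\bm\a, \bm z)} \sum_{n \geq n_0} e^{2\pi i n B(\bm{f_0}, \bm\b)} = \frac{e^{2\pi i B(\bm\a, \bm z) + 2\pi i n_0 B(\bm{f_0}, \bm\b)}}{1 - e^{2\pi i B(\bm{f_0}, \bm\b)}},
\]
which is meromorphic in $\bm z$ with a simple pole where the denominator vanishes, yielding the stated simple pole along $P_{\bm f} = \{\bm\a \in \R\bm f,\, B(\bm f, \bm\b) = 0\}$.

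The main obstacle I anticipate is pinning down the precise lattice structures of $\L_1(\bm\mu)$ and of $\R\bm f \cap (\Z^{N-2}+\bm\a)$, together with the starting index $n_0$ induced by the half-ray restriction from $\xoverline\De$. These depend on the rationality assumption \eqref{eqnrelaxedconditionintermsangles} and must be tracked carefully to ensure that the singular locus indeed reduces to the single hyperplane $P_{\bm f}$ described in the statement rather than a larger union of shifted copies.
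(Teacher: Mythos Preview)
Your approach is correct and essentially the same as the paper's: both isolate the contribution from the isotropic ray $\R\bm f$ (your $\bm\mu=\bm 0$ case) as an explicit geometric series with a simple pole at $B(\bm f,\bm\b)=0$, and show the remainder is holomorphic. The paper streamlines the regular part by splitting $\R(B_\e(\bm f)\cap\xoverline\De)$ into the open piece $\R(B_\e(\bm f)\cap\De)$ and the ray $\R\bm f\cap\xoverline\De$, then citing Theorem~\ref{thmconvergenceofcountingfunction} directly for the former rather than re-running the $\bm\rho/\bm\mu$ slicing as you do.
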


\begin{proof}
	The series in \eqref{eqnThetaseriesdivergentpart} equals
	\begin{equation*}
		\sum_{\bm\l\in\mathbb{R}\left(B_\varepsilon(\bm f)\cap\Delta\right)\cap\left(\Z^{N-2}+\bm\a\right)}  q^{Q(\bm\l)}e^{2\pi iB(\bm\b,\bm\l)} + \sum_{\bm\l\in \mathbb{R}\bm{f}\cap\xoverline{\Delta}\cap\left(\Z^{N-2}+\bm\a\right)}  q^{Q(\bm\l)}e^{2\pi iB(\bm\b,\bm\l)}.
	\end{equation*}
	The first summand converges compactly by Theorem \ref{thmconvergenceofcountingfunction}. If $\R\bm f\cap\xoverline\De\cap(\Z^{N-2}+\bm\a)$ is non-empty, then, by Lemma \ref{lemlatticeisotropicvector} it is a translated rank-one lattice and $\bm\a\in\R\bm f$. As before we parametrize the isotropic ray $\R_0^+\bm f\cap\xoverline\De\cap(\Z^{N-2}+\bm\a)$ by $(a_2\N+a_1)\bm f$ for some $a_1\in\R,a_2\in\R^+$. Then the second summand equals
$
 \sum_{n\geq 0}  e^{2\pi i B(\bm{f},\bm{\beta})(na_2+a_1)}.
$
This series is conditionally convergent for those $\bm\b$ satisfying $B(\bm f,\bm\b)\ne0$ and the result is $\frac{e^{2\pi ia_1B(\bm f,\bm\b)}}{1-e^{2\pi ia_2B(\bm f,\bm\b)}}.$ The same analysis applies to the other half of the isotropic ray $\mathbb{R}\bm{f}$.
Thus
the original series is meromorphic with a simple pole at $\bm{z}=\bm{\alpha}\tau+\bm{\beta}$ if there exists an isotropic ray $\mathbb{R}\bm{f}$ such that $\bm{\alpha}\in\mathbb{R}\bm{f},\, B(\bm{f}, \bm{\beta})=0.$ This finishes the proof.
\end{proof}

The function $\TH_{Q,\chi}$ has the following elliptic properties.

\begin{corollary}\label{corThetaseriesellipticity}
	Let the notation and assumptions be the same as in Theorem \ref{thmconvergenceofcountingfunction}. Then
	\[
		\TH_{Q,\chi}(\bm z+\bm a\t+\bm b;\t) = q^{-Q(\bm a)} e^{-2\pi iB(\bm a,\bm z)} \TH_{Q,\chi}(\bm z;\t)\,,\qquad \bm a,\bm b\in\Z^{N-2}.
	\]
\end{corollary}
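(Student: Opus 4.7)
The plan is to derive the ellipticity by a standard reindexing of the defining series, using only the definition of $\TH_{Q,\chi}$ and the integrality consequences of the rationality condition \eqref{eqnrelaxedconditionintermsangles}. First, writing $\bm z + \bm a \tau + \bm b = \bm x' + i \bm y'$ with $\bm y' = \bm y + \bm a v$, I note that $\tfrac{\bm y'}{v} = \tfrac{\bm y}{v} + \bm a$; hence the substitution $\bm n \mapsto \bm n - \bm a$ in the defining series leaves the characteristic factor $\chi$ unchanged, and gives
\[
\TH_{Q,\chi}(\bm z + \bm a \tau + \bm b;\, \tau) = \sum_{\bm n \in \Z^{N-2}} \chi\!\left(\bm n + \tfrac{\bm y}{v}\right) q^{Q(\bm n - \bm a)} e^{2\pi i B(\bm n - \bm a,\, \bm z + \bm a \tau + \bm b)}.
\]

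Next, I would expand $Q(\bm n - \bm a) = Q(\bm n) - B(\bm n, \bm a) + Q(\bm a)$ and $B(\bm n - \bm a, \bm z + \bm a \tau + \bm b)$ linearly in each argument, so the combined exponent splits into a $\tau$-linear piece and a $\tau$-free piece. The $\tau$-linear cross term $\tau B(\bm n, \bm a)$ cancels exactly the contribution $-\tau B(\bm n, \bm a)$ coming from $Q(\bm n - \bm a)$, while $B(\bm a, \bm a) = 2 Q(\bm a)$ combines with $Q(\bm a)$ to produce the target prefactor $q^{-Q(\bm a)}$. The analogous bookkeeping in the $\bm z$-linear piece yields $e^{-2\pi i B(\bm a, \bm z)}$ and the original elliptic exponential $e^{2\pi i B(\bm n, \bm z)}$, leaving in the summand only the residual phases $e^{2\pi i B(\bm n, \bm b)}$ and $e^{-2\pi i B(\bm a, \bm b)}$.

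The crux is then to verify that these residual phases are trivial. This is where the rationality hypothesis enters via Lemma \ref{lemrelaxedcondition}(3): the conclusion $S^T A S \in \Mat_{(N-2)\times(N-2)}(\Z)$, with respect to the fixed scaling $\{\e_k\}$ singled out in Subsection \ref{secintegralityofsignedarea}, forces $Q$ to be integer-valued on the lattice of summation, and hence $B(\cdot,\cdot) = Q(\cdot + \cdot) - Q(\cdot) - Q(\cdot)$ takes integer values on the same lattice. In particular $B(\bm n, \bm b), B(\bm a, \bm b) \in \Z$ for $\bm a, \bm b \in \Z^{N-2}$, so both residual exponentials equal $1$, and the announced identity follows after re-summing $\bm n$.

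Finally, the reindexing is legitimate wherever the series converges absolutely; by Theorem \ref{thmconvergenceofcountingfunction} this is compact convergence on the complement of the polar locus, and the identity of meromorphic functions then extends to all of $\H \times \C^{N-2}$ by the meromorphicity established in Corollary \ref{corThetaseriespolarpart}. The main obstacle is not analytic but notational: one has to confirm that the chosen scaling conventions really make the bilinear form integer-valued on the summation lattice, so that the two residual phases indeed trivialize. Once that is in place, the ellipticity is immediate.
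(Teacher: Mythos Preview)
Your argument is correct and is precisely the standard reindexing computation one expects here; the paper itself states Corollary~\ref{corThetaseriesellipticity} without proof, so there is nothing to compare against beyond confirming that your derivation goes through. The key step---that $B(\bm n,\bm b)$ and $B(\bm a,\bm b)$ are integers---is indeed secured by the convention fixed after Lemma~\ref{lemrelaxedcondition}, where the summation lattice is tacitly $S\Z^{N-2}$ and $S^TAS$ is integral.

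One minor remark: your final paragraph is slightly overcautious. Theorem~\ref{thmconvergenceofcountingfunction} already gives compact (and absolute) convergence of $\TH_{Q,\chi}$ on all of $\H\times\C^{N-2}$, since $\chi$ is supported on the \emph{open} cone $\Delta$ and therefore never touches the isotropic rays; the poles in Corollary~\ref{corThetaseriespolarpart} pertain to the series over $\xoverline\Delta$, not to $\TH_{Q,\chi}$ itself. So the reindexing is legitimate everywhere without any appeal to meromorphic continuation.
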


\subsection{Faces of $\xoverline\De$ and degenerations of polygons}\label{secfacesanddegenerations}

By Lemma \ref{lemlineardepenence}, one has $\lambda_{k}=0$ on the face $F_{K}$ for any $k\in K_{*}\,,$
where $K_{*}$ is the maximal set containing $K$ such that $F_{K}=F_{K_{*}}$ with $\dim (V_{K_{*}})=\dim( F_{K_{*}})$.
Each face $F_{K}$ can then uniquely be represented  by an index set $K_{*}\subseteq \{1,\dots,N\}$.
Denote by $\mathcal{F}(\xoverline{\Delta})$ the collection of faces of $\xoverline{\Delta}$, and by $\mathcal{F}_{0}(\xoverline{\Delta})$ the set of
faces which are isotropic rays.
Abusing notation we also use the notation $K_{*}\in \mathcal{F}(\xoverline{\Delta})$ if $F_{K_{*}}\in \mathcal{F}(\xoverline{\Delta})$.

\begin{lemma}\label{lemfacesofDeltaanddegenerations}
	For $K_*\in\FF(\xoverline{\Delta})\setminus\FF_0(\xoverline{\Delta})$, $Q|_{V_{K_*}}$ is non-degenerate of signature $(1,N-3-\lvert K_*\rvert)$.
\end{lemma}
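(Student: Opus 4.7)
The plan is to prove that $Q|_{V_{K_*}}$ both admits a strictly positive direction and is non-degenerate; combined with the ambient signature $(1,N-3)$ from Lemma \ref{la:quadFormsignature}, these force the signature of $Q|_{V_{K_*}}$ to equal $(1, \dim V_{K_*} - 1) = (1, N-3-|K_*|)$. Since $K_* \notin \FF_0(\xoverline{\De})$, Lemma \ref{lemlineardepenence} gives that $\{\l_j\}_{j \in K_*}$ are linearly independent and $\dim V_{K_*} = N-2-|K_*|$. We may restrict to the case $F_{K_*} \ne \{\bm 0\}$, since otherwise the signature statement is vacuous.

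For the positive direction, Proposition \ref{propisotricconestructure} shows that $D_0 \cap \xoverline{\De}$ is a finite union of one-dimensional isotropic rays together with the origin. Thus if $\dim F_{K_*} \ge 2$, the face $F_{K_*}$ cannot be contained in this one-dimensional set, and hence contains a point $\bm w$ with $Q(\bm w) > 0$; while if $\dim F_{K_*} = 1$, being a non-isotropic ray forces $Q > 0$ on $V_{K_*} \setminus \{\bm 0\}$ via Corollary \ref{corpositivityofarea}. In either case, $V_{K_*}$ contains a strictly positive direction $\bm w$.

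For non-degeneracy, suppose for contradiction that the radical $R := V_{K_*} \cap V_{K_*}^\perp$ contains a non-zero $\bm u$. Then $\bm u$ is $Q$-isotropic (since $B(\bm u, \bm u) = 0$) and $V_{K_*} \subseteq \bm u^\perp$ (since $\bm u$ is $B$-orthogonal to all of $V_{K_*}$). The contradiction comes from the rigidity of signature $(1, N-3)$: choose a Witt decomposition $V = \sangle{\bm u, \bm u'} \oplus U$, where $\bm u'$ is isotropic with $B(\bm u, \bm u') = 1$, so that $\sangle{\bm u, \bm u'}$ is a hyperbolic plane of signature $(1,1)$ and $U = \sangle{\bm u, \bm u'}^\perp$ is negative definite of dimension $N-4$. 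A direct computation shows $\bm u^\perp = \R\bm u \oplus U$, on which $Q$ vanishes on $\R\bm u$ and is negative definite on $U$, making $Q|_{\bm u^\perp}$ negative semi-definite and thus devoid of any positive direction. This contradicts $\bm w \in V_{K_*} \subseteq \bm u^\perp$ with $Q(\bm w) > 0$, so $R = \{\bm 0\}$ and $Q|_{V_{K_*}}$ is non-degenerate. The main obstacle lies precisely here: the essential input is the Lorentzian rigidity — that no $Q$-positive vector can live in the tangent hyperplane to the light cone at an isotropic vector — which closes the argument once Proposition \ref{propisotricconestructure} has supplied the positive direction.
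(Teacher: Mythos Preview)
Your proof is correct and follows essentially the same overall strategy as the paper: locate a $Q$-positive vector in $V_{K_*}$ via Proposition \ref{propisotricconestructure}, prove non-degeneracy by contradiction using the Lorentzian signature, and read off the signature $(1,N-3-|K_*|)$ from the ambient $(1,N-3)$.

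The one noteworthy difference is in how the non-degeneracy contradiction is obtained. The paper argues geometrically through the cone: a radical vector $\bm h$ forces $V_{K_*}\cap D_0=\R\bm h$, then a case analysis on $\dim(V_{K_*}\cap\De)$ eventually pins down $F_{K_*}$ as an isotropic ray. You instead invoke a Witt decomposition $V=\sangle{\bm u,\bm u'}\oplus U$ to see directly that $\bm u^\perp=\R\bm u\oplus U$ is negative semi-definite, which immediately contradicts the positive direction $\bm w\in V_{K_*}\subseteq\bm u^\perp$. Both arguments exploit the same Lorentzian rigidity (no positive vector lies in the tangent hyperplane to the light cone at an isotropic point), but your route is shorter and purely linear-algebraic, while the paper's route keeps the cone $\xoverline\De$ in the picture throughout. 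One minor point: in the $\dim F_{K_*}=1$ case you assert $Q>0$ on all of $V_{K_*}\setminus\{\bm 0\}$, which tacitly uses $\dim V_{K_*}=\dim F_{K_*}$; since you only need a single positive direction, the weaker statement that $Q>0$ somewhere on $F_{K_*}\setminus\{\bm 0\}$ (immediate from $F_{K_*}\subseteq\xoverline\De$ non-isotropic) already suffices.
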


\begin{proof}
	Assume that $Q|_{V_{K_*}}$ is degenerate, i.e., there exists $\bm h\in V_{K_*}$ such that $V_{K_*}\subseteq\bm h^\perp$, and in particular $\bm h\in D_0$. Since $(V,Q)$ has signature $(1,N-3)$, its largest degenerate subspaces are one-dimensional (see beginning of the proof of Proposition \ref{propisotricconestructure}), and thus $V_{K_*}\cap D_0=\R\bm h$. If $\dim(V_{K_*}\cap\De)\ge1$, then, by working with a cross section of $\xoverline\De$, $V_{K_*}\cap D_0$ can not be a line. Therefore $\dim(V_{K_*}\cap\De)\le0$. If $\dim(V_{K_*}\cap\De)=0$, then $V_{K_*}$ is an one-dimensional face contained in $\De$ since $V_{K_*}$ is a supporting subspace of $\xoverline\De$ by the maximality of $K_*$. In particular, $Q|_{V_{K_*}}$ is positive definite. This contradicts the degeneracy of $Q|_{V_{K_*}}$. Therefore $V_{K_*}\cap\De=\emptyset$ and $F_{K_*}=V_{K_*}\cap\xoverline\De=V_{K_*}\cap D_0=\R\bm h$ is an isotropic ray. This contradicts the assumption that $F_{K_*}\in\FF(\xoverline\De)\setminus\FF_0(\xoverline\De)$. Thus $Q|_{V_{K_*}}$ is non-degenerate. Since $Q$ has signature $(1,N-3)$ and $\emptyset\ne F_{K_{*}}\cap D_+\subseteq V_{K_*}\cap D_+$, $Q|_{V_{K_*}}$ has signature $(1,\dim(V_{K_*})-1)$, with $\dim(V_{K_*})=N-2-|K_*|$ by Lemma \ref{lemlineardepenence}.
\end{proof}

Restricting to $V_{\{k\}}:=\{\bm\l\in\R^{N-2}:\l_k=0\}$ preserves the integral lattice structure of the domain and the integral lattice structure for isotropic rays in Lemma \ref{lemlatticeisotropicvector}. Regarding the above series as a sub-series of the original one, Proposition \ref{propisotricconestructure}, Theorem \ref{thmconvergenceofcountingfunction}, Lemma \ref{lemlatticeisotropicvector}, and Corollary \ref{corThetaseriespolarpart} follow automatically.
These results can be alternatively obtained by using the geometric interpretation for $Q|_{F_{K_*}}$ whose extension gives $Q|_{V_{K_*}}$. By iteration, it suffices to consider points in $\xoverline\De$ that lie on the hyperplane $V_{\{k\}}$. If $\th_k+\th_{k+1}=\pi$, then $V_{\{k\}}\cap\xoverline\De$ is an isotropic ray representing those degenerated polygons which become lines. If $\th_k+\th_{k+1}>\pi$, then, by using $\l_j$, $j\in\{1,\dots,N\}\setminus\{k,k+1\}$, as independent parameters and computing $\l_k$ from \eqref{eqndfnofNlambda}, we obtain $V_{\{k\}}\cap\xoverline\De=\{\bm0\}$. If $\th_k+\th_{k+1}<\pi$, then such points represent degenerations of the original convex $N$-gon which are still convex, as discussed in Lemma \ref{la:quadFormsignature}. 
With the new angles, the signature of the resulting signed area follows from Lemma \ref{la:quadFormsignature}. The corresponding condition \eqref{eqnrelaxedconditionintermsangles} holds and solutions to \eqref{eqnrelaxedconditionintermsofstandardform} or to \eqref{eqnrelaxedprimitiveconditionintermsofstandardform} descend automatically. 

Consider the restrictions of the counting function on the faces. Denote by $\d_X(x)$ the characteristic function of the set $X$. Define, for $\bm z\notin\bigcup_{\bm f}P_{\bm f}$
\begin{align*}
	\TH_{Q,\d_{\xoverline{\Delta}}}(\bm z;\t) &:= q^{-Q(\bm\a)} e^{-2\pi iB(\bm\a,\bm\b)} 
	\hspace{-3mm} 
	\sum_{\bm\l\in\Z^{N-2}+\bm\a} \d_{\xoverline{\Delta}}(\bm\l) q^{Q(\bm\l)} e^{2\pi iB(\bm\l,\bm\b)}
	= 
	\hspace{-3mm} 
	\sum_{\bm n\in\Z^{N-2}} \d_{\xoverline{\Delta}}(\bm n+\bm\a) q^{Q(\bm n)} e^{2\pi iB(\bm n,\bm z)}.
\end{align*}
Let $\bm{\lambda}|_{V_{K_{*}}}$ be the restriction of $\bm{\lambda}$ to $V_{K_{*}}$. Then the
restriction of the  above sum to the face $F_{K_{*}}=\xoverline{\Delta}\cap V_{K_{*}}$ gives that $\TH_{Q,\d_{\xoverline{\Delta}\cap V_{K_*}}}$ equals
\[
	q^{-Q(\bm\a)} e^{-2\pi iB(\bm\a,\bm\b)}
	 \sum_{\bm\l\in\Z^{N-2}+\bm\a} \d_{F_{K_*}}(\bm\l) q^{Q|_{V_{K_*}}\left(\bm\l|_{V_{K_*}}\right)} e^{2\pi iB|_{V_{K_*}}\left(\bm\l|_{V_{K_*}},\bm\b|_{V_{K_*}}\right)} e^{2\pi iB\left(\bm\l|_{V_{K_*}},\bm\b-\bm\b|_{V_{K_*}}\right)}.
\]
Consider, for example, $K_*=\{k\}$, for some $1\le k\le N-2$ with $\th_k+\th_{k+1}<\pi$. The non-vanishing of $\d_{F_{K_*}}(\bm\l)$ in the above sum requires $\a_k\in\Z$. Further assume that $\bm\b$ satisfies $B(\bm\l|_{V_{K_*}},\bm\b-\bm\b|_{V_{K_*}})=0$, which can be achieved, for instance, by requiring $\b_k=0$ which in return implies $\bm\b=\bm\b|_{V_{K_*}}$. Then the above series is exactly the corresponding generating function of the degenerated polygons with angles $\{\th_1,\dots,\th_{k-1},\th_k+\th_{k+1},\th_{k+2},\dots,\th_N\}$ and signed area given in \eqref{eqninduction} in Lemma \ref{la:quadFormsignature}.

\section{Tools for indefinite theta functions}\label{sectools}

\subsection{Generalized error functions}

We follow the setup in \cite{Na}, except that we flip the signature of the quadratic form $Q$ (from $(r,s)$ to $(s,r)$) and that the relation between $Q$ and $B$ differs by a factor of $2$. Let $B$ be a symmetric, indefinite bilinear form of signature $(r,s)$ on $\R^n$ such that the associated quadratic form $Q(\bm x)=\frac12B(\bm x,\bm x)$ is integral on $\Z^n$. The bilinear form maps as $B(\bm x,\bm y)=\bm x^TA\bm y$ with a symmetric, indefinite matrix $A\in\R^{n\times n}$. For $C=(\bm{c_1},\dots,\bm{c_s})\in\R^{n\times s}$ spanning a negative definite $s$-dimensional subspace $\sangle C:=\sangle{\bm{c_1},\dots,\bm{c_s}}$ and $\bm x\in\C^n$, we define the {\it generalized error function}
\begin{align}\label{DefGenError}
	E_{Q,C} ( \bm{x} ) := \int_{\langle C\rangle} \sgn  (B (C,\bm{y}+\bm{x} ) ) e^{2 \pi Q\left( \bm{y}\right)  } \bm{d} \bm{y}
\end{align}
(introduced in \cite{ABMP})
with $B(C,\bm y):=C^TA\bm y$, $\sgn(\bm x):=\prod_{j=1}^s\sgn(x_j)$, 
and with the measure $\bm{dy}$ normalized such that $\int_{\sangle{C}}e^{2\pi Q(\bm y)}\bm d\bm y=1$. Note that $E_{Q,C}$ only depends on the projection of $\bm x$ to $\sangle C$. We write $E_{Q,\emptyset}(\bm x):=1$ where $\emptyset$ is the $0\times0$ matrix. For any matrix $C=(\bm{c_1},\dots, \bm{c_s})$, let $C_S:=(\bm{c_{k_1}},\bm{c_{k_2}},\dots, \bm{c_{k_{\lvert S\rvert}}})$ be the matrix consisting of the columns indexed by $S=\{k_1,k_2,\dots, k_{\lvert S\rvert} \}\subseteq\{1,\dots,s\}$ (with $k_1<k_2<\dots< k_{\lvert S\rvert}$) and $C_\emptyset:=\emptyset$ the $0\times 0$ matrix. For $S\subseteq \{1,\dots,s\}$, we define the orthogonal decomposition of $\bm{x}\in \R^n$ and $\bm{c_j}\in \R^n$ into $\langle C_S\rangle \oplus \langle C_S\rangle^\perp$ with respect to $B$ as $\bm{x}=:\bm{x_S}+\bm{x_S^\perp}$ and  $\bm{c_j}=:\bm{c_{j,S}}+\bm{c_{j\perp S}}$. Furthermore for $S=\{k_1,k_2,\dots, k_{\lvert S\rvert} \}$ and $T\subseteq\{1,\dots,s\}$ (with $k_1<k_2<\dots< k_{\lvert S\rvert}$), we let $C_{S\perp T}:=(\bm{c_{k_1\perp T}},\bm{c_{k_2\perp T}},\dots, \bm{c_{k_{\lvert S\rvert\perp T}}}).$ For the following definition, assume that $\langle C\rangle $ is negative definite of dimension $s$ and let $D=(\bm{d_1},\dots, \bm{d_s})\in \R^{n\times s}$ such that the its columns form a dual basis to $C$ for the quadratic space $(\langle C\rangle , Q|_{\langle C\rangle })$. Define for $\bm{x}\in \R^{r+s}$ with $\prod_{j=1}^{s} B(D_{j},\bm{x})\neq 0$ the {\it complementary generalized error function} as (this expression of $M_{Q,C}$ in \cite[Proposition (3.14)(e)]{Na} has a typo in the power of the determinant)
\begin{align*}
	M_{Q,C}(\bm{x}):=\left(\frac{i}{\pi}\right)^s \det\left(C^TAC\right)^{-\frac12}\int_{\langle C\rangle-i\bm{x_C}} \frac{e^{2\pi Q(\bm{w}) +2\pi i B(\bm{x},\bm{w}) }}{ \prod_{j=1}^{s}B(D_j,\bm{w})}\bm{dw}.
\end{align*}
Taking the vectors in $C$ as the basis for $\langle C\rangle$, any $\bm{y}\in \langle C\rangle$
can be uniquely written as $	\bm{y}=\sum_{1\leq j\leq s} t_{j}\bm{c_{j}}$.
Thus, one obtains
\[
	M_{Q,C}(\bm{x})=\left(\frac{i}{\pi}\right)^s\int_{\langle C\rangle-i\bm{x_C}} \frac{e^{2\pi Q(\bm{y}) +2\pi i B(\bm{x},\bm{y}) }}{ \prod_{j=1}^{s}t_{j}}\prod_{j=1}^{s}{dt_{j}}\,,
\]
which is defined if $\bm{x_C}=\sum_{j=1}^sb_j\bm{c_j}$ with $b_j\ne0$ for $1\le j\le s$. 

\subsection{Vign\'eras differential equation}
In \cite{Vi}, Vign\'eras showed that modularity of indefinite theta functions can be obtained by checking a differential equation.\footnote{It is this differential equation that is essential for the modularity.}

\begin{theorem}[Vign\'eras]\label{Vigneras}
	Suppose that $p:\R^n\to\R$ satisfies the following conditions: 
	\begin{enumerate}[label=\textnormal{(\roman*)},leftmargin=0.65cm, labelwidth=!, labelindent=0pt]
		\item For a differential operator $D$ of order $2$ and a polynomial $R$ of degree at most $2$, $\bm{w}\mapsto p(\bm{w})e^{-{\pi}Q(\bm{w})}$ and $\bm{w}\mapsto R(\bm{w})p(\bm{w})e^{-{\pi}Q(\bm{w})}$ belong to  $L^2\left(\R^n\right)\cap L^1\left(\R^n\right)$.
		
		\item Write the Euler and Laplace operator ($\bm x:=(x_1,\dots,x_n)^T$, $\bm{\del_x}:=(\frac{\del}{\del x_1},\dots\frac{\del}{\del x_n})^T$) $\calE:=\bm x^T\bm{\del_x}$ and $\De_{A^{-1}}:=\bm{\del_x}^TA^{-1}\bm{\del_x}$. For some $\l\in\Z$ the Vign\'eras differential equation holds:
		$$
			\left(\mathcal{E}-\frac{1}{2\pi}\Delta_{A^{-1}}\right)p=\lambda p.
		$$
	\end{enumerate}
	If furthermore
	\begin{align*}
		\Theta_{Q,p}(\bm{z};\t):=\sum_{\bm{m}\in \Z^{n}} p\left(\bm{m}+\frac{\bm{y}}{v}\right) q^{ Q(\bm{m})} e^{2\pi i B(\bm{m},\bm{z})}
	\end{align*} is absolutely locally convergent, then it transforms as a Jacobi form of weight $\lambda+\frac{n}{2}$ and index $A$:
	\begin{enumerate}[label=\textnormal{(\arabic*)},leftmargin=0.65cm, labelwidth=!, labelindent=0pt]
		\item For $\bm{m},\bm{\ell}\in \Z^n$ we have
		\begin{align*}
			\Theta_{Q,p}\left(\bm{z}+\bm{m}\tau +\bm{\ell};\tau \right) =  q^{-Q(\bm{m})} e^{-2\pi i B(\bm{m},\bm{z})} \Theta_{Q,p}(\bm{z};\tau).
		\end{align*}
		
		\item For $\pabcd$ in a suitable finite index subgroup of $\SL_2(\Z)$ we have
		\[
			\TH_{Q,p}\left(\frac{\bm z}{c\t+d};\frac{a\t+b}{c\t+d}\right) = (-i(c\t+d))^{\l+\frac n2}e^\frac{2\pi iQ(\bm z)}{c\t+d}\TH_{Q,p}(\bm z;\t).
		\]
	\end{enumerate}
\end{theorem}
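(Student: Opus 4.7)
The plan is to verify the two transformation laws separately: the elliptic law (1) is an algebraic re-indexing, while the modular law (2) requires Poisson summation combined with a Fourier identity encoded by the Vign\'eras equation. For (1), I would substitute $\bm z \mapsto \bm z + \bm m\tau + \bm\ell$ in the defining sum, noting that $\bm y/v$ shifts to $\bm y/v + \bm m$. Re-indexing $\bm n \mapsto \bm n - \bm m$ restores the argument of $p$, after which $q^{Q(\bm n - \bm m)}$ and $e^{2\pi i B(\bm n - \bm m,\, \bm z + \bm m\tau + \bm\ell)}$ expand bilinearly; integrality of $Q$ and $B$ on $\Z^n$ (together with $e^{2\pi i B(\bm m, \bm\ell)} = 1$) collects the remaining exponentials into the prefactor on the right-hand side.

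For (2), it suffices to establish covariance under $T: \tau\mapsto\tau+1$ and $S: \tau\mapsto -1/\tau$, since these generate $\SL_2(\Z)$; restriction to a suitable principal congruence subgroup $\Gamma(N)$ absorbs both the quadratic-form denominator and the metaplectic eighth-root of unity. The $T$-relation is immediate once $NQ(\bm n)\in\Z$ on $\Z^n$, as the $\bm y/v$ shift is unchanged under $\tau\mapsto\tau+N$ and $q^{NQ(\bm n)}$ is invariant. The essential step is the $S$-transformation, for which I would apply Poisson summation to
\begin{equation*}
	F_\tau(\bm w) := p\!\left(\bm w + \tfrac{\bm y}{v}\right) q^{Q(\bm w)}\, e^{2\pi i B(\bm w, \bm x)}\,,\qquad \bm z =: \bm x + i\bm y\,,
\end{equation*}
so that $\TH_{Q,p}(\bm z;\tau) = \sum_{\bm n\in\Z^n} F_\tau(\bm n) = \sum_{\bm m\in\Z^n} \widehat{F_\tau}(\bm m)$.

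A direct computation expresses $\widehat{F_\tau}(\bm m)$ as a Gaussian integral against $p$. The key analytic lemma is that the Vign\'eras equation $(\calE - \tfrac{1}{2\pi}\De_{A^{-1}})p = \lambda p$ is precisely the infinitesimal condition guaranteeing that the family $\tau \mapsto p(\cdot)\, e^{\pi i \tau Q(\cdot)}$ transforms covariantly of weight $\lambda + n/2$ under $\tau\mapsto -1/\tau$, as a vector in the Schr\"odinger--Weil representation of the metaplectic double cover of $\SL_2(\R)$. I would prove this by differentiating the putative Fourier-covariance identity in $\tau$, checking that both sides satisfy the same linear ODE built out of $\calE$ and $\De_{A^{-1}}$, and matching initial data at $\tau = i$, where the identity reduces to the classical Fourier inversion for a Gaussian twisted by the eigenfunction $p$. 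Substituting back into the Poisson-dualized sum, completing squares in the dual variable, and relabeling yields $\TH_{Q,p}(\bm z/\tau;\, -1/\tau)$ with the stated weight $\lambda + n/2$ and theta-multiplier $e^{2\pi i Q(\bm z)/\tau}$.

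The main obstacle is the integrability analysis in indefinite signature: $e^{-\pi Q(\bm w)}$ blows up in the negative directions of $Q$, so hypothesis (i) is the assertion that $p$ decays rapidly enough there to compensate. I would invoke it both to justify the exchange of sum and integral in Poisson, and to check that the Fourier-transformed expression again defines an absolutely convergent theta sum on the dual lattice. Tracking the exact eighth-root of unity and the branch of $(-i(c\tau+d))^{\lambda + n/2}$ as a function of the signature $(r,s)$ is the remaining subtlety, and is the reason covariance holds only on a finite-index subgroup of $\SL_2(\Z)$ rather than on the whole group.
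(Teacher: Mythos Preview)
The paper does not prove this theorem: it is quoted from Vign\'eras' original article \cite{Vi} and used as a black box, so there is no ``paper's own proof'' to compare against. Your sketch is the standard strategy and is essentially correct: the elliptic law is indeed a bilinear re-indexing, and the modular law follows from Poisson summation once one knows that the Vign\'eras operator $\calE-\tfrac{1}{2\pi}\De_{A^{-1}}$ is (after conjugation by the Gaussian) the infinitesimal generator of the maximal compact in the Weil representation, so that its eigenfunctions transform with the prescribed weight under the Fourier transform.

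One point to tighten: your reduction at $\tau=i$ to ``Fourier inversion for a Gaussian twisted by the eigenfunction $p$'' is not quite the right initial condition. The function $p$ itself is not a Fourier eigenfunction; rather, $p(\bm w)e^{-\pi Q_+(\bm w)}$ (with $Q_+$ the sign-flipped positive-definite form) is one, and this is what the Vign\'eras equation encodes after the Gaussian conjugation. Equivalently, instead of an ODE-in-$\tau$ argument with initial data at $\tau=i$, Vign\'eras' original proof computes the Fourier transform of $p(\bm w)e^{2\pi i\tau Q(\bm w)}$ directly by writing $p$ as a superposition of Gaussians (or, in the smooth case, by the intertwining property of the Weil representation) and reads off the weight from the eigenvalue $\lambda$. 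Either route works, but the ODE matching you describe needs the correct identification of what is actually a Fourier eigenfunction. Your use of hypothesis (i) to justify Poisson summation and dual convergence is exactly right; note also that (i) is needed not just for $p$ but for its first and second derivatives (via the polynomial $R$ and operator $D$), since the Fourier-covariance argument differentiates under the integral.
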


We use the following properties of generalized error functions that were shown in \cite{Na}.

\begin{lemma}\label{ErrorProperties}
	\ \begin{enumerate}[leftmargin=*,label=(\arabic*)]
		\item If all occurring $M_{Q,C_S}$ are defined, then we have 
		\[
			E_{Q,C}(\bm x) = \sum_{S\subseteq\{1,\dots,s\}} M_{Q,C_S}(\bm x)\sgn\left(B\left(C_{\{1,\dots,s\}\setminus S\perp S},\bm x\right)\right),\qquad |M_{Q,C}(\bm x)| < s!e^{2\pi Q(\bm{x_C})}.
		\]
		
		\item We have
		\[
			\lim_{t\to\infty} E_{Q,C}(t\bm x) = \sum_{S\subseteq\{1,\dots,s\}} E_{Q,C_S}(\bm0)\d_{\scriptscriptstyle B(C_S,\bm x)=\bm0}\sgn\left(B\left(C_{\{1,\dots,m\}\setminus S},\bm x\right)\right).
		\]
		In particular, if all components of $B( C, \bm{x} )$ are non-zero, then
		\[
			\lim_{t\to\infty} E_{Q,C}(t\bm x) = \sgn(B(C,\bm x)).
		\]
		If $C=(C_1,C_2)$ with $C_1^TAC_2=0$, then we can factor
		\begin{align*}
			E_{Q,C}( \bm{x})=E_{Q,C_1}( \bm{x})E_{Q,C_2}( \bm{x}).
		\end{align*}
		
		\item The generalized error function $E_{Q,C}$ is a $\calC^\infty$-functions and satisfies Vign\'eras's differential equation with respect to $Q$ for $\l=0$. Moreover, $M_{Q,C}$ is a $\calC^\infty$-function, where it is defined.
	\end{enumerate}
\end{lemma}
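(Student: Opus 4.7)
Since these properties are proved in \cite{Na}, I sketch the approach I would follow if reproducing them.

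The plan is to start with (3), because smoothness and the Vign\'eras differential equation both come from direct analysis of the defining integrals and underpin the manipulations used in (1) and (2). For $E_{Q,C}$, smoothness follows by differentiating \eqref{DefGenError} under the integral sign, which is legal because every derivative of the integrand is dominated by a polynomial times the Gaussian $e^{2\pi Q(\bm y)}$ on $\langle C\rangle$, where $Q$ is negative definite. To verify Vign\'eras with $\l=0$, I would apply $\calE-\tfrac{1}{2\pi}\De_{A^{-1}}$ inside the integral: the derivatives of the sign product produce distributional $\d$-contributions supported on the hyperplanes $B(\bm{c_j},\bm y+\bm x)=0$, which upon integration by parts against $e^{2\pi Q(\bm y)}$ cancel because $B(\bm{c_j},\,\cdot\,)|_{\langle C\rangle}$ is negative definite. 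For $M_{Q,C}$, the same differentiation is legitimate since the contour $\langle C\rangle-i\bm{x_C}$ stays away from the poles of $1/\prod_j B(D_j,\bm w)$.

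For (1), I would parametrize $\bm y=\sum_{j=1}^s t_j\bm{c_j}$ and write the defining integral of $E_{Q,C}$ as an $s$-fold integral over $\R^s$. Setting $\bm{x_C}=\sum_j b_j\bm{c_j}$ and using the dual basis to decompose $B(\bm{c_j},\bm y+\bm x)$, each sign factor becomes (up to contributions perpendicular to $\bm{c_j}$) $\sgn(t_j+b_j)$. Deforming each real $t_j$-contour to $\R-ib_j$ and keeping track, for each subset $S\subseteq\{1,\dots,s\}$, of whether a contour is kept on $\R$ (producing a residual $\sgn$) or crosses the pole at $t_j=-b_j$, I would isolate the $M_{Q,C_S}$ contribution arising from the $S$-indexed residues and the $\sgn(B(C_{\{1,\dots,s\}\sm S\perp S},\bm x))$ factor coming from the remaining integrations after orthogonally projecting onto $\langle C_S\rangle^\perp$. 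For the bound on $|M_{Q,C}(\bm x)|$, I would shift $\bm w$ back to $\langle C\rangle$ to extract the factor $e^{2\pi Q(\bm{x_C})}$ and then estimate the resulting oscillatory integral against $\prod_j|t_j|^{-1}$ iteratively; the combinatorial factor $s!$ arises from iterating an elementary one-variable bound.

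For (2), the limit formula follows from dominated convergence: as $t\to+\infty$, $\sgn(B(\bm{c_j},t\bm x+\bm y))\to\sgn(B(\bm{c_j},\bm x))$ pointwise whenever the latter is non-zero, and on the subset $S$ of indices where $B(\bm{c_j},\bm x)=0$ the integration survives and reassembles as $E_{Q,C_S}(\bm 0)$ after an orthogonal decomposition. The factorization $E_{Q,C}=E_{Q,C_1}E_{Q,C_2}$ under $C_1^T A C_2=0$ is immediate from $\langle C\rangle=\langle C_1\rangle\oplus\langle C_2\rangle$ since Gaussian, measure, and sign product all split. The hardest point will be the orthogonal bookkeeping in (1): correctly identifying each residue at $t_j=-b_j$ with $M_{Q,C_S}$ requires iterating a Gram--Schmidt-type decomposition so that the ``remaining'' columns $\bm{c_k}$ for $k\notin S$ are replaced by their orthogonal projections $\bm{c_{k\perp S}}$, and this must be tracked consistently term by term to match the statement.
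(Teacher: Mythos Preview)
The paper does not prove this lemma at all: it simply states the properties and attributes them to \cite{Na}. You correctly recognize this and your sketch of how one would reproduce the arguments from \cite{Na} is reasonable and broadly faithful to the contour-shifting and dominated-convergence approach used there, so there is nothing to compare against in the paper's own text.
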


\subsection{Generalized error functions for deformed quadratic forms}

For a set of vectors $C=(\bb{c_1},\dots,\bb{c_{k}})$, we define the cone given by their normal hyperplanes as
\begin{equation*}
	R(C):=\left\{\bm{x}\in\R^{r+s}: B(\bb{c}_{\bm{j}},\bm{x})\geq 0\text{ for } 1\leq j\leq k \text{ or } B(\bb{c}_{\bm{j}},\bm{x})\leq 0 \text{ for } 1\leq j\leq k \right\} .
\end{equation*}

Hereafter we assume that the normal vectors in $C$ are rational. We list some elementary properties about quadratic spaces easily obtained by diagonalization, generalizing Lemma \ref{lemnormalvectors}.

\begin{lemma}\label{lemsimplefacts}
	Let $(V,Q)$ be a quadratic space of signature  $(r_{+},r_{-},r_{0})$. Then the following hold:
	\begin{enumerate}[label=\rm{(\arabic*)}]
		\item Assume that $r_0=0$. Then the signature $(s_+,s_-,s_0)$ of any subspace satisfies $s_++s_0\le r_+$, $s_-+s_0\le r_-$. In particular, any isotropic subspace has dimension at most $\min\{r_+,r_-\}$.
		
		\item Assume that $r_{+}=0$. The set of isotropic vectors in V forms a vector space of dimension $r_{0}$ and is given by $V\cap V^{\perp}$.
		
		\item Assume that the signature of $(V,Q)$ is $(1,s,0)$. For any non-zero vector $\bm{c}\in V$, let $\bm{c}^{\perp}$ be the orthogonal complement of $\mathbb{R}\bm{c}$. If $Q(\bm{c})>0$, then $\bm{c}^{\perp}\cap( D_{+}\cup D_{0})=\{\bm{0}\}$. If $Q(\bm{c})=0$, then $\bm{c}^{\perp}\cap ( D_{+}\cup D_{0})= \bm{c}^{\perp}\cap D_{0}=\mathbb{R}\bm{c}$. If $Q(\bm{c})<0$, then $\bm{c}^{\perp}\cap  D_{+}\neq \{\bm{0}\} $.
	\end{enumerate}
\end{lemma}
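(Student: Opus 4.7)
The plan is to handle the three parts sequentially, all by diagonalizing $Q$ and a direct analysis of orthogonal projections.

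For (1), I would fix a decomposition $V = V_+ \oplus V_-$ with $Q|_{V_+}$ positive definite of dimension $r_+$ and $Q|_{V_-}$ negative definite of dimension $r_-$ (which exists since $r_0 = 0$). Given a subspace $W$ of signature $(s_+, s_-, s_0)$, decompose $W = W_+ \oplus W_- \oplus W_0$ so that $Q|_{W_\pm}$ has the corresponding definiteness and $W_0$ is the radical of $Q|_W$. The key observation is that for $\bm w = \bm w_+ + \bm w_0 \in W_+ \oplus W_0$ one has $Q(\bm w) = Q(\bm w_+)$, since $\bm w_0$ is isotropic and orthogonal to $W_+$. Hence $Q|_{W_+ \oplus W_0}$ is positive semi-definite and vanishes only on $W_0$. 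I would then show that the orthogonal projection $\pi_+ \colon W_+ \oplus W_0 \to V_+$ (along $V_-$) is injective: any vector in its kernel lies in $V_-$, so $Q \le 0$ there, but also $Q \ge 0$ on $W_+ \oplus W_0$, forcing it to vanish. This yields $s_+ + s_0 \le r_+$, and the symmetric argument applied to $W_- \oplus W_0$ yields $s_- + s_0 \le r_-$. The isotropic-dimension bound follows by setting $s_+ = s_- = 0$.

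For (2), since $r_+ = 0$, the form $Q$ is negative semi-definite. Decomposing $V = V_- \oplus V_0$ with $V_0 = V \cap V^\perp$ the radical and $V_-$ negative definite, any $\bm w = \bm w_- + \bm w_0$ satisfies $Q(\bm w) = Q(\bm w_-)$, so $Q(\bm w) = 0$ forces $\bm w_- = 0$, i.e.\ $\bm w \in V_0$. Hence the isotropic locus coincides with $V_0 = V \cap V^\perp$, a subspace of dimension $r_0$.

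For (3), the signature $(1, s, 0)$ makes $Q$ non-degenerate of dimension $s + 1$, so $\dim \bm c^\perp = s$ for any nonzero $\bm c$. If $Q(\bm c) \neq 0$, then $V = \R \bm c \oplus \bm c^\perp$ and the signature of $Q|_{\bm c^\perp}$ is obtained by subtraction: it is $(0, s, 0)$ when $Q(\bm c) > 0$, yielding $\bm c^\perp \cap (D_+ \cup D_0) = \{\bm 0\}$; and it is $(1, s-1, 0)$ when $Q(\bm c) < 0$, producing vectors with $Q > 0$ in $\bm c^\perp$. The remaining case $Q(\bm c) = 0$ is the only mildly delicate one: here $\bm c \in \bm c^\perp$ lies in the radical of $Q|_{\bm c^\perp}$, and part~(1) applied inside $\bm c^\perp$ gives $s_+ + s_0 \le 1$, while $s_0 \ge 1$ from $\bm c$. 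This forces $s_+ = 0$, $s_0 = 1$, $s_- = s - 1$, with radical precisely $\R \bm c$. Part~(2) then identifies the isotropic vectors of $\bm c^\perp$ with $\R \bm c$, while negative semi-definiteness of $Q|_{\bm c^\perp}$ excludes any vector with $Q > 0$, giving $\bm c^\perp \cap (D_+ \cup D_0) = \bm c^\perp \cap D_0 = \R \bm c$.

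All three arguments are elementary exercises in linear algebra; the only step where some care is required is the isotropic case of part (3), where $\bm c^\perp$ itself is degenerate and one has to chain parts (1) and (2) rather than split off $\R\bm c$ directly.
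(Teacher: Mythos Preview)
Your proposal is correct and follows precisely the diagonalization approach the paper indicates (the paper states these are ``elementary properties about quadratic spaces easily obtained by diagonalization'' and gives no further details). You have simply filled in the omitted details; the chaining of parts (1) and (2) to handle the isotropic case of (3) is a clean way to organize it.
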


Results on modularity of indefinite theta functions are established for the following special simplicial cones,
and then generalized to more general polyhedron cones.

\begin{assumption}\label{assumptionsI}
Let $(V,Q)$ have signature $(1,s)$ with $s\geq 1$.
Let $C$ be a collection of non-zero vectors.
Assume the following conditions: 
\begin{enumerate}[label=\textnormal{(\arabic*)}]
\item The cone $R(C)\subseteq D_{+}\cup D_{0}$ has dimension $s+1$.
\item The cone $R(C)$ is simplicial, i.e., its cross section is a simplex of dimension $s$.
\item The cone $R(C)\cap  D_{0}$ consists of at most one isotropic ray.
\end{enumerate}
\end{assumption}

We have the  following result regarding the signature of $\langle C_{T}\rangle$ for $T\subseteq C$.

\begin{lemma}\label{lemCTnegativesemidefinite}
	Suppose Assumption \ref{assumptionsI} {\rm(1)} holds. For $T\subseteq C$, exactly one of the following holds: 
	\begin{enumerate}[label=\textnormal{(\arabic*)}]
		\item We have $\langle C_{T}\rangle=V$.
		
		\item The space $\langle C_{T}\rangle$ is negative definite.
		
		\item The space $\langle C_{T}\rangle$ has signature $(0, \dim (\langle C_{T}\rangle)-1,1)$. This is equivalent to $\langle C_{T} \rangle\cap D_0\neq \{\bm{0}\}$ for $\langle C_{T}\rangle\neq V$.
	\end{enumerate}
	Assume furthermore $C=(\bm{c_1},\cdots,\bm{c_s},\bm{c_0})$. Then in {\rm(3)}, under the additional Assumption \ref{assumptionsI} {\rm(2)}, one has $\dim(\sangle{C_T})\ge2$. Under the additional Assumption \ref{assumptionsI} {\rm(2)}, {\rm(3)}, there is a unique index set $T$ having the above signature, with $\dim(\sangle{C_T})=s$.
\end{lemma}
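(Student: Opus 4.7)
The plan is to leverage Lemma \ref{lemsimplefacts} to restrict the possible signatures of subspaces of $V$, then exploit the dual-ray structure of the simplicial cone to rule out the only problematic case. First, by Lemma \ref{lemsimplefacts}(1) applied to the non-degenerate space $(V,Q)$ of signature $(1,s)$, every subspace $W\subseteq V$ has signature $(s_+,s_-,s_0)$ with $s_++s_0\le 1$ and $s_-+s_0\le s$. Hence the only a priori possibilities are negative definite $(0,k,0)$, negative semi-definite with one-dimensional radical $(0,k,1)$, and non-degenerate indefinite $(1,k,0)$. The trichotomy reduces to showing that the last case forces $\langle C_T\rangle=V$.

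Next, I would check that $Q(\bm{c_j})\le 0$ for every normal $\bm{c_j}\in C$. Each hyperplane $\bm{c_j}^\perp$ contains an $s$-dimensional facet of $R(C)\subseteq D_+\cup D_0$, so it contains non-zero vectors with $Q\ge 0$; if $Q(\bm{c_j})>0$, Lemma \ref{lemsimplefacts}(3) yields $\bm{c_j}^\perp\cap(D_+\cup D_0)=\{\bm 0\}$, a contradiction. I would then invoke the simplicial structure, which (together with the labelling $C=(\bm{c_0},\bm{c_1},\dots,\bm{c_s})$) gives $|C|=s+1$ linearly independent normals and a dual basis of extreme rays $\bm{e_0},\dots,\bm{e_s}$ with $B(\bm{c_j},\bm{e_k})=0$ for $j\ne k$. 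Each $\bm{e_k}$ is an extreme ray of $R(C)$, so $\bm{e_k}\in D_+\cup D_0$ and $Q(\bm{e_k})\ge 0$.

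For any $T\subseteq C$, linear independence gives $\dim\langle C_T\rangle=|T|$ and $\langle C_T\rangle^\perp=\langle\bm{e_k}:k\in T'\rangle=:E_{T'}$ where $T':=\{0,\dots,s\}\setminus T$. Now suppose, toward contradiction, that $\langle C_T\rangle$ has signature $(1,k,0)$ with $T'\ne\emptyset$. Non-degeneracy of $\langle C_T\rangle$ gives an orthogonal decomposition $V=\langle C_T\rangle\oplus E_{T'}$ with signatures adding to $(1,s)$, forcing $E_{T'}$ to be negative definite of positive dimension. This contradicts $Q(\bm{e_k})\ge 0$ for any $k\in T'$. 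Hence the indefinite case occurs only when $T'=\emptyset$, i.e.\ $T=C$ and $\langle C_T\rangle=V$, yielding case (1); the remaining two possibilities are exactly cases (2) and (3). Since signatures are uniquely determined, \emph{exactly} one case holds.

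For the furthermore statements, suppose case (3) holds with $\dim\langle C_T\rangle=1$, so $T=\{\bm{c_j}\}$ and $\bm{c_j}$ is isotropic. Then Lemma \ref{lemsimplefacts}(3) gives $\bm{c_j}^\perp\cap(D_+\cup D_0)=\R\bm{c_j}$, but simpliciality in Assumption \ref{assumptionsI}(2) demands that the facet $R(C)\cap\bm{c_j}^\perp$ be $s$-dimensional, which is impossible once $s\ge 2$; hence $\dim\langle C_T\rangle\ge 2$. For uniqueness, Assumption \ref{assumptionsI}(3) singles out one isotropic ray $\R\bm{f}$ of $R(C)$, which must coincide with one extreme ray, relabelled $\bm{e_0}$. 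Taking $T=C\setminus\{\bm{c_0}\}$ gives $\langle C_T\rangle^\perp=\R\bm{f}$; since $V$ is non-degenerate and $\bm{f}$ is isotropic, we must have $\bm{f}\in\langle C_T\rangle$, so $\operatorname{rad}(\langle C_T\rangle)=\R\bm{f}$ and the signature is $(0,s-1,1)$ with $\dim=s$. Conversely, any $T$ realising case (3) with $\dim=s$ satisfies $|T|=s$, hence $T=C\setminus\{\bm{c_k}\}$ with $\langle C_T\rangle^\perp=\R\bm{e_k}$ necessarily isotropic; Assumption \ref{assumptionsI}(3) forces $\bm{e_k}=\bm{f}$, i.e.\ $k=0$, giving uniqueness. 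The main obstacle is the third paragraph: correctly identifying $\langle C_T\rangle^\perp=E_{T'}$ via the dual ray basis and leveraging $Q(\bm{e_k})\ge 0$ is the crux that forbids indefinite proper subspaces.
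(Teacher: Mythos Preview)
Your approach via the dual extreme-ray basis $\{\bm{e_k}\}$ is correct for the trichotomy and genuinely different from the paper's. The paper instead analyses the face $\langle C_T\rangle^\perp\cap R(C)$: if this face meets $D_+$, any positive vector $\bm u$ therein has negative-definite orthogonal complement containing $\langle C_T\rangle$; otherwise the face lies in $D_0$, hence is either $\{\bm 0\}$ (forcing $\langle C_T\rangle=V$) or a single isotropic ray $\R\bm{f_T}$, from which one deduces that $\langle C_T\rangle$ is negative semi-definite with radical $\R\bm{f_T}$. Your route is more direct but already invokes simpliciality (Assumption~\ref{assumptionsI}(2)) for the trichotomy, whereas the lemma asserts it under (1) alone; the paper's face argument is phrased without (2), though its step ``$\langle C_T\rangle^\perp\cap R(C)=\{\bm 0\}\Rightarrow\langle C_T\rangle=V$'' is itself delicate without further hypotheses on $C$ (a redundant normal with positive norm would break it).

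There is, however, a genuine gap in your uniqueness argument: you establish uniqueness only among those $T$ in case (3) with $\dim\langle C_T\rangle=s$, and do not exclude $T$ in case (3) with $2\le\dim\langle C_T\rangle<s$. The paper closes this by observing that under simpliciality $\dim(\langle C_T\rangle^\perp)=\dim(\langle C_T\rangle^\perp\cap R(C))$, and the latter equals $1$ in case (3) since the face is a single isotropic ray, forcing $\dim\langle C_T\rangle=s$. In your framework the missing step is short: the radical $\R\bm f=\langle C_T\rangle\cap E_{T'}$ satisfies $\bm f\perp\bm{e_k}$ for every $k\in T'$ (as $\bm f\in\langle C_T\rangle=E_{T'}^\perp$); if some $Q(\bm{e_k})>0$, then $\bm f$ lies in the negative-definite space $\bm{e_k}^\perp$ and must vanish; hence every $\bm{e_k}$ with $k\in T'$ is isotropic, and Assumption~\ref{assumptionsI}(3) forces $|T'|=1$, i.e.\ $\dim\langle C_T\rangle=s$.
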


\begin{proof}
	Consider the face $\sangle{C_T}^\perp\cap R(C)$ of $R(C)$. Assumption \ref{assumptionsI} (1) gives that this face is contained in $D_+\cup D_0$. Suppose that $\sangle{C_T}^\perp\cap R(C)\cap D_+\ne\emptyset$. Take any non-zero $\bm u\in\sangle{C_T}^\perp\cap D_+$. Then $\bm u^\perp$ has signature $(0,s,0)$. Thus $\sangle{C_T}\subseteq(\sangle{C_T}^\perp)^\perp\subseteq\bm u^\perp$ is negative definite. Suppose that $\sangle{C_T}^\perp\cap R(C)\subseteq D_{0}$. If $\sangle{C_T}^\perp\cap R(C)=\{\bm{0}\}$, then $\sangle{C_T}=V$. Otherwise, as in the proof of Proposition \ref{propisotricconestructure}, $\sangle{C_T}^\perp\cap D_0$ is a finite union of isotropic rays. Since $\sangle{C_T}^\perp\cap R(C)$ is a face of $R(C)$ contained in this union, it follows that the face $\sangle{C_T}^\perp\cap R(C)$ is a single isotropic ray $\R\bm{f_T}$, that is,
	\begin{equation}\label{eqnisotropicfaceinpolyhedron}
		\langle C_{T} \rangle^{\perp}\cap R(C)=\mathbb{R}\bm{f_{T}}
		.
	\end{equation}
	By Lemma \ref{lemsimplefacts} (1) we obtain that $\langle C_{T}\rangle $ is negative semidefinite, since otherwise any positive vector $\bm{u}\in \langle C_{T}\rangle $ would give rise to a rank two quadratic space
	$\langle \bm{f}, \bm{u}\rangle$ of signature $(1,0,1)$.
	Lemma \ref{lemsimplefacts} (2) then  gives that $\langle C_{T} \rangle\cap D_0$ is a vector space satisfying
	\begin{equation}\label{eqnisotropicfaceinpolyhedronasmaximal}
		\langle C_{T} \rangle\cap D_0=\langle C_{T} \rangle\cap \langle C_{T} \rangle^{\perp}.
	\end{equation}
	Combining   \eqref{eqnisotropicfaceinpolyhedron} and  \eqref{eqnisotropicfaceinpolyhedronasmaximal}, we have
	\begin{equation}\label{eqnCTisotropicfaceinpolyhedron}
		\langle C_{T} \rangle\cap D_0=\langle C_{T} \rangle\cap \langle C_{T} \rangle^{\perp}\cap D_0
		=\langle C_{T} \rangle\cap \mathbb{R}\bm{f_T}.
	\end{equation}
	If $\sangle{C_T}\cap D_0=\{\bm0\}$, then the negative semidefinite space $\sangle{C_T}$ is actually negative definite. Otherwise, $\sangle{C_T}\cap D_0=\R\bm{f_T}$. Combining  \eqref{eqnisotropicfaceinpolyhedron}, \eqref{eqnisotropicfaceinpolyhedronasmaximal}, and \eqref{eqnCTisotropicfaceinpolyhedron} we have
	\begin{equation*}
		\mathbb{R}\bm{f_T}=  \langle C_{T}\rangle^{\perp}\cap R(C)
		= \langle C_{T}\rangle\cap D_0
		=
		\langle C_{T}\rangle\cap \langle C_{T}\rangle^{\perp}
		.
	\end{equation*}
	The fact
	$\bm{f_{T}}\in  \langle C_{T} \rangle^{\perp}$ yields that the signature of $\langle C_{T}\rangle$ is $(0, \dim (\langle C_{T}\rangle)-1,1)$
	by Lemma \ref{lemsimplefacts} (1).
	
	Conversely, if $\langle C_{T} \rangle\neq V$  and
	$\langle C_{T} \rangle\cap D_0\neq \{\bm{0}\}$, then $\langle C_{T} \rangle$
	can not be negative definite and by the above discussions has the desired signature.
	
	Consider the case that $\langle C_{T}\rangle$ has signature $(0, \dim (\langle C_{T}\rangle)-1,1)$.
	First, the simplicial condition in Assumption \ref{assumptionsI} (2) gives that
	\begin{equation}\label{eqndimensionoffaceinsimplicialpolyhedron}
		\dim\,\left(
		\langle C_{T} \rangle^{\perp}\right)=\dim\,(\langle C_{T} \rangle^{\perp}\cap R(C)).
	\end{equation}
	By \eqref{eqnisotropicfaceinpolyhedron}, \eqref{eqndimensionoffaceinsimplicialpolyhedron}, and
	dimension reasons, we have
	\begin{equation*}
		\langle C_{T} \rangle^{\perp}=\langle C_{T} \rangle^{\perp}\cap R(C)=\mathbb{R}\bm{f_{T}}
		.
	\end{equation*}
	In particular, $\dim(\sangle{C_T}^\perp)=1$, $\dim(\sangle{C_T})=s$. Further imposing Assumption \ref{assumptionsI} (2), the vectors in $C$ are linearly independent and satisfy $\bm{c_j}^\perp\cap D_+\ne\{\bm0\}$, $0\le j\le s$. By Lemma \ref{lemsimplefacts} (3) we obtain that $Q(\bm{c_j})<0$. This gives $\dim(\sangle{C_T})\ge2$. Further imposing Assumption \ref{assumptionsI} (2), (3), there exists a unique subset $T$ of $\{1,\dots,s\}\cup\{0\}$ satisfying the above dimensionality condition and \eqref{eqnisotropicfaceinpolyhedron}.
\end{proof}

Under Assumption \ref{assumptionsI} (1), (2), (3) and $C=(\bm{c_1},\cdots,\bm{c_s},\bm{c_0})$, we can choose the unique subset in Lemma \ref{lemCTnegativesemidefinite} (3) to be $S=\{1,\dots,s\}$ by possibly relabelling indices. 

If $\sangle{C_T}$ fails to be negative definite, then we need substitutes for $E_{Q,C_T}$ and $M_{Q,C_T}$. For $S=\{1,\dots,s\}$, let $U_S:=\sangle{C_S}\cap D_0=\sangle{C_S}\cap\R\bm{f_S}$. Since $\bm{f_S}\in\sangle{C_S}^\perp$, $Q$ induces a negative definite quadratic form on the quotient vector space $\sangle{C_S}/U_S$. Taking a lift $W_S\subseteq\sangle{C_S}$ of it along the quotient map $\pi_S:\sangle{C_S}\to\sangle{C_S}/U_S$, one has an orthogonal splitting
\begin{equation}\label{eqnCSdecomposition}
	\langle C_{S}\rangle=
	U_{S}\oplus  W_{S}.
\end{equation}
A choice of $E_{-}$ in Lemma \ref{lemnormalvectors} provides an example for such a splitting.
We also take a splitting of 
\begin{equation}\label{eqnVdecomposition}
	V=\langle C_{S}\rangle\oplus \mathbb{R}\bm{\xi}\,,\quad \bm{ \xi}\in V\setminus \langle C_{S}\rangle.
\end{equation}
Combining the above, we obtain a vector space decomposition
\begin{equation}\label{eqnVfinerdecomposition}
	V=
	\langle C_{S}\rangle\oplus \mathbb{R}\bm{\xi}=
	U_{S}\oplus  W_{S}\oplus \mathbb{R}\bm{\xi}.
\end{equation}
Consider
a new quadratic form $Q_{t}$ on $V$, with $||\bm{f_S}||=1$
and $t>0$ sufficiently small
\begin{equation}\label{eqnQt}
	Q_{t}(\bm{\lambda})=Q(\bm{\lambda})
	- \frac t2||\bm{\lambda_{U_{S}}}||^2.
\end{equation}
The component $\bm{\lambda_{U_{S}}}$ is defined by using the decomposition provided in \eqref{eqnVfinerdecomposition}. By construction of $Q_{t}$,  $U_{S}\oplus W_{S}$ is an orthogonal decomposition of $\langle C_{S} \rangle$ and $(\langle C_{S} \rangle, Q_{t}|_{\langle C_{S} \rangle})$
is negative definite.

\begin{lemma}\label{lemCTnegativedefinite}
Suppose Assumption \ref{assumptionsI} holds with $C=(\bm{c_1},\cdots, \bm{c_{s}},\bm{c_{0}})$.
For $T\subsetneq \{0,\dots,s\}$, $Q_{t}|_{\langle C_T\rangle}$
is negative definite.
\end{lemma}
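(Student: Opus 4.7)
The plan is to split on the three alternatives from Lemma \ref{lemCTnegativesemidefinite} and show each forces $Q_t|_{\langle C_T\rangle}$ to be negative definite. Since $R(C)$ is simplicial of dimension $s+1$, the $s+1$ vectors in $C$ are linearly independent and form a basis of $V$; hence for $T \subsetneq \{0,\dots,s\}$ one has $\dim\langle C_T\rangle \leq |T| \leq s < s+1$, which rules out alternative (1) of Lemma \ref{lemCTnegativesemidefinite}.

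If $\langle C_T\rangle$ is negative definite (alternative (2)), the conclusion is immediate: by the very definition of $Q_t$ in \eqref{eqnQt}, the inequality $Q_t(\bm\lambda) = Q(\bm\lambda) - \tfrac{t}{2}\|\bm{\lambda_{U_S}}\|^2 \leq Q(\bm\lambda) < 0$ holds for every nonzero $\bm\lambda \in \langle C_T\rangle$ and every $t \geq 0$, regardless of how $\bm{\lambda_{U_S}}$ sits inside the decomposition \eqref{eqnVfinerdecomposition}.

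The interesting case is alternative (3), where $\langle C_T\rangle$ has signature $(0,\dim\langle C_T\rangle - 1,1)$. The key step is to locate the one-dimensional kernel of $Q|_{\langle C_T\rangle}$ inside $U_S$. This is read off from the proof of Lemma \ref{lemCTnegativesemidefinite}: that proof identifies $\langle C_T\rangle^\perp \cap R(C)$ with an isotropic ray $\mathbb{R}\bm{f_T}$ of $R(C)$ and shows $\langle C_T\rangle \cap D_0 = \langle C_T\rangle \cap \mathbb{R}\bm{f_T}$. Assumption \ref{assumptionsI} (3) permits at most one isotropic ray in $R(C)$, forcing $\mathbb{R}\bm{f_T} = \mathbb{R}\bm{f_S}$; since signature $(0,\dim-1,1)$ makes the null space nontrivial, it must equal $\mathbb{R}\bm{f_S}$, and the chain $\mathbb{R}\bm{f_S} \subseteq \langle C_S\rangle \cap D_0 = U_S$ places it inside $U_S \cap \langle C_T\rangle$.

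With the null direction pinned down, the verification is elementary and valid for every $t > 0$. If $\bm\lambda \in \langle C_T\rangle \setminus \mathbb{R}\bm{f_S}$, then $Q(\bm\lambda) < 0$ by negative semi-definiteness with kernel exactly $\mathbb{R}\bm{f_S}$, while $-\tfrac{t}{2}\|\bm{\lambda_{U_S}}\|^2 \leq 0$, hence $Q_t(\bm\lambda) < 0$. If instead $\bm\lambda \in \mathbb{R}\bm{f_S}\setminus\{\bm 0\}$, then $\bm\lambda \in U_S$ forces $\bm{\lambda_{U_S}} = \bm\lambda$ and $Q(\bm\lambda) = 0$, giving $Q_t(\bm\lambda) = -\tfrac{t}{2}\|\bm\lambda\|^2 < 0$. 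The main obstacle is the book-keeping in alternative (3), namely pinning the new kernel to the specific ray $\mathbb{R}\bm{f_S} \subseteq U_S$ rather than just some isotropic line in $\langle C_T\rangle$; once Lemma \ref{lemCTnegativesemidefinite} and Assumption \ref{assumptionsI} (3) settle that point, no compactness argument and no smallness assumption on $t$ is needed.
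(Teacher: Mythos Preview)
Your argument is correct and follows essentially the same strategy as the paper. In case (3) the paper is slightly terser: it observes via the uniqueness clause of Lemma \ref{lemCTnegativesemidefinite} that necessarily $\langle C_T\rangle\subseteq\langle C_S\rangle$ (in fact $T=S$), and then simply quotes the already-established fact that $Q_t|_{\langle C_S\rangle}$ is negative definite by construction, whereas you re-derive this by hand on $\langle C_T\rangle$ after pinning the radical to $\mathbb{R}\bm{f_S}=U_S$; the underlying idea is identical.
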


\begin{proof}
	We distinguish two cases. If $(\sangle{C_T},Q|_{\sangle{C_T}})$ is negative definite, then $Q(\bm\l)<0$ for $\bm\l\in\sangle{C_T}\setminus\{\bm0\}$. Thus by \eqref{eqnQt}, $Q_t|_{\sangle{C_T}}$ is negative definite.
	If $(\sangle{C_T},Q|_{\sangle{C_T}})$ has signature $(0,|T|-1,1)$, then, by Lemma \ref{lemCTnegativesemidefinite},
	$\sangle{C_T}\subseteq\sangle{C_S}$. Again $Q_t|_{\sangle{C_T}}$ is negative definite since $Q_t|_{\sangle{C_S}}$ is.
\end{proof}

Applying the same expressions as in the negative definite case, we obtain
$E_{Q_{t},C_{T}}, M_{Q_{t},C_{T}}$ for any $T\subsetneq \{0,\dots,s\}$.
Then all statements in Lemma \ref{ErrorProperties} hold.
In particular, we have
\begin{equation}\label{eqnMestimate}
|M_{Q_{t},C_T}(\bm{\lambda})|\leq
K\, e^{2\pi Q_{t}\left(\bm{\lambda_{C_{T}}}^{[t]}\right)},
\end{equation}
for some constant $K$ that only depends on $T, Q$ and $t$.
Hereafter the superscript $[t]$ represents a construction with respect to $Q_{t}$. 

For use below, we also discuss orthogonal decompositions and projection maps with respect to $Q$ and $Q_{t}$.
Projection maps
are defined using splittings, in particular projection maps
 to isotropic lines can not be defined directly using the quadratic forms.
One can choose
$\bm{\xi} $ in \eqref{eqnVdecomposition} such that
\begin{equation}\label{eqnxiforsplitting}
\bm{\xi}\in W_{S}^{\perp} \,,\quad
Q(\bm{\xi})=0\,,\quad B(\bm{\xi},\bm{f_S})=1.
\end{equation}
This is possible since $W_S^\perp$ has signature $(1,1)$ and $\bm{f_S}\in W_S^\perp$ is isotropic. Then, by construction, the orthogonal decompositions $W_S\oplus W_S^\perp$ in $Q$ and $Q_t$ are the same, only the orthogonal decompositions inside $W_S^\perp=\sangle{\bm{f_S},\bm\xi}$ are different. For use below, we study the projection to $\sangle{C_T}^\perp$.

\begin{lemma}\label{lemorthogonalprojectionCT}
	Suppose Assumption \ref{assumptionsI} holds with $C=(\bm{c_1},\cdots, \bm{c_{s}},\bm{c_{0}})$.
	Assume $T\subsetneq\{0,\dots,s\}$.
	\begin{enumerate}[label=(\arabic*)]
		\item If $\sangle{C_T}$ is negative definite, then, for any $\bm\l\in V$, one has $ \bm{\l_{C_T}}=\bm{\l_{C_T}}^{[t]}, \bm\l_{\perp\bm{C_T}}=\bm\l_{\perp\bm{C_T}}^{[t]}$.
		
		\item If $\sangle{C_T}$ is negative semi-definite, then, for any $\bm\l\in V$, one has
		$Q(\bm{\l_{C_T}}^{[t]})=Q(\bm{\l_{C_T}})$.
	\end{enumerate}
\end{lemma}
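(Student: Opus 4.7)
My plan is to work in coordinates adapted to the decomposition \eqref{eqnVfinerdecomposition}. For $\bm v\in V$, write $\bm v = A_{\bm v}\bm{f_S} + \bm{W_{\bm v}} + B_{\bm v}\bm\xi$ with $\bm{W_{\bm v}}\in W_S$; using \eqref{eqnxiforsplitting} one has $A_{\bm v} = B(\bm\xi,\bm v)$ and $B_{\bm v} = B(\bm{f_S},\bm v)$, while $\|\bm{f_S}\|=1$ gives $\bm{v_{U_S}} = A_{\bm v}\bm{f_S}$. The definition \eqref{eqnQt} then yields the rank-one perturbation
\begin{equation*}
B_t(\bm u,\bm v) = B(\bm u,\bm v) - t\,A_{\bm u}A_{\bm v}.
\end{equation*}
Writing each facet normal as $\bm{c_j} = a_j\bm{f_S} + \bm{w_j} + b_j\bm\xi$, one has $b_j = 0$ for $j\in S$ (since $\bm{c_j}\in\sangle{C_S}$), while $b_0\neq 0$.

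For part (2), Lemma \ref{lemCTnegativesemidefinite} together with Assumption \ref{assumptionsI}(3) forces $\sangle{C_T}\subseteq\sangle{C_S}$ with radical $\R\bm{f_S}$, so every $\bm u\in\sangle{C_T}$ has $B_{\bm u}=0$ and $\bm{W_{\bm u}} \in W_S^{(T)} := \operatorname{span}(\bm{w_j}:j\in T)\subseteq W_S$. Since $\bm{f_S}\in\sangle{C_T}$, the defining condition $B_t(\bm{\lambda_{C_T}}^{[t]} - \bm\lambda,\bm{f_S}) = 0$ simplifies to $-B_{\bm\lambda} - t(A_{\bm{\lambda_{C_T}}^{[t]}} - A_{\bm\lambda}) = 0$, pinning down
\begin{equation*}
A_{\bm{\lambda_{C_T}}^{[t]}} = A_{\bm\lambda} - B_{\bm\lambda}/t.
\end{equation*}
Substituting this into $B_t(\bm{\lambda_{C_T}}^{[t]}-\bm\lambda,\bm{c_j}) = 0$ for $j\in T$ produces the key cancellation
\begin{equation*}
B\bigl(\bm{W_{\bm{\lambda_{C_T}}^{[t]}}} - \bm{W_{\bm\lambda}},\,\bm{w_j}\bigr) = a_j\bigl(B_{\bm\lambda} + t(A_{\bm{\lambda_{C_T}}^{[t]}} - A_{\bm\lambda})\bigr) = 0,
\end{equation*}
so $\bm{W_{\bm{\lambda_{C_T}}^{[t]}}}$ is the $Q$-orthogonal projection of $\bm{W_{\bm\lambda}}$ onto the negative definite space $W_S^{(T)}$ and is manifestly independent of $t$. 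Since $\bm{f_S}$ is $Q$-isotropic and $Q$-orthogonal to $W_S$,
\begin{equation*}
Q\bigl(\bm{\lambda_{C_T}}^{[t]}\bigr) = Q\bigl(A_{\bm{\lambda_{C_T}}^{[t]}}\bm{f_S} + \bm{W_{\bm{\lambda_{C_T}}^{[t]}}}\bigr) = Q\bigl(\bm{W_{\bm{\lambda_{C_T}}^{[t]}}}\bigr),
\end{equation*}
which is $t$-independent. Identifying $\bm{\lambda_{C_T}}$ with this canonical representative modulo the radical $\R\bm{f_S}$ (on which $Q$ vanishes) yields $Q(\bm{\lambda_{C_T}}) = Q(\bm{\lambda_{C_T}}^{[t]})$, completing part (2).

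For part (1), by Lemma \ref{lemCTnegativedefinite} the subspace $\sangle{C_T}$ is negative definite for both $Q$ and $Q_t$, so both projections exist uniquely. The plan is to show that the $Q$-projection $\bm{\lambda_{C_T}}$ already satisfies the $B_t$-orthogonality to $\sangle{C_T}$, which by uniqueness of the $Q_t$-projection forces $\bm{\lambda_{C_T}}^{[t]} = \bm{\lambda_{C_T}}$; the complementary identity $\bm\lambda_{\perp C_T} = \bm\lambda_{\perp C_T}^{[t]}$ then follows from $\bm\lambda = \bm{\lambda_{C_T}} + \bm\lambda_{\perp C_T}$. Since $B(\bm u,\bm\lambda-\bm{\lambda_{C_T}}) = 0$ for $\bm u\in\sangle{C_T}$, the rank-one identity reduces the condition to
\begin{equation*}
A_{\bm u}\,A_{\bm\lambda - \bm{\lambda_{C_T}}} = 0\qquad\text{for all } \bm u\in\sangle{C_T}.
\end{equation*}
The main obstacle is to establish this cancellation: it is most delicate when $0\in T$, since then $\bm{c_0}\notin\sangle{C_S}$ contributes a non-trivial $\bm\xi$-component and neither factor is obviously zero. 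Under the negative definiteness of $\sangle{C_T}$ (whence $\sangle{C_T}\cap U_S = \{\bm 0\}$), together with the rigid splitting \eqref{eqnVfinerdecomposition} and the specific choice of $\bm\xi$ in \eqref{eqnxiforsplitting}, one expects the $A$-components within $\sangle{C_T}$ and those of the residual $\bm\lambda - \bm{\lambda_{C_T}}$ to be constrained so that the product vanishes identically, completing the proof.
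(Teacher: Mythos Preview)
Your treatment of part (2) is correct and follows the same route as the paper: both compute in the coordinates of the splitting \eqref{eqnVfinerdecomposition} and arrive at $\bm{\lambda_{C_T}}^{[t]}=(A_{\bm\lambda}-B_{\bm\lambda}/t)\,\bm{f_S}+\bm{W_{\bm\lambda}}$ (this is exactly the paper's formula \eqref{eqnVevenfinerdecompositioncoordinates} with $\bm v=\bm{W_{\bm\lambda}}$ and $\bm w=0$, since here $T=S$), then observe that $Q$ of this equals $Q(\bm{W_{\bm\lambda}})$, which is independent of $t$.

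For part (1), however, your last paragraph is not a proof: after correctly reducing the question to $A_{\bm u}\,A_{\bm\lambda-\bm{\lambda_{C_T}}}=0$ for all $\bm u\in\langle C_T\rangle$, you write only that ``one expects'' this to hold. In fact the cancellation \emph{fails}, and contrary to your remark the failure already occurs in the ``easy'' case $0\notin T$. Take $T=\{j\}\subseteq S$ with $a_j\neq 0$; such a $j$ must exist, since writing $\bm{f_S}=\sum_{k\in S}\alpha_k\bm{c_k}$ and projecting to $U_S$ gives $\sum_k\alpha_k a_k=1$. For $\bm\lambda=\bm{f_S}$ one has $B(\bm{f_S},\bm{c_j})=0$ (because $\bm{f_S}\in\langle C_S\rangle^\perp$), so $\bm{\lambda_{C_T}}=0$ and hence $A_{\bm\lambda-\bm{\lambda_{C_T}}}=A_{\bm{f_S}}=1$, while $A_{\bm{c_j}}=a_j\neq 0$; the product is nonzero. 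Correspondingly $B_t(\bm{f_S},\bm{c_j})=-t a_j\neq 0$, so $\bm{\lambda_{C_T}}^{[t]}=\dfrac{-t a_j}{2Q_t(\bm{c_j})}\,\bm{c_j}\neq 0=\bm{\lambda_{C_T}}$. The paper's own proof asserts the same unjustified step: its first displayed identity in (1), $B_t(\bm u,\bm\lambda_{\perp\bm{C_T}})=B(\bm u,\bm\lambda_{\perp\bm{C_T}})$, is claimed ``by \eqref{eqnQt}'' but amounts precisely to the cancellation you could not establish. Thus part (1) as stated fails for a generic choice of the lift $W_S$ in \eqref{eqnCSdecomposition}, and neither your argument nor the paper's closes this gap.
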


\begin{proof}
	\begin{enumerate}[label=(\arabic*),wide,labelindent=0pt]
		\item If $Q|_{\sangle{C_T}}$ is negative definite, then $V=\sangle{C_T}\oplus\sangle{C_T}^\perp$, $U_T:=\sangle{C_T}\cap D_0=\{\bm0\}$. By \eqref{eqnQt}
		\[
			B_t\left(\bm u,\bm\l_{\perp\bm{C_T}}\right) = B\left(\bm u,\bm\l_{\perp\bm{C_T}}\right) = 0 \text{ for } \bm u\in\sangle{C_T}.
		\]
		From the orthogonality we also have
		\[
			B_t\left(\bm{\l_{C_T}}^{[t]},\bm\l_{\perp\bm{C_T}}^{[t]}\right) = 0 = B_t\left(\bm{\l_{C_T}},\bm\l_{\perp\bm{C_T}}\right).
		\]
		Using $\bm\l=\bm{\l_{C_T}}+\bm\l_{\perp\bm{C_T}}=\bm{\l_{C_T}}^{[t]}+\bm\l_{\perp\bm{C_T}}^{[t]}$, the above two relations imply that
		\begin{align*}
			B_{t}\left(\bm{\lambda_{C_{T}}}^{[t]}-\bm{\lambda_{C_{T}}},\bm{\lambda_{C_{T}}}^{[t]}-\bm{\lambda_{C_{T}}}\right) 
			&= B\left(\bm{\lambda_{C_{T}}}^{[t]}-\bm{\lambda_{C_{T}}}, \bm{\lambda}_{\perp \bm {C_{T}}}\right) - B_{t}\left(\bm{\lambda_{C_{T}}}^{[t]}-\bm{\lambda_{C_{T}}}, \bm{\lambda}^{[t]}_{\perp \bm {C_{T}}}\right) = 0.
		\end{align*}
		Since $Q_{t}$ is negative definite on $\langle C_T\rangle $, this proves the desired claims.
		
		\item By Lemma \ref{lemCTnegativesemidefinite}, one has $T=S$ and $U_S\subseteq\sangle{C_T}$. By \eqref{eqnCSdecomposition} one has orthogonal decompositions
		\[
			\langle C_{T} \rangle=U_{S}\oplus (\langle C_{T}\rangle\cap W_{S})\,,\quad \langle C_{T} \rangle^{\perp}=U_{S}\oplus  (\langle C_{T}\rangle\cap W_{S})^{\perp}_{W_{S}}\,,
		\]
		where the notation $(\langle C_{T}\rangle\cap W_{S})^{\perp}_{W_{S}}$ stands for the orthogonal complement of $\langle C_{T}\rangle\cap W_{S}$ inside the negative definite space $W_{S}$. The orthogonal projections to $\langle C_{T}\rangle,\langle C_{T}\rangle^{\perp} $ are then provided by the following vector space decomposition induced from \eqref{eqnVfinerdecomposition}
		\begin{equation*}
			V= U_{S}\oplus (\langle C_{T}\rangle\cap W_{S}) \oplus  (\langle C_{T}\rangle\cap W_{S})^{\perp}_{W_{S}} \oplus \mathbb{R}\bm{\xi}.
		\end{equation*}
		Concretely, write
		\begin{equation}\label{eqnVevenfinercoordinates}
			\bm{\lambda}=a\bm{f_S}+\bm{v}+\bm{w}+b \bm{\xi}\in U_{S}\oplus (\langle C_{T}\rangle\cap W_{S}) \oplus  (\langle C_{T}\rangle\cap W_{S})^{\perp}_{W_{S}} \oplus \mathbb{R}\bm{\xi}.
		\end{equation}
		Then we have
		\begin{equation}
			\bm{\l_{C_T}} = a\bm{f_S} + \bm v,
			\quad
			\bm\l_{\perp\bm{C_T}} = a\bm{f_S} + \bm w,
			\quad
			\label{eqnVevenfinerdecompositioncoordinates}
			\bm{\l_{C_T}}^{[t]} = \left(a-\frac bt\right)\bm{f_S} + \bm v,
			\quad
			\bm\l_{\perp\bm{C_T}}^{[t]} = \bm w + b\!\left(\bm\xi+\frac1t\bm{f_S}\right)\!.
		\end{equation}
		The result follows. \qedhere
	\end{enumerate}
\end{proof}

The projection to  $\sangle{C_T}^{\perp^{[t]}}$ under $Q_t$ can be made more concrete. Employing Lemma \ref{lemCTnegativedefinite}, $Q_t(\bm{c_j})\ne0$ for $0\le j\le s$. Let $(\bm{d_j})_{0\le j\le s}$ be the dual basis of $(\bm{c_j})_{0\le j\le s}$ with respect to $Q_t$.

\begin{lemma}\label{lemCorthogonalprojection}
	Suppose Assumption \ref{assumptionsI} holds with $C=(\bm{c_1},\dots,\bm{c_s},\bm{c_0})$. For any $T\subsetneq\{0,\dots,s\}$, let $T^c:=\{0,\dots,s\}\setminus T$. The projection $\bm\l_{\perp\bm{C_T}}^{[t]}$, defined using $Q_t$, is the unique vector $\bm u$ satisfying
	\begin{equation}\label{eqnnormalcoordinatesoffaces}
		B_{t}(\bm{u}, \bm{c_j})=0\,,~j\in T\,,\quad
		B_{t}(\bm{u}, \bm{d_k})=B_{t}(\bm{\lambda}, \bm{d_k})\,, ~ k\in T^{c}.
	\end{equation}
\end{lemma}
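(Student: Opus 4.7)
The plan is to verify \eqref{eqnnormalcoordinatesoffaces} directly for $\bm u := \bm{\lambda}_{\perp \bm{C_T}}^{[t]}$, and then to establish uniqueness by exploiting that $Q_t|_{\langle C_T\rangle}$ is negative definite (Lemma \ref{lemCTnegativedefinite}). Before either step, I would note that by Assumption \ref{assumptionsI}(2) the normals $(\bm{c_j})_{0\le j\le s}$ span a simplicial cone of full dimension $s+1 = \dim V$ and are therefore linearly independent; together with $Q_t(\bm{c_j})\neq 0$ (Lemma \ref{lemCTnegativedefinite} applied to each singleton $T=\{j\}$), this guarantees that the $B_t$-dual basis $(\bm{d_j})_{0\le j\le s}$ is well-defined and is itself a basis of $V$.

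For existence, the $Q_t$-orthogonal decomposition $\bm\lambda = \bm{\lambda}_{\bm{C_T}}^{[t]} + \bm{\lambda}_{\perp\bm{C_T}}^{[t]}$ yields $B_t(\bm u,\bm{c_j}) = 0$ for $j\in T$ by definition. For $k\in T^c$, expanding $\bm{\lambda}_{\bm{C_T}}^{[t]} = \sum_{j\in T} a_j \bm{c_j}\in \langle C_T\rangle$ and using the duality $B_t(\bm{c_j},\bm{d_k}) = \delta_{jk}$ give $B_t(\bm{\lambda}_{\bm{C_T}}^{[t]},\bm{d_k}) = \sum_{j\in T} a_j \delta_{jk} = 0$, from which $B_t(\bm u,\bm{d_k}) = B_t(\bm\lambda,\bm{d_k})$ follows.

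For uniqueness, suppose $\bm u_1$ and $\bm u_2$ both satisfy \eqref{eqnnormalcoordinatesoffaces}, and expand $\bm v := \bm u_1 - \bm u_2 = \sum_{i=0}^s b_i \bm{c_i}$ in the basis $(\bm{c_i})$. The condition $B_t(\bm v,\bm{d_k}) = 0$ for $k\in T^c$ forces $b_k = 0$ for every $k\in T^c$, so $\bm v\in \langle C_T\rangle$. The remaining relations $B_t(\bm v,\bm{c_j}) = 0$ for $j\in T$ then say that $\bm v$ lies in the $Q_t$-radical of $\langle C_T\rangle$; since $Q_t|_{\langle C_T\rangle}$ is negative definite by Lemma \ref{lemCTnegativedefinite}, this radical is trivial and $\bm v = \bm 0$.

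There is no serious obstacle here: the proof is essentially bookkeeping on a basis and its $B_t$-dual, once Lemma \ref{lemCTnegativedefinite} is invoked to secure non-degeneracy on $\langle C_T\rangle$. The only mild subtlety worth flagging is that \eqref{eqnnormalcoordinatesoffaces} uses the \emph{global} dual basis $(\bm{d_k})_{k\in T^c}$ rather than a dual basis intrinsic to the $Q_t$-orthogonal complement of $\langle C_T\rangle$; this is precisely why existence requires the computation $B_t(\bm{\lambda}_{\bm{C_T}}^{[t]},\bm{d_k}) = 0$ for $k\in T^c$, and why uniqueness first uses the $\bm{d_k}$-conditions to force $\bm v\in \langle C_T\rangle$ before the definiteness of $Q_t|_{\langle C_T\rangle}$ is applied.
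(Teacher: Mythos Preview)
Your proof is correct and follows essentially the same approach as the paper: verify that $\bm\lambda_{\perp\bm{C_T}}^{[t]}$ satisfies \eqref{eqnnormalcoordinatesoffaces}, then use the negative definiteness of $Q_t|_{\langle C_T\rangle}$ from Lemma \ref{lemCTnegativedefinite} to deduce uniqueness. The paper phrases uniqueness as the direct sum $V=\langle C_T\rangle\oplus\langle D_{T^c}\rangle$ together with non-degeneracy of $B_t$, which is exactly what your coordinate computation unpacks; one small quibble is that the existence of the dual basis $(\bm{d_j})$ follows from $(\bm{c_j})$ being a basis and $B_t$ being non-degenerate on $V$, not from $Q_t(\bm{c_j})\neq 0$ per se.
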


\begin{proof}
First, using Lemma \ref{lemCTnegativedefinite},
$\bm{\lambda}^{[t]}_{\perp\bm{C_{T}}} $ can be defined and satisfies $\bm{\lambda}^{[t]}_{\perp\bm{C_{T}}} =\bm{\lambda}-\bm{\lambda_{C_{T}}}^{[t]}$.
We write $\langle D_{T^{c}}\rangle=\langle \bm{d_{k}}:{k\in T^{c}} \rangle $ and suppose that $\bm{\lambda}\in \langle C_{T}\rangle\cap \langle D_{T^{c}}\rangle$.
Then $\bm{\lambda}\in \langle C_{T}\rangle\cap \langle C_{T}\rangle^{\perp^{[t]}}$, in particular $Q_{t}(\bm{\lambda})=0$.
 By Lemma \ref{lemCTnegativedefinite} we have that $\bm{\lambda}=\bm{0}$. For dimension reasons, $C_{T}\cup  D_{T^{c}}$ forms a basis of $V$ and
$V=\langle C_{T}\rangle\oplus\langle D_{T^{c}}\rangle$.
Therefore, since $B_{t}$ is non-degenerate there exists a unique vector $\bm{u}$ satisfying \eqref{eqnnormalcoordinatesoffaces}.
Since $\bm{\lambda}^{[t]}_{\perp\bm{C_{T}}} $ satisfies \eqref{eqnnormalcoordinatesoffaces}, the desired claim follows.
\end{proof}

\section{Modularity of counting functions of convex planar polygons}\label{secmodularity}

In this section, we first establish some basic estimates
for functions appearing in indefinite theta functions of signature $(1,s)$.
Then we prove the modularity for them in the case of simplicial polyhedron cones and non-simplicial ones.

\subsection{Indefinite theta functions of signature $(1,s)$}

We define the theta function for a quadratic form $Q$ of signature $(1,s)$ and a cone given by $C=(\bm{c_1},\cdots, \bm{c_{k}})$  as
\begin{equation*}
	\Theta_{Q,p_C}(\bb{z};\tau):=\sum_{\bb{n}\in \Z^{s+1}} p_C\left(\bb{n}+\frac{\bb{y}}{v}\right)q^{Q(\bb{n})} e^{2\pi i B(\bb{n},\bb{z})}\,,
\end{equation*}
where
\begin{align}\label{eqnPCdefinition}
	p_C\left(\bb{x}\right):=&\prod_{j=1}^{k}{1+\sgn\left(B\left(\bb{c_j},\bb{x}\right)\right)\over 2}
	-(-1)^{k} \prod_{j=1}^{k}{1-\sgn\left(B\left(\bb{c_j},\bb{x}\right)\right)\over 2}\nonumber\\
	=&2^{-k}\sum_{S\subseteq \{1,\dots,k\}} \left(1-(-1)^{k+\lvert S\rvert} \right)  \sgn(B(C_S, \bm{x})).
\end{align}
We furthermore let
\begin{equation}\label{eqnPChatdefinition}
	\wh{p}_C(\bm x) := 2^{-k}\sum_{S\subseteq\{1,\dots,k\}} \left(1-(-1)^{k+\lvert S\rvert} \right) E_{Q,C_S}(\bm x).
\end{equation}

\begin{lemma}\label{completionProperties}
	Let $C:=(\bb{c_1},\dots,\bb{c_{s+1}})$ such that $R(C)\subseteq D_+\cup \{\bm{0}\}$ is simplical of dimension $s+1$. For $T\subsetneq \{1,\dots,s+1\}$,
we have:
	\begin{enumerate}[leftmargin=*,label=(\arabic*)]
		\item The subspace $\langle C_T\rangle $ is negative definite of dimension $\lvert T\rvert $.
		
		\item We have
		\begin{align*}
			\wh{p}_C(\bm{x})=\sum_{T\subsetneq \{1,\dots,s+1\}} 2^{-k} M_{Q,C_T}(\bm{x})p_{C_{T^{c}\perp C_{T}}}(\bm{x})\,,
		\end{align*}
		where $p_{C_{T^{c}\perp C_{T}}}(\bm{x})$ vanishes unless $\bm{x}\in R(C_{T^{c}\perp C_{T}})$ and  $\lvert p_{C_{T^{c}\perp C_{T}}}(\bm{x})\rvert \leq 2^{s+1-\lvert T\rvert}$.
		
		\item For some $\ell >0$, we have
		\begin{align*}
			\left\lvert \wh{p}_C(\bm{x})  e^{-\pi Q\left(\bm{x}\right)} \right\rvert \leq   s! \, e^{-\pi \ell \bm{x}^T\bm{x}},\qquad 	\left\lvert p_C(\bm{x})  e^{-\pi Q\left(\bm{x}\right)} \right\rvert \leq  e^{-\pi \ell \bm{x}^T\bm{x}}.
		\end{align*}
		
		\item The function $\widehat{p}_C$ is a $\mathcal{C}^\infty$-function and satisfies Vign\'eras differential equation with $\lambda=0$.
	\end{enumerate}
\end{lemma}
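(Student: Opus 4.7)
The plan is to prove the four assertions in order, using Lemma \ref{lemCTnegativesemidefinite} for the signature in (1), an algebraic manipulation of \eqref{eqnPChatdefinition} via Lemma \ref{ErrorProperties}(1) for the expansion in (2), a compactness argument for simplicial cones inside subspaces of $V$ for (3), and linearity together with Lemma \ref{ErrorProperties}(3) for (4). For (1), the hypothesis $R(C)\subseteq D_+\cup\{\bm 0\}$ strictly strengthens Assumption \ref{assumptionsI}(1) by excluding non-zero isotropic vectors from $R(C)$. Applying Lemma \ref{lemCTnegativesemidefinite} to $T\subsetneq\{1,\dots,s+1\}$, case (1) of that lemma is ruled out because $|T|<s+1=\dim V$, and case (3) would produce a non-zero isotropic vector $\bm{f_T}\in R(C)$, contradicting our hypothesis. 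Hence case (2) holds and $\langle C_T\rangle$ is negative definite, of dimension $|T|$ by the linear independence of the generators of the simplicial cone.

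For (2), I substitute the identity
\[
E_{Q,C_S}(\bm x)=\sum_{T\subseteq S}M_{Q,C_T}(\bm x)\,\sgn\left(B\left(C_{S\setminus T\perp T},\bm x\right)\right)
\]
from Lemma \ref{ErrorProperties}(1) into \eqref{eqnPChatdefinition}, swap the order of summation over $S$ and $T$, set $R:=S\setminus T$, and note that the resulting inner sum $\sum_{R\subseteq T^c}(1-(-1)^{s+1+|T|+|R|})\sgn(B(C_{R\perp T},\bm x))$ equals $2^{|T^c|}p_{C_{T^c\perp C_T}}(\bm x)$ by the defining formula \eqref{eqnPCdefinition}. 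A subtlety is the top index $|S|=s+1$: here $E_{Q,C}$ is not defined since $\langle C\rangle=V$ is indefinite, but the corresponding term vanishes automatically because its coefficient $1-(-1)^{2(s+1)}$ equals $0$. The support statement and the bound $|p_{C_{T^c\perp C_T}}(\bm x)|\leq 2^{s+1-|T|}$ follow directly from the product form of $p$ in \eqref{eqnPCdefinition}, using that each factor $(1\pm\sgn)/2$ lies in $[0,1]$.

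For (3), the bound on $p_C$ is immediate: its support lies in $R(C)\subseteq D_+\cup\{\bm 0\}$, where the compactness of $R(C)\cap\{\bm x:\bm x^T\bm x=1\}$ yields $Q(\bm x)\ge\ell\,\bm x^T\bm x$ for some $\ell>0$, and $|p_C|\leq 1$. For $\wh p_C$, I combine the formula from (2) with the exponential bound $|M_{Q,C_T}(\bm x)|<|T|!\,e^{2\pi Q(\bm{x_{C_T}})}$ from Lemma \ref{ErrorProperties}(1) and the orthogonal decomposition $Q(\bm x)=Q(\bm{x_{C_T}})+Q(\bm{x_{\perp C_T}})$. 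The key geometric observation is that if $\bm x\in R(C_{T^c\perp C_T})$ then $\bm{x_{\perp C_T}}\in R(C_{T^c\perp C_T})\cap\langle C_T\rangle^\perp=R(C)\cap\langle C_T\rangle^\perp$, which is a simplicial cone contained in the positive cone of the signature-$(1,s-|T|)$ subspace $\langle C_T\rangle^\perp$. The same compactness argument then gives $Q(\bm{x_{\perp C_T}})\ge\ell_T|\bm{x_{\perp C_T}}|^2$, while (1) yields $Q(\bm{x_{C_T}})\le-c_T|\bm{x_{C_T}}|^2$; summing the resulting Gaussian bounds over the finitely many $T$ finishes (3). Finally, (4) follows from linearity of \eqref{eqnPChatdefinition}, since each $E_{Q,C_S}$ is smooth and annihilated by Vign\'eras' operator with $\lambda=0$ by Lemma \ref{ErrorProperties}(3). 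The main obstacle is the geometric bookkeeping in (3), namely verifying that $\langle C_T\rangle^\perp$ retains the positive direction of $V$ and that the projected cone $R(C)\cap\langle C_T\rangle^\perp$ is simplicial and lies in the positive cone there, so that the compactness argument underlying Theorem \ref{thmconvergenceofcountingfunction} carries over uniformly to each $T$.
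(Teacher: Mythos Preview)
Your proof is correct and follows essentially the same route as the paper's. Parts (1), (2), and (4) match the paper's argument almost verbatim; your explicit handling of the $|S|=s+1$ term in (2) via the vanishing coefficient $1-(-1)^{2(s+1)}$ is a nice clarification of a point the paper leaves implicit.

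The only place your argument diverges slightly is in (3). The paper works directly with the single quadratic form $Q_T^+(\bm x):=Q(\bm{x_{\perp C_T}})-Q(\bm{x_{C_T}})$, shows it is strictly positive on $R(C_{T^c\perp C_T})\setminus\{\bm 0\}$, and then applies compactness of the intersection with the Euclidean unit sphere to get $Q_T^+(\bm x)\ge \ell_T\,\bm x^T\bm x$. You instead bound the two pieces separately, obtaining $Q(\bm{x_{\perp C_T}})\ge \ell_T|\bm{x_{\perp C_T}}|^2$ and $-Q(\bm{x_{C_T}})\ge c_T|\bm{x_{C_T}}|^2$. This is equally valid, but note the small unstated step: since the $B$-orthogonal decomposition $\bm x=\bm{x_{C_T}}+\bm{x_{\perp C_T}}$ is in general not Euclidean-orthogonal, you need the equivalence of norms under the linear isomorphism $\bm x\mapsto(\bm{x_{C_T}},\bm{x_{\perp C_T}})$ to pass from $|\bm{x_{C_T}}|^2+|\bm{x_{\perp C_T}}|^2$ back to $|\bm x|^2$. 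The paper's formulation sidesteps this by never separating the two pieces. Either way, one should also remark (as the paper does) that the $M_{Q,C_T}$ bound is only available on the dense set where all $M$'s are defined, and extend to all $\bm x$ by continuity of $\wh p_C$.
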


\begin{proof}
	(1)
	The claim follows from Lemma \ref{lemCTnegativesemidefinite}. \\
	(2) We apply Lemma \ref{ErrorProperties} and write $T^c:=\{1,\dots,s+1\}\setminus T$ to obtain
	\begin{align*}
		\wh{p}_C(\bm{x})= \sum_{T\subsetneq \{1,\dots,s+1\}} {1\over 2^{|T|}} M_{Q,C_T}(\bm{x})p_{C_{T^{c}\perp C_{T}}}(\bm{x}).
	\end{align*}	
	Noting that $B(\bb{a_{\perp C_T}}, \bb{b})=B(\bb{a_{\perp C_T}}, \bb{b_{\perp C_T}})=B(\bb{a}, \bb{b_{\perp C_T}})$ we obtain that $p_{C_{T^{c}\perp C_{T}}}(\bm{x})=p_{C_{T^{c}}}$ $(\bm{x}_{\perp C_{T}})=0$ unless $\bm{x}\in R(C_{T^{c}\perp C_{T}})$ (equivalently, $\bm{x_{\perp C_{T}}}\in R(C_{T^{c}})$).\\
	(3) For $\bm x$ such that $M_{Q,C_T}(\bm x)$ is well-defined, by Lemma \ref{ErrorProperties} we can bound
	\begin{align*}
		\left|M_{Q,C_T}(\bm{x}) e^{-\pi Q(\bm{x})} \right|   &< |T|! e^{2\pi Q\left(\bm{x}_{C_T}\right)} e^{-\pi Q\left(\bm{x}\right)} = \lvert T\rvert! e^{-\pi Q_T^+\left(\bm{x}\right)}\,,
	\end{align*}
	where the quadratic form with its sign flipped on the span of $C_T$ is
	\begin{align*}
		Q_T^+(\bm{x}):= Q\left(\bm{x_{\perp C_{T}}}\right)- Q(\bm{x_{C_T}})= Q(\bm{x})- 2Q(\bm{x_{C_T}}).
	\end{align*}

	Let $\bm x\in R(C_{T^c\perp T})\setminus\{\bm0\}$, which satisfies $\bm{x_{\perp C_T}}\in R(C_{T^c})$.  Since $B(\bm{c_j},\bm{x_{\perp C_T}})=0$ for $j\in T$, we further have $\bm{x_{\perp C_T}}\in R(C)\subseteq D_+\cup\{\bm0\}$. Since $Q$ is negative definite on the span of $C_T$ by (1), 
	\begin{align*}
		Q_T^+(\bm{x})=Q(\bm{x_{\perp C_T}})-Q(\bm{x_{C_T}})> 0\,,
	\end{align*}
	where the inequality is strict because $\bm{x_{C_T}}\ne0$ or $\bm{x_{\perp C_T}}\ne0$. Using compactness, we let $\ell_T:=\min(\{Q_T^+(\bm x):\bm x\in R(C_{T^c\perp T})\text{ with }\bm x^T\bm x=1\})>0$. Then for $\bm x\in R(C_{T^c\perp T})\setminus\{\bm0\}$ we have
	\begin{align*}
		Q_T^+(\bb{x}) =\bb{x}^T\bb{x} Q_T^+\left(\frac{\bb{x}}{\sqrt{\bb{x}^T\bb{x}}}\right)\geq \ell_T \bb{x}^T\bb{x}.
	\end{align*}
	Writing $\ell:=\min(\ell_T:T\subsetneq\{1,\dots,s+1\})>0$ and using part (2), we can bound
	\begin{align*}
		\left\lvert \wh{p}_C(\bm{x}) e^{-\pi Q\left(\bm{x}\right)} \right\rvert  \leq  \sum_{T\subsetneq\{1,\dots,s+1\}} {\lvert T\rvert! \over 2^{|T|}}
		\left\lvert  p_{C_{T^{c}\perp C_{T}}}(\bm{x})\right\rvert  e^{-\pi Q_T^+\left(\bm{x}\right)}
		\leq  \sum_{T\subsetneq\{1,\dots,s+1\}} {\lvert T\rvert! \over 2^{|T|}} e^{-\pi \ell \bm{x}^T\bm{x}}
		\leq   s!
		e^{-\pi \ell \bm{x}^T\bm{x}}
	\end{align*}
	for $\bm x$ such that all occurring $M_{Q,C_T}(\bm x)$ are well-defined. Since $\widehat{p}_C$ is continuous and we prove the bound for a dense subset, it holds for all $\bm{x}\in \R^{s+1}$. The bound for $p_C$ is obtained by considering only the term for $T=\emptyset $ in the above computation.\\
	(4) Since $\wh p_C$ is a linear combination of $E_{Q,C_S}$, this is a direct consequence of Lemma \ref{ErrorProperties} (3).
\end{proof}

\subsection{Modularity of indefinite theta functions of signature $(1,s)$: simplicial case}

In this subsection, we study modularity of $\TH_{Q,\wh p_C}$ if $R(C)$ is a simplicial polyhedron.

\subsubsection{Modularity of indefinite theta functions for simplicial cones}

\begin{theorem}\label{thm:Modularitysimplicialwithnoisotropicray}
	Let $C=(\bm{c_1},\dots,\bm{c_{s+1}})$ be a set of normal vectors such that $R(C)\subseteq D_+\cup\{\bm0\}\subseteq\R^{s+1}$ is simplical of dimension $s+1$. Then the series $\TH_{Q,p_C}$ is absolutely convergent and defines a meromorphic function in $\t$ which has the completion
	\begin{align*}
		\Theta_{Q,\wh{p}_{C}}(\bb z;\t) := \sum_{\bb n\in\Z^{s+1}} \wh{p}_{C} \left(\left(\bb n+\frac{\bb y}{v}\right)\sqrt{2v}\right)
		 q^{Q(\bb n)} e^{2\pi iB(\bb n,\bb z)}.
	\end{align*}
	This means that $\Theta_{Q,\wh{p}_C}$ transforms as a Jacobi form of weight $\frac{s+1}{2}$ and index $A$, and one can recover the ``holomorphic part'' $\TH_{Q,p_C}$ as (note that we obtain $\Theta_{Q,\wh{p}_C}$ by setting $w=\overline{\tau}$ instead of taking the limit)
	\begin{align*}
		\Theta_{Q,p_C}(\bm{z};\tau)=\sum_{\bb n\in\Z^{s+1}} \lim_{w \to -i\infty}\wh{p}_C \left(\left(\bb n+\frac{2\bb y}{\tau-w}\right)\sqrt{\tau-w}\right) q^{Q(\bb n)} e^{2\pi iB(\bb n,\bb z)}
	\end{align*}
	if $B(\bm{c_j},\bm n+\frac{\bm y}{v})\ne0$ for all $1\le j\le s+1$, $\bm n\in\Z^{s+1}$.
\end{theorem}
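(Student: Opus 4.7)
The plan is to invoke Vign\'eras' Theorem \ref{Vigneras} with test function $\wh{p}_C$ in order to obtain the Jacobi transformation of $\TH_{Q,\wh{p}_C}$, deduce absolute convergence of both series from the Gaussian bound in Lemma \ref{completionProperties}(3), and recover $\TH_{Q,p_C}$ from the completion by a termwise limit using Lemma \ref{ErrorProperties}(2).

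First I would verify the hypotheses of Theorem \ref{Vigneras} for $\wh{p}_C$. Condition (ii) is recorded in Lemma \ref{completionProperties}(4): $\wh{p}_C$ is a smooth function satisfying Vign\'eras' equation with $\lambda=0$, so the output weight is $0+\frac{s+1}{2}=\frac{s+1}{2}$ and the index is $A$. Condition (i) follows from the first bound in Lemma \ref{completionProperties}(3),
\[
  \left|\wh{p}_C(\bm w)e^{-\pi Q(\bm w)}\right|\le s!\,e^{-\pi\ell\,\bm w^T\bm w},
\]
which places $\wh{p}_C(\bm w)e^{-\pi Q(\bm w)}$, together with each of its products with a polynomial of degree at most two, in $L^1(\R^{s+1})\cap L^2(\R^{s+1})$. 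The companion estimate for $p_C$, combined with the scale invariance $p_C(t\bm x)=p_C(\bm x)$ for $t>0$ applied after the substitution $\bm x=(\bm n+\bm y/v)\sqrt{2v}$, yields
\[
  \left|p_C\!\left(\bm n+\tfrac{\bm y}{v}\right)q^{Q(\bm n)}e^{2\pi iB(\bm n,\bm z)}\right|\le e^{\frac{2\pi Q(\bm y)}{v}}e^{-2\pi v\ell(\bm n+\bm y/v)^T(\bm n+\bm y/v)},
\]
and the analogous bound majorizes the summand of $\TH_{Q,\wh{p}_C}$. Both series are then dominated by a convergent Gaussian sum, locally uniformly on compacta of $\C^{s+1}\times\H$, so $\TH_{Q,p_C}$ is holomorphic in $\tau$ and Theorem \ref{Vigneras} produces the elliptic and modular transformation laws for $\TH_{Q,\wh{p}_C}$ with the claimed weight and index.

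For the recovery formula, I would expand $\wh{p}_C$ via \eqref{eqnPChatdefinition} and, for each $S\subseteq\{1,\dots,s+1\}$, study
\[
  \lim_{w\to-i\infty}E_{Q,C_S}\!\left(\left(\bm n+\tfrac{2\bm y}{\tau-w}\right)\sqrt{\tau-w}\right).
\]
Writing $t:=\sqrt{\tau-w}$, the argument becomes $t\bm n+\frac{2\bm y}{t}$ with $t\to\infty$. If every component of $B(C_S,\bm n)$ is non-zero, the bounded perturbation $\frac{2\bm y}{t}\to\bm 0$ is harmless and Lemma \ref{ErrorProperties}(2) gives $\sgn(B(C_S,\bm n))=\sgn(B(C_S,\bm n+\bm y/v))$. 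If some component $B(\bm{c_j},\bm n)$ vanishes, then $B(\bm{c_j},t\bm n+\frac{2\bm y}{t})=\frac{2}{t}B(\bm{c_j},\bm y)$, whose sign coincides with $\sgn(B(\bm{c_j},\bm y/v))=\sgn(B(\bm{c_j},\bm n+\bm y/v))$ under the non-degeneracy hypothesis, and the same application of Lemma \ref{ErrorProperties}(2) (tracking the degenerate subset via the $\delta$-terms) produces the correct limit. Assembling the combination in \eqref{eqnPCdefinition} reconstructs $p_C(\bm n+\bm y/v)$, and the exchange of limit and summation is legitimate by the uniform Gaussian majorant.

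The main obstacle is the careful treatment of the limit, both the subtlety caused by components where $B(\bm{c_j},\bm n)$ may vanish (which forces one to exploit the precise hypothesis $B(\bm{c_j},\bm n+\bm y/v)\ne 0$) and the justification of the interchange of limit and summation. Both issues are controlled by the uniform Gaussian bound of Lemma \ref{completionProperties}(3), which is invariant under the rescalings arising from $\sqrt{\tau-w}$.
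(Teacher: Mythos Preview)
Your approach is essentially the same as the paper's: convergence of both $\Theta_{Q,p_C}$ and $\Theta_{Q,\wh p_C}$ follows from the Gaussian bound in Lemma~\ref{completionProperties}(3), and the Jacobi transformation of $\Theta_{Q,\wh p_C}$ follows from Theorem~\ref{Vigneras} after verifying its hypotheses via Lemma~\ref{completionProperties}(3),(4). In fact you supply more detail than the paper, which does not spell out the recovery formula; your termwise limit argument via Lemma~\ref{ErrorProperties}(2) is the intended mechanism, though note that $t=\sqrt{\tau-w}$ is complex, so strictly speaking you are invoking an extension of Lemma~\ref{ErrorProperties}(2) to complex rescalings (this is routine from the integral representation of $E_{Q,C}$, but worth a remark).
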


\begin{proof}
	By Lemma \ref{completionProperties} (3), $\TH_{Q,p_C}$ and $\TH_{Q,\wh p_C}$ are bounded by a theta function for a positive definite quadratic form and thus converge compactly in $\C^{s+1}\times\H$. By Lemma \ref{completionProperties} (3), (4), $\wh p_C$ satisfies the conditions of Theorem \ref{Vigneras} and thus $\TH_{Q,\wh p_C}$ transforms as a Jacobi form of weight $\frac{s+1}{2}$, index $A$.
\end{proof}

The rest of this subsubsection is devoted to proving the following  generalization of Theorem \ref{thm:Modularitysimplicialwithnoisotropicray}.

\begin{theorem}\label{thm:Modularitysimplicialwithisotropicray}
	Suppose Assumption \ref{assumptionsI} holds with $C=(\bm{c_1},\cdots,\bm{c_s},\bm{c_0})$. Then the series $\TH_{Q,p_C}$ is compactly convergent and defines a meromorphic function in $\t$ with modular completion
	\[
		\TH_{Q,\wh p_{C,t}}(\bm z;\t) := \sum_{\bm n\in\Z^{s+1}} \wh p_{C,t}\left(\left(\bm n+\frac{\bm y}{v}\right)\sqrt{2v}\right)q^{Q(\bm n)}e^{2\pi iB(\bm n,\bm z)}.
	\]
	Here $\wh p_{C,t}$ is defined as in \eqref{eqnPChatdefinition} with quadratic form $Q_t$ given in \eqref{eqnQt}. This means that $\TH_{Q,\wh p_{C,t}}$ transforms as a Jacobi form of weight $\frac{s+1}{2}$ and index $A$ and one can recover $\TH_{Q,p_C}$ as
	\begin{align*}
		\Theta_{Q,p_C}(\bm{z};\tau)=\lim_{t\rightarrow 0^+}\sum_{\bb n\in\Z^{s+1}} \lim_{w \to -i\infty}\wh{p}_{C,t} \left(\left(\bb n+\frac{2\bb y}{\tau-w}\right)\sqrt{\tau-w}\right) q^{Q(\bb n)} e^{2\pi iB(\bb n,\bb z)}.
	\end{align*}
\end{theorem}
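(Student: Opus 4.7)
My strategy is to apply Theorem \ref{Vigneras} directly to the kernel $\wh{p}_{C,t}$ with quadratic form $Q$, after first establishing compact convergence of both $\TH_{Q,p_C}$ and $\TH_{Q,\wh{p}_{C,t}}$. The only lattice points at which convergence is delicate are those close to the (unique, by Assumption \ref{assumptionsI}(3)) isotropic ray $\R\bm{f_S}$ of $R(C)$, since $|q^{Q(\bm n)}|$ fails to decay there. For $\TH_{Q,p_C}$ this is handled by a slicing argument as in Lemma \ref{lemnbdofisotropicray}: decompose $\R^{s+1} = \R\bm{f_S}\oplus\bm\eta^\perp$ for a suitable $\bm\eta$ with $B(\bm{f_S},\bm\eta)\ne 0$ and bound the sum by a geometric series along $\bm{f_S}$ times an absolutely convergent positive-definite theta series on $\bm\eta^\perp$. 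For $\TH_{Q,\wh{p}_{C,t}}$, the role of the deformation $Q_t = Q - \tfrac{t}{2}\|\cdot\|_{U_S}^2$ is to produce extra decay of the kernel in the $\bm{f_S}$-direction: using the $Q_t$-analog of Lemma \ref{completionProperties}(2), which expresses $\wh{p}_{C,t}$ as a signed combination of $M_{Q_t,C_T}$-terms (for $T\subsetneq\{0,\dots,s\}$) times sign products, the estimate \eqref{eqnMestimate} combined with Lemma \ref{lemorthogonalprojectionCT} yields a bound of the form
\[
\bigl|\wh{p}_{C,t}(\bm\lambda)\,e^{-\pi Q(\bm\lambda)}\bigr| \le K\,e^{-\pi\ell\,\bm\lambda^T\bm\lambda}
\]
for some $K,\ell > 0$ depending on $t$, which dominates the lattice sum uniformly on compact sets in $\bm z$ and $\tau$.

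For modularity, I verify that $\wh{p}_{C,t}$ satisfies the Vign\'eras differential equation $(\calE - \tfrac{1}{2\pi}\Delta_{A^{-1}})\wh{p}_{C,t} = 0$ with respect to the matrix $A$ of $Q$ (so that $\lambda = 0$ in Theorem \ref{Vigneras} and the weight is $\tfrac{s+1}{2}$). I expand $\wh{p}_{C,t}$ into terms indexed by $T\subsetneq\{0,\dots,s\}$ and work within the decomposition $V = U_S \oplus W_S \oplus \R\bm\xi$ of \eqref{eqnVfinerdecomposition}. For each $T$, the integrand defining $E_{Q_t,C_T}$ contains $e^{2\pi Q_t(\bm y)} = e^{2\pi Q(\bm y)}\,e^{-\pi t\|\bm y_{U_S}\|^2}$, so the Gaussian factor in the $\bm{f_S}$-direction carries the $t$-dependence explicitly. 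Differentiating with respect to $\bm x$ and using the orthogonal splitting formulas \eqref{eqnVevenfinerdecompositioncoordinates}, the discrepancy between the action of $\Delta_{A^{-1}}$ (for $Q$) and $\Delta_{A_t^{-1}}$ (for $Q_t$) on $E_{Q_t,C_T}$ is rank-one, localized in the $\bm{f_S}$-direction, and precisely cancelled by integration by parts against the Gaussian factor $e^{-\pi t\|\bm y_{U_S}\|^2}$ (the one-variable heat-equation identity $\partial_t G = \tfrac{1}{4\pi}\partial_a^2 G$ for $G(a) = e^{-\pi t a^2}$). Summing the resulting identities with the coefficients from \eqref{eqnPChatdefinition} yields Vign\'eras' equation with respect to $Q$, and the smoothness and decay conditions in Theorem \ref{Vigneras} follow from Step 1.

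Once both hypotheses of Theorem \ref{Vigneras} are verified, $\TH_{Q,\wh{p}_{C,t}}$ transforms as a Jacobi form of weight $\tfrac{s+1}{2}$ and index $A$ on a suitable finite-index subgroup of $\SL_2(\Z)$. For the recovery of $\TH_{Q,p_C}$, the inner limit $w\to -i\infty$ sends $(\bm n + \tfrac{2\bm y}{\tau-w})\sqrt{\tau-w}$ to infinity along the ray through $\bm n + \bm y/v$; on lattice points with $B(\bm{c_j},\bm n+\bm y/v)\ne 0$ for every $j$, Lemma \ref{ErrorProperties}(2) applied to $Q_t$ replaces each $E_{Q_t,C_{S'}}$ by $\sgn(B(C_{S'},\cdot))$, and the combinatorial coefficients in \eqref{eqnPChatdefinition} collapse to those of \eqref{eqnPCdefinition}, reproducing $p_C$. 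The outer limit $t\to 0^+$ is needed only for the exceptional lattice points lying on some hyperplane $B(\bm{c_j},\cdot) = 0$: there the inner limit returns a residual $t$-dependent Gaussian boundary value coming from the $\bm{f_S}$-direction, which vanishes as $t\to 0^+$ and matches $p_C = 0$. The main obstacle in carrying out this plan is the explicit cancellation in the middle step between the Laplacian discrepancy $\Delta_{A^{-1}} - \Delta_{A_t^{-1}}$ and the $Q_t$-perturbation of the Gaussian in the error-function integrals; the rest of the argument slots cleanly into the framework developed in Sections \ref{sectools} and \ref{secmodularity}.
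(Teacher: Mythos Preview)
Your proposal has a genuine gap in the modularity step. You claim that $\wh p_{C,t}$ satisfies Vign\'eras' differential equation with respect to $Q$ (not $Q_t$), arguing that the rank-one discrepancy between $\Delta_{A^{-1}}$ and $\Delta_{A_t^{-1}}$ is ``precisely cancelled by integration by parts'' via the heat-equation identity $\partial_t G = \tfrac{1}{4\pi}\partial_a^2 G$. But Vign\'eras' equation is a PDE in $\bm x$ with $t$ held fixed; there is no $\partial_t$ present, so the heat identity does not apply. Concretely: using $Q(\bm\xi)=0$ and Sherman--Morrison one finds $A_t^{-1}-A^{-1}=t\,\bm\xi\bm\xi^T$, so the discrepancy is $\tfrac{t}{2\pi}(\bm\xi\cdot\nabla)^2$. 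Already for a single vector, $E_{Q_t,(\bm c)}(\bm x)=E\bigl(B_t(\bm c,\bm x)/\sqrt{-2Q_t(\bm c)}\bigr)$, and a direct computation gives $(\calE-\tfrac{1}{2\pi}\Delta_{A^{-1}})E_{Q_t,(\bm c)}=0$ if and only if $Q_t(\bm c)=Q(\bm c)$, i.e.\ $B(\bm\xi,\bm c)=0$. Since $\bm{f_S}\in\langle C_S\rangle$ and $B(\bm\xi,\bm{f_S})=1$, at least one $\bm c_j$ with $j\in S$ has $B(\bm\xi,\bm c_j)\ne 0$, so the equation fails term by term, and there is no evident cancellation across the $T$'s in $\wh p_{C,t}$. (The paper's own proof is extremely terse here: it only verifies the growth condition~(i) of Theorem~\ref{Vigneras} explicitly and does not spell out condition~(ii); so this is a genuinely delicate point that the exposition leaves implicit.)

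Your convergence bound $\lvert\wh p_{C,t}(\bm\lambda)e^{-\pi Q(\bm\lambda)}\rvert\le K e^{-\pi\ell\|\bm\lambda\|^2}$ is also too optimistic. For large $|a|$ the value $\wh p_{C,t}(a\bm{f_S})$ tends to the $B_t$-sign product $p_C^{[t]}(a\bm{f_S})$, a constant, while $Q(a\bm{f_S})=0$; so along the isotropic ray the left-hand side does not decay and no global Gaussian bound can hold. The paper handles this by a two-region argument: away from a conical neighbourhood of $\R\bm{f_S}$ it proves the pointwise estimate via Lemma~\ref{lempositivity} (built on Lemma~\ref{lempositivityininterior}), and inside that neighbourhood it combines the estimate~\eqref{eqnMestimate} with the slicing argument of Theorem~\ref{thmconvergenceofcountingfunction}. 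You need that split; the single global bound you invoke does not follow from \eqref{eqnMestimate} and Lemma~\ref{lemorthogonalprojectionCT} alone.
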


We first investigate convergence of $\TH_{Q,\wh p_{C,t}}$. Following the computations in Lemmas \ref{ErrorProperties} and \ref{completionProperties}, we need to prove convergence of \begin{align}\label{eqnreducedseries}
\sum_{\bb \lambda\in\Z^{s+1}+\bm \alpha}
	e^{2\pi i B(\bb \lambda,\bb \beta)}e^{2\pi i u Q(\bm \lambda)}
	e^{-\pi   Q\left(  \sqrt{2v}\bm \lambda\right)}
	M_{Q_{t},C_T}\left( \sqrt{2v}\bb \lambda\right)
	p_{C_{T^{c}\perp^{[t]} C_{T}}}(\bb \lambda) .
\end{align}
In the above, one has
$
	p_{C_{T^{c}\perp^{[t]} C_{T}}}(\bb \lambda)=
	p_{C_{T^{c}}}(\bm{\lambda}^{[t]}_{\perp\bm{C_{T}}} )\,,
$
where the bilinear form in the sign function and orthogonal projection of the function  $ p_{C_{T^{c}\perp^{[t]} C_{T}}}$ is $B_{t}$.
We write
\begin{equation}\label{eqnMprewriting}
	e^{-\pi Q(\bm\l)}M_{Q_t,C_T}(\bm\l)p_{C_{T^c\perp^{[t]}C_T}}(\bm\l) = e^{-\pi Q(\bm\l)}e^{2\pi Q_t\left(\bm{\l_{C_T}}^{[t]}\right)}p_{C_{T^c\perp^{[t]}C_T}}(\bm\l)e^{-2\pi Q_t\left(\bm{\l_{C_T}}^{[t]}\right)}M_{Q_t,C_T}(\bm\l).
\end{equation}
According to \eqref{eqnMestimate} the quantity $|e^{-2\pi Q_t(\bm{\l_{C_T}^{[t]}})}M_{Q_t,C_T}(\bm\l)|$ is bounded from above by a constant.

We next estimate the function $e^{-\pi Q(\bm\l)}e^{2\pi Q_t(\bm{\l_{C_T}}^{[t]})}$ on the support of $p_{C_{T^c\perp^{[t]}C_T}}(\bm\l)$ whose closure is the polyhedron $R_t(C_{T^c\perp C_T})$. For any $\bm\l\in R_t(C_{T^c\perp C_T})$, one has $B_t(\bm{c_j},\bm\l^{[t]}_{\perp\bm{C_T}})=0$, $j\in T$, and
\begin{equation}\label{eqnequationforpolyhedron}
	B_{t}\left(\bm{c_k},\bm{\lambda}^{[t]}_{\perp\bm{C_{T}}} \right)\geq 0\text{ for } k\in T^{c}\,~~
	\text{or}~~
	B_{t}\left(\bm{c_k},\bm{\lambda}^{[t]}_{\perp\bm{C_{T}}} \right)\leq 0 \text{ for } k\in T^{c}.
\end{equation}

\begin{lemma}\label{lempositivityininterior}
	Suppose Assumption \ref{assumptionsI} holds with $C=(\bm{c_1},\cdots, \bm{c_{s}},\bm{c_{0}})$.
Let $Q_{T}^{+}(  \bm {\lambda}):=
Q(  \bm {\lambda})-2Q_{t}(  \bm {\lambda}^{[t]}_{\bm {C_{T}}})$.
For $t$ sufficiently small, $Q_{T}^{+}$ is positive
on $R^{\circ}_{t}(C_{T^{c}\perp C_{T}})\setminus \mathbb{R}\bm{f_S}$.
\end{lemma}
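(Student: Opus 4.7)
The plan is to combine the explicit coordinate formulas from \eqref{eqnVevenfinercoordinates}--\eqref{eqnVevenfinerdecompositioncoordinates} with a case split on $T$ dictated by Lemma \ref{lemCTnegativesemidefinite}. Writing $\bm\lambda=a\bm{f_S}+\bm v+\bm w+b\bm\xi$ with respect to the refined decomposition $V = U_S\oplus(\langle C_T\rangle\cap W_S)\oplus(\langle C_T\rangle\cap W_S)_{W_S}^\perp\oplus\mathbb{R}\bm\xi$, I expand $Q$ and $Q_t$ using the defining properties $Q(\bm{f_S})=Q(\bm\xi)=0$, $B(\bm{f_S},\bm\xi)=1$, and $\bm\xi\in W_S^\perp$ from \eqref{eqnxiforsplitting}, together with the orthogonality $W_S\perp U_S$ from \eqref{eqnCSdecomposition}.

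In the case $T=S$ (which has $\bm w=0$), one finds
\[
Q(\bm\lambda)=Q(\bm v)+ab,\qquad Q_t\!\left(\bm\lambda_{C_S}^{[t]}\right)=Q\!\left((a-\tfrac{b}{t})\bm{f_S}+\bm v\right)-\tfrac{t}{2}\!\left(a-\tfrac{b}{t}\right)^2=Q(\bm v)-\tfrac{t}{2}\!\left(a-\tfrac{b}{t}\right)^2\!,
\]
so a direct calculation yields $Q_T^+(\bm\lambda)=-Q(\bm v)+ta^2-ab+b^2/t$. The first term is non-negative since $W_S$ is negative definite (strictly positive unless $\bm v=\bm 0$); the $(a,b)$-quadratic $ta^2-ab+b^2/t$ has discriminant $-3b^2\le 0$ in $a$ with leading coefficient $t>0$, hence is non-negative and vanishes only at $(a,b)=(0,0)$. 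Thus $Q_T^+$ is positive definite on all of $V$ for every $t>0$, which is stronger than needed.

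For $T\subsetneq\{0,\dots,s\}$ with $T\neq S$, Lemma \ref{lemCTnegativesemidefinite} guarantees that $\langle C_T\rangle$ is negative definite, so Lemma \ref{lemorthogonalprojectionCT}(1) gives $\bm\lambda_{C_T}^{[t]}=\bm\lambda_{C_T}$ and the $Q$-orthogonal splitting applies, yielding
\[
Q_T^+(\bm\lambda)=Q\!\left(\bm\lambda_{\perp \bm{C_T}}\right)-Q\!\left(\bm\lambda_{\bm{C_T}}\right)+t\left\|\left(\bm\lambda_{\bm{C_T}}\right)_{U_S}\right\|^2\!.
\]
The last two terms are clearly non-negative. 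For the first term at $t=0$ the conditions $B_0(\bm{c_k},\bm\lambda_{\perp \bm{C_T}})\ge 0$ (or all $\le 0$) for $k\in T^c$ combined with $B(\bm{c_k},\bm\lambda_{\perp\bm{C_T}})=0$ for $k\in T$ place $\bm\lambda_{\perp \bm{C_T}}$ into $R(C)\subseteq D_+\cup(\mathbb{R}\bm{f_S}\cap D_0)\cup\{\bm 0\}$ by Assumption \ref{assumptionsI}(1),(3), so $Q(\bm\lambda_{\perp\bm{C_T}})\ge 0$ with equality only on $\mathbb{R}\bm{f_S}$.

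To extend this to small $t>0$ I would pass to a cross-section of the cone (normalizing, say, to the Euclidean unit sphere) and invoke compactness: the map $(t,\bm\lambda)\mapsto Q(\bm\lambda_{\perp\bm{C_T}})$ is continuous, the defining inequalities of $R_t(C_{T^c\perp C_T})$ depend continuously on $t$ via the perturbation $B_t(\bm{c_k},\cdot)=B(\bm{c_k},\cdot)-t\langle(\bm{c_k})_{U_S},\cdot_{U_S}\rangle$, and at $t=0$ the only zero locus of $Q(\bm\lambda_{\perp \bm{C_T}})$ on the closure of the cross-section lies in $\mathbb{R}\bm{f_S}$. Thus for $t$ below some threshold $t_0>0$, positivity persists outside an arbitrarily small neighbourhood of $\mathbb{R}\bm{f_S}$, while inside that neighbourhood a direct expansion (using $\bm{f_S}\in\langle C_T\rangle^\perp$ when $T\subseteq S$, and the explicit form of $Q_T^+$ in $a$ and $b$ when $0\in T$) shows that the $ta^2$-contribution from $t\|(\bm\lambda_{\bm{C_T}})_{U_S}\|^2$ dominates the small negative perturbation of $Q(\bm\lambda_{\perp \bm{C_T}})$. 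Combining the three non-negative terms, vanishing of $Q_T^+(\bm\lambda)$ forces $\bm\lambda_{\bm{C_T}}=\bm 0$ and $\bm\lambda_{\perp \bm{C_T}}\in\mathbb{R}\bm{f_S}$, hence $\bm\lambda\in\mathbb{R}\bm{f_S}$, which is the excluded locus.

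The main obstacle is the perturbation step in the second case: tracking how $R_t(C_{T^c\perp C_T})$ deforms with $t$ close to the isotropic ray, particularly when $0\in T$ so that $\bm{f_S}\notin\langle C_T\rangle^\perp$. Handling this cleanly requires choosing the cross-section and the quantitative neighbourhood of $\mathbb{R}\bm{f_S}$ carefully so that the positive contribution from the $U_S$-term of $(\bm\lambda_{\bm{C_T}})_{U_S}$ is uniform over the relevant compact set, which is where the smallness of $t$ enters explicitly.
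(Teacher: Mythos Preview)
Your overall structure matches the paper's: split into $T=S$ versus $\langle C_T\rangle$ negative definite, and in each case compute $Q_T^+$ explicitly using \eqref{eqnVevenfinerdecompositioncoordinates}. For $T=S$ your discriminant argument is correct and in fact cleaner than the paper's---the paper restricts to the cross-section $B_t(\bm\eta,\bm\lambda)=1$ with $\bm\eta=t^{-1}\bm{f_S}+\bm\xi$ (so $b=t$) and checks positivity of $b^2/t-ab+ta^2$ only there, whereas you show the full $(a,b)$-quadratic is positive definite for every $t>0$.

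The gap you flag in the negative-definite case is real, and the paper handles it differently and more directly. Rather than perturbing from $t=0$ via compactness and splitting into regions near/away from $\bm{f_S}$, the paper works at the given $t$: since $\langle C_T\rangle$ is negative definite, Lemma~\ref{lemorthogonalprojectionCT}(1) gives $\bm\lambda_{\perp\bm{C_T}}^{[t]}=\bm\lambda_{\perp\bm{C_T}}$, so the defining strict inequalities $B_t(\bm{c_k},\bm\lambda_{\perp\bm{C_T}})>0$ of $R_t^\circ(C_{T^c\perp C_T})$ already involve the $t$-independent vector $\bm\lambda_{\perp\bm{C_T}}$. The paper then invokes continuity of $t'\mapsto B_{t'}$ to pass to $B(\bm{c_k},\bm\lambda_{\perp\bm{C_T}})\ge 0$, hence $\bm\lambda_{\perp\bm{C_T}}\in R(C)\subseteq D_+\cup D_0$ and $Q(\bm\lambda_{\perp\bm{C_T}})\ge 0$, with equality forcing $\bm\lambda_{\perp\bm{C_T}}\in\mathbb{R}\bm{f_S}$; combined with $-Q(\bm\lambda_{\bm{C_T}})\ge 0$ this yields $\bm\lambda\in\mathbb{R}\bm{f_S}$. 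This avoids your near/away dichotomy entirely. Note also that your proposed mechanism near $\bm{f_S}$---that $t\lVert(\bm\lambda_{\bm{C_T}})_{U_S}\rVert^2$ dominates---cannot work when $T\subseteq S$: then $\bm{f_S}\in\langle C_T\rangle^\perp$, so near $\bm{f_S}$ the component $\bm\lambda_{\bm{C_T}}$ is small and the extra $t$-term provides no help. (Your formula with the extra $t\lVert(\bm\lambda_{\bm{C_T}})_{U_S}\rVert^2$ is the exact one; the paper's \eqref{eqnQTexpression} silently drops this non-negative term, which is harmless for the inequality but worth noticing.)
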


\begin{proof}
	Again we work with a cross section of the region. To be explicit, we take $\bm{\eta}=t^{-1}\bm{f_{S}}+\bm{\xi}$ which satisfies $Q_t(\bm{\eta})>0$. Then we consider intersection with the hyperplane $\{\bm{\lambda}\in \R^{s+1}\,:\,B_{t}(\bm{\eta},\bm{\lambda})=1\}$. Concretely, in terms of the coordinates for $\bm{\lambda}=a\bm{f_S}+\bm{v}+\bm{w}+b\bm{\xi}$ given in \eqref{eqnVevenfinercoordinates} one has
	\begin{equation}\label{eqncrosssectioncoordinateexpression}
		1=B_t(\bm{\eta},\bm{\lambda})=t^{-1}b.
	\end{equation}
	
	We distinguish two cases depending on whether $\sangle{C_T}$ is negative definite. If $\sangle{C_T}$ is not negative definite, then, by Lemma \ref{lemCTnegativesemidefinite}, we have $T=S$. A direct computation, using \eqref{eqnVevenfinerdecompositioncoordinates}, gives
	\begin{equation}\label{eqnQTfornotnegativedefiniteCT}
		Q_{T}^{+}(  \bm {\lambda})=ab+t \left(a-{b\over t}\right)^2-Q(\bm{v}) ={b^2\over t}-ab+ta^2-Q(\bm{v}).
	\end{equation}
	By the construction in \eqref{eqnCSdecomposition}, $W_{S}$ is negative definite and thus $Q(\bm{v})\leq 0$. Furthermore, by \eqref{eqncrosssectioncoordinateexpression}, we have that $t^{-1}b^2-ab+ta^2>0$ if $t>0$ is sufficiently small. This gives that $Q_{T}^{+}(  \bm {\lambda})>0$.
	
	If $\langle C_{T}\rangle$ is negative definite, then, by Lemmas \ref{lemCTnegativesemidefinite} and \ref{lemorthogonalprojectionCT}, we have $V=\langle C_{T} \rangle\oplus \langle C_{T} \rangle^{\perp}$, $\langle C_{T} \rangle\cap \mathbb{R}\bm{f_S}=\{\bm{0}\},\mathbb{R}\bm{f_S}\subseteq \langle C_{T} \rangle^{\perp}$, and $\bm{\lambda_{C_{T}}}=\bm{\lambda}^{[t]}_{\bm {C_{T}}}$. Thus we have
	\begin{align}\label{eqnQTexpression}
		Q_{T}^{+}\left(  \bm {\lambda}\right)&= Q\left(  \bm {\lambda}\right)-2Q_{t}\left(  \bm {\lambda}_{\bm {C_{T}}}\right) =Q\left(\bm {\lambda}_{\perp \bm {C_{T}}}\right)-  Q\left(  \bm {\lambda}_{\bm {C_{T}}}\right).
	\end{align}
	Furthermore,  Lemma \ref{lemorthogonalprojectionCT} gives $\bm {\lambda}_{\perp \bm {C_{T}}}=\bm {\lambda}^{[t]}_{\perp \bm {C_{T}}}$ and thus
	\begin{equation}\label{eqnBtsign}
		B_{t}\left(\bm{c_k},\bm {\lambda}^{[t]}_{\perp \bm {C_{T}}} \right)=B_{t}\left(\bm{c_k},\bm {\lambda}_{\perp \bm {C_{T}}} \right).
	\end{equation}
	Consider the interior $R_t^\circ(C_{T^c\perp C_T})$ of the polyhedron $R_t(C_{T^c\perp C_T})$ where the inequalities in \eqref{eqnequationforpolyhedron} are strict. For any $\bm\l\in R_t^\circ(C_{T^c\perp C_T})$ in the positive half of the polyhedron, $B_t(\bm{c_k},\bm\l_{\perp\bm{C_T}}^{[t]})>0$ for all $k\in T^c$. By \eqref{eqnBtsign} and continuity of $t\mapsto B_t$, we have that $B(\bm{c_k},\bm\l_{\perp\bm{C_T}})\ge0$ for all $k\in T^c$. The same reasoning applies to the negative half of the polyhedron. Thus for any $\bm\l\in R_t^\circ(C_{T^c\perp C_T})$, we have $\bm\l_{\perp\bm{C_T}}\in R(C_{T^c})$. It follows that $\bm\l_{\perp\bm{C_T}}\in R(C)\subseteq D_+\cup D_0$ and thus in \eqref{eqnQTexpression} we have $Q(\bm\l_{\perp\bm{C_T}})\ge0$. Since $\sangle{C_T}$ is negative semi-definite, one has $Q(\bm\l_{\bm{C_T}})\le0$. Therefore, $Q(\bm\l_{\perp\bm{C_T}})-Q(\bm{\l_{C_T}})\ge0$ and $Q(\bm\l_{\perp\bm{C_T}})-Q(\bm{\l_{C_T}})=0$ if and only if $\bm\l\in\R\bm{f_S}$. This finishes the proof.
\end{proof}

By continuity it follows that $Q_{T}^{+}$ is non-negative on the boundary of the polyhedron $R_{t}(C_{T^{c}\perp C_{T}})$.
We next consider the faces of the polyhedron $R_{t}(C_{T^{c}\perp C_{T}})$.

\begin{lemma}\label{lempositivity}
	Suppose Assumption \ref{assumptionsI} holds with $C=(\bm{c_1},\cdots, \bm{c_{s}},\bm{c_{0}})$.
If $t>0$ is sufficiently small,
then there exist constants (depending on $t$) $\kappa, \delta>0$ such that in a cross section of $R_{t}(C_{T^{c}\perp C_{T}})\setminus \mathbb{R}\overline{B_\varepsilon(\bm f)}$ we have

\begin{equation}\label{eqnsummandestimateonfull}
\left|
 e^{-\pi   Q\left(  \sqrt{2v}\bm \lambda\right)}
M_{Q_{t},C_T}\left( \sqrt{2v}\bb \lambda\right)  p_{C_{T^{c}\perp^{[t]} C_{T}}}\left(\bb \lambda\right)
\right|
\leq \kappa e^{-\pi \delta v}.
\end{equation}
\end{lemma}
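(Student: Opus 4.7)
The plan is to combine the rewriting \eqref{eqnMprewriting} with the uniform bound \eqref{eqnMestimate} to reduce the estimate to one of the form $\kappa_0\, e^{-2\pi v Q_T^+(\bm\l)}$, and then to produce a positive uniform lower bound on $Q_T^+$ via a compactness argument supplied by Lemma \ref{lempositivityininterior}.

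First, by Lemma \ref{lemCTnegativedefinite}, $Q_t|_{\langle C_T\rangle}$ is negative definite for $T\subsetneq\{0,\dots,s\}$, so \eqref{eqnMestimate} supplies a constant $K=K(Q,T,t)$ with $|e^{-2\pi Q_{t}(\bm{\mu_{C_T}}^{[t]})}M_{Q_t,C_T}(\bm\mu)|\le K$. Combining with the bound $|p_{C_{T^c\perp^{[t]} C_T}}(\bm\mu)|\le 2^{s+1-|T|}$ from Lemma \ref{completionProperties} (2) and the identity \eqref{eqnMprewriting}, I obtain a constant $\kappa_0=\kappa_0(Q,T,t)$ with
\[
\left|e^{-\pi Q(\bm\mu)}M_{Q_t,C_T}(\bm\mu)p_{C_{T^c\perp^{[t]} C_T}}(\bm\mu)\right|\le \kappa_0\, e^{-\pi Q_T^+(\bm\mu)}\,.
\]
Substituting $\bm\mu=\sqrt{2v}\,\bm\l$, using the invariance of the sign function $p_{C_{T^c\perp^{[t]} C_T}}$ under positive scaling and the degree-two homogeneity of $Q_T^+$, one obtains
\[
\left|e^{-\pi Q(\sqrt{2v}\bm\l)}M_{Q_t,C_T}(\sqrt{2v}\bm\l)p_{C_{T^c\perp^{[t]} C_T}}(\bm\l)\right|\le \kappa_0\, e^{-2\pi v Q_T^+(\bm\l)}\,.
\]

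Next, I would show that $Q_T^+$ admits a positive uniform lower bound on the specified cross section by passing to the unit sphere. Let $\Sigma := \{\bm\l\in V : \|\bm\l\|=1\}\cap R_t(C_{T^c\perp C_T})\setminus \R B_\varepsilon(\bm f)$, where $B_\varepsilon(\bm f)$ is the open ball. Since $\R B_\varepsilon(\bm f)$ is an open cone and $R_t(C_{T^c\perp C_T})$ is closed, $\Sigma$ is a closed subset of the unit sphere and hence compact. On $\Sigma$, the continuous function $Q_T^+$ is strictly positive: Lemma \ref{lempositivityininterior} yields this in the interior of the polyhedron away from $\R\bm f_S$, and I would extend the explicit coordinate formulas \eqref{eqnQTfornotnegativedefiniteCT} and \eqref{eqnQTexpression} from its proof to the closure $R_t(C_{T^c\perp C_T})\setminus \R\bm f_S$. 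Thus $Q_T^+$ attains a positive minimum $\delta_0>0$ on $\Sigma$, and homogeneity gives $Q_T^+(\bm\l)\ge\delta_0\|\bm\l\|^2$ on the cone $R_t(C_{T^c\perp C_T})\setminus\R\overline{B_\varepsilon(\bm f)}$. On any fixed cross section $\|\bm\l\|$ is bounded below by a positive constant $c$, so $Q_T^+(\bm\l)\ge\delta_0 c^2$, yielding the claim with $\kappa=\kappa_0$ and $\delta=2\delta_0 c^2$.

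The main obstacle is upgrading the interior strict positivity of $Q_T^+$ (Lemma \ref{lempositivityininterior}) to strict positivity on the full closure of $R_t(C_{T^c\perp C_T})\setminus \R\bm f_S$. My plan is a case-by-case boundary extension of the coordinate identities in the proof of Lemma \ref{lempositivityininterior}: for $T=S$, \eqref{eqnQTfornotnegativedefiniteCT} writes $Q_T^+$ as $(b^2/t-ab+ta^2)-Q(\bm v)$, where the first summand has non-positive discriminant $-3b^2$ (hence is non-negative, vanishing iff $a=b=0$) and $-Q(\bm v)\ge 0$ on the negative-definite space $W_S$ (with equality iff $\bm v=0$); for $\langle C_T\rangle$ negative definite, the extension follows from \eqref{eqnQTexpression} together with the single-isotropic-ray structure ensured by Assumption \ref{assumptionsI} (3), which pins down the vanishing locus to $\R\bm f_S$.
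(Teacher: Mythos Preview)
Your approach is genuinely different from the paper's and, once completed, simpler. The paper does \emph{not} claim that $Q_T^+>0$ on the full closed cone $R_t(C_{T^c\perp C_T})\setminus\R\bm f_S$; it only asserts non-negativity on the boundary and then runs a face-by-face argument: for a boundary point it introduces the face index $L\supseteq T$, applies Lemma \ref{lempositivityininterior} to $L$ (so $Q_L^+>0$ there), and transfers the estimate back to $M_{Q_t,C_T}$ via the limiting relation $|M_{Q_t,C_L}|\sim 2^m|M_{Q_t,C_T}|$ along the face. Your route bypasses this stratification entirely by establishing strict positivity of $Q_T^+$ on the whole closed set and invoking compactness once.

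Your handling of the case $T=S$ is clean: the discriminant computation on \eqref{eqnQTfornotnegativedefiniteCT} shows $b^2/t-ab+ta^2\ge 0$ with equality only at $a=b=0$, and $-Q(\bm v)\ge 0$ on $W_S$, so in fact $Q_S^+>0$ on all of $V\setminus\{0\}$. No restriction to the polyhedron is even needed there.

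The gap is in the negative-definite case. You write that Assumption \ref{assumptionsI} (3) ``pins down the vanishing locus to $\R\bm f_S$'', but that assumption concerns $R(C)\cap D_0$, not $R_t(C_{T^c\perp C_T})\cap D_0$, and the latter polyhedron is \emph{not} contained in $R(C)$. From \eqref{eqnQTexpression}, $Q_T^+(\bm\l)=0$ forces $\bm\l_{C_T}=0$ and $Q(\bm\l_{\perp C_T})=0$; to conclude $\bm\l\in\R\bm f_S$ you must first place $\bm\l=\bm\l_{\perp C_T}$ inside $R(C)$. The missing step is short: the proof of Lemma \ref{lempositivityininterior} shows $\bm\l_{\perp C_T}\in R(C)$ for $\bm\l$ in the \emph{interior} $R_t^\circ(C_{T^c\perp C_T})$; since the projection $\bm\l\mapsto\bm\l_{\perp C_T}$ is continuous, $R(C)$ is closed, and every boundary point of the full-dimensional polyhedron is a limit of interior points, the containment $\bm\l_{\perp C_T}\in R(C)$ extends to the closure. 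With this in hand your compactness argument goes through and yields the lemma directly.
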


\begin{proof}
	For any non-zero vector $\bm u\in R_t(C_{T^c\perp C_T})\setminus\R\ol{B_\e(\bm f)}$, consider $L:=\{0\le j\le s:B_t(\bm{c_j},\bm u_{\perp\bm{C_T}}^{[t]})=0\}$, which satisfies $T\subseteq L$ and
	\begin{equation}\label{eqnequationforpolyhedronstrictT}
		B_{t}\left(\bm{c_k},\bm{u}^{[t]}_{\perp\bm{C_{T}}} \right)> 0\text{ for } k\in L^{c}\,~~
		\text{or}~~
		B_{t}\left(\bm{c_k},\bm{u}^{[t]}_{\perp\bm{C_{T}}} \right)< 0 \text{ for } k\in L^{c}.
	\end{equation}

	If $L=\{0,\dots,s\}$, then by the non-degeneracy of $B_{t}$ we have
	$\bm{u}^{[t]}_{\perp \bm{ C_T}}=\bm{0}$ and thus $\bm{u}=\bm{u}^{[t]}_{\bm {C_{T}}}$.
	By \eqref{eqnQTfornotnegativedefiniteCT}, $Q_{T}^{+}$ is  strictly bounded below if $\langle C_{T}\rangle$ is not negative definite.
 	Consider next the case that $\langle C_{T}\rangle$ is negative definite.
	Using Lemma \ref{lemorthogonalprojectionCT}  and \eqref{eqnQTexpression}, we have that
	\[
		Q_{T}^{+}(\bm{u})=Q\left(\bm{u}_{\perp {\bm C_{T}}}\right)-Q\left(\bm{u}_{ {\bm C_{T}}}\right)=
		-Q\left(\bm{u}_{ {\bm C_{T}}}\right).
	\]
	This also has a strictly positive lower bound in a compact region of the cross section by the negative definiteness of $\sangle{C_T}$. In either case, the inequality follows from \eqref{eqnMprewriting}, \eqref{eqnMestimate}, and $e^{-\pi Q(\bm\l)}e^{2\pi Q_t(\bm{\l_{C_T}}^{[t]})}=e^{-\pi Q_T^+(\bm\l)}$. In the case $L\subsetneq\{0,\dots,s\}$, we claim that for any $\bm u$ satisfying \eqref{eqnequationforpolyhedronstrictT}, one has $\bm u\in R_t^\circ(C_{L^c\perp C_L})\setminus\R\ol{B_\e(\bm f)}$. That is,
	\begin{equation}\label{eqnequationforpolyhedronstrictL}
		B_{t}\left(\bm{c_k},\bm{u}^{[t]}_{\perp\bm{C_{L}}} \right)> 0\text{ for } k\in L^{c}\,~~
		\text{or}~~
		B_{t}\left(\bm{c_k},\bm{u}^{[t]}_{\perp\bm{C_{L}}} \right)< 0 \text{ for } k\in L^{c}.
	\end{equation}
	Indeed, by the orthogonal decomposition in Lemma \ref{lemCorthogonalprojection}, $\langle C_T\rangle^{\perp^{[t]}}=\langle D_{T^{c}}\rangle,\langle C_L\rangle^{\perp^{[t]}}=\langle D_{L^{c}}\rangle$
	and thus
	$
		\bm{u}^{[t]}_{\perp\bm{C_{T}}} =\bm{u}^{[t]}_{\perp\bm{C_{L}}} +\bm{v}$ for some $\bm{v}\in \langle D_{T^{c}\setminus L^{c}}\rangle.
	$
	Using the fact that $D_{\{0,\dots,s\}}$ is the dual basis of  $C_{\{0,\dots,s\}}$,  we have $B_{t}(\bm{c_k},\bm{v})=0, k\in L^{c}$.
	Therefore,
	\begin{equation}\label{eqnBtckurelation}
		B_{t}\left(\bm{c_k},\bm{u}^{[t]}_{\perp\bm{C_{T}}} \right)=
		B_{t}\left(\bm{c_k},\bm{u}^{[t]}_{\perp\bm{C_{T}}} \right)+B_{t}\left(\bm{c_k},\bm{v}\right)=B_{t}\left(\bm{c_k},\bm{u}^{[t]}_{\perp\bm{C_{L}}} \right)\,\text{ for } k\in L^{c}.
	\end{equation}
	Now \eqref{eqnequationforpolyhedronstrictL} follows from \eqref{eqnequationforpolyhedronstrictT}. Thus $\bm u$ lies in the interior of $R_t(C_{L^c\perp C_L})$ which is a face of $R_t(C_{T^c\perp C_T})$. Consider any vector $\bm{u}$ lying in the interior $R^{\circ}_{t}(C_{L^{c}\perp C_{L}})\setminus \mathbb{R}\overline{B_\varepsilon(\bm f)}$.
	From Lemma \ref{lempositivityininterior} and continuity, there exists a neighborhood $N(\bm{u})\subseteq R_{t}(C_{T^{c}\perp C_{T}})\setminus \mathbb{R}\overline{B_\varepsilon(\bm f)}$ and $\delta_{\bm{u}}>0$ such that $Q_{L}^{+}(\bm{\lambda})\geq \frac{\delta_{\bm{u}}}2$ for $\bm{\lambda}\in N({\bm{u}})$.
	In particular, using \eqref{eqnMestimate}, we have for $\bm{\lambda}\in N(\bm{u})$
	\begin{equation}\label{eqnsummandestimateonX}
		\left|
 		e^{-\pi   Q\left(  \sqrt{2v}\bm \lambda\right)}
		M_{Q_{t},C_L}\left( \sqrt{2v}\bb \lambda\right)  p_{C_{L^{c}\perp^{[t]} C_{L}}}\left(\bb \lambda\right)
		\right|
		\leq K e^{-\pi Q_{L}^{+}\left( \sqrt{2v}\bm{\lambda}\right)}\leq K e^{-\pi v\, \delta_{\bm{u}}}.
	\end{equation}
	For any such $\bm{\lambda}$, by \cite[Proposition 3.4, equation (51)]{Na} and  Lemma \ref{ErrorProperties},
	one has 
	\[
	\lim
	M_{Q_{t},C_L}\left( \sqrt{2v}\bb \lambda\right)
	=(-1)^{|L|+|T|} \sgn\left(B_t\left(\left(C_{L\setminus T}\right)_{\perp C_T}^{[t]},\bm{\lambda}\right)\right) M_{Q_{t},C_T}\left( \sqrt{2v}\bb \lambda\right)  \,,
	\]
	with the limit taken at the locus $B_t((C_{L\setminus T})_{\perp C_T}^{[t]},\bm\l)=\bm0$. On the other hand, from \eqref{eqnPCdefinition} and \eqref{eqnBtckurelation}, we obtain that along the same locus we have, for $m\in\N_0$ satisfying $0\leq m\leq |L|-|T|$,
	\[
		\lim \left\lvert p_{C_{T^{c}\perp^{[t]} C_{T}}}(\bb \lambda)\right\rvert= \frac{1}{2^{m}}\left\lvert p_{C_{L^{c}\perp^{[t]} C_{L}}}(\bb \lambda)\right\rvert.
	\]
	Here $m$ is the number of 
	vanishing components in $B_t((C_{L\setminus T})_{\perp C_T}^{[t]},\bm{\lambda})$ in the limit process.
	This gives
	\[
		\lim\,
		\left(\left\lvert
		M_{Q_{t},C_L}\left( \sqrt{2v}\bb \lambda\right)  p_{C_{L^{c}\perp^{[t]} C_{L}}}(\bb \lambda)\right\rvert\right)
		=  2^{m} \lim \left( \left\lvert M_{Q_{t},C_T}\left( \sqrt{2v}\bb \lambda\right)  p_{C_{T^{c}\perp^{[t]} C_{T}}}(\bb \lambda)\right\rvert\right).
	\]
	By shrinking the neighborhood $N(\bm{u})$ if necessary, according to \eqref{eqnsummandestimateonX} we obtain that inside $N(\bm{u})$  
	\begin{equation}\label{eqnsummandestimateonNX}
		\left|
		e^{-\pi   Q\left(  \sqrt{2v}\bm \lambda\right)}
		M_{Q_{t},C_T}\left( \sqrt{2v}\bb \lambda\right)  p_{C_{T^{c}\perp^{[t]} C_{T}}}\left(\bb \lambda\right)
		\right|
		\leq 2^{-m} \cdot 2 K e^{-\pi v\,\delta_{\bm{u}}}.
	\end{equation}
	
	\begin{figure}[h]
		\centering
		\includegraphics[scale=0.6]{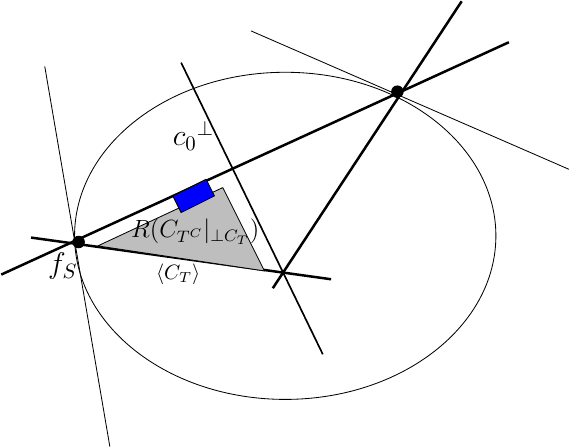}
		\caption{A cross section of the polyhedron $R_{t}(C_{T^{c}\perp C_{T}})$.}
		\label{figure:stratification}
	\end{figure}
	
	For any point $\bm{u}$ on the boundary of the cross section of $R_{t}(C_{T^{c}\perp C_{T}})\setminus \mathbb{R}\overline{B_\varepsilon(\bm f)}$, one
	constructs such a neighborhood $N(\bm{u})$. The resulting open sets then cover the boundary of the cross section. By compactness of the latter, we can find a finite cover
	within which we have the estimates given by \eqref{eqnsummandestimateonNX}.  After that, consider the complement (within $R_{t}(C_{T^{c}\perp C_{T}})\setminus \mathbb{R}\overline{B_\varepsilon(\bm f)}$) of the open finite cover, which is compact and covers the remaining cross section of $R_{t}(C_{T^{c}\perp C_{T}})\setminus \mathbb{R}\overline{B_\varepsilon(\bm f)}$.
	For the compact complement we also have an estimate of the form in
	\eqref{eqnsummandestimateonNX} according to Lemma \ref{lempositivityininterior};
	see Figure \ref{figure:stratification} for an illustration.
	We then obtain the desired estimate in \eqref{eqnsummandestimateonfull}
	everywhere in the cross section of the polyhedron $R_{t}(C_{T^{c}\perp C_{T}})\setminus \mathbb{R}\overline{B_\varepsilon(\bm f)}$.
\end{proof}

We are now ready to prove Theorem \ref{thm:Modularitysimplicialwithisotropicray}.
\begin{proof}[Proof of Theorem \ref{thm:Modularitysimplicialwithisotropicray}]
The compact convergence of $ \Theta_{Q,p_{C}}$ follows as in Theorem \ref{thmconvergenceofcountingfunction}.
Based on Lemma \ref{lempositivity},
we apply the compactness argument as in the proof of Theorem \ref{thm:Modularitysimplicialwithnoisotropicray}
which gives compact convergence of the series in \eqref{eqnreducedseries} and thus of $\Theta_{Q,\wh{p}_{C,t}}$, summed away from $\overline{B_\varepsilon(\bm f)}$.
For the summation over  $\overline{B_\varepsilon(\bm f)}$, we use \eqref{eqnMestimate}, Lemma \ref{lempositivityininterior},
and Theorem \ref{thmconvergenceofcountingfunction}. Similarly, $\wh{p}_{C,t}$  satisfies Theorem \ref{Vigneras} (i) according to Lemma  \ref{lempositivity}, \eqref{eqnMestimate}, and Lemma \ref{lempositivityininterior}.
   These give the compact convergence and meromorphicity of $\Theta_{Q,\wh{p}_{C,t}}$, and the desired modularity of $\Theta_{Q,\wh{p}_{C,t}}$.
 The relation between $\TH_{Q,p_C}$  and $\Theta_{Q,\wh{p}_{C,t}} $ follows from their compact convergences. \qedhere
\end{proof}

\subsubsection{Indefinite theta functions on faces of simplicial polyhedrons}\label{secfacesandthetafunctions}

We next consider the restrictions of theta functions to faces. Recall from \eqref{eqndfnface} that for any non-empty set $J\subseteq C$ we write $V_J=\sangle{C_J}^\perp$, $F_J=V_J\cap R(C)$. If $F_J$ is an isotropic ray $\R\bm f$, then by the simplicial condition and dimension reasons so is $V_J$. It follows that $\sangle{C_J}$ can not be $V$ nor negative definite by the signature of $V$. By Lemma \ref{lemCTnegativesemidefinite}, $J=\{1,\dots,s\}$. We take the splitting \eqref{eqnVfinerdecomposition}, with $\bm\xi\in D_0$ satisfying \eqref{eqnxiforsplitting}. A direct computation as in Theorem \ref{thmconvergenceofcountingfunction} and Corollary \ref{corThetaseriespolarpart} shows that the corresponding series $\TH_{Q,\d_{V_J}p_C}$ and $\TH_{Q,\d_{V_J}p_{C_{J^c}}}$ are divergent. 
We have the following result if $F_J$ is not an isotropic ray.

\begin{corollary}\label{corthetarestrictedonfaceinsimplex}
	Suppose Assumption \ref{assumptionsI} holds with $C=(\bm{c_1},\cdots,\bm{c_s},\bm{c_0})$. Assume $F_J$ is not an isotropic ray. The series $\TH_{Q,\d_{V_J}p_{C_{J^c}}}$ and $\TH_{Q,\d_{V_J}p_C}$ define meromorphic functions in $(\bm{z_{V_J}},\t)$ whose modular completions have weight $\frac{\dim(F_J)+1}{2}$.
\end{corollary}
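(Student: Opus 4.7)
The approach is to reduce $\TH_{Q,\delta_{V_J}p_{C_{J^c}}}$ and $\TH_{Q,\delta_{V_J}p_C}$ to theta series associated to the restricted quadratic form $Q|_{V_J}$ on the subspace $V_J$, and then to recognize the resulting objects as covered by Theorems \ref{thm:Modularitysimplicialwithnoisotropicray} and \ref{thm:Modularitysimplicialwithisotropicray}. The first task is to verify that $Q|_{V_J}$ is non-degenerate of signature $(1,\dim(V_J)-1)$: since $F_J$ is not an isotropic ray, Lemma \ref{lemCTnegativesemidefinite} rules out the case that $\langle C_J\rangle$ has a non-trivial intersection with $D_0$, so (being a proper subspace of $V$) $\langle C_J\rangle$ must be negative definite. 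This yields an orthogonal decomposition $V=\langle C_J\rangle\oplus V_J$ and gives $V_J$ signature $(1,\dim(V_J)-1)$ with $\dim(V_J)=s+1-|J|$. Via the $B$-orthogonal projections $C_{J^c\perp C_J}$ (well defined by the negative definiteness of $\langle C_J\rangle$, as in Lemma \ref{lemorthogonalprojectionCT}), the face $F_J$ coincides inside $V_J$ with the cone $R(C_{J^c\perp C_J})$; it is simplicial of full dimension in $V_J$ and contains at most one isotropic ray, because any isotropic ray inside $F_J$ is automatically an isotropic ray of $R(C)$, which by Assumption \ref{assumptionsI}(3) admits at most one.

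The next step is to rewrite the series. The factor $\delta_{V_J}$ forces $\bm{n}+\bm{y}/v\in V_J$, which imposes integrality/rationality constraints on the $\langle C_J\rangle$-component of the shift $\bm{\alpha}$; if these fail, the series vanishes identically, and otherwise the admissible $\bm{n}$ form an affine sublattice $\Lambda_J$ of rank $\dim(V_J)$ in $V_J$ (cf.\ the analysis preceding Section \ref{secfacesandthetafunctions}). Decomposing $\bm{z}=\bm{z}|_{V_J}+\bm{z}|_{\langle C_J\rangle}$, the contribution from $\bm{z}|_{\langle C_J\rangle}$ enters only through the $B(\bm{\lambda},\bm{z}|_{\langle C_J\rangle})$-prefactor, which is constant on $\Lambda_J$ and can be absorbed. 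On the support of $\delta_{V_J}$ one has $B(\bm{c_j},\bm{\lambda})=0$ for all $j\in J$, so the factors involving $C_J$ collapse in the definition \eqref{eqnPCdefinition}, and up to a combinatorial constant $p_C$ reduces to $p_{C_{J^c\perp C_J}}$; the same applies to $p_{C_{J^c}}$. The upshot is that
\begin{equation*}
	\TH_{Q,\delta_{V_J}p_{C_{J^c}}}(\bm{z};\tau) = \mathrm{cst}(\bm{z})\,\TH_{Q|_{V_J},\,p_{C_{J^c\perp C_J}}}\bigl(\bm{z}|_{V_J};\tau\bigr),
\end{equation*}
with an analogous identity for $\TH_{Q,\delta_{V_J}p_C}$, where $\mathrm{cst}(\bm{z})$ is a non-vanishing holomorphic factor.

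The final step is then purely an application of the previously proved simplicial modularity. The reduced series sits on the smaller quadratic space $(V_J,Q|_{V_J})$ of signature $(1,\dim(V_J)-1)$, with simplicial cone $R(C_{J^c\perp C_J})$ containing at most one isotropic ray; applying Theorem \ref{thm:Modularitysimplicialwithnoisotropicray} if no isotropic ray is present, and Theorem \ref{thm:Modularitysimplicialwithisotropicray} otherwise, furnishes the compact convergence, the meromorphicity in $\tau$, and a modular completion of weight $\dim(V_J)/2=(\dim(F_J)+1)/2$. The main obstacle is the bookkeeping in the second step: one needs to verify carefully that the projected vectors $C_{J^c\perp C_J}$ meet the hypotheses of Assumption \ref{assumptionsI} inside $V_J$ (dimensionality, simpliciality, at most one isotropic ray), and that the rationality/integrality of the sublattice $(\Z^{s+1}+\bm{\alpha})\cap V_J$ descends from that of $C$, so that $\Lambda_J$ is a genuine affine lattice to which the results of Section \ref{sectools} apply.
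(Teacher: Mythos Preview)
Your approach is correct and essentially the same as the paper's: reduce to the restricted quadratic space $(V_J,Q|_{V_J})$, verify it has signature $(1,\dim(V_J)-1)$, rewrite the series as a lower-dimensional theta function over the inherited affine sublattice, and apply Theorems \ref{thm:Modularitysimplicialwithnoisotropicray}/\ref{thm:Modularitysimplicialwithisotropicray} (or the classical Jacobi case when $\dim(F_J)=1$). The only cosmetic differences are that the paper cites Lemma \ref{lemfacesofDeltaanddegenerations} (transported to $R(C)$) for the signature instead of Lemma \ref{lemCTnegativesemidefinite}, and takes a general splitting $V=V_J\oplus Z_J$ rather than your orthogonal one $V=\langle C_J\rangle\oplus V_J$; since $\langle C_J\rangle$ is negative definite in the non-isotropic-ray case, your orthogonal splitting is actually available and slightly cleaner.
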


\begin{proof}
	We only show the claim for $\TH_{Q,\d_{V_J} p_{C_{J^c}}}$, the claim for $\TH_{Q,\d_{V_J}p_C}$ follows analogously. Applying Lemma \ref{lemfacesofDeltaanddegenerations} to the polyhedron $R(C)$, $Q|_{V_J}$ has signature $(1,\dim(F_J)-1)$ if $F_J$ is not an isotropic ray. Take the splitting of the quadratic space $V$, 
	$V=V_J\oplus Z_J$. Note that $Z_J$ should not be confused with $\sangle{C_J}$ which may not give such a splitting (see \eqref{eqnVfinerdecomposition}). We write $\bm\l=\bm\mu+\bm\l_{\bm Z_J}$. The non-vanishing of $\d_{V_J}(\bm\l)$ requires that the orthogonal decomposition satisfies $\bm\l_{\bm Z_J}=0$. Since the face $V_J$ is cut out by hyperplanes with integral normal vectors, the quadratic space $V_J$ inherits the integral structure from $V$. Then $\bm\l_{\bm Z_J}=0$ implies $\bm\a_{\bm Z_J}\in\Z^{s+1}$. It follows that
	\begin{multline*}
		\sum_{\bm n\in\Z^{s+1}} \d_{V_J}(\bm\l)p_{C_{J^c}}(\bm\l)q^{Q(\bm n)}e^{2\pi iB(\bm n,\bm z)}\\
		= q^{-Q\left(\bm{\a_{Z_J}}\right)}e^{-2\pi iB\left(\bm\a_{\bm Z_J},\bm\b_{\bm Z_J}\right)}\sum_{\bm\a_{\bm V_J}+\bm m\in\left(\Z^{s+1}+\bm\a_{\bm V_J}\right)\cap V_J} p_{C_{J^c|_{V_J}}}\left(\bm\a_{\bm V_J}+\bm m\right)q^{Q(\bm m)}e^{2\pi iB\left(\bm m,\bm z_{\bm V_J}\right)}.
	\end{multline*}
Convergence and modularity follows from Theorems \ref{thmconvergenceofcountingfunction} and \ref{thm:Modularitysimplicialwithisotropicray} if $\dim(F_J)\ge2$ and from standard facts on Jacobi theta functions if $\dim(F_J)=1$.
\end{proof}


\subsection{Modularity of indefinite theta functions of signature $(1,s)$: non-simplicial case}

 Next we generalize Theorems \ref{thm:Modularitysimplicialwithnoisotropicray} and \ref{thm:Modularitysimplicialwithisotropicray}
  from simplicial cones to polyhedral cones arising in the counting functions.
The idea is to decompose polyhedral cones into simplicial cones.

\subsubsection{Simplicial decomposition}

We first mention some elementary results regarding simplicial decompositions of general polyhedrons.
Then we restrict to the ones arising in the counting functions.

\begin{lemma}\label{DecomposePolyhedron}
	One can decompose a convex $r$-dimensional polytope $P$ into $r$-dimensional simplices $S_1,\dots,S_\ell$ which only overlap on their boundaries.
\end{lemma}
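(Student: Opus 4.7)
The plan is to prove Lemma \ref{DecomposePolyhedron} by induction on the dimension $r$, using the classical pulling (or cone) triangulation. The base case $r=1$ is immediate since any one-dimensional convex polytope is a line segment, which is itself a $1$-simplex. (For concreteness one may also verify $r=2$ by hand: a convex polygon is triangulated by fixing any vertex $v$ and taking the triangles spanned by $v$ together with each edge not adjacent to $v$.)

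For the inductive step, assume the result in all dimensions less than $r$ and let $P$ be a convex $r$-dimensional polytope. Pick any vertex $v$ of $P$, and let $\mathcal{F}_v$ denote the collection of facets (i.e., $(r-1)$-dimensional faces) of $P$ that do not contain $v$. Since each such facet $F \in \mathcal{F}_v$ is itself an $(r-1)$-dimensional convex polytope, the induction hypothesis provides a decomposition of $F$ into finitely many $(r-1)$-simplices $T_1^F,\dots,T_{\ell_F}^F$ whose pairwise intersections lie on their boundaries. I then define, for each $F \in \mathcal{F}_v$ and each $1 \le i \le \ell_F$, the $r$-dimensional simplex
\[
S_i^F := \operatorname{conv}\bigl(\{v\} \cup T_i^F\bigr).
\]
Each $S_i^F$ is indeed $r$-dimensional because $v \notin \operatorname{aff}(F)$ (supporting hyperplane of $F$ separates $v$ from the interior of $F$), so $v$ and the $r$ affinely independent vertices of $T_i^F$ together form $r+1$ affinely independent points.

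It remains to verify the two key properties of the collection $\{S_i^F\}_{F \in \mathcal{F}_v, \, 1 \le i \le \ell_F}$. For coverage, let $p \in P$ be arbitrary. If $p=v$ there is nothing to show; otherwise the ray emanating from $v$ through $p$ exits $P$ at some point $q$ lying on the boundary of $P$, and one checks that $q$ must lie on some facet $F \in \mathcal{F}_v$ (if $q$ lay only on facets containing $v$, the ray from $v$ through $q$ would remain in $P$ past $q$, contradicting that $q$ is the exit point). Then $q \in T_i^F$ for some $i$, so $p \in \operatorname{conv}(v,q) \subseteq S_i^F$. For the interior-disjointness, suppose $S_i^F$ and $S_j^{F'}$ share an interior point. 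If $F \neq F'$, then the intersection $S_i^F \cap S_j^{F'}$ is contained in the cone over $F \cap F'$ with apex $v$, which has dimension at most $r-1$, contradicting the existence of a common interior point. If $F = F'$, the common interior point forces $T_i^F \cap T_j^F$ to have dimension $r-1$, contradicting the inductive hypothesis on $F$.

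The main (and only) real obstacle is the bookkeeping for interior-disjointness when the same facet $F$ is used twice; this is handled cleanly by projecting from $v$ onto the affine hyperplane of $F$ and invoking the inductive hypothesis. No further technical difficulty arises, so the induction closes and the lemma follows.
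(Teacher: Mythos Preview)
Your proof is correct and follows essentially the same approach as the paper: both argue by induction on $r$, fix a point of $P$ (you choose a vertex $v$, the paper an arbitrary $\bm{w_0}\in P$), triangulate by induction the facets not containing that point, and cone over those triangulations. Your version is simply more detailed in verifying coverage and interior-disjointness.
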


\begin{proof}
	We prove the lemma by induction on $r$. Pick $\bb{w_0}\in P$ and label the faces of $P$ not containing $\bb{w_0}$ as $F_1,\dots, F_k$. These are convex $(r-1)$-dimensional polytope. By induction hypothesis, we can split each of these into simplices, labeling all of these simplices as $G_1,\dots, G_\ell$.
	Writing $\mathrm{conv}(M)$ for the convex hull of the set $M$, the polytope $P$ decomposes into the simplices $S_1:=\mathrm{conv}(G_1\cup\{\bb{w_0}\}),\dots, S_\ell:=\mathrm{conv}(G_\ell\cup\{\bb{w_0}\})$ overlapping only on the boundaries.
\end{proof}

We apply this to decompose polyhedral cones into simplicial cones.

\begin{lemma}\label{DecomposeSimplicialGeneral}
	Let $Q$ be a quadratic form of signature $(1,s)$ and consider the cone $R(C)$ with $C:=(\bm{c_0},\dots,\bm{c_k})$ and $R(C)\subseteq D_+\cup\{\bm0\}$.
	\begin{enumerate}
		\item One can decompose $R$ into simplicial cones $R(C_1),\dots, R(C_\ell) \subseteq D_+\cup \{\bm{0}\}$ intersecting only on the boundary.
		
		\item There exist $W_j\subset\R^{s+1}$ and $\chi_{W_j}$ satisfying $\{\bm0\}\ne W_j\cap R(C_j)\subseteq D_+\cup\{\bm0\}$, and additional simplicial cones denoted by $R(C_{\ell+1}),\dots,R(C_h)\subseteq D_+\cup\{\bm0\}$, and $d_j\in\R$,  such that
		\begin{align*}
			p_C(\bm{x}) = \sum_{j=1}^{\ell} p_{C_j}(\bm{x})+\sum_{j=\ell+1}^{h} \chi_{W_j}(\bm{x}) d_j  p_{C_j}(\bm{x}).
		\end{align*}
	\end{enumerate}
\end{lemma}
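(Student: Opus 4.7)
For part (1), I would apply Lemma~\ref{DecomposePolyhedron} to a transversal cross-section of the cone. Specifically, choose an affine hyperplane $H \subset \R^{s+1}$ not passing through the origin and transverse to the generators of $R(C)$; then $H \cap R(C)$ consists of at most two convex $s$-polytopes, one from each of the two opposite halves of $R(C)$. By Lemma~\ref{DecomposePolyhedron}, each such polytope decomposes into $s$-simplices overlapping only on their boundaries. Coning these simplices back to the origin produces the desired simplicial $(s+1)$-dimensional cones $R(C_1), \dots, R(C_\ell)$. The containments $R(C_j) \subseteq R(C) \subseteq D_+ \cup \{\bm 0\}$ and the only-on-boundary overlap property are inherited directly from the cross-section.

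For part (2), I would compare $p_C$ with $\sum_{j=1}^\ell p_{C_j}$ stratum by stratum, with respect to the hyperplane arrangement generated by the normals in $C$ together with those appearing in the $C_j$'s. On the open stratum of $R(C)$, i.e., for $\bm x$ such that no $\sgn(B(\bm c, \bm x))$ vanishes, exactly one $j$ places $\bm x$ in the interior of $R(C_j)$. Since each $R(C_j)$ is simplicial with $|C_j| = s+1$, one reads off from \eqref{eqnPCdefinition} that $p_{C_j}(\bm x) = +1$ on the positive-half interior and $p_{C_j}(\bm x) = (-1)^s$ on the negative-half interior, whereas $p_C(\bm x)$ equals $+1$ on the positive half and $(-1)^{k+1}$ on the negative half of $R(C)$. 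Thus $p_C - \sum_{j=1}^\ell p_{C_j}$ vanishes identically on the open positive half; on the open negative half it reduces to the constant $(-1)^{k+1} - (-1)^s$, which can be absorbed, if non-zero, by a single correction term $\chi_{W_j} d_j p_{C_j}$ in which $W_j$ is taken to be the half-space singling out the negative half.

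The main obstacle lies on the lower-dimensional boundary strata, where one or more components $\sgn(B(\bm c, \bm x))$ vanish and both $p_C$ and the $p_{C_j}$'s take dyadic fractional values. There the discrepancy $p_C - \sum_{j=1}^\ell p_{C_j}$ is a piecewise constant function supported on a finite union of hyperplane cells inside $R(C)$. I would proceed by induction on the codimension of the stratum: on each boundary hyperplane arising from the decomposition, the restriction of the discrepancy is itself a linear combination of sign products for a lower-dimensional quadratic sub-problem, to which part~(1) of the present lemma applies inside the intersection with $R(C)$ (still contained in $D_+ \cup \{\bm 0\}$ by restriction). The resulting lower-dimensional simplicial cones become $R(C_{\ell+1}), \dots, R(C_h)$; for each of them $W_j$ is the ambient stratum, and $d_j$ is the rational constant recording the local fractional value of the vanishing-sign products. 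Since the codimension strictly increases at each recursive step, the process terminates after at most $s+1$ iterations, yielding the claimed finite decomposition.
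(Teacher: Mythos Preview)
Your strategy coincides with the paper's: for (1), take a hyperplane cross-section of $R(C)$, triangulate via Lemma~\ref{DecomposePolyhedron}, and cone back; for (2), observe that $p_C-\sum_{j\le\ell}p_{C_j}$ is supported on lower-dimensional faces and induct. Two points deserve attention.

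First, the paper chooses the slicing hyperplane as $H_{\bm d}=\{\bm x:B(\bm d,\bm x)=1\}$ with $Q(\bm d)>0$. This choice is not incidental: since $R(C)\subseteq D_+\cup\{\bm 0\}$ and $Q$ has signature $(1,s)$, such an $H_{\bm d}$ meets only one component of $R(C)\setminus\{\bm 0\}$ and does so in a \emph{bounded} polytope, which is what Lemma~\ref{DecomposePolyhedron} requires. Your phrase ``transverse to the generators'' does not by itself guarantee compactness of the slice; you should make this explicit.

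Second, your parity remark on the open negative half is a genuine observation. With $|C|$ normal vectors one has $p_C(\bm x)=(-1)^{|C|+1}$ there, while each simplicial $p_{C_j}$ gives $(-1)^{s}$; these agree iff $|C|\equiv s+1\pmod 2$. The paper's proof silently asserts that the discrepancy is boundary-supported, which is only literally true under this parity condition. In the paper's application one has $|C|=N=s+3$, so the parity always matches and nothing is lost. Your proposed repair, however, does not fit the lemma's framework: you take $W_j$ to be a half-space, but the statement (and the paper's induction) requires $W_j$ to be a linear subspace with $W_j\cap R(C_j)\subseteq D_+\cup\{\bm 0\}$. If you want to keep the lemma in full generality, a cleaner fix is to note that adding a redundant normal $-\bm c_m$ to $C$ flips the parity of $|C|$ without changing $R(C)$, so one may assume $|C|\equiv s+1\pmod 2$ from the outset.

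Finally, in your inductive step you invoke part~(1) on each boundary stratum $W_j$ but do not check the hypothesis that $Q|_{W_j}$ again has signature $(1,\dim W_j-1)$. The paper supplies this: since $\{\bm 0\}\ne W_j\cap R(C_j)\subseteq D_+\cup\{\bm 0\}$, the subspace $W_j$ contains a positive vector and hence, by the ambient signature $(1,s)$, the restriction has signature $(1,\dim W_j-1)$. You should include this verification to close the induction.
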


\begin{proof}
	(1) Note that $\sangle C=\R^{s+1}$ and $k\ge s$ since for any $\bm w\in \R^{s+1}\setminus\sangle C^\perp$, we have that $\bm w\oplus\sangle C^\perp\subseteq R(C)\subseteq D_+\cup D_0$ is at most one-dimensional due to the signature $(1,s)$ of $Q$. Fix a vector $\bm d\in\R^{s+1}$ with $Q(\bm d)>0$. We begin by intersecting the cone $R(C)$ with the hyperplane
	\begin{equation*}
		H_{\bb d}:=\left\{\bm{x}\in \R^{s+1}: B(\bb d,\bm{x})=1 \right\}.
	\end{equation*}
	Since $R(C)\setminus\{\bm{0}\}$ has two connected components and $H_{\bb{d}}$ intersects only one of them, we can write 
	\begin{equation*}
		R(C)\cap H_{\bm d}=\left\{ \bm{x}\in\R^{s+1}: B(\bb d,\bm{x})=1, B(\bb{c_j},\bm{x})\geq 0\text{ for } 0\leq j\leq k
		\right\},
	\end{equation*}
	possibly after changing $\bm d\mapsto-\bm d$ which is a $s$-dimensional convex polytope. By Lemma \ref{DecomposePolyhedron} we can decompose it into $s$-dimensional simplices $S_1,\dots,S_\ell$. Then there are $\ell$ sets $C_j\,(1\le j\le\ell)$ consisting of $s+1$ vectors each such that $R(C_j):=\R S_j$ and
	\begin{equation*}
		\bigcup_{j=1}^\ell R(C_j) =\R \bigcup_{j=1}^\ell S_j =\R(R(C)\cap H_{\bm{d}})= R(C)
	\end{equation*}
	since each simplex can be described by $s+1$ conditions of the shape $B(\bb{c},\bm{x})>0$. Thus the $R(C_j)$ form a decomposition of $R(C)$ into simplicial cones intersecting only at the boundaries.
	
	\noindent (2) By the proof of (1),
	$
	p_C(\bm{x})-\sum_{j=1}^{\ell} p_{C_j}(\bm{x})
	$
	is supported on the boundaries of $R(C_j)$. By the inclusion-exclusion principle, it consists of a linear combination of terms of the shape (see 
	\eqref{eqnpCexpanded} for details)
	$
		\sum_{j=\ell+1}^{h}\chi_{W_j}(\bm{x}) p_{C_j}(\bm{x}).
	$
	Here $W_j\subset\R^{s+1}$ are subspaces with $\{\bm0\}\ne W_j\cap R(C_j)\subseteq D_+\cup\{\bm0\}$ (where for $\ell+1\le j\le h$ there exists some $1\le k\le\ell$ such that $C_j=C_k$). Since $W_j$ contains a positive vector, $W_j$ has signature $(1,\dim(W_j)-1)$. Note that $R(C_j)\cap W_j\subseteq(D_+\cup\{\bm0\})\cap W_j$ satisfies the conditions of the Lemma in the lower-dimensional subspace $W_j$. Since the claim is true for $\dim(W_j)=1$ (with $\ell=h=1$, $C_1=C$), this proves the claim by induction on $\dim(W_j)$.
\end{proof}

In the rest of this section, let $(V,Q)$ be the quadratic space arising from the counting function $\Theta_{Q,\chi}$ of convex polygons given in \eqref{eqncountingfunctionwithparameterlambda}
that satisfy the assumptions\footnote{In fact, the discussions below also apply to the case $N=4$.} of Theorem \ref{thmconvergenceofcountingfunction}.
Let $C=(\bm{c_{k}}:\,1\leq k\leq N), N=s+3$ be the set of normal vectors therein.
By Proposition \ref{propisotricconestructure} and Lemma \ref{lemlineardepenence}, one has
$\bm{c_{k}}^{\perp}\cap (D_{+}\cup D_{0})\neq \{\bm{0}\}$.
By Lemma \ref{lemsimplefacts} (3),
we have $Q(\bm{c_k})\leq 0$.
Furthermore, any hyperplanes whose intersection with $R(C)$ has dimension $s$ (i.e., the supporting hyperplanes) satisfies
\begin{equation}\label{eqnsupportinghyperplaneQ}
	\bm{c_{k}}^{\perp}\cap D_{+}\neq \{\bm{0}\}
\end{equation}
and thus $Q(\bm{c_k})<0$ by Lemma \ref{lemsimplefacts} (3). We next consider simplicial decompositions of $R(C)\subseteq D_+\cup D_0$ into simplicial polyhedrons with each of them satisfying Assumption \ref{assumptionsI} using the construction in Lemmas \ref{DecomposePolyhedron} and \ref{DecomposeSimplicialGeneral}. We follow the following procedure:
\begin{enumerate}[label=\textnormal{(\arabic*)}]
	\item We first decompose the polyhedron $R(C)$ into small polyhedrons $R_{\bm{f}}$ each containing a small neighborhood of one isotropic ray $\mathbb{R}\bm{f}$ and the complement $R^{[c]}$ of the union of these small polyhedrons. The cross section of the small polyhedron $R_{\bm{f}}$ is a cone over some polytope $P_{\bm{f}}$, with the tip $\bm{w_0}(\bm{f})$ (as in the proof of Lemma \ref{DecomposePolyhedron}) given by the cross section of $\mathbb{R}\bm{f}$.
	
	\item We apply the triangulation procedure in Lemma \ref{DecomposePolyhedron} with the cross section $\bm{w_0}(\bm{f})$ of $\mathbb{R}\bm{f}$ chosen as the reference vertex, to get a triangulation $X_{\bm{f}}$ for $R_{\bm{f}}$. Any triangulation of $P_{\bm{f}}$ gives such a triangulation for $R_{\bm{f}}$.
	
	\item The simplicial triangulations of the small polyhedrons $R_{\bm{f}}$ above also create faces for the complement polyhedron  $R^{[c]}$. Apply the triangulation procedure in Lemma \ref{DecomposePolyhedron} to get a triangulation for the resulting complement polyhedron  $R^{[c]}$.
\end{enumerate}
Denote the resulting simplicial decomposition of $R(C)$ by $X$. It is obtained by inserting hyperplanes with normal vectors $\bm{c_\ell}$ ($N+1\le\ell\le N+M$) for $M\in\N_0$. We can arrange the newly inserted hyperplanes such that all of the normal vectors are rational by perturbing them if necessary. Denote as before $\l_j=B(\bm{c_j},\bm\l)$ and $\l_{-j}=B(-\bm{c_j},\bm\l)$, $1\le j\le N+M$. Let $J\subseteq\{N+1,\dots,N+M\},J^c=\{N+1,\dots,N+M\}\setminus J$. It follows that
\begin{align}\label{eqnpCexpanded}\nonumber
	p_C(\bm\l) &= p_C(\bm\l)\prod_{\ell=N+1}^{N+M} \left(\frac{1+\sgn(\l_\ell)}{2}+\frac{1+\sgn(\l_{-\ell})}{2}\right)\\
	\nonumber
	&= \sum_{J\subseteq\{N+1,\dots, N+M\}} \left(\prod_{k=1}^{N} \frac{1+\sgn(\l_k)}{2}\prod_{\ell\in J} \frac{1+\sgn(\l_\ell)}{2}\prod_{\ell\in J^c} \frac{1+\sgn(\l_{-\ell})}{2}\right.\\
	&\hspace{3.5cm}\left.- (-1)^N\prod_{k=1}^{N} \frac{1+\sgn(\l_{-k})}{2}\prod_{\ell\in J} \frac{1+\sgn(\l_{-\ell})}{2}\prod_{\ell\in J^c} \frac{1+\sgn(\l_\ell)}{2}\right).
\end{align}
Let $C(J):=(\bm{c_j}:1\le j\le N\cup J\cup (-J^c))$. For each summand $p_{C(J)}$ determined by $J$, the support is contained in the intersection of the half-spaces
\[
	R(C(J))=\{\bm{\lambda}~:~ {\lambda}_{j}\geq 0\,, \bm{c_{j}}\in C(J)  \,~ \mathrm{or}~ {\lambda}_{j}\leq 0\,, \bm{c_{j}}\in C(J) \}.
\]
Since $X$ is a simplicial decomposition, $F_{J}:=R(C(J))\subseteq R(C)$ is a simplicial polyhedron. Take any simplicial polyhedron $\xoverline{\Delta}_{J}$ of dimension $s+1$ containing $R(C(J))$. Denote the set of normal vectors of the codimension-one faces of $\xoverline{\Delta}_{J}$ by $C(\xoverline{\Delta}_{J})$.
If $F_{J}\subsetneq \xoverline{\Delta}_{J}$, then the corresponding summand $p_{C(J)}$ is a rational multiple of $\delta_{F_{J}} p_{C(\xoverline{\Delta}_J)}$. Otherwise, we have  $F_{J}=\xoverline{\Delta}_{J}$ and $p_{C(J)}-p_{C(\xoverline{\Delta}_{J})}$ is a linear combination of functions of the form $\delta_{G_{K}} p_{C(\xoverline{\Delta}_J)}$, where the $G_{K}$ are proper faces of $\xoverline{\Delta}_J$ obtained by intersecting hyperplanes $\langle \bm{c_{j}}\rangle^{\perp}, \bm{c_{j}}\in C(J)\setminus C(\xoverline{\Delta}_{J})$, with $\xoverline{\Delta}_J$.

We next deal with the case where $F_J$ or $G_K$ is an isotropic ray on which the theta functions are divergent as discussed in Subsubsection \ref{secfacesandthetafunctions}. By the construction of simplicial decomposition, it is enough to focus on the small polyhedron $R_{f}$ that contains the isotropic ray $\R\bm f$. We exclude those normal vectors $\bm{c_k}$, $1\le k\le N$, satisfying 
\begin{equation}\label{eqnexclusionofisotropicray}
	\bm{c_{k}}^{\perp}\cap R(C)=\bm{c_{k}}^{\perp}\cap  R(C)\cap D_{0}\,,
\end{equation}
which in the current case is equivalent to 
$
\dim\left(\bm{c_{k}}^{\perp}\cap R(C)\right)=1\,
$
by  Proposition \ref{propisotricconestructure} and Lemma \ref{lemlineardepenence}. The corresponding hyperplanes are non-supporting hyperplanes that intersect $R(C)$ only at the isotropic ray (see \eqref{eqnsupportinghyperplaneQ}), and removing them from $C$ does not change $R(C)$.
Define
\[
	C^{\mathrm{reg}}:=\left\{\bm{c_{k}}\in C~:~\bm{c_{k}}^{\perp}\cap R(C)\neq \bm{c_{k}}^{\perp}\cap  R(C)\cap D_{0}\right\}.
\]

By  Lemma \ref{lemsimplefacts} (3), we then have
$
	C^{\mathrm{reg}}=C\setminus  D_0.
$

\begin{lemma}\label{lemnoisotrpicrayinsimplicialdecomposition}
	Replacing $C$ by $C^{\reg}$ in \eqref{eqnpCexpanded}, the resulting faces $F_J$ and $G_K$ are not isotropic rays.
\end{lemma}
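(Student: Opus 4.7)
The plan is to argue by contradiction. I would assume that some face $F_J$ or $G_K$, obtained by replacing $C$ with $C^{\reg}$ in \eqref{eqnpCexpanded}, equals an isotropic ray $\R\bm f$, and derive a contradiction by exhibiting a transverse direction within this face.

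First I would localize the problem near $\R\bm f$. By Proposition \ref{propisotricconestructure} the isotropic rays in $R(C)$ form a finite collection, each being a unique one-dimensional face $F_I$ with $|I|=N-2$ and $\sin(\varphi_b-\varphi_a)=0$ for specific $a,b$. By the three-step construction of the simplicial decomposition $X$ described just before the lemma, the ray $\R\bm f$ is the apex edge of every simplex in the triangulation $X_{\bm f}$ of the small polyhedron $R_{\bm f}$, and does not touch any simplex of the triangulation of the complement $R^{[c]}$. Hence the simplicial cone $\xoverline{\Delta}_J$ with $F_J$ (or $G_K$) equal to $\R\bm f$ must be contained in $R_{\bm f}$, and the ray $\R\bm f$ is cut out as the intersection of $s$ of its $s+1$ facet hyperplanes, whose normal vectors lie in $C^{\reg}\cup\{\bm{c_{N+1}},\dots,\bm{c_{N+M}}\}$ and each of which contains $\R\bm f$.

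Next I would invoke the defining property of $C^{\reg}$ to produce a transverse direction. For every $\bm{c_k}\in C^{\reg}$ with $B(\bm{c_k},\bm f)=0$ one has $\bm{c_k}^\perp\cap R(C)\not\subseteq D_0$, and so by Proposition \ref{propisotricconestructure} and Lemma \ref{lemsimplefacts}~(3) the set $\bm{c_k}^\perp\cap R(C)$ has dimension at least $2$. Hence there exists $\bm v\in\bm{c_k}^\perp\cap R(C)$ linearly independent from $\bm f$ along which all sign constraints coming from $C^{\reg}$ continue to hold with equality on $\bm{c_k}^\perp$ and non-strict inequality on the remaining $C^{\reg}$-hyperplanes. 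After replacing $\bm v$ by $\bm v+\varepsilon\bm f$ for $\varepsilon>0$ small, one obtains a direction inside the relative interior of the supposed face that is not parallel to $\bm f$, forcing the face to have dimension at least two, contrary to the assumption that it equals $\R\bm f$.

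The main obstacle will be compatibility with the triangulation sign choices: the transverse vector $\bm v$ must still lie in the specific simplicial cell of the cross section $P_{\bm f}$ picked out by $J$ (and, for $G_K$, by the additional active triangulation hyperplanes in $C(J)\setminus C(\xoverline{\Delta}_J)$). I would resolve this by invoking the freedom in Lemmas \ref{DecomposePolyhedron} and \ref{DecomposeSimplicialGeneral} to refine the triangulation $X_{\bm f}$ so that every triangulation hyperplane through $\R\bm f$ is compatible with the codimension-one face $\bm{c_k}^\perp\cap R(C)$ through $\R\bm f$, which ensures that the $2$-dimensional slice spanned by $\bm f$ and $\bm v$ is not subdivided by the triangulation and therefore sits entirely in one simplicial cell. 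This step is where the hypothesis that all $\bm{c_k}\in C^{\reg}$ define genuine codimension-one faces of $R(C)$ rather than mere tangencies along isotropic rays is used in an essential way.
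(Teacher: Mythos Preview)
Your contradiction scheme has a genuine gap in step~3. You produce a transverse vector $\bm v\in\bm{c_k}^\perp\cap R(C)$ for \emph{one} normal $\bm{c_k}\in C^{\reg}$ with $B(\bm{c_k},\bm f)=0$, but the putative face $F_J$ or $G_K$ is cut out by several hyperplanes simultaneously. By Proposition~\ref{propisotricconestructure} the isotropic ray $\R\bm f$ satisfies $\lambda_k=0$ for $N-2=s+1$ indices, so after discarding the (at most one) non-supporting normal in $C\cap D_0$ there are still $s$ or $s+1$ distinct $C^{\reg}$-hyperplanes through $\R\bm f$. Several of these can appear as equality constraints in $G_K$ (they lie in $C(J)\setminus C(\xoverline{\Delta}_J)$ whenever they are not among the $s+1$ facet normals of the chosen simplex), and your transverse $\bm v$ would have to lie in their common intersection with $R(C)$. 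Nothing in your argument rules out that this common intersection is exactly $\R\bm f$; the property $\dim(\bm{c_k}^\perp\cap R(C))\ge 2$ for each individual $k$ does not propagate to intersections. The refinement proposed in step~5 cannot repair this, because the $C^{\reg}$-normals are the original data of the polygon and are not part of the triangulation freedom---only the inserted hyperplanes $\bm{c_{N+1}},\dots,\bm{c_{N+M}}$ can be moved.

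The paper's argument sidesteps this difficulty entirely by exploiting the explicit cone structure of the chosen triangulation rather than searching for a transverse direction. The cross section of $R_{\bm f}$ is itself a cone with apex $\bm{w_0}(\bm f)$ over a base polytope $P_{\bm f}\subseteq\bm{c_0}^\perp$, and the triangulation $X_{\bm f}$ is obtained by coning a triangulation $Y_{\bm f}$ of $P_{\bm f}$. In $P_{\bm f}$ all newly inserted hyperplanes pass through the reference vertex $\bm{w_0}$ of Lemma~\ref{DecomposePolyhedron}, and all old $C^{\reg}$-half-spaces contain $P_{\bm f}\ni\bm{w_0}$. Hence every relevant intersection in $P_{\bm f}$ contains $\bm{w_0}$ and is non-empty; two successive cone constructions (over $\bm{w_0}(\bm f)$, then over $\bm 0$) raise its dimension by two, so the resulting face has dimension at least $2$ and cannot be $\R\bm f$. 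This handles all hyperplanes simultaneously without needing to find a common transverse vector.
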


\begin{proof}
	For any isotropic ray $\R\bm f$, denote the normal vector of the face of $R_{\bm f}$ that does not contain $\bm{w_0}(\bm f)$ by $\bm{c_0}$. By Proposition \ref{propisotricconestructure} and Lemma \ref{lemlineardepenence}, there exists at most one non-supporting hyperplane that passes through the isotropic ray $\R\bm f$, and thus the set $\calC_{\bm f}$ of normal vectors from $C\cup\bm{c_0}$ whose hyperplanes intersect $R_{\bm f}$ non-trivially consists of $s+2$ elements.
	
	Suppose that $\calC_{\bm f}\cap D_0\ne\emptyset$. That is, there exists a non-supporting hyperplane for $R_{\bm f}$ whose normal vector is contained in $C\cap D_0$. After the exclusion of $C\cap D_0$ from $C$, the small polyhedron $R_{\bm f}$ is simplicial near $\R\bm f$ (in the sense that the cross section of $R_{\bm f}$ is a simplex). Thus no decomposition of the small polyhedron $R_{\bm f}$ is needed. In this case, for any $J$ such that $\xoverline\De_J=R_{\bm f}$, one has $R(C(J))\cap\R\bm f=\{\bm0\}$ or $R(C(J))=R_{\bm f}$ depending on the sign of $B(\bm{c_0},\bm\l)$. Otherwise, the set $\calC_{\bm f}$ consists of vectors from  $C^{\reg}\cup\bm{c_0}$.
	Suppose	there exists a non-supporting hyperplane whose normal vector $\bm{c_a}\in C^{\reg}$ satisfies $\dim(\bm{c_a}^\perp\cap R_{\bm f})<s$. Since $\dim(\bm{c_a}^\perp\cap R(C))=\dim(\bm{c_a}^\perp\cap R_{\bm f})$, by Lemma \ref{lemlineardepenence}, we have that the face is  an isotropic ray\footnote{For general polyhedrons, the face could be either an isotropic ray or a face of dimension greater than one containing the isotropic ray.}. Thus $\bm{c_a}\in D_0$ by Lemma \ref{lemsimplefacts} (3), contradicting the fact that $C^{\reg}\cap D_0=\emptyset$. It follows that any hyperplane from $\calC_{\bm f}$ is a supporting hyperplane.

	We then consider the cross section of $R_{\bm f}$. This is a cone over the polytope $P_{\bm f}$ obtained by intersecting $s+1$ half-spaces in $\bm{c_0}^\perp\cong\R^{s-1}$, with the tip $\bm{w_0}(\bm f)$ being the cross section of the isotropic ray $\R\bm f$. The newly inserted hyperplanes in the simplicial triangulation $X_{\bm f}$ of $R_{\bm f}$ arise from cones (first over $\bm{w_0}(\bm f)$ then over $\bm0\in\R^{s+1}$) over newly inserted hyperplanes in $P_{\bm f}$ that gives rise to a simplicial triangulation $Y_{\bm f}$ of $P_{\bm f}$. Hyperplanes in $Y_{\bm f}$ fall into two groups: the newly added ones each containing the reference vertex $\bm{w_0}$ in the construction using Lemma \ref{DecomposePolyhedron} and the old $s+1$ hyperplanes used in forming $P_{\bm f}\subseteq\bm{c_0}^\perp$. In particular, each of the new hyperplanes and old half-spaces contains $\bm{w_0}$. By the construction based on \eqref{eqnpCexpanded}, the faces $F_J$ of $G_K$ above arise from cones over intersections within $P_{\bm f}$ of hyperplanes from the former group and half-spaces including the latter group. Any such intersection contains $\bm{w_0}$ and is thus non-empty. Since each cone construction increases the dimensions of intersections by one\footnote{We use the uncommon convention that the dimension of the empty set is $-1$.}, the cone of any such intersection, now inside $X_{\bm f}$, can not be the isotropic ray $\R\bm f$.
\end{proof}

In sum, excluding the normal vectors subject to \eqref{eqnexclusionofisotropicray} from $C$
eliminates the faces $F_{J},G_{K}$ that are isotropic rays.
Correspondingly $p_{C}$ is replaced by $p_{C^{\mathrm{reg}}}$,
with
$\mathrm{supp}\,(p_{C}-p_{C^{\mathrm{reg}}})\subseteq R(C)\cap D_{0}$.

\subsubsection{Modularity of indefinite theta functions for polyhedron cones}

Combining Lemmas \ref{DecomposeSimplicialGeneral} and \ref{lemnoisotrpicrayinsimplicialdecomposition}, Theorem \ref{thm:Modularitysimplicialwithisotropicray}, and Corollary \ref{corthetarestrictedonfaceinsimplex} we obtain.

\begin{theorem}\label{thm:Modularitynonsimplicial}
		Assume \eqref{eqnpositivityonangles}, $N\ge5$,
	and \eqref{eqnrelaxedconditionintermsangles}.
	Let $C=(\bm{c_k}:1\le k\le N),N=s+3$ be the set of normal vectors of $R(C)\subseteq D_+\cup D_0$. Then $\TH_{Q,p_{C^{\reg}}}$ defines a meromorphic function and its completion is a sum of modular forms of (positive) weights at most $\frac{s+1}{2}$ and depth at most $s$.
\end{theorem}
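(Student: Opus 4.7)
The plan is to combine the simplicial decomposition machinery of Lemma \ref{DecomposeSimplicialGeneral} with the modularity result of Theorem \ref{thm:Modularitysimplicialwithisotropicray} for simplicial cones, using Lemma \ref{lemnoisotrpicrayinsimplicialdecomposition} to eliminate problematic isotropic-ray faces, and then induct on the dimension of the faces via Corollary \ref{corthetarestrictedonfaceinsimplex} to accumulate the depth bound.

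First, I would replace $p_C$ by $p_{C^{\reg}}$, which, by the discussion following Lemma \ref{lemnoisotrpicrayinsimplicialdecomposition}, differs only on the support contained in $R(C)\cap D_0$, and then apply Lemma \ref{DecomposeSimplicialGeneral} to the polyhedral cone $R(C^{\reg})\subseteq D_+\cup D_0$. This produces a simplicial decomposition $R(C_1),\dots,R(C_\ell)$ together with correction terms of the form $\chi_{W_j}d_jp_{C_j}$ for $\ell+1\le j\le h$, where $W_j\cap R(C_j)\subseteq D_+\cup\{\bm0\}$ sits in a lower-dimensional subspace $W_j$. I would arrange, as in the construction preceding Lemma \ref{lemnoisotrpicrayinsimplicialdecomposition}, that the inserted separating hyperplanes have rational normals and that none of the resulting faces $F_J$ or $G_K$ is an isotropic ray (this is exactly what Lemma \ref{lemnoisotrpicrayinsimplicialdecomposition} guarantees after excluding the vectors in $C\cap D_0$).

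Next, I would read off the modular completion term-by-term. Each top-dimensional piece $\TH_{Q,p_{C_j}}$ satisfies Assumption \ref{assumptionsI}: it is simplicial of dimension $s+1$, is contained in $D_+\cup D_0$, and contains at most one isotropic ray (since the whole decomposition was arranged so isotropic rays of $R(C)$ each lie inside a single simplex $R_{\bm f}$). Therefore Theorem \ref{thm:Modularitysimplicialwithisotropicray} applies and produces a Jacobi-form completion $\TH_{Q,\wh p_{C_j,t}}$ of weight $\frac{s+1}{2}$ and index $A$. For the correction terms $\chi_{W_j}d_jp_{C_j}$ living on a proper face $F_J$ or $G_K$ of dimension $d'+1<s+1$, Corollary \ref{corthetarestrictedonfaceinsimplex} identifies them with restricted theta series $\TH_{Q|_{V_J},\d_{V_J}p_{C_{J^c}}}$ of signature $(1,d')$; by the inductive application of Lemma \ref{DecomposeSimplicialGeneral} provided by the second part of its proof, the same decomposition procedure may be iterated inside $W_j$, yielding modular completions of strictly lower weight $\frac{d'+1}{2}$.

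Finally, I would track the depth recursively. The depth of the top-dimensional completion is $s$, since $\wh p_{C_j,t}$ involves generalized error functions $E_{Q_t,(C_j)_T}$ of rank up to $s$, each contributing one level of non-holomorphicity under $\del/\del\bar\tau$; the standard computation (as in the references cited in the introduction) shows that $\frac{\del}{\del\bar\tau}\wh p_{C_j,t}$ belongs to $\bigoplus_kv^{r_k}\wh{\Mf}_{k'}^{s-1}\otimes\ol{\Mf_{2+r_k}}$. The face contributions, living in signature $(1,d')$, are by induction of depth at most $d'\le s-1$. Assembling everything, $\TH_{Q,p_{C^{\reg}}}$ is a finite $\R$-linear combination of mock modular forms whose completions have weight at most $\frac{s+1}{2}$ and depth at most $s$. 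Meromorphicity follows from Corollary \ref{corThetaseriespolarpart} together with Theorem \ref{thmconvergenceofcountingfunction} applied to each simplicial piece. The main obstacle I expect is bookkeeping: verifying that the simplicial decomposition respects rationality, that the isotropic-ray exclusion in Lemma \ref{lemnoisotrpicrayinsimplicialdecomposition} really prevents any face $F_J$ or $G_K$ in the iteration from collapsing onto an isotropic ray (so Corollary \ref{corthetarestrictedonfaceinsimplex} actually applies at every stage), and that the weight and depth estimates stabilize correctly across the recursion on $\dim(W_j)$.
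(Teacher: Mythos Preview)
Your proposal is correct and follows essentially the same route as the paper: the paper's proof is a one-line appeal to Lemmas \ref{DecomposeSimplicialGeneral} and \ref{lemnoisotrpicrayinsimplicialdecomposition}, Theorem \ref{thm:Modularitysimplicialwithisotropicray}, and Corollary \ref{corthetarestrictedonfaceinsimplex}, and you have correctly identified these ingredients and spelled out how they combine (simplicial decomposition with rational normals, exclusion of isotropic-ray faces via $C^{\reg}$, modularity on each simplex, and induction on face dimension for the boundary corrections). Your additional remarks on depth tracking and on meromorphicity via Theorem \ref{thmconvergenceofcountingfunction} and Corollary \ref{corThetaseriespolarpart} are consistent with the paper's setup and simply make explicit what the one-sentence proof leaves implicit.
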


\begin{remark}
	If $s+1\le4$, then, using elementary geometry, it is easy to see that at most one hyerplane is needed to construct a triangulation for $R_{\bm f}$. In general, the simplicial decomposition for $R(C)$ satisfying the desired properties is far from being unique. It seems interesting to investigate the dependence of the modular completion of $\TH_{Q,p_{C^{\reg}}}$ on the simplicial decomposition. 
\end{remark}

We finally discuss modularity of $\Theta_{Q,\chi_{C}}$.
Recall from
\eqref{eqnchifunction} that $\chi(\bm{\lambda})=
\chi_{C}(\bm{\lambda})$, where
\[
	\chi_{C}(\bm{\lambda})
	:=\prod_{k=1}^{N}
	\frac{1+\sgn({\lambda}_{k})-\delta({\lambda}_{k})}2
	-(-1)^{N}\prod_{k=1}^{N}
	\frac{1-\sgn({\lambda}_{k})-\delta({\lambda}_{k})}2	.
\]
A direct computation shows that 
\begin{equation*}\label{eqndecompositionofChi}
	\chi_{C}(\bm{\lambda})=\sum_{J\subseteq \{1,\dots,N\}} (-1)^{|J|} \prod_{k\in J}\delta({\lambda}_{k})  p_{C}({\lambda})
=\sum_{J\subseteq \{1,\dots,N\}} (-1)^{|J|} \delta_{\langle C_{J}\rangle^{\perp}}  p_{C}(\bm{\lambda}).
\end{equation*}
Consider the following regularized version of $\chi_C(\bm\l)$, excluding the contributions of isotropic rays

\begin{align}\label{RegularizedChi}
	\chi_{C}^{\mathrm{reg}}(\bm{\lambda}):=\sum_{\substack{J\subseteq \{1,\dots,N\}\\\langle C_{J}\rangle^{\perp}\cap R(C)\neq \langle C_{J}\rangle^{\perp}\cap D_0}} \delta_{\langle C_{J}\rangle^{\perp}}(\bm{\lambda})  p_{C^{\mathrm{reg}}}(\bm{\lambda}).
\end{align}

Note that solely removing $C\cap D_{0}$ from $C$ through the $p$-functions is not enough, since isotropic vectors can arise from faces of $R(C^{\mathrm{reg}})$. By Lemma \ref{lemlineardepenence}, the above summation over index sets $J$ is equivalent to the summation over faces of $R(C)$ that are not isotropic rays.

We finally arrive at the main modularity result stated in Theorem \ref{ModularityOfCoutingFunction}.

\begin{proof}[Proof of Theorem \ref{ModularityOfCoutingFunction}]
	By Remark \ref{remexceptionalcases} we only need to consider $N\ge5$. Using the construction of $\chi_C^{\reg}$ and Lemma \ref{lemnoisotrpicrayinsimplicialdecomposition}, the desired result follows from the same proof as Theorem \ref{thm:Modularitynonsimplicial}.
\end{proof}

\begin{remark}
	We can  prove the  modularity with other versions of $\chi_C$ similarly using the same approach. For example, consider
	\[
		\chi(\bm{\lambda})
		:=\prod_{k=1}^{N} \frac{1+\sgn({\lambda}_{k})+\delta({\lambda}_{k})}2
		-(-1)^{N}\prod_{k=1}^{N}  \frac{1-\sgn(\bm{\lambda}_{k})-\delta({\lambda}_{k})}2.
	\]
	A direct computation shows that 
	\begin{equation*}\label{eqndecompositionofChi}
		\chi(\bm{\lambda})=\sum_{J\subseteq \{1,\dots,N\}}2^{-|J|} \prod_{k\in J}\delta({\lambda}_{k})  p_{C_{J^{c}}}({\lambda})
		=\sum_{J\subseteq \{1,\dots,N\}} 2^{-|J|} \delta_{\langle C_{J}\rangle^{\perp}}  p_{C_{J^{c}}}(\bm{\lambda})
 		.
	\end{equation*}
	Again applying Corollary \ref{corthetarestrictedonfaceinsimplex} proves modularity of $\TH_{Q,\chi_C^{\reg}}$.
\end{remark}

\begin{example}
	Consider the pentagon with the following angles $\th_1=\th_2=\th_3=\frac\pi3$, $\th_4=\th_5=\frac{2\pi}{3}$.
	\begin{figure}[h]
		\centering
		\includegraphics[scale=0.6]{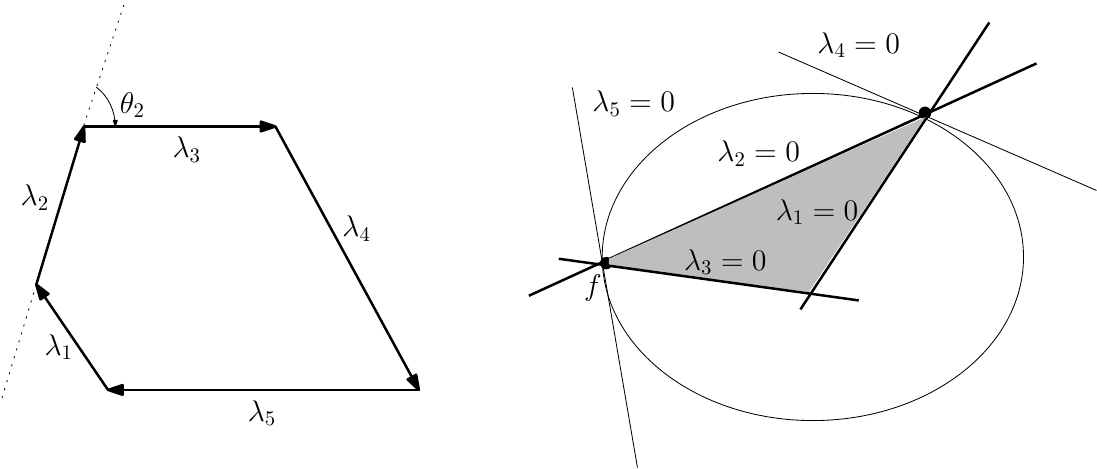}
		\caption{The figure on the left shows the pentagon, the one on the right shows a cross section of the domain $\xoverline{\Delta}$ in Proposition \ref{propisotricconestructure}.}
		\label{figure:pentagon}
	\end{figure}

	\noindent Choosing $\lambda_{j}\,(1\leq j \leq 3)$ as independent parameters, the vectors $\bm{v}$ and $\bm{w}$ in \eqref{eqndfnofNlambda} are $\bm{v}=(1, 1, 0)^{T}$ and $\bm{w}=(0, 1, 1)^{T}$. The matrix in \eqref{eqndefinitionofA} is
	\[
		A = \sin\left(\frac\pi3\right)\mat{0&1&1\\1&1&1\\1&1&0}
	\]
	and correspondingly the quadratic form given in Lemma \ref{la:quadForm} is
	\[
		Q(\bm\l) = \sin\left(\frac\pi3\right)\left(\l_1\l_2+\l_1\l_3+\l_2\l_3+\frac{\l_2^2}2\right).
	\]	
	The positivity condition in \eqref{eqnpositivityonangles} is satisfied, so are the conditions in Lemma \ref{lemconditiononangles} and \eqref{eqnrelaxedconditionintermsangles} by the computations in Example \ref{exconditiononangles}. In fact, we take $P=(\sin(\frac\pi3))^{-\frac12}\Id$ and $S=(\sin(\frac\pi3))^{-\frac12}\Id$, so that
	\[
		S^{T}A S=\begin{pmatrix}
		0 & 1 & 1\\
		1 & 1 & 1\\
		1 & 1 & 0
		\end{pmatrix}.
	\]
 	The normal vectors in \eqref{eqnnormalvectors} are then
 	\[
 		\bm{c}_{\bm 1}=
 		\begin{pmatrix}
 		-1 \\
 		1\\
 		0
 		\end{pmatrix}\,,\quad
  		\bm{c}_{\bm 2}=
 		\begin{pmatrix}
		1 \\
 		-1\\
		1
 		\end{pmatrix}\,,\quad
  		\bm{c}_{\bm 3}=
 		\begin{pmatrix}
		0 \\
		1\\
		-1
 		\end{pmatrix}\,,\quad
  		\bm{c}_{\bm 4}=
 		\begin{pmatrix}
		0 \\
		0\\
		1
 		\end{pmatrix}=  \bm{e}_{\bm 3}\,,\quad
  		\bm{c}_{\bm 5}=
 		\begin{pmatrix}
		1 \\
		0\\
		0
 		\end{pmatrix}= \bm{e}_{\bm 1}.
 	\]	
	They satisfy $Q(\bm{c}_{\bm 1})=Q(\bm{c}_{\bm 2})=Q(\bm{c}_{\bm 3})=-{1\over2},Q(\bm{c}_{\bm4})=Q(\bm{c}_{\bm 5})=0$.
	In particular, $\bm{c_{k}}^{\perp}\cap R(C)=\bm{c_{k}}^{\perp}\cap R(C)\cap D_{0}$ for $k\in\{4,5\}$. Consider for example the isotropic ray $\mathbb{R}\bm{f}$ given by $\lambda_{2}=\lambda_{3}=0$, that is, $\bm{f}= \bm{e}_{\bm 1}$. For this ray, the subset $\mathcal{E}_{1}$  in Lemma \ref{lemnormalvectors} is $\mathcal{E}_{1}=\{\bm{c}_{\bm 2}, \bm{c}_{\bm 3}\}$ and one can take $\mathcal{E}_{1}^{-}$ there to be either $\{\bm{c}_{\bm 2}\}$ or $\{ \bm{c}_{\bm 3}\}$. By definition, we have $C^{\mathrm{reg}}=\{\bm{c_1},\bm{c_2},\bm{c_{3}}\}$.
	
	A simplicial decomposition satisfying Assumption \ref{assumptionsI} can be obtained by inserting the hyperplane with normal vector $\bm{c_0}=\bm{c_1}-r\bm{c_3}$ with $r\in\Q^+$. Correspondingly we have $\l_0=\l_1-r\l_3$. The sets of normal vectors of the resulting two small simplicial polyhedrons are given by $C_{023}:=\{\bm{c_0},\bm{c_2},\bm{c_3}\}$ and $C_{021}:=\{\bm{c_0},\bm{c_2},\bm{c_1}\}$, respectively. We have
	\begin{equation*}
		p_{C}(\bm{\lambda})=\prod_{k=1}^{5}\tfrac{1+\sgn({\lambda}_{k})}2-(-1)^{5}\prod_{k=1}^{5} \tfrac{1-\sgn({\lambda}_{k})}2,\quad
		p_{C^{\mathrm{reg}}}(\bm{\lambda}) =\prod_{k=1}^{3}\tfrac{1+\sgn({\lambda}_{k})}2-(-1)^{3}\prod_{k=1}^{3} \tfrac{1-\sgn({\lambda}_{k})}2.
	\end{equation*}
	It follows that
	\begin{align*}
		p_{C^{\reg}}(\bm\l) &= p_{C^{\reg}}(\bm\l)\left(\frac{1+\sgn(\l_0)}{2}+\frac{1-\sgn(\l_0)}{2}\right)\\
		&= \left(\prod_{k=0}^3 \frac{1+\sgn(\l_k)}{2} - (-1)^3\prod_{k=0}^3 \frac{1-\sgn(\l_k)}{2}\right)\\
		&\hspace{2.5cm}+ \left(\prod_{k=1}^3 \frac{1+\sgn(\l_k)}{2}\frac{1-\sgn(\l_0)}{2} - (-1)^3\prod_{k=1}^3 \frac{1-\sgn(\l_k)}{2}\frac{1+\sgn(\l_0)}{2}\right).
	\end{align*}
	The first summand is
	\begin{align*}
		&\prod_{k\in \{0,2,3\}} \frac{1+\sgn(\l_k)}{2}\frac{1+\sgn(\l_1)}{2} - (-1)^3\prod_{k\in \{0,2,3\}} \frac{1-\sgn(\l_k)}{2}\frac{1-\sgn(\l_1)}{2}\\
		 &= p_{C_{023}}\left(1-\frac12\d(\l_1)\right)
		= p_{C_{023}} - \frac12p_{C_{023}}\d(\l_3)\d(\l_0).
	\end{align*}
	Similarly, the second summand is $p_{C_{021}}-\frac12p_{C_{021}}\d(\l_1)\d(\l_0)$. Combining them, we have
	\begin{equation*}
		p_{C^{\mathrm{reg}}} = p_{C_{023}}+p_{C_{021}} - \frac12p_{C_{023}}\delta({\lambda}_{3})\delta({\lambda}_{0}) - \frac12p_{C_{021}}\delta({\lambda}_{1})\delta({\lambda}_{0}) = p_{C_{023}}+p_{C_{021}}- p_{C^{\mathrm{reg}}}\delta({\lambda}_{3})\delta({\lambda}_{1}).
	\end{equation*}
	The contribution of $p_{C^{\mathrm{reg}}}\delta({\lambda}_{3})\delta({\lambda}_{1})$ to $\Theta_{Q,p_{C^{\mathrm{reg}}}}$ gives the Jacobi theta function and is independent of the choice of $r\in \mathbb{Q}^{+}$.
\end{example}

\appendix
\section{deformations of polygons with fixed angles}\label{secappenedix}

The condition $\theta_{k}\neq 0,\lambda_{k}\neq 0, 1\leq k\leq N$
and
\eqref{eqnrankcondition}
provide necessary and sufficient conditions for $\theta_{k}, \lambda_{k}\,\,(1\leq  k\leq N)$ to yield an $N$-gon which can be a non-convex,
or even a complex polygon (namely one for which non-consecutive edges intersect)  with the outer angles defined appropriately.
From elementary geometry, the necessary and sufficient conditions to yield a convex $N$-gon is
$$
	\sum_{m=0}^{\ell-1} \lambda_{k+m}e^{-i\sum_{a=0}^m \theta_{k+a}} \times e^{-0\cdot i} > 0,
\quad\quad \left(1\leq k\leq N, 1\leq \ell\leq N-2\right),
$$
or
$$
	\sum_{m=0}^{\ell-1} \lambda_{k+m}e^{-i\sum_{a=0}^m \theta_{k+a}} \times e^{-0\cdot i} < 0,
\quad\quad \left(1\leq k\leq N, 1\leq \ell\leq N-2\right).
$$

We next discuss the conditions such that the $N$-gon is not simple.
For $N=3$, all $N$-gons are simple. Hence we assume $N\geq 4$.
Again, by elementary geometry, it is easy to see that an $N$-gon is not simple if and only if
there exists $1\leq k\leq N$ and $t\in (0,1]$ such that 
\begin{align*}
	\left(t\lambda_{k-1} +\lambda_{k}e^{-i\theta_{k}}\right)\times \left(\lambda_{k+1} e^{-i \left(\theta_{k} +\theta_{k+1}\right)}\right)=0.
\end{align*}
This can be simplified into the following:
\begin{align*}
	\text{there exist } 1\le k\le N, ~t\in (0,1]\,,~\mathrm{such\, that}\,~
	t\lambda_{k-1}\sin (\theta_{k}+\theta_{k+1})+\lambda_{k}\sin ( \theta_{k+1})=0.
\end{align*}

The formula derived in Lemma \ref{la:quadForm}
is the signed area for the $N$-gon. While it is always non-zero for simple polygons, it could be zero for complex polygons.
Just as the area is defined to be a signed area (as the cross product), the length $\lambda_{k}$ can also be defined as the signed one with respect to the original orientation $e^{-i\varphi_{k}}$.

\begin{figure}[h]
	\centering
	\includegraphics[scale=0.5]{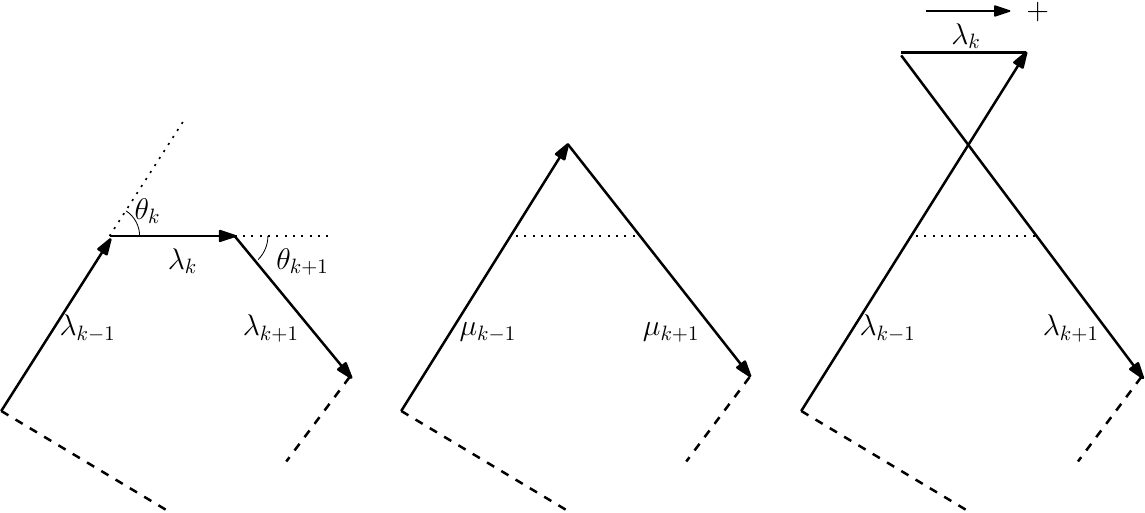}
	\caption{Deformations of convex $N$-gons. The middle one is a convex $(N-1)$-polygon with area $Q_{0}$, while the rightmost one is a complex $N$-gon.}
	\label{figure:deformationpolygon}
\end{figure}

Consider the deformations of a convex polygon
by
fixing the angles $\theta_{k},1\leq k\leq N$ and varying the signed side lengths $\lambda_{k}\,( 1\leq k\leq N)$.
Assume that
\begin{align}\label{eqndegenerationcondition}
	\theta_{k}>0\,,\quad \theta_{k+1}>0\,,\quad \theta_{k}+\theta_{k+1}<\pi\,,
\end{align}
for certain $k$. As $\lambda_{k}\to 0$, the convex $N$-gon degenerates into an $(N-1)$-polygon with
angles given by $\theta_{1},\cdots ,\theta_{k-1}, \theta_{k}+\theta_{k+1},\theta_{k+2},\cdots, \theta_{N}$, as
described in Lemma \ref{la:quadFormsignature}.
The signed (say positive) area $Q(\lambda_{1}, \cdots ,\lambda_{N};\theta_{1},\cdots ,\theta_{N})$ for the original convex $N$-gon
becomes the positive number
\begin{align*}
	Q_{0}:=Q(\mu_{1},\cdots,\mu_{k-1},\mu_{k+1}, \cdots ,\mu_{N};\theta_{1},\cdots, \theta_{k-1}, \theta_{k}+\theta_{k+1},\theta_{k+2},\cdots ,\theta_{N} ).
\end{align*}
As the signed length $\lambda_{k}$ keeps decreasing to a negative value, the
$(N-1)$-polygon becomes a non-simple one, with the same angles  $\theta_{k},1\leq k\leq N$.
The signed area becomes
\begin{align*}
	-{\sin (\theta_{k}) \sin (\theta_{k+1})\over 2\sin (\theta_{k}+\theta_{k+1})}\lambda_k^2
	+Q_{0}\,;
\end{align*}
see Figure \ref{figure:deformationpolygon} for illustration.
One can then extend the expression $Q(\l_1,\cdots,\l_N;\th_1,\cdots,\th_N)$ for the area of a convex $N$-gon to a quadratic from with variables $\l_1,\cdots,\l_N$ and parameters $\th_1,\cdots,\th_N$. The above geometric consideration gives that the sign of this quadratic form jumps at those values for $\l_k$, with $k$ subject to the conditions in \eqref{eqndegenerationcondition}, such that
\[
	-\frac{\sin(\th_k)\sin(\th_{k+1})}{2\sin(\th_k+\th_{k+1})}\l_k^2 + Q_0 = 0.
\]
Any complex polygon for which non-consecutive edges intersect at points which are not vertices can be deformed to a convex one through inverting the above deformation process iteratively. Furthermore starting from a convex $N$-gon the value of the corresponding quadratic form at any $\l_k$ ($1\le k\le N$) can be realized as the signed area for a deformed polygon which has no more than $N$ sides. The non-convex case can be treated similarly.

\begin{figure}[h]
 	\centering
 \includegraphics[scale=0.5]{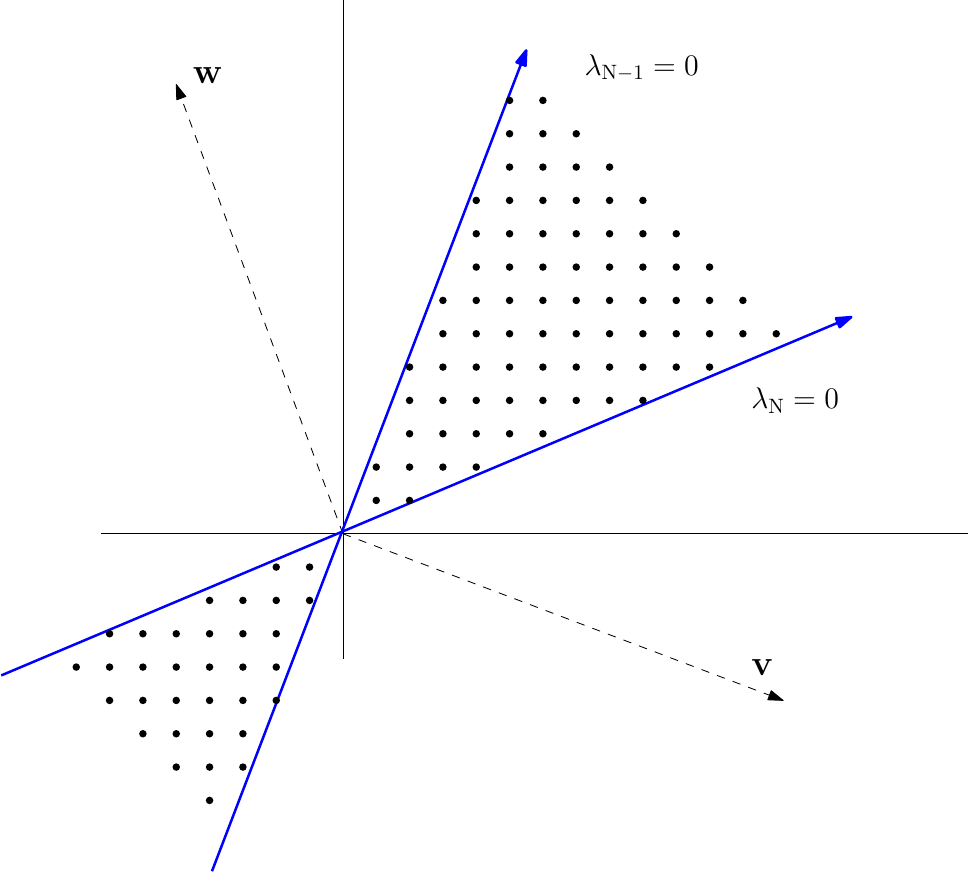}
\caption{Different regions on $\mathbb{R}^{{N-2}}$ for fixed angles.
The interior of the cone consists of simple $N$-gons and possibly complex ones (depending on the convexity), the boundary of the cone represents $n$-gons with $n\leq N-1$,
and the complement of the closure of the cone consists of complex polygons and ones with negative orientations.
}
 	\label{figure:sumoverpolygons}
 \end{figure}
 
\begin{example}\label{extrapzoid}
Consider the enumeration of isosceles trapezoids as  in Example
\ref{extrapzoidintro}.
Comparing the expressions for $f(\bm{z})$ and $\wh{f}(\bm{z})$, we see that the modular completion can be regarded as
a weighted count of trapezoids with the weight specified  by both the sign function and the  error function $E$,
in which not only simple trapezoids (i.e., those $\bb{n}$ satisfying $\chi(\bm{n}+\frac{\bm{y}}{v})\neq 0$) contribute but all possible $\bb{n}$ contribute.
The difference is a count weighted  by the complementary error function
\begin{align*}\label{eqncomplementaryEfunctionweightedcount}
	\wh f(\bm z)-f(\bm{z}) = \frac12\sum_{n_1\in\Z} q^\frac{n_1^2}{2} e^{2\pi in_1z_1} \sum_{n_2\in\Z} \left(\sgn(-n_2)-E\left(-n_2\sqrt{2v}\right)\right) q^{-\frac{n_2^2}{2}} e^{-2\pi in_2z_2}.
\end{align*}
Recall that the area of the polygon is $\frac12{(n_1^2-n_2^2)}$; intuitively the above difference is the contribution of the complex polygons and those with negative orientations; see Figures \ref{figure:deformationpolygon} and \ref{figure:sumoverpolygons} for visualizations.
We expect that similar phenomenon exists for the counting functions of more general polygons.

\end{example}


\end{document}